\newcommand\R{\mathbb{R}}
\newcommand\C{\mathbb{C}}
\newcommand\Z{\mathbb{Z}}
\newcommand{\bra}[1]{\langle #1 \rangle}
\newcommand\CC{\mathcal{C}}
\newcommand{\hank}{\mathcal{H}_\nu}
\numberwithin{equation}{section}
\newtheorem{proposition}{Proposition}[section]
\newtheorem{lemma}{Lemma}[section]
\newtheorem{theorem}{Theorem}[section]
\newtheorem{corollary}{Corollary}[section]
\newtheorem{remark}{Remark}[section]
\begin{document}
\title[Dirac in Aharonov--Bohm magnetic fields]{Dispersive estimates for Dirac equations\\ in Aharonov-Bohm magnetic fields: massless case }

\author{Federico Cacciafesta}
\address{Federico Cacciafesta:
Dipartimento di Matematica, Universit\'a degli studi di Padova, Via Trieste, 63, 35131 Padova PD, Italy}
\email{federico.cacciafesta@unipd.it}

\author{Piero D'Ancona}
\address{Piero D'Ancona:
Aldo Moro 5, 00185 Rome, Italy
Department of Mathematics "Guido Castelnuovo", Sapienza University of Rome, Piazzale Aldo Moro 5, 00185 Rome, Italy}
\email{dancona@mat.uniroma1.it}

\author{Zhiqing Yin}
\address{Zhiqing Yin:
Department of Mathematics, Beijing Institute of Technology, Beijing 100081}
\email{zhiqingyin@bit.edu.cn}

\author{Junyong Zhang}
\address{Junyong Zhang:
Department of Mathematics, Beijing Institute of Technology, Beijing 100081}
\email{zhang\_junyong@bit.edu.cn}

\thanks{%
The authors are partially supported by the MIUR PRIN project 2020XB3EFL, ``Hamiltonian and Dispersive PDEs'', and
by the Gruppo Nazionale per l'Analisi Matematica, la Probabilit\`{a} e le loro Applicazioni (GNAMPA), Project CUP E53C23001670001, by the GNAMPA project Teoria perturbativa per l'equazione di Dirac e applicazioni (2024) and by National key R\&D program of China: 2022YFA1005700, National Natural Science Foundation of China(12171031) and Beijing Natural Science Foundation(1242011). The second Author  thanks the Erwin Schr\"{o}dinger Insitute for support during the preparation of this work, Program Nonlinear Waves and Relativity\_FSD\_2024.}%

\keywords{Dispersive estimates; 
Strichartz estimates;
Aharonov-Bohm potential;
Dirac equation}%

\subjclass[2020]{%
42B37,
35Q40,
35Q41
}%

\begin{abstract}
In this paper we study the dispersive properties of a two dimensional massless Dirac equation perturbed by an Aharonov--Bohm magnetic field. Our main results will be a family of pointwise decay estimates and a full range family Strichartz estimates for the flow. The proof relies on the use of a relativistic Hankel transform, which allows for an explicit representation of the propagator in terms of the generalized eigenfunctions of the operator. 
These results represent the natural continuation of earlier research on evolution equations associated to operators with  magnetic fields with strong singularities (see \cite{DF, FFFP, FZZ} where the Schr\"odinger and the wave  equations were studied). Indeed, we recall the fact that the Aharonov--Bohm field represents a perturbation which is critical with respect to the scaling: this fact, as it is well known, makes the analysis particularly challenging.

%In this paper we study the dispersive properties of a two dimensional massless Dirac equation perturbed by an Aharonov--Bohm magnetic field. Our main results will be a family of pointwise decay estimates and a full range family Strichartz estimates for the flow. 
%The proof requires a careful spectral analysis of the operator and its selfadjoint extensions, leading to a natural procedure to choose a ``distinguished'' extensions, based on the optimal dispersive properties of the corresponding flow.
%These results represent the natural continuation of earlier research on evolution equations associated to the Sch\"{o}dinger operator with singular magnetic fields (see \cite{DF, FFFP, FZZ} where the Schr\"odinger and the wave  equations were studied). Recall that the Aharonov--Bohm operator models an infinitely thin vortex configuration, and is critically singular in two dimensions, which makes its analysis especially challenging. 
 
\end{abstract}

\maketitle

%\maketitle

\tableofcontents

%\textcolor{blue}{blue=fed}

\section{Introduction}

 Dispersive equations perturbed with electric and magnetic potentials have attracted considerable attention in the years. In this paper, we intend to investigate the validity of dispersive estimates (pointwise decay and Strichartz) for the massless Dirac equation in space dimension 2, perturbed by an Aharonov--Bohm magnetic field. The Aharonov-Bohm field is a {\em critical} perturbation with respect to the scaling of the massless Dirac operator, making its analysis particularly  challenging.
 We begin by a brief synopsis of the main properties of magnetic operators and a review of the literature.

\subsection{The electromagnetic operators}
An electromagnetic Schr\"odinger Hamiltonian takes the form
\begin{equation}\label{H-AV}
H_{A, V}=-(\nabla+iA(x))^2+V(x)\quad \text{on}\quad L^2(\R^n; \C).
\end{equation}
Here $V: \R^n\to \R$ is the scalar electric potential, while
\begin{equation}
A(x)=(A^1(x),\ldots, A^n(x)): \, \R^n\to \R^n
\end{equation}
is the magnetic vector potential. We shall always work in the Coulomb gauge
\begin{equation}\label{div0}
\mathrm{div}\, A=0.
\end{equation}
In the physical three dimensional case, the magnetic vector potential $A$ produces a magnetic field $B$, given by
\begin{equation}\label{B-3}
B(x)=\mathrm{curl} (A)=\nabla\times A(x).
\end{equation}
In arbitrary dimension $n\geq2$, the natural analogue of $B$ is the matrix--valued field $B:\R^n\to \mathcal{M}_
{n\times n}(\R)$ defined by
\begin{equation}\label{B-n}
B:=DA-DA^t,\quad B_{ij}=\frac{\partial A^i}{\partial x_j}-\frac{\partial A^j}{\partial x_i}.
\end{equation}
Since the interaction of particles and fields is the main object of study in quantum mechanics, Schr\"odinger operators with electromagnetic potentials have always played a fundamental role, and both their spectral and scattering theory have been developed extensively. We refer to Reed--Simon \cite{RS} and Avron--Herbest--Simon \cite{AHS1,AHS2,AHS3} for the fundamentals of this theory and a thorough analysis of the most important physical potentials (in particular the constant magnetic field and the Coulomb electric potential)

Finer aspects of quantum theory, like the inner structure (the \emph{spin}) of particles and relativistic effects, are not captured by the scalar model \eqref{H-AV}. To this end one needs the more involved \emph{electromagnetic Dirac operators} on  $L^2(\R^n; \C^N)$,
where $N=2^{\lceil(n+1)/2 \rceil}$. These operators are defined as
\begin{equation}\label{D-AV}
\begin{split}
\mathcal{D} _{A, V}&=-i\sum_{k=1}^n\alpha_k(\partial_k-iA_k(x))+m\beta+V (x)\\
&=-i{\bf \alpha}\cdot (\nabla-iA(x))+m\beta+V (x),
\end{split}
\end{equation}
where the electric potential $V (x)$ is an $N \times N$ matrix, 
the magnetic potential $A(x)=(A_{1}(x),\dots,A_{n}(x))$
is an $\mathbb{R}^{n}$ valued vector field, and
${\bf\alpha}=(\alpha_1,\alpha_2, \cdots,\alpha_n)$ and $\beta=\alpha_{0}$ are the $N\times N$ \emph{Dirac matrices}, satisfying
the following \emph{anticommutation relations} 
\begin{equation}\label{alpha-r}
  \alpha_j\alpha_k+\alpha_k\alpha_j=2\delta_{jk} I_{N},\quad 0\leq j,k\leq n.
\end{equation}
% \begin{equation}\label{alpha-r}
% \begin{split}
% \alpha_j\alpha_k+\alpha_k\alpha_j&=2\delta_{jk} I_{N},\quad 1\leq j,k\leq n,\\
% \alpha_j\beta+\beta\alpha_j&=0,\quad j=1,2,\cdots,n\\
% \beta^2&=I_N.
% \end{split}
% \end{equation}
Here $I_{N}$ denotes the $N\times N$ identity matrix and $\delta_{jk}$ the Kronecker symbol, equal to 1 if $j=k$ and to 0 if $j\neq k$. We call \eqref{D-AV} \emph{massless} if $m=0$, \emph{massive} if $m\neq0$. 

In spatial dimension $n=2$, so that $N=2$,
there are at most three linearly independent anticommuting matrices. A standard choice is given by
$\alpha_1=\sigma_1$, $\alpha_2=\sigma_2$ and $\beta=\sigma_3$, 
where $\sigma_{j}$ are the \emph{Pauli matrices}
\begin{equation}
\sigma_1=\left(\begin{array}{cc}0 & 1 \\1 & 0\end{array}\right),\quad
\sigma_2=\left(\begin{array}{cc}0 &-i \\i & 0\end{array}\right),\quad
\sigma_3=\left(\begin{array}{cc}1 & 0\\0 & -1\end{array}\right).
\end{equation}
With this choice, the magnetic Dirac operator \eqref{D-AV} in
dimension $n=2$ takes the form
\begin{equation}\label{D-AV2}
\begin{split}
\mathcal{D} _{A, V}&=-i\sum_{k=1}^2\sigma_k(\partial_k-iA^k(x))+\sigma_3 m+V (x)\\
  &=-\begin{pmatrix}
    -m & (i\partial_{1}+A^{1}-i(i \partial_{2}+A^{2})\\
    (i\partial_{1}+A^{1}+i(i \partial_{2}+A^{2}) &  m
  \end{pmatrix}+V (x).
\end{split}
\end{equation}

\subsection{The Aharonov--Bohm magnetic field}
The main focus of this paper is the massless Dirac operator on $\mathbb{R}^{2}$ perturbed by the Aharonov--Bohm (AB in the following) magnetic potential. This field is used to model the so called
\emph{AB effect} \cite{AB59}, which plays an important role in quantum theory. 
The AB effect is the interaction of a non-relativistic charged particle with an infinitely long and infinitesimally thin magnetic solenoid field. 
According to classical electrodynamics, the particle should not
be influenced by the solenoid, since its magnetic field is zero outside the singularity. However the particle is scattered, and this is interpreted as an evidence of quantum behaviour of the particle.

In the mathematical model, the {\em Aharonov-Bohm potential}
 $A(x)=(A^1(x),A^2(x))$ is 
\begin{equation}\label{AB}
A:\R^2\setminus\{(0,0)\}\to\R^2,
\quad
A(x)=\frac{\alpha}{|x|}\left(-\frac{x_2}{|x|},\frac{x_1}{|x|}\right),\, \quad
x=(x_1,x_2)
\end{equation}
where the real constant $\alpha$ is called the \emph{magnetic flux}
(for example in \cite{AB59,AT} the choice is
$\alpha\equiv \text{magnetic flux}/(2\pi/e)$;
while $\alpha=e/Q_{\text {Higgs}}$ for the cosmic strings
studied in \cite{AW}). Note that we have obviously,
for all nonzero $x\in \mathbb{R}^{2}$,
\begin{equation}\label{eq:transversal}
  \textstyle
A(x)\cdot\hat{x}=0,
\qquad
\hat{x}=\big(\frac{x_1}{|x|},\frac{x_2}{|x|}\big)\in\mathbb{S}^1.
\end{equation}

With the AB field one can form a second order Hamiltonian,
as in \cite{AT}, or a Dirac Hamiltonian, like in 
\cite{AW} where it was used to model the scattering of fermions by an infinitely thin flux tube.
In the typical cosmic string scenarios observed by Alford and Wilczek \cite{AW}, the fermionic charges can be noninteger multiples of the Higgs charges.
As the flux is quantized with respect to the Higgs charge, this leads to a non-trivial Aharonov-Bohm scattering of these fermions.  
The construction of self-adjoint Schr\"odinger and Dirac operators with the AB field have been studied in detail, see \cite{GTV} and the references therein.

\subsection{The time dependent Dirac equation} 
The evolution equation associated to \eqref{D-AV} is
\begin{equation}
i\partial_t u(t,x)=\mathcal{D} _{A, V}\, u(t,x)
\end{equation}
where $u(t,x)$ is the vector valued wavefunction
\begin{equation}
u(t,x) =\begin{pmatrix}
    u_1(t,x) \\
    \vdots\\
      u_{N}(t,x)
    \end{pmatrix}\in \C^N.
\end{equation}
It is customary to use the term \emph{spinor} to denote elements
of $\mathbb{C}^{N}$ in this context.
If the system at time $t=0$ is in the state $u_0(x)$ and if $\mathcal{D}_{A, V}$ is a self-adjoint operator, then the state at time $t$ can be
represented as a continuous group
$$u(t,x)=e^{-it\mathcal{D}_{A, V}} u_0(x)$$ 
and by Stone's theorem, $u(t,x)$ is the unique strong solution of the Cauchy problem
\begin{equation}\label{equ:DAV}
\begin{cases}
i\partial_t u(t,x)=\mathcal{D}_{A, V}\, u(t,x)\\
u(t,x)|_{t=0}=u_0(x)\in D(\mathcal{D}_{A, V})\subset L^2
\end{cases}
\end{equation}
The purpose of this paper is to study the dispersive behavior of the solution of the Cauchy problem \eqref{equ:DAV} where $A(x)$ is the AB potential \eqref{AB} and $m=V=0$.

The study of decay and Strichartz estimates for dispersive equations has a long history and an extensive literature, due to their central role in harmonic analysis and applications to PDEs. In particular, Schr\"{o}dinger, wave and Klein--Gordon equations with electromagnetic potentials have been investigated in great detail
(see e.g.~\cite{BPSS, BPST, CS,DF, DFVV, EGS1, EGS2, S} and the references therein). The picture is rather clear for wide classes of potentials, however it is still far from complete for potentials of critical decay or singularity. A notable example is given by the Aharonov-Bohm potential, whose study was initiated only recently.
In \cite{FFFP, FFFP1}, Fanelli, Felli, Fontelos, and Primo studied pointwise decay for the Schr\"odinger flow perturbed by a class of singular magnetic potentials, including the Aharonov-Bohm potential.
However, the argument in \cite{FFFP, FFFP1} breaks down for the wave equation due to the lack of pseudoconformal invariance (which was crucial for the Schr\"odinger equation). Very recently, Fanelli, Zheng and the last author \cite{FZZ}
established Strichartz estimates for the wave equation in 2D by constructing the odd sin propagator. 
Gao, Yin, Zheng and the last author \cite{GYZZ} constructed the spectral measure and further proved the time decay and Strichartz estimates of Klein-Gordon equation. 
%See also the survey \cite{Fanelli}.

%To the best of our knowledge, no Strichartz estimates are available for the Dirac flow with an AB potential, except for \cite{CF}, \cite{CF1}, in which some local smoothing (and weighted Strichartz) are obtained.
For the Dirac equation the situation is even harder, due to the rich algebraic structure of the equation: Strichartz estimates for ``small" (i.e. subcritical) potential perturbations were studied in \cite{cacdan, DF,DF2,EG, EGG, EGT} by perturbative techniques, but these approaches break down scaling critical models. To the best of our knowledge, the only available results in this direction are provided in \cite{cacser,cacserzha,danesi} and \cite{CF,CYZ}, in which some local smoothing and Strichartz estimates with angular regularity are proved for, respectively, the Coulomb potential and the AB field.

\subsection{Main results}
In the following we shall focus on the model
\begin{equation}\label{eq:dirac}
\begin{cases}
\displaystyle
 i\partial_tu=\mathcal{D}_Au,\quad u(t,x):\mathbb{R}_t\times\mathbb{R}_x^2\rightarrow\mathbb{C}^{2}\\
u(0,x)=u_0(x)
\end{cases}
\end{equation}
where $\mathcal{D}_A$ is given by \eqref{D-AV2} with $m=V=0$ and $A$ as in \eqref{AB}. Explicitly,
\begin{equation}\label{op:D}
\begin{split}
\mathcal{D}_{A}&=-\begin{pmatrix}
    0 & (i\partial_{1}+A^{1})-i(i \partial_{2}+A^{2})\\
    (i\partial_{1}+A^{1})+i(i \partial_{2}+A^{2}) &  0
  \end{pmatrix}.
\end{split}
\end{equation}

The AB potential \eqref{AB} is homogeneous of degree $-1$, like the unperturbed massless Dirac operator $\mathcal{D}$; thus $A$ is a {\em scaling critical perturbation} for $\mathcal{D}$.
PDEs with scaling critical potentials have attracted much attention recently. Criticality makes perturbative techniques ineffective, and requires the introduction of ``ad hoc" methods. We refer to \cite{CF} and references therein for further details on the model. Notice that in order to have a scaling-invariant structure, the zero mass assumption is crucial. The analysis in the massive case requires different techniques and will be the object of a future work.

The first question to address is {\em self-adjointness}. 
Although this topic is fairly understood (see e.g. \cite{gerb} and \cite{borrelli}), we discuss it briefly since the explicit form of the domain plays a crucial role in our results. As proved in Section \ref{sec:pre}, for any given $\alpha$, the operator $\mathcal{D}_A$ on $D(\mathcal{D}_{A})= \big[C_{c}^{\infty}(\mathbb{R}^{2}\setminus0)\big]^2$ admits a one--parameter family of self-adjoint extensions $\mathcal{D}_{A,\gamma}$ indicized by $\gamma\in[0, 2\pi)$. For the scalar Schr\"odinger operator with Aharonov-Bohm potential, the Friedrichs extension is the natural choice for a {\em distinguished} self-adjoint extension; this choice is not available for Dirac, since the spectrum is unbounded from below. As a consequence, it is not entirely clear how a distinguished self adjoint extension for the operator $\mathcal{D}_A$ should be defined. As we shall see in Theorem \ref{ABsa}, the choice of the parameter $\gamma$ determines the boundary conditions at the origin of the domain of $\mathcal{D}_{A,\gamma}$. More precisely, given $\gamma\in[0, 2\pi)$ the elements of $D(\mathcal{D}_{A})$ can be represented as
\begin{equation*}
  f=
  c
  \begin{pmatrix}
    \cos \gamma \cdot K_{\alpha}(r) \\
    i\sin \gamma \cdot K_{1-\alpha}(r)   e^{i\theta}
  \end{pmatrix}
  + \begin{pmatrix}
  \phi_{0}(r) \\
    e^{i\theta}\ \psi_{0}(r)
  \end{pmatrix}
+
  \sum_{k\in \mathbb{Z}\setminus\{0\}}
  \begin{pmatrix}
    e^{ik \theta}\ \phi_{k}(r) \\
    e^{i(k+1)\theta}\ \psi_{k}(r)
  \end{pmatrix},
\end{equation*}
where $K_{\alpha}$ is the Macdonald (modified Bessel) function,
$c\in \mathbb{C}$ and, for each $k\in\Z$, 
$\phi_{k},\psi_{k}$ are regular at the origin and such that
\begin{equation*}
  \phi_{k},\psi_{k}\in
    \{\phi\in L^{2}(rdr):
    \phi',\phi/r\in L^{2}(rdr),\ 
    \phi\in C([0,+\infty)),\ 
    \phi(0)=0\}.
\end{equation*}
Then, explicit computations show that the action of $\mathcal{D}_{A,\gamma}$ on $f$ is given
 by
\begin{equation*}%\label{DAf}
  \mathcal{D}_{A,\gamma}f=
  c
  \begin{pmatrix}
    \sin \gamma \cdot K_{\alpha}(r) \\
    -i\cos \gamma\cdot  K_{1-\alpha}(r) e^{i\theta}
  \end{pmatrix}
  +
  \begin{pmatrix}
    i \partial_{1-\alpha}\psi_{0}(r) \\
    ie^{i\theta}\ \partial_{\alpha}\phi_{0}(r)
  \end{pmatrix}+
  \sum_{\{k\in\Z:k\neq 0\}}
  \begin{pmatrix}
    ie^{ik \theta}\ \partial_{k+1-\alpha}\psi_{k}(r) \\
    ie^{i(k+1)\theta}\ \partial_{\alpha-k}\phi_{k}(r)
  \end{pmatrix}
\end{equation*}
where 
$\partial_{\alpha}=\partial_{r}+\frac \alpha r$, with
$\partial_{r}$ being the radial derivative.
In particular, we see that $\mathcal{D}_{A,\gamma}$ acts on the singular component 
$$
f_{sing}(x)=\begin{pmatrix}
  \cos \gamma \cdot K_{\alpha}(r) \\
  i\sin \gamma \cdot K_{1-\alpha}(r)   e^{i\theta}
\end{pmatrix}
\qquad\text{as follows:}\qquad 
\mathcal{D}_{A,\gamma} f_{sing}=
\begin{pmatrix}
    \sin \gamma \cdot K_{\alpha}(r) \\
    -i\cos \gamma\cdot  K_{1-\alpha}(r)   e^{i\theta}
  \end{pmatrix}.
$$
In what follows, we shall pursue the following choice: for any $\alpha\in(0,1)$ we shall take $\gamma\in[0,2\pi)$ such that 
\begin{equation}\label{eq:convention}
  \ \text{either}\ \sin\gamma=0
  \ \text{if}\ \alpha\in (0,1/2],
  \quad\text{or}\quad 
  \cos\gamma=0
  \ \text{if}\ \alpha\in (1/2,1).
\end{equation}
This choice eliminates the most singular part of the bad component
and ensures that the surviving part behaves like
$\lesssim r^{-1/2}$ as $r\to0$. Equivalently, for any value of $\alpha$ we are considering the (unique) self-adjoint extension such that the domain of the operator is contained in $H^{1/2}$. This of course represents a natural choice, it is consistent with existing results (see e.g. \cite{cuesied}, \cite{dolestlos}), and may be interpreted as the choice a \emph{distinguished} self-adjoint extension for Dirac.
We postpone all the details to forthcoming Section \ref{sec:3} (see also Remark \ref{rk:conjecture}). Thus, we shall focus on the problem
\begin{equation}\label{eq:Dirac1}
\begin{cases}
i\partial_t u+\mathcal{D}_{A,\gamma} u=0;\\
u|_{t=0}=f(x) \in D(\mathcal{D}_{A, \gamma})\subset (L^2(\R^2))^2
\end{cases}
\end{equation}
where by convention
$\mathcal{D}_{A,\gamma}$ is the von Neumann selfadjoint extension of the operator defined in 
\eqref{op:D} with $\gamma$ satisfying \eqref{eq:convention}.
 \medskip

In order to state our main results, we introduce the 
orthogonal projections on $L^{2}$
\begin{equation}\label{def-pro}
  P_0:
  L^{2}(\mathbb{R}^{2})^{2}\to   
  L^{2}(rdr)^{2}\otimes h_{0}(\mathbb{S}^{1}),
  \qquad
  P_{\bot}=I-P_{0}.
\end{equation}
It will be also useful to further split
$P_{\bot}=P_{>}+P_{<}$ where
\begin{equation*}
\begin{split}
  P_{>}:
  L^{2}(\mathbb{R}^{2})^{2}&\to
  \bigoplus_{0< k\in \mathbb{Z}}
  L^{2}(rdr)^{2}\otimes h_{k}(\mathbb{S}^{1}),\\
  P_{<}:
  L^{2}(\mathbb{R}^{2})^{2}&\to
  \bigoplus_{0>k\in \mathbb{Z}}
  L^{2}(rdr)^{2}\otimes h_{k}(\mathbb{S}^{1}).
  \end{split}
\end{equation*}
Here the space $h_{k}(\mathbb{S}^{1})$ is the linear
span of 
$\big(\begin{smallmatrix} e^{ik \theta} \\ e^{i(k+1)\theta}
\end{smallmatrix}\big)$
defined in \eqref{eq:hk} below. Then we can prove the following
pointwise dispersive estimate, with a decay rate and regularity
loss similar to the case of the unperturbed equation.
In order to avoid technical issues, we state the result
as a separate estimate for each frequency component
of the solution; 
functions $\varphi(\mathcal{D}_{A,\gamma})$ of the operator are
defined via the spectral theorem.
In the statement we use the matrix of weight functions
\begin{equation}\label{weig-Wj1}
W_j(|x|)=\begin{cases}
\left(\begin{matrix} (1+2^j|x|^{-\alpha})^{-1}& 0 \\0 &  1\end{matrix}\quad \right),\quad \alpha\in(0,\frac12];\\
\left(\begin{matrix} 1& 0 \\0 &  (1+2^j|x|^{\alpha-1})^{-1}\end{matrix}\quad \right),\quad \alpha\in(\frac12,1).
\end{cases}
\end{equation}

 \begin{theorem}[Time decay estimates]\label{thm:disper}  
  Let $\alpha\in(0,1)$, $\varphi\in\mathcal{C}_c^\infty([1,2])$,
  $\tilde\varphi\in\mathcal{C}_c^\infty([1/2,4])$ such that
  $\varphi\tilde \varphi=\varphi$,
  and let $\mathcal{D}_{A,\gamma}$ be the
  self--adjoint extension of $\mathcal{D}_A$ selected in
  \eqref{eq:convention}.
  Then for any $f\in [L^1{(\R^2)}]^2\cap [L^2{(\R^2)}]^2$
  and any $j\in\Z$, there exists a constant $C$ such that
  \begin{equation}\label{est:dis1}
  \begin{split}
  \Big\|e^{it\mathcal{D}_{A,\gamma}}&\varphi(2^{-j}|\mathcal{D}_{A,\gamma}|)P_{\bot}f(x)\Big\|_{[L^\infty(\R^2)]^2}\\&
  \leq C2^{2j}(1+2^{j}|t|)^{-\frac12} 
  \|\tilde\varphi(2^{-j}|\mathcal{D}_{A,\gamma}|)P_{\bot}f\|_{[L^1{(\R^2)]^2}},
  \end{split}
  \end{equation}
  and
  \begin{equation}\label{est:dis0}
  \begin{split}
  \Big\|W_j(|x|) e^{it\mathcal{D}_{A,\gamma}}&\varphi(2^{-j}|\mathcal{D}_{A,\gamma}|)P_{0} W_j(|x|)f(x)\Big\|_{[L^\infty(\R^2)]^2}\\&
  \leq C2^{2j}(1+2^{j}|t|)^{-\frac12} 
  \|\tilde\varphi(2^{-j}|\mathcal{D}_{A,\gamma}|)P_{0}f\|_{[L^1{(\R^2)]^2}}.
  \end{split}
  \end{equation}
  where the weight function matrix $W_{j}$ is given by 
  \eqref{weig-Wj1}.

  Moreover, let $q(\alpha)$ be given by
  \begin{equation}\label{q-alp}
  q(\alpha)=
  \begin{cases} 
    \frac2{\alpha}&
    \text{if}\quad \alpha\in(0,\frac12]\\
    \frac2{1-\alpha}&
    \text{if}\quad \alpha\in(\frac12,1).
  \end{cases}
  \end{equation}
  Then for $2\leq q<q(\alpha)$ one has the weightless estimate
  \begin{equation}\label{est:disq}
    \begin{split}
    \Big\|e^{it\mathcal{D}_{A,\gamma}}&\varphi(2^{-j}|\mathcal{D}_{A,\gamma}|)P_{0} f(x)\Big\|_{[L^q(\R^2)]^2}\\&
    \leq C2^{2j(1-\frac2q)}(1+2^{j}|t|)^{-\frac12(1-\frac2q)} 
    \|\tilde\varphi(2^{-j}|\mathcal{D}_{A,\gamma}|)P_{0}f\|_{[L^{q'}{(\R^2)]^2}}.
    \end{split}
  \end{equation}
\end{theorem}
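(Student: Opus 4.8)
\emph{Overall approach and reductions.} The plan is to derive all three estimates from one object: the integral kernel of the frequency--localized propagator $e^{it\mathcal{D}_{A,\gamma}}\varphi(2^{-j}|\mathcal{D}_{A,\gamma}|)$, written explicitly through the relativistic Hankel transform diagonalizing $\mathcal{D}_{A,\gamma}$, together with pointwise bounds on that kernel obtained from classical Bessel asymptotics and (non)stationary phase in the spectral variable. Since $\mathcal{D}_A$ is homogeneous of degree $-1$, the scaling $x\mapsto 2^{-j}x$, $t\mapsto 2^{-j}t$ reduces \eqref{est:dis1} and \eqref{est:disq} to $j=0$ (the powers $2^{2j}$ and $(1+2^j|t|)$ being produced by the change of variables), and \eqref{est:dis0} is handled in the same way, carrying the weight $W_j$ through the argument. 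Using $\varphi\tilde\varphi=\varphi$ and the fact that functions of $\mathcal{D}_{A,\gamma}$ commute with the angular projections, it then suffices to establish uniform pointwise bounds on the relevant (weighted) integral kernels on each angular sector. Via $L^2(\mathbb{R}^2)^2=\bigoplus_k L^2(r\,dr)^2\otimes h_k(\mathbb{S}^1)$, on the $k$-th sector $\mathcal{D}_{A,\gamma}$ is a first--order radial $2\times2$ system whose generalized eigenfunction of eigenvalue $\lambda\in\mathbb{R}$ is a column vector built from $J_{\nu_1}(|\lambda|r)$ and $J_{\nu_2}(|\lambda|r)$ with $\nu_1=|k-\alpha|$, $\nu_2=|k+1-\alpha|$ when $k\neq0$; for $k=0$ the extension fixed by \eqref{eq:convention} makes the first entry $J_{-\alpha}(|\lambda|r)$ (singular, $\sim r^{-\alpha}$) and the second the regular $J_{1-\alpha}(|\lambda|r)$ if $\alpha\in(0,\tfrac12]$, and makes the first the regular $J_{\alpha}(|\lambda|r)$ and the second $J_{\alpha-1}(|\lambda|r)$ (singular, $\sim r^{\alpha-1}$) if $\alpha\in(\tfrac12,1)$. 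Consequently each relevant kernel is a finite sum, over the sectors, of integrals $\int_0^\infty e^{\pm it\lambda}\varphi(\lambda)J_\nu(\lambda r)J_{\nu'}(\lambda s)\lambda\,d\lambda$ (up to bounded constants) carrying the phases $e^{ik\theta}$, $e^{i(k+1)\theta}$.

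\emph{The radial sector $P_0$.} I reduce to the $2\times2$ radial kernel $\mathbf{K}_0(t,r,s)$ and split $(0,\infty)^2$ into $\{r,s\lesssim1\}$, $\{r\lesssim1\lesssim s\}$, $\{s\lesssim1\lesssim r\}$, $\{r,s\gtrsim1\}$. On the part of a variable that is $\gtrsim1$ I use $J_\nu(z)=\sqrt{2/(\pi z)}\cos(z-c_\nu)+O(z^{-3/2})$ and, since $\varphi\in C_c^\infty$ vanishes near $0$, non--stationary phase in $\lambda$ (the phases $\lambda(t\pm(r\pm s))$ have no critical point), getting a free--type bound $|\mathbf{K}_0(t,r,s)|\lesssim (rs)^{-1/2}\sum_\pm(1+|t\pm(r\pm s)|)^{-N}$; on the part of a variable that is $\lesssim1$ I use $J_\nu(\lambda r)=c_\nu(\lambda r)^\nu(1+O((\lambda r)^2))$, which extracts the singular prefactor $r^{-\alpha}$ (resp.\ $r^{\alpha-1}$) of the chosen eigenfunction times a remainder decaying rapidly in $t$ (both variables small) or like $(\cdot)^{-1/2}$ in the large variable. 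Granting these, \eqref{est:dis0} is immediate: $W_j$ multiplies exactly the singular entry of $\mathbf{K}_0$ by $(1+r^{-\alpha})^{-1}$ (resp.\ $(1+r^{\alpha-1})^{-1}$), cancelling the prefactor and leaving a kernel bounded by $(1+|t|)^{-1/2}$ uniformly in $(r,s)$. For the weightless \eqref{est:disq} I treat the four pieces separately: on the two pieces meeting $\{r\lesssim1\}$ (or $\{s\lesssim1\}$) the governing quantity is $\|r^{-\alpha}\|_{L^q(\{r\lesssim1\},\,r\,dr)}$ (resp.\ $\|r^{\alpha-1}\|_{L^q}$), which is finite \emph{exactly} for $q<q(\alpha)$, and the oscillatory factor in the large variable contributes the decay $(1+|t|)^{-\frac12(1-\frac2q)}$; on $\{r,s\gtrsim1\}$ the free--type bound gives the $L^1\!\to\!L^\infty$ estimate with rate $(1+|t|)^{-1/2}$ (since $\sup_{r,s\gtrsim1}(rs)^{-1/2}(1+|t-(r+s)|)^{-N}\lesssim(1+|t|)^{-1/2}$) and a uniform $L^2\!\to\!L^2$ estimate (the factors $(1+|t\pm(r\pm s)|)^{-N}$ acting as $L^1$ convolution kernels in $r\mp s$), hence the $L^{q'}\!\to\!L^q$ bound by interpolation. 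The same computations show that $e^{it\mathcal{D}_{A,\gamma}}\varphi(|\mathcal{D}_{A,\gamma}|)P_0 f$ generically behaves like $r^{-\alpha}$ (resp.\ $r^{\alpha-1}$) near $0$, so both the weight in \eqref{est:dis0} and the restriction $q<q(\alpha)$ in \eqref{est:disq} are sharp.

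\emph{The complementary part $P_\bot$.} For every $k\neq0$ the eigenfunctions use the regular Bessel functions $J_{|k-\alpha|}$, $J_{|k+1-\alpha|}$, which vanish at the origin, so no weight is needed. Two steps remain: (i) kernel bounds on each sector uniform in $k$ — the same near/far splitting as above, now with estimates on $J_\nu$ uniform in the order $\nu$; and (ii) the summation of the angular series $\sum_{k\neq0}$, split as $P_>+P_<$ and performed by reorganizing it — following the treatment of the wave and Klein--Gordon flows in \cite{FZZ,GYZZ} — into a ``free'' contribution plus an absolutely convergent remainder. This yields the bound $(1+|t|)^{-1/2}$, that is, \eqref{est:dis1} after restoring the scaling.

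\emph{Main obstacle.} The genuinely new difficulty, absent for the unperturbed Dirac equation, is the $k=0$ sector near the spatial origin: the distinguished self--adjoint extension forces the generalized eigenfunctions to carry a mildly singular component of size $\sim r^{-\alpha}$ or $r^{\alpha-1}$ (hence $\lesssim r^{-1/2}$), which breaks the naive $L^1\!\to\!L^\infty$ bound, mandates the weight $W_j$, and caps the weightless range at $q(\alpha)$. Concretely, the crux is to produce a pointwise bound on $\mathbf{K}_0(t,r,s)$ that is simultaneously sharp as $r,s\to0$ — exhibiting the $r^{-\alpha}$ (resp.\ $r^{\alpha-1}$) behaviour and nothing worse, uniformly in $t$ — and of free dispersive type for $r,s$ large, i.e.\ to glue the small-- and large--argument regimes of the Bessel functions inside the oscillatory $\lambda$--integral. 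For $P_\bot$ the analogous difficulty is the uniform-in-$k$ control together with the summation of the angular series; this is by now standard in the Aharonov--Bohm literature but remains the most delicate computation.
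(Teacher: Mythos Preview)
Your proposal is correct and follows essentially the same route as the paper. The $P_0$ analysis matches almost verbatim: the paper's Lemmas~5.1--5.3 carry out exactly your near/far splitting with the Bessel representations \eqref{eq:bess1}--\eqref{eq:bess4} and integration by parts in $\rho$, yielding the weighted bound \eqref{est:dis0}; and Lemma~5.4 is precisely your four--piece decomposition via cutoffs $\chi,\chi^c$ at $r\sim1$, with the constraint $q<q(\alpha)$ arising from $\int_0^1 r^{\nu q}\,r\,dr<\infty$.

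For $P_\bot$ there is one point worth sharpening. Your step (i) ``uniform-in-$k$ kernel bounds'' by itself does not sum (infinitely many sectors, each contributing $(1+|t|)^{-1/2}$), and the paper does \emph{not} proceed that way. Instead it first observes that on $P_{>}$ (resp.\ $P_{<}$) the Dirac flow coincides with the half--wave group $e^{\mp it\sqrt{L}}$ for the diagonal AB Schr\"odinger operator $L$ (see \eqref{d-propag}), and then estimates $e^{-it\sqrt{L}}$ by the subordination formula of \cite{MS,DPR} (Proposition~\ref{prop:sub0}), which reduces the half--wave bound to the pointwise Schr\"odinger bound $|e^{itL}(x,y)|\lesssim|t|^{-1}$. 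The latter comes from the closed--form Cheeger--Taylor kernel (Proposition~\ref{prop:kerenel-S}), where the $k$--sum is performed exactly via Poisson summation and geometric series --- this is the ``free contribution plus absolutely convergent remainder'' you allude to in step (ii). So your citation of \cite{FZZ,GYZZ} points to the right mechanism, but the actual argument sums \emph{before} estimating, not after, and the subordination step is the device that converts the explicit $|t|^{-1}$ Schr\"odinger decay into the $|t|^{-1/2}$ half--wave decay.
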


\begin{remark}\rm
Notice that from \eqref{est:dis1}, \eqref{est:dis0} and the $L^2$-estimates, one can obtain by standard complex interpolation the following additional families of time decay estimates for any $q\geq 2$:
 \begin{equation}\label{est:dis3}
  \begin{split}
  \Big\|e^{it\mathcal{D}_{A,\gamma}}&\varphi(2^{-j}|\mathcal{D}_{A,\gamma}|)P_{\bot}f(x)\Big\|_{[L^q(\R^2)]^2}\\&
  \leq C2^{2j(1-\frac2q)}(1+2^{j}|t|)^{-\frac12(1-\frac2q)} 
  \|\tilde\varphi(2^{-j}|\mathcal{D}_{A,\gamma}|)P_{\bot}f\|_{[L^{q'}{(\R^2)]^2}},
  \end{split}
  \end{equation}
  and
  \begin{equation}\label{est:dis4}
  \begin{split}
  \Big\|W_j(|x|)^{1-\frac2q} e^{it\mathcal{D}_{A,\gamma}}&\varphi(2^{-j}|\mathcal{D}_{A,\gamma}|)P_{0} W_j(|x|)^{1-\frac2q} f(x)\Big\|_{[L^q(\R^2)]^2}\\&
  \leq C2^{2j(1-\frac2q)}(1+2^{j}|t|)^{-\frac12(1-\frac2q)} 
  \|\tilde\varphi(2^{-j}|\mathcal{D}_{A,\gamma}|)P_{0}f\|_{[L^{q'}{(\R^2)]^2}}.
  \end{split}
  \end{equation}

\end{remark}
%\textcolor{blue}{Note that the weight matrix $W_{j}$ is asymmetric in the
%two components. As a consequence, if the initial data
%\begin{equation*}
%  f=
%  c
%  \begin{pmatrix}
%    \cos \gamma \cdot K_{\alpha}(r) \\
%    i\sin \gamma \cdot K_{1-\alpha}(r)   e^{i\theta}
%  \end{pmatrix}
%  + \begin{pmatrix}
%  \phi_{0}(r) \\
%    e^{i\theta}\ \psi_{0}(r)
%  \end{pmatrix}
%+
%  \sum_{k\in \mathbb{Z}\setminus\{0\}}
%  \begin{pmatrix}
%    e^{ik \theta}\ \phi_{k}(r) \\
%    e^{i(k+1)\theta}\ \psi_{k}(r)
%  \end{pmatrix}
%\end{equation*}
%have a special structure, namely
%\begin{equation}\label{eq:struct}
%  \text{either
%  $\phi_0=0$ when $\alpha\in(0,1/2]$, 
%  or $\psi_0=0$ when $\alpha\in(1/2,1)$,
%  }
%\end{equation}
%then \eqref{est:dis0}
%reduces to an estimate similar to \eqref{est:dis1},
%and we obtain the following estimate without weigths for the
%full solution:
%\begin{equation}\label{est:dis1'}
%\begin{split}
%\Big\|e^{it\mathcal{D}_{A,\gamma}}&\varphi(2^{-j}|\mathcal{D}_{A,\gamma}|)f(x)\Big\|_{[L^\infty(\R^2)]^2}\\&
%\leq C2^{2j}(1+2^{j}|t|)^{-\frac12} 
%\|\varphi(2^{-j}|\mathcal{D}_{A,\gamma}|)f\|_{[L^1{(\R^2)]^2}}.
%\end{split}
%\end{equation}
%}
As a natural application of Theorem \ref{thm:disper},
we can prove Strichartz estimates for \eqref{eq:Dirac1}.
The regularity loss in the estimates will be expressed in terms of suitable Sobolev norms adapted to the operator
$\mathcal{D}_{A}$. For $s=1$ and Schwartz class functions
$\phi$ we define the (semi)norms
\begin{equation}\label{eq:nomrH1A}
  \|\phi\|_{\dot H^{1}_{A}}=
  \|(\partial-iA(x))\phi\|_{L^{2}},
  \qquad
  \|\phi\|_{H^{1}_{A}}^{2}=
  \|\phi\|_{\dot H^{1}_{A}}^{2}+\|\phi\|_{L^{2}}^{2}
\end{equation}
and the corresponding spaces $\dot H^{1}_{A}$, $H^{1}_{A}$
by completion. Next, for all $|s|\le1$ the
spaces
$\dot H^{s}_{A}$ and $H^{s}_{A}$ may be defined by duality
and complex interpolation as usual; equivalently, they can also
be defined using the relativistic Hankel transform,
see Section \ref{sec:relhan} for details. 

For the $P_{\bot}$ component of the flow (the {\em regular component}, which in fact does not contain
singularities), the following result is a standard consequence of \eqref{est:dis1}.

\begin{theorem}[Strichartz estimates for the regular component]
  \label{thm-stri1}
  Let $\alpha\in(0,1)$ and let $\mathcal{D}_{A,\gamma}$ be the
  self--adjoint extension of $\mathcal{D}_A$ selected in
  \eqref{eq:convention}.
  Then for any $f\in [\dot{H}^{s}_A]^2$ and $p>2$, 
  the following Strichartz estimates hold
  \begin{equation}\label{stri-D1} 
    \|e^{it\mathcal{D}_{A,\gamma}}P_{\bot}f\|
    _{[L^p_t(\R; L^q_x(\R^2))]^2}
    \leq C \| f\|_{[\dot{H}^{s}_A]^2},
    \qquad s=1-\frac1p-\frac2q
  \end{equation}
  provided $(p,q)$ satisfy
  \begin{equation}\label{pqrange1}
    (p,q)\in (2,\infty]^2 
    \quad\text{and}\quad 
    \frac2p+\frac1q\leq\frac12, 
    \qquad \text{or}\quad 
    (p,q)=(\infty, 2).
  \end{equation}
\end{theorem}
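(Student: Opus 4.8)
The plan is to derive the Strichartz estimates as a consequence of the dispersive estimate \eqref{est:dis1} for the regular component $P_\bot$ via a frequency-localized $TT^*$ argument followed by a Littlewood--Paley reassembly. First I would dyadically decompose the flow: write $u=e^{it\mathcal{D}_{A,\gamma}}P_\bot f=\sum_{j\in\Z}u_j$ with $u_j=e^{it\mathcal{D}_{A,\gamma}}\varphi(2^{-j}|\mathcal{D}_{A,\gamma}|)P_\bot f$, where $\{\varphi(2^{-j}\cdot)\}$ is a Littlewood--Paley partition of unity adapted to $|\mathcal{D}_{A,\gamma}|$ built from the $\varphi,\tilde\varphi$ of Theorem \ref{thm:disper}; the spectral theorem guarantees this decomposition converges in $L^2$ and that the pieces are almost orthogonal. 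For each fixed $j$, the two ingredients are: the mass conservation $\|u_j(t)\|_{[L^2]^2}\le C\|\tilde\varphi(2^{-j}|\mathcal{D}_{A,\gamma}|)P_\bot f\|_{[L^2]^2}$ (unitarity of $e^{it\mathcal{D}_{A,\gamma}}$), and the frequency-localized dispersive bound \eqref{est:dis1}, namely $\|u_j(t)\|_{[L^\infty]^2}\lesssim 2^{2j}(1+2^j|t|)^{-1/2}\|\tilde\varphi(2^{-j}|\mathcal{D}_{A,\gamma}|)P_\bot f\|_{[L^1]^2}$. Interpolating these two gives, for $2\le q\le\infty$,
\begin{equation*}
  \|u_j(t)\|_{[L^q]^2}\lesssim 2^{2j(1-2/q)}(1+2^j|t|)^{-\frac12(1-\frac2q)}\|\tilde\varphi(2^{-j}|\mathcal{D}_{A,\gamma}|)P_\bot f\|_{[L^{q'}]^2},
\end{equation*}
which is exactly the hypothesis of an abstract Keel--Tao-type argument with decay exponent $\sigma=\frac12(1-\frac2q)$.

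Next I would run the standard Keel--Tao machinery on each dyadic piece. Set $U_j(t)=e^{it\mathcal{D}_{A,\gamma}}\varphi(2^{-j}|\mathcal{D}_{A,\gamma}|)P_\bot$; then $U_j(t)U_j(s)^*$ has operator norm from $[L^{q'}]^2$ to $[L^q]^2$ bounded by $C\,2^{2j(1-2/q)}(1+2^j|t-s|)^{-\sigma}\lesssim 2^{2j(1-2/q)}(2^j|t-s|)^{-\sigma}$ on the region $2^j|t-s|\ge1$, together with the trivial $[L^2]^2$-bound for $2^j|t-s|\le1$. Since $2\sigma\le1$ exactly when $q\ge2$ (and $2\sigma<1$ when $q>2$), the Keel--Tao lemma applies to any pair $(p,q)$ with $p>2$ and $\frac2p+\sigma\cdot\frac{??}{}$—more precisely the admissibility condition $\frac1p\le\sigma=\frac12(1-\frac2q)$, i.e. $\frac2p+\frac1q\le\frac12$ (the endpoint $(p,q)=(\infty,2)$ being trivial from $L^2$-conservation). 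This yields the single-frequency bound
\begin{equation*}
  \|U_j f\|_{[L^p_tL^q_x]^2}\lesssim 2^{j\left(1-\frac1p-\frac2q\right)}\|\tilde\varphi(2^{-j}|\mathcal{D}_{A,\gamma}|)P_\bot f\|_{[L^2]^2}=2^{js}\|\tilde\varphi(2^{-j}|\mathcal{D}_{A,\gamma}|)P_\bot f\|_{[L^2]^2},
\end{equation*}
with $s=1-\frac1p-\frac2q$, which is precisely the scaling in \eqref{stri-D1}.

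Finally I would sum in $j$. Since $p,q>2$, the Littlewood--Paley square-function estimate for $L^q_x$ combined with Minkowski's inequality (valid because $p,q\ge2$) reduces $\|\sum_j u_j\|_{L^p_tL^q_x}$ to $\big(\sum_j\|u_j\|_{L^p_tL^q_x}^2\big)^{1/2}$; feeding in the single-frequency bound and using almost-orthogonality of the pieces $\tilde\varphi(2^{-j}|\mathcal{D}_{A,\gamma}|)P_\bot f$ in $L^2$, the right side is controlled by $\big(\sum_j 2^{2js}\|\tilde\varphi(2^{-j}|\mathcal{D}_{A,\gamma}|)P_\bot f\|_{L^2}^2\big)^{1/2}$, which by the spectral definition of $\dot H^s_A$ (through the relativistic Hankel transform, see Section \ref{sec:relhan}) equals $C\|P_\bot f\|_{[\dot H^s_A]^2}\le C\|f\|_{[\dot H^s_A]^2}$. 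For the wave-admissible endpoint $(p,q)=(\infty,2)$ one simply uses unitarity directly, with no summation subtleties.

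The main obstacle—and the reason this is stated as a \emph{separate} theorem for the regular component—is ensuring that all the Littlewood--Paley theory genuinely holds for the \emph{operator} $|\mathcal{D}_{A,\gamma}|$ rather than the flat Laplacian: one needs the $L^q$-boundedness of the square function $\big(\sum_j|\varphi(2^{-j}|\mathcal{D}_{A,\gamma}|)f|^2\big)^{1/2}\sim\|f\|_{L^q}$ uniformly in the relevant range, together with the identification of $\dot H^s_A$ with the Besov-type space built from these projections. For the $P_\bot$ component this is manageable because, as the theorem's preamble notes, the regular component carries no singularity at the origin: the functional calculus for $\mathcal{D}_{A,\gamma}P_\bot$ is comparable to that of a smooth operator, so heat-kernel / Mikhlin-type multiplier bounds transfer Littlewood--Paley theory from the free case. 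I would make this precise by invoking the relativistic Hankel transform representation of Section \ref{sec:relhan} to diagonalize $\mathcal{D}_{A,\gamma}$ on each angular sector $h_k(\mathbb{S}^1)$ with $k\neq0,-1$, reducing the multiplier estimates to uniform (in $k$) bounds for Hankel multipliers of order $|k-\alpha|$ or $|k+1-\alpha|$ bounded away from zero—these are classical. The $P_0$ sector, where the order can be as small as $\min(\alpha,1-\alpha)$, is precisely what forces the weights in \eqref{est:dis0} and is handled separately later; here it is excluded by the projection.
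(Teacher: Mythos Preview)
Your proposal is correct and follows essentially the same route as the paper: frequency-localized Keel--Tao from the dispersive estimate \eqref{est:dis1} together with $L^2$ conservation, then Littlewood--Paley reassembly into the $\dot H^s_A$ norm. The only real difference is in justifying the square-function step: the paper observes that $|\mathcal{D}_{A,\gamma}|=\sqrt{L}$ with $L$ the (doubled) Friedrichs AB Schr\"odinger operator, computes its heat kernel explicitly and proves a Gaussian upper bound (Appendix), from which the $L^q$ Littlewood--Paley inequality follows globally---this is cleaner than the sector-by-sector Hankel-multiplier route you sketch, and incidentally does not require restricting to $P_\bot$ at that stage.
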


On the other hand, Strichartz estimates with initial condition $P_0f$ (the {\em singular component}) are more delicate, due to the singularity at the origin.
For a restricted range of indices $(p,q)$ we can prove the
following result;
note that the component $k=0$ is a radial component and we
expect a wider range of indices as in \eqref{pqrange0}, 
as it is usual for radial Strichartz estimates.

\begin{theorem}[Strichartz estimates for the singular component]
  \label{thm-stri3}
  Let $\alpha\in(0, 1/2)\cup (1/2,1)$ and let $\mathcal{D}_{A,\gamma}$ be the
  self--adjoint extension of $\mathcal{D}_A$ selected in
  \eqref{eq:convention}. Assume $(p,q)$ satisfy
  \begin{equation}\label{pqrange0}
  (p,q)\in (2,\infty]^2 \quad{\rm and}\quad \frac1p+\frac1q<\frac12, \qquad \text{or}\quad (p,q)=(\infty, 2).
  \end{equation}
  Assume in addition that $q<q(\alpha)$, where $q(\alpha)$
  is given by \eqref{q-alp}.
  Then for any $f\in [\dot{H}^{s}_A]^2$ the following 
  Strichartz estimates hold:
  \begin{equation}\label{stri-D3}
    \|e^{it\mathcal{D}_{A,\gamma}}P_{0}f\|
    _{[L^p_t(\R; L^q_x(\R^2))]^2}
    \leq C \|P_{0} f\|_{[\dot{H}^{s}_A]^2},
    \qquad
    s=1-\frac 1p-\frac 2q.
  \end{equation}
\end{theorem}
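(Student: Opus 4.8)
The plan is to deduce \eqref{stri-D3} from the time--decay estimates of Theorem \ref{thm:disper} by the standard $TT^{*}$/Keel--Tao machinery, after a Littlewood--Paley reduction adapted to $\mathcal{D}_{A,\gamma}$.

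\emph{Step 1: reduction to a single dyadic frequency on the $P_{0}$ sector.} Using the relativistic Hankel transform one has the decomposition $I=\sum_{j\in\Z}\varphi(2^{-j}|\mathcal{D}_{A,\gamma}|)$ and the norm equivalence $\|g\|_{\dot H^{s}_{A}}\simeq\||\mathcal{D}_{A,\gamma}|^{s}g\|_{L^{2}}\simeq\big(\sum_{j}2^{2js}\|\varphi(2^{-j}|\mathcal{D}_{A,\gamma}|)g\|_{L^{2}}^{2}\big)^{1/2}$ (established earlier). Since on the $P_{0}$ sector $|\mathcal{D}_{A,\gamma}|$ is, through the Hankel transform, essentially a one--dimensional Bessel operator, standard spectral--multiplier bounds give the square--function inequality $\|\sum_{j}F_{j}\|_{L^{q}_{x}}\lesssim\|(\sum_{j}|F_{j}|^{2})^{1/2}\|_{L^{q}_{x}}$; combined with Minkowski's inequality (valid since $p,q\ge2$ in \eqref{pqrange0}) this reduces \eqref{stri-D3} to the single--frequency estimates
\begin{equation*}
  \|e^{it\mathcal{D}_{A,\gamma}}\varphi(2^{-j}|\mathcal{D}_{A,\gamma}|)P_{0}f\|_{L^{p}_{t}L^{q}_{x}}
  \le C\,2^{js}\,\|\varphi(2^{-j}|\mathcal{D}_{A,\gamma}|)P_{0}f\|_{L^{2}},\qquad j\in\Z,
\end{equation*}
with $C$ independent of $j$. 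By the scaling invariance of $\mathcal{D}_{A}$ and of the Aharonov--Bohm potential (both homogeneous of degree $-1$), the change of variables $(t,x)\mapsto(2^{-j}t,2^{-j}x)$ turns the frequency--$2^{j}$ estimate into the frequency--$1$ one, the powers of $2^{j}$ matching exactly the scaling exponent $s=1-\tfrac1p-\tfrac2q$; hence it suffices to treat $j=0$.

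\emph{Step 2: $TT^{*}$ and dyadic--in--time block decomposition.} Set $U(t)=e^{it\mathcal{D}_{A,\gamma}}\varphi(|\mathcal{D}_{A,\gamma}|)P_{0}$. By duality, $\|U(t)g\|_{L^{p}_{t}L^{q}_{x}}\lesssim\|g\|_{L^{2}}$ is equivalent to
\begin{equation*}
  \Big|\iint\langle U(t)U(s)^{*}F(s),G(t)\rangle\,ds\,dt\Big|
  \lesssim\|F\|_{L^{p'}_{t}L^{q'}_{x}}\,\|G\|_{L^{p'}_{t}L^{q'}_{x}},
\end{equation*}
and we split the $(s,t)$ integral into the diagonal region $|t-s|\le1$ and the dyadic shells $|t-s|\sim2^{k}$, $k\ge0$. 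On the diagonal region the frequency localization and Bernstein's inequality give $\|U(t)U(s)^{*}\|_{L^{q'}\to L^{q}}\lesssim1$, and the contribution is controlled by Young's inequality in time (using $p\ge2$). On the shell $|t-s|\sim2^{k}$ we feed in the weightless dispersive bound \eqref{est:disq} at $j=0$ — interpolated with the $L^{2}$ bound to absorb the fattened cutoff $\tilde\varphi$ — which supplies $\|U(t)U(s)^{*}\|_{L^{q_{0}'}\to L^{q_{0}}}\lesssim(1+|t-s|)^{-\rho(q_{0})}$ for every $q_{0}\in[2,q(\alpha))$, the decay $\rho(q_{0})$ being the one produced by the radial ($k=0$) sector. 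The requirement $q<q(\alpha)$ in the statement is precisely the range of validity of \eqref{est:disq}, and we choose an auxiliary $q_{0}$ with $q<q_{0}<q(\alpha)$ to have a small surplus in the decay to absorb logarithmic losses. Summing the resulting geometric series over $k\ge0$ then yields the estimate for the frequency--$1$ piece. Since $p>2$ strictly in \eqref{pqrange0}, this is the non--endpoint regime (the pair $(\infty,2)$ being simply conservation of the $L^{2}$ norm), so neither the Keel--Tao endpoint argument nor the Christ--Kiselev lemma is needed (only the homogeneous estimate is claimed).

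\emph{Step 3: the crux.} The delicate point is the precise $(p,q)$--range. Running the block decomposition with a \emph{generic} wave--type decay rate would only produce the narrower range $\tfrac2p+\tfrac1q<\tfrac12$; obtaining the full range $\tfrac1p+\tfrac1q<\tfrac12$ of \eqref{pqrange0} is a genuine radial/low--angular--momentum improvement, and it is here that the structure of the $P_{0}$ sector must be exploited: being a single angular mode, the relativistic Hankel transform turns the problem into a one--dimensional one in $r$ with measure $r\,dr$, and the dispersive information of \eqref{est:disq}, available for the \emph{whole} subcritical range $q_{0}<q(\alpha)$, plays the role of an enhanced (Schr\"odinger--type) decay in the Keel--Tao bookkeeping — this is what makes the geometric series converge under the weaker condition $\tfrac1p+\tfrac1q<\tfrac12$. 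Equally delicate is the borderline behaviour as $q\uparrow q(\alpha)$: there the weightless estimate \eqref{est:disq} degenerates (this is exactly why the weights $W_{j}$ are forced upon us in \eqref{est:dis0}), which is what prevents reaching the endpoints and accounts for both strict inequalities. Once the frequency--$1$ estimate is established in this range, Step 1 closes the argument; note that because the chosen self--adjoint extension is non--standard one cannot simply quote off--the--shelf radial Strichartz estimates, which is why one passes through the explicit decay bounds of Theorem \ref{thm:disper}.
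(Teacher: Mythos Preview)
Your overall architecture is reasonable, but Step 3 contains a genuine gap. The dispersive estimate \eqref{est:disq} gives $L^{q'}\to L^{q}$ decay of order $(1+|t|)^{-\frac12(1-\frac2q)}$, which is exactly the wave-type rate $\sigma=\tfrac12$; plugging this into any variant of the Keel--Tao/$TT^{*}$ machinery (Propositions \ref{prop:semi} or \ref{prop:semi-1}) yields only the narrower range $\tfrac2p+\tfrac1q\le\tfrac12$. There is no ``enhanced (Schr\"odinger-type) decay'' hidden in \eqref{est:disq}: the exponent is the same as for the full flow, so your geometric series converges under the weaker condition $\tfrac1p+\tfrac1q<\tfrac12$ only by assertion, not by arithmetic. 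The paper obtains the wider radial range by a genuinely different mechanism: it bypasses $TT^{*}$ and instead proves pointwise-in-$(R,N)$ bounds for the localized Hankel transform (Proposition \ref{LRE2}), then sums via a Schur test whose convergence conditions are precisely $\nu+\tfrac2q>0$ (i.e.\ $q<q(\alpha)$) and $\tfrac1q+\tfrac1{\min\{p,q\}}<\tfrac12$. This covers the region $p\le q$ or $q>4$; the remaining strip $2\le q\le4$, $p>q$ is then handled by Keel--Tao (giving $\tfrac2p+\tfrac1q\le\tfrac12$) plus interpolation with the first region.

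A second, smaller gap is in Step 1: you invoke ``standard spectral--multiplier bounds'' for the square function on the $P_{0}$ sector, but the paper explicitly notes that Gaussian heat-kernel bounds are \emph{not} available for $e^{-t\mathcal{D}_{A,\gamma}^{2}}P_{0}$ because of the negative-order Bessel functions, so the usual route to square-function estimates is blocked. A direct kernel analysis (Proposition \ref{prop:squarefun0}) is needed, and it only yields $L^{p}$ boundedness in the restricted range $q'(\alpha)<p<q(\alpha)$, which is why the paper applies it only for $q<4$.
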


\begin{remark}
In Theorem above the case $\alpha=1/2$ is excluded. The reason for this restriction is purely technical: as we shall see, the couple $(p,q)$ needs to satisfy some condition, \eqref{condfinal}, that can not be satisfied if $\alpha=1/2$. Let us mention that this case could be dealt with by following the argument developed in the subsequent work \cite{cosmic}, in which the Dirac equation in a cosmic string spacetime was studied, but this would require additional amount of technicalities that we prefer to omit.
\end{remark}

As a consequence of Theorems \ref{thm-stri1} and \ref{thm-stri3}, we conclude:

\begin{corollary}[Strichartz estimates for the complete flow]
  \label{cor-stri}
  Let $\alpha\in(0, 1/2)\cup (1/2,1)$, $p,q$ and $s$ be as in Theorem \ref{thm-stri1} and $q<q(\alpha)$,
  and let $\mathcal{D}_{A,\gamma}$ be the
  self--adjoint extension of $\mathcal{D}_A$ selected in
  \eqref{eq:convention}. Then the following Strichartz estimates hold
  \begin{equation}\label{stri}
  \|e^{it\mathcal{D}_{A,\gamma}}f\|_{[L^p_t(\R; L^q_x(\R^2))]^2}\leq
  C
  \| f\|_{[\dot{H}^{s}_A]^2}.
  \end{equation}
  The restriction $q<q(\alpha)$ is necessary in the sense that \eqref{stri} fails if  $q\geq q(\alpha)$.
\end{corollary}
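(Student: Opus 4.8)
\textbf{Proof plan for Corollary \ref{cor-stri}.}
The positive direction is immediate. Writing $f = P_{\bot}f + P_0 f$ and using the triangle inequality in $[L^p_t L^q_x]^2$, we combine the estimate \eqref{stri-D1} of Theorem \ref{thm-stri1} applied to $P_\bot f$ with the estimate \eqref{stri-D3} of Theorem \ref{thm-stri3} applied to $P_0 f$, which is legitimate precisely because the hypotheses on $(p,q)$ here --- namely $(p,q)\in(2,\infty]^2$ with $\frac2p+\frac1q\le\frac12$ (hence also $\frac1p+\frac1q<\frac12$), or $(p,q)=(\infty,2)$, together with $q<q(\alpha)$ --- are the intersection of the admissibility conditions \eqref{pqrange1} and \eqref{pqrange0}. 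Finally I would observe that $P_0$ and $P_\bot$ are bounded projections on each $[\dot H^s_A]^2$, $|s|\le 1$ (they act diagonally on the angular decomposition that defines these spaces via the relativistic Hankel transform, cf.\ Section \ref{sec:relhan}), so that $\|P_{\bot}f\|_{[\dot H^s_A]^2}+\|P_0 f\|_{[\dot H^s_A]^2}\lesssim \|f\|_{[\dot H^s_A]^2}$; this yields \eqref{stri}.

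The substantive part is the necessity of the restriction $q<q(\alpha)$. The plan is to exhibit, for each $q\ge q(\alpha)$, a single initial datum (a well-chosen element of $P_0 L^2$, since by Theorem \ref{thm-stri1} the obstruction cannot come from $P_\bot$) for which the left-hand side of \eqref{stri} is infinite while the right-hand side is finite. I would build this counterexample from the explicit structure of the singular component of the flow: by the convention \eqref{eq:convention}, after removing the most singular piece, the surviving part of $e^{it\mathcal{D}_{A,\gamma}}P_0 f$ still carries a factor behaving like $r^{-\alpha}$ (for $\alpha\in(0,1/2]$) or $r^{\alpha-1}$ (for $\alpha\in(1/2,1)$) near the origin --- this is exactly the behaviour encoded in the weight matrices $W_j$ of \eqref{weig-Wj1} and is what forces the weights in \eqref{est:dis0}. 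Concretely, one takes a frequency-localized datum $f=\varphi(|\mathcal{D}_{A,\gamma}|)P_0 g$ with $g$ smooth and compactly supported away from the origin, uses the Hankel-transform representation of the propagator (Section \ref{sec:relhan}) to write $e^{it\mathcal{D}_{A,\gamma}}f(t,x)$ explicitly near $x=0$, and checks that its leading spatial profile is $\sim |x|^{-\alpha}$ (resp.\ $|x|^{\alpha-1}$) uniformly for $t$ in a set of positive measure. Such a profile lies in $L^q_x$ near the origin if and only if $q\alpha<2$ (resp.\ $q(1-\alpha)<2$), i.e.\ $q<q(\alpha)$; hence for $q\ge q(\alpha)$ the spatial $L^q$ norm is infinite for a positive-measure set of times $t$, so the space--time norm diverges. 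On the other hand $\|f\|_{[\dot H^s_A]^2}<\infty$ trivially since $f$ is frequency-localized and $g$ is Schwartz, so \eqref{stri} cannot hold.

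The main obstacle is making the lower bound on the near-origin profile of $e^{it\mathcal{D}_{A,\gamma}}P_0 f$ rigorous and $t$-uniform: one must show the $r^{-\alpha}$ (or $r^{\alpha-1}$) singular coefficient does not vanish and is not cancelled by oscillation in $t$. The natural route is to extract it from the small-argument asymptotics of the relevant Bessel/Hankel functions appearing in the kernel of $\varphi(|\mathcal{D}_{A,\gamma}|)P_0$ --- the same asymptotics already used in the proof of \eqref{est:dis0} to justify that the weight $W_j$ is the correct (and necessary) one --- and to fix the phase by choosing $t$ small (say $t$ in a fixed interval around $0$, or a short time window adapted to the frequency scale $2^j$), where the propagator is a small perturbation of the identity on the relevant angular sector and the singular coefficient is bounded below by a positive constant times $\|f\|$. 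Once this pointwise-in-$t$ lower bound $|e^{it\mathcal{D}_{A,\gamma}}P_0f(x)|\gtrsim |x|^{-\alpha}$ (resp.\ $|x|^{\alpha-1}$) for $|x|$ small and $t$ in a fixed positive-measure set is in hand, the divergence of the $L^p_t L^q_x$ norm for $q\ge q(\alpha)$ follows by a direct integration, completing the proof of necessity.
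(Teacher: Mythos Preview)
Your proposal is correct and follows essentially the same approach as the paper: the positive direction combines Theorems \ref{thm-stri1} and \ref{thm-stri3}, and the necessity is established via a frequency-localized datum in the $k=0$ sector whose flow inherits the $r^{-\alpha}$ (resp.\ $r^{\alpha-1}$) singularity of $J_{-\alpha}$ (resp.\ $J_{\alpha-1}$), with non-vanishing of the singular coefficient secured for $t$ in a short interval. The paper's execution is slightly cleaner in that it defines the datum directly in Hankel space, $f=(\mathcal{H}_{-\alpha}\chi,0)$ with $\chi\in C_c^\infty([1,2])$ nonnegative, so the first component of the flow is exactly $\int_0^\infty J_{-\alpha}(r\rho)e^{-it\rho}\chi(\rho)\rho\,d\rho$ and the lower bound follows from $\cos(t\rho)\ge 1/2$ on $t\in[0,1/4]$, $\rho\in[1,2]$---this sidesteps the extra step in your plan of checking that $\varphi(\rho)\mathcal{H}_{-\alpha}(P_0 g)_1$ is nontrivial and sign-definite.
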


In fact, it is possible to recover the full range of 
admissible pairs for the singular component if
we introduce a suitable weight in the estimates, like in 
Theorem \ref{thm:disper}: 
\begin{equation}\label{weig-Wj}
  W(|x|)=\begin{cases}
    \begin{pmatrix} 
      (1+|x|^{-\alpha-\epsilon})^{-1}& 0 \\
      0 &  1
    \end{pmatrix}
    & \alpha\in(0,\frac12]\\
    \begin{pmatrix} 
      1& 0 \\
      0 &  (1+|x|^{\alpha-1-\epsilon})^{-1}
    \end{pmatrix}
    & \alpha\in(\frac12,1)
\end{cases}
\end{equation}
where $0<\epsilon\ll 1$ is a fixed small parameter.
We obtain the following result:

\begin{theorem}[Weighted Strichartz estimates for the singular component]\label{thm-stri2}
  Let $\alpha\in (0,1)$ and let $\mathcal{D}_{A,\gamma}$ be the
  self--adjoint extension of $\mathcal{D}_A$ selected in
  \eqref{eq:convention}. Assume $(p,q)$ satisfy \eqref{pqrange0}
  and $q(\alpha)\le q<\infty$, where
  $q(\alpha)$ is given by \eqref{q-alp}.
  Denote by $W(|x|)$ the weight \eqref{weig-Wj}.
  Then for any $f\in [{H}^{s}_A]^2$ 
  and any $\theta>1-\frac{q(\alpha)}{q}$ the following weighted
  Strichartz estimates hold:
  \begin{equation}\label{stri-D2b}
    \|W(|x|)^{\theta}e^{it\mathcal{D}_{A,\gamma}}P_{0}f\|
    _{[L^p_t(\R; L^q_x(\R^2))]^2}
    \leq C_{\epsilon,\theta} \|P_{0} f\|_{[{H}^{s}_A]^2},
    \qquad s=1-\frac1p-\frac2q.
  \end{equation}
\end{theorem}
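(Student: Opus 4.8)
The plan is to deduce the weighted estimate \eqref{stri-D2b} from the pointwise bounds of Theorem \ref{thm:disper} via the Keel--Tao $TT^{*}$ machinery together with a complex--interpolation argument; the threshold $\theta>1-q(\alpha)/q$ will come out as the weight power produced by interpolating the weightless estimate --- available only for $q<q(\alpha)$ --- against a full--weight estimate. First I would perform a Littlewood--Paley reduction: writing $e^{it\mathcal D_{A,\gamma}}P_{0}f=\sum_{j\in\Z}F_{j}$ with $F_{j}=e^{it\mathcal D_{A,\gamma}}\varphi_{j}P_{0}f$, $\varphi_{j}:=\varphi(2^{-j}|\mathcal D_{A,\gamma}|)$ and $\tilde\varphi_{j}:=\tilde\varphi(2^{-j}|\mathcal D_{A,\gamma}|)$, and using that $p,q\ge 2$ and that $x\mapsto W(|x|)^{\theta}$ is a bounded multiplier, Minkowski's inequality and the (weighted) square function estimate for $\mathcal D_{A,\gamma}$ --- harmlessly keeping $W^{\theta}$ in the Muckenhoupt class $A_{q}$, which is possible since one may always take $\theta\le 1$ --- reduce the claim to the single--frequency bound
\begin{equation}\label{eq:plan-freqloc}
  \big\|W(|x|)^{\theta}F_{j}\big\|_{[L^{p}_{t}L^{q}_{x}]^{2}}
  \le C\,2^{js}\,\big\|\tilde\varphi_{j}P_{0}f\big\|_{[L^{2}]^{2}},
  \qquad s=1-\tfrac1p-\tfrac2q,
\end{equation}
which is then summed in $\ell^{2}_{j}$; here the inhomogeneous norm $H^{s}_{A}$ absorbs the low frequencies, since $\sum_{j\in\Z}2^{2js}\|\tilde\varphi_{j}P_{0}f\|_{L^{2}}^{2}\lesssim\|P_{0}f\|_{H^{s}_{A}}^{2}$ for $s>0$.

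Next I would set up two frequency--localized endpoint estimates. On one side, the weightless dispersive estimate \eqref{est:disq} fed into the ($TT^{*}$, radially improved) argument behind Theorem \ref{thm-stri3} gives $\|F_{j}\|_{L^{p_{0}}_{t}L^{q_{0}}_{x}}\lesssim 2^{js_{0}}\|\tilde\varphi_{j}P_{0}f\|_{L^{2}}$, $s_{0}=1-\frac1{p_{0}}-\frac2{q_{0}}$, for every pair $(p_{0},q_{0})$ obeying \eqref{pqrange0} with $q_{0}<q(\alpha)$. On the other side I claim that the pointwise bound \eqref{est:dis0} continues to hold with the fixed weight $W$ in place of the frequency--adapted $W_{j}$: the extra exponent $\epsilon$ in $W=(1+|x|^{-\alpha-\epsilon})^{-1}$ makes the product of $W$ with the singular spinor component of the generalized eigenfunctions bounded near the origin, of size comparable to that of the regular component, so that the same argument yields $\|W\,e^{it\mathcal D_{A,\gamma}}\varphi_{j}^{2}P_{0}\,W\,g\|_{L^{\infty}}\lesssim 2^{2j}(1+2^{j}|t|)^{-1/2}\|g\|_{L^{1}}$; combined with the trivial bound $\|W\,\cdot\|_{L^{2}}\le\|\cdot\|_{L^{2}}$ and the $TT^{*}$ argument applied to $U_{j}(t)=W(|x|)\,e^{it\mathcal D_{A,\gamma}}\varphi_{j}P_{0}$, this gives the full--weight estimate $\|W F_{j}\|_{L^{p_{1}}_{t}L^{q_{1}}_{x}}\lesssim 2^{js_{1}}\|\tilde\varphi_{j}P_{0}f\|_{L^{2}}$ for every admissible $(p_{1},q_{1})$.

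Then I would interpolate. Fix the target $(p,q)$ with $q\ge q(\alpha)$. For $\theta\ge 1$, \eqref{eq:plan-freqloc} follows immediately from $W^{\theta}\le W$ and the full--weight estimate. For $1-q(\alpha)/q<\theta<1$, apply Stein's analytic interpolation along the family of weights $W(|x|)^{z}$, $0\le\operatorname{Re}z\le 1$, between the weightless estimate at a pair $(p_{0},q_{0})$ with $q_{0}<q(\alpha)$ and the full--weight estimate at a pair $(p_{1},q_{1})$ chosen so that the interpolant is $L^{p}_{t}L^{q}_{x}$ with parameter $\vartheta=1-q_{0}/q$; since the scaling exponent $s=1-\frac1p-\frac2q$ is affine in $(1/p,1/q)$ it interpolates to the right value, giving $\|W^{\vartheta}F_{j}\|_{L^{p}_{t}L^{q}_{x}}\lesssim 2^{js}\|\tilde\varphi_{j}P_{0}f\|_{L^{2}}$. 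Letting $q_{0}\to q(\alpha)^{-}$ one reaches every $\vartheta>1-q(\alpha)/q$ (which also satisfies $\vartheta<1-\frac2q$, so $q_{0}=(1-\vartheta)q>2$ stays admissible); choosing $\vartheta\in(1-q(\alpha)/q,\theta]$ and using $W^{\theta}\le W^{\vartheta}$ proves \eqref{eq:plan-freqloc}, and summing in $\ell^{2}_{j}$ yields \eqref{stri-D2b}. The case $\alpha\in(\tfrac12,1)$ is identical, with $W$ acting on the second spinor component and $|x|^{-\alpha-\epsilon}$ replaced by $|x|^{\alpha-1-\epsilon}$.

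Granting the pointwise estimates of Theorem \ref{thm:disper}, the hard part is the interpolation bookkeeping, which is exactly what forces the two structural features of the statement. The weightless Strichartz estimate for the singular component is available precisely up to $q=q(\alpha)$ --- the value at which the $|x|^{-\alpha}$--type generalized eigenfunction leaves $L^{q}_{\mathrm{loc}}$ near the origin --- so interpolating it against the full--weight estimate necessarily produces a weight power $>1-q(\alpha)/q$, and the strict inequality together with the harmless $\epsilon$--enlargement of $W$ is what keeps all exponents in the open admissible ranges and makes the $\ell^{2}$ summation over dyadic frequencies lossless. The subtler technical point, needed for Step 2, is that the frequency--adapted weight $W_{j}$ of Theorem \ref{thm:disper} can be replaced by the single fixed weight $W$ at the cost of the $\epsilon$--loss only; without this one would be left with a $j$--dependent weight that cannot be removed uniformly in $j$, and a crude pointwise comparison $W^{\theta}\lesssim W_{j}^{\vartheta}$ costs a positive power of $2^{j}$ that destroys the summation.
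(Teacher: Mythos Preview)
Your overall strategy --- prove a full-weight estimate for $e^{it\mathcal D_{A,\gamma}}P_0$ and then interpolate it against the weightless estimate of Theorem \ref{thm-stri3} with $q_0\uparrow q(\alpha)$ --- is exactly what the paper does at the end of Section 6.2. The divergence, and the gap, is in how the full-weight endpoint is obtained.

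You claim that the dispersive bound \eqref{est:dis0} continues to hold with the fixed weight $W$ in place of the frequency-adapted $W_j$, uniformly in $j$. This is false at low frequencies. For $\alpha\in(0,\tfrac12]$ the kernel bound of Lemma \ref{lemTF} produces the factor $(1+(2^jr)^{-\alpha})$, so you need $\sup_{r>0}W(r)\big(1+(2^jr)^{-\alpha}\big)\le C$ uniformly in $j$. For $j\ge0$ this is fine (the extra $\epsilon$ in $W$ does exactly this job near $r=0$), but for $j<0$ and $1\le r\le 2^{-j}$ one has $W(r)\simeq 1$ while $1+(2^jr)^{-\alpha}\simeq (2^jr)^{-\alpha}$, whose supremum over that range is $2^{|j|\alpha}$. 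So the weighted $L^1\to L^\infty$ constant grows like $2^{2|j|\alpha}$, and after $TT^*$ your single-frequency bound picks up an extra factor $2^{|j|\alpha(1-2/q_1)}$ for $j<0$; your justification ``at the cost of the $\epsilon$-loss only'' is not what is happening. There is a second unjustified step: the Littlewood--Paley reduction requires a (weighted) square-function inequality for $\varphi_j(|\mathcal D_{A,\gamma}|)P_0$ in $L^q$ with $q\ge q(\alpha)$, but Proposition \ref{prop:squarefun0} only supplies it for $q'(\alpha)<q<q(\alpha)$, and the Gaussian heat-kernel route is blocked precisely because of the negative-order Bessel function in the $k=0$ kernel.

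The paper avoids both issues by proving the full-weight estimate \eqref{stri-D2} \emph{without} Littlewood--Paley and \emph{without} passing through the dispersive bound: it interpolates the trivial weighted $L^\infty_tL^2_x$ estimate against a weighted $L^p_tL^\infty_x$ estimate (Lemma \ref{lemwe}), and the latter is obtained directly from the Hankel representation via a double dyadic decomposition in the spatial radius $R$ and in the frequency $N$, a localized bound at each scale (Proposition \ref{LRE}), and the Schur test. The low-frequency loss there is exactly $N^{\nu-\epsilon}$ with $\nu\in[-\tfrac12,0)$; since $1-\tfrac1p+\nu-\epsilon\ge 0$ for $\epsilon$ small, it is absorbed by replacing $N^{1-1/p}$ with $\langle N\rangle^{1-1/p}$, which is why the inhomogeneous norm appears. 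After that, the complex interpolation with \eqref{stri-D3} is the step you described.
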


%\textcolor{blue}{As before, for data of the special form
%\eqref{eq:struct} and $p,q,s$ as in Theorem \ref{thm-stri2}
%we obtain the estimate for the full solution
%\begin{equation}\label{stri-D1'}
%\|e^{it\mathcal{D}_{A,\gamma}}f\|_{[L^p_t(\R; L^q_x(\R^2))]^2}\leq
%C
%\| f\|_{[{H}^{s}_A]^2}.
%\end{equation}
%}

The conditions on $(p,q)$ are represented in the figure.
Estimates \eqref{stri-D1} hold for $(p,q)$ in the triangle $AOC$, the weighted Strichartz estimates \eqref{stri-D2b} hold for  $(p,q)$ in the triangle $AOB$.
On the other hand,
estimates \eqref{stri-D3} hold for $(p,q)$ in the triangle $ADF$
and the classical estimates \eqref{stri} for $(p,q)$ in the triangle $ADE$.
 
 \begin{center}
  \begin{tikzpicture}[scale=1]
 \draw[->] (4,0) -- (8,0) node[anchor=north] {$\frac{1}{p}$};
 \draw[->] (4,0) -- (4,4)  node[anchor=east] {$\frac{1}{q}$};
 %\path (9.6,-1) node(caption){Fig 4. $n=2,\theta=0$};  %Ã©Â¢ÂÃŠÂ³Å¡

 \draw  (4.1, -0.1) node[anchor=east] {$O$};
 \draw  (7, 0) node[anchor=north] {$\frac12$};
 \draw  (4, 3) node[anchor=east] {$\frac12$};
 \draw  (5.5, 0) node[anchor=north] {$\frac14$};
 %(8, 1) node[anchor=east] {$\frac16$};

 \draw[thick] (4,3) -- (7,0);  
 \draw[red,thick] (4,3) -- (5.5,0);%Ã¥Â®ÂÃ§ÂºÂ¿Ã¥ÂÅÃ€Â»â¬
 \draw[red, dashed,thick] (4,3) -- (7,0); %ÃšÂÂÃ§ÂºÂ¿Ã¥ÂÅÃ€Â»â¬

 \draw (3.9,3.15) node[anchor=west] {$A$};
 \draw (6.9,0.2) node[anchor=west] {$B$};
 \draw (5.5,-0.2) node[anchor=west] {$C$};
 \draw (6,2.6) node[anchor=west] {$\frac{1}{p}+\frac{1}{q}=\frac12$};

 \draw (6,1.6) node[anchor=west] {$\frac{2}{p}+\frac{1}{q}=\frac12$};

 \draw (7,0) circle (0.06);

 \filldraw[fill=gray!30](4,3)--(5.51,0)--(7,0); 
 \filldraw[fill=gray!50](4,3)--(5.49,0)--(4,0); 

 %\draw (7.9,0.45) node[anchor=west] {E};
 %\draw (10,0.45) node[anchor=west] {C};
 %\draw   (8, 0.6) node[anchor=east] {$\frac12-\frac{1+\nu_0}{n}$};
 % \draw[red, dashed,thick]         (8.0,0) -- (11,0); 

 \draw[<-] (5,2.1) -- (6,2.6) node[anchor=south]{$~$};
 \draw[<-] (4.86,1.3) -- (6,1.6) node[anchor=south]{$~$};

 \draw[red, dashed, thick] (4,1) -- (6,1); \draw (4,1) circle (0.06); \draw (6,1) circle (0.06); \draw (5,1) circle (0.06);
 \draw  (3.6,1.6 ) node[anchor=north] {$\frac1{q(\alpha)}$};
 \draw (3.8, 1.2) node[anchor=west] {D};
 \draw (4.8, 1.2) node[anchor=west] {E};
 \draw (5.9, 1.2) node[anchor=west] {F};

 \path (6,-1.5) node(caption){Diagrammatic picture of the admissible range of $(p,q)$.};  %Ã©Â¢ÂÃŠÂ³Å¡
 \end{tikzpicture}
 \end{center}
 
%\textcolor{red}{REMOVE ENTIRELY THIS NEXT REMARK?}\textcolor{blue}{I agree!}
%\begin{remark}\rm
%We briefly discuss the convention \eqref{eq:convention}.
%We noticed above that if $\gamma \neq 0,\frac\pi2,\pi,\frac32\pi$ there exist initial conditions in the domain of $\mathcal{D}_{A,\gamma}$ such that the solution does not disperse. This is closely connected with the following fact: the domain of of $\mathcal{D}_{A,\gamma}$ must necessarily contain singular functions, and the choice  \eqref{eq:convention} ensures that the singularities are as mild as possible. This is crucial for our approach, indeed our argument breaks down if the generalized eigenfunctions are too singular at the origin. In view of the better dispersive properties of the flow and the absence of strong singularities in the domain, it is tempting to conjecture that \eqref{eq:convention} may give a natural choice of a \emph{distinguished} self--adjoint extension, which may replace the Friedrichs extension, unavailable for Dirac.
%\end{remark}

\begin{remark}\label{rem:square}\rm
  A common method to study dispersive properties of Dirac
  equations is to square them and reduce the equation to
  a system of wave equations, with principal part in diagonal
  form (see e.g.~ \cite{DAnconaOkamoto17-a}).
  This works well for smooth potential perturbations.
  However the method is ineffective for the Dirac AB
  equation, since the square of $\mathcal{D}_{A,\gamma}$
  for every value of $\gamma$ gives the same selfadjoint 
  operator
  $L=(\begin{smallmatrix} H & 0 \\ 0 & H \end{smallmatrix})$,
  where $H$ is the Friedrichs extension of the Schr\"{o}dinger
  operator with AB potential; see the beginning of
  Section \ref{sec:proofdisp}. This destroys all information
  on the singular part of $\mathcal{D}_{A,\gamma}$.
  Thus we are forced to investigate dispersive properties of 
  the Dirac propagator directly.
\end{remark}

\begin{remark}\rm
  Note that the range of admissible couples $(p,q)$ in \eqref{pqrange0} is wider than \eqref{pqrange1}. This is not surprising, since the singular component is one dimensional and the range is the typical one for radial initial data. Let us stress the fact that the proof of \eqref{pqrange0} contains new ideas since we need to prove Littlewood-Paley square function estimates for the singular component. On the other hand, estimate 
  \eqref{stri-D2b} is weaker than \eqref{stri-D1}, since the homogeneous Sobolev norm at the right hand side is replaced by the inhomogeneous norm. This is due to the loss of regularity in low frequencies caused by the weight.
\end{remark}

\begin{remark}\label{rk:conjecture}\rm
  Corollary \ref{cor-stri} and its proof suggest the following general conjecture: singularities of the generalized eigenfunctions are an obstruction for the validity of Strichartz estimates, namely, the stronger is the singularity of the generalized eigenfunctions, the smaller is the range (from the above) for the admissible Strichartz exponents. As a consequence, this would imply that our choice of the self-adjoint extension for $\mathcal{D}_{A,\gamma}$ provides the widest range of admissible exponents. This topic will be the object of further investigations. 
\end{remark}

\medskip

{\bf Plan of the paper.}
The paper is organized as follows. Section \ref{sec:pre} is devoted to an overview of the spectral theory of the Dirac operator in the Aharonov-Bohm magnetic field. In particular, in Theorem \ref{ABsa} we shall provide all the self-adjoint extensions for the operator $\mathcal{D}_A$. In Section \ref{sec:3} we shall derive the generalized eigenfunctions of the operator $\mathcal{D}_A$ and introduce the relativistic Hankel transform, which will represent the stepping stone for proving our dispersive estimate, as indeed it allows for an explicit representation of the propagator, which will be developed in Section \ref{sec:prop}. In Section \ref{sec:proofdisp} we shall then present the proof of Theorem \ref{thm:disper}, while in Section \ref{sec:stri} we collect the proof of all of our Strichartz estimates (Theorems \ref{thm-stri1}, \ref{thm-stri3}, \ref{thm-stri2} and Corollary \ref{cor-stri}).

\section{Spectral theory of the Dirac operator}\label{sec:pre}

In this section we construct the selfadjoint extensions of the Dirac AB Hamiltonian. The following results are partly known, but we reshape them in a form suitable for our applications.

We  recall the definition of \emph{von Neumann extensions}
of a closed symmetric operator $T$ on a
Hilbert space $\mathcal{H}$, with dense domain $D(T)$.
The basic result of von Neumann's theory states that the domain
of the adjoint operator $T^{*}$ splits in a direct sum
\begin{equation*}
  D(T^{*})=D(T)\oplus N_{+}\oplus N_{-}
  \quad\text{where}\quad N_{\pm}=\ker(T^{*}\mp i)
\end{equation*}
and the action of $T^{*}$ can be described as follows:
\begin{equation*}
  T^{*}(f+g_{+}+g_{-})=
  Tf+ig_{+}-ig_{-}
  \qquad
  \forall f\in D(T),\ g_{\pm}\in N_{\pm}.
\end{equation*}
Then
$n_{\pm}=\dim N_{\pm}\in \mathbb{N}_{0}\cup\{+\infty\}$ 
are called the \emph{deficiency indices} of $T$, and satisfy:
\begin{itemize}
  \item 
  $n_{+}=n_{-}=0$ iff $T$ is selfadjoint
  \item 
  $n_{+}=n_{-}$ iff $T$ has a selfadjoint extension
  \item 
  $n_{+}\neq n_{-}$ iff $T$ has no selfadjoint extension.
\end{itemize}
The case of interest is $n_{+}=n_{-}$.
Then all selfadjoint extensions
$T_{V}$ of $T$ are in 1--1 correspondence with
isometries $V$ from $N_{+}$ onto $N_{-}$, 
and we can write explicitly
\begin{equation*}
  D(T_{V})=\{f+g+Vg:f\in D(T),\ g\in N_{+}\}
\end{equation*}
and
\begin{equation*}
  T_{V}(f+g+Vg)=Tf+ig-iVg.
\end{equation*}
In particular when $n_{+}=n_{-}<\infty$ we obtain a family of 
selfadjoint extensions with $n_{+}^{2}$ parameters.

We want to apply the previous notions to the massless 
Dirac AB operator
\begin{equation}\label{DA}
  \mathcal{D}_{A}=
  \begin{pmatrix}
    0 & (i\partial_{1}+A_{1}-i(i \partial_{2}+A_{2})\\
    (i\partial_{1}+A_{1}+i(i \partial_{2}+A_{2}) &  0
  \end{pmatrix}
\end{equation}
initially
defined on $D(\mathcal{D}_{A})= \big[C_{c}^{\infty}(\mathbb{R}^{2}\setminus0)\big]^2$,
where, for a fixed $\alpha\in(0,1)$,
\begin{equation*}
  A(x_{1},x_{2})=(A_1(x_1,x_2), A_2(x_1,x_2))=\frac{\alpha}{|x|^{2}}(-x_{2},x_{1}).
\end{equation*}
Some notations will help simplify computations.
We shall write
\begin{equation}\label{notation-der}
  \partial_{\sigma}=\partial_{r}+\frac{\sigma}{r},
  \qquad
  \partial_{r}=\frac{d}{dr},
  \qquad
  \sigma\in \mathbb{R},
\end{equation}
and 
$f'(r)=\frac{d}{dr}f(r)=\partial_{r}f(r)$.
For $\alpha\in(0,1)$ we also write
\begin{equation}\label{dk}
  d_{k}:=
  \begin{pmatrix}
    0 & i\partial_{k+1-\alpha} \\
    i\partial_{\alpha-k} & 0 
  \end{pmatrix}=
  \begin{pmatrix}
    0 & i \partial_{r}+i\frac{k+1-\alpha}r \\
    i\partial_{r}+i\frac{\alpha-k} r& 0 
  \end{pmatrix}.
\end{equation}

Any $f\in L^{2}(\mathbb{R}^{2})$ can be expanded in spherical
harmonics as
\begin{equation*}
  f(x)=\sum_{k\in \mathbb{Z}}f_{k}(r)e^{ik \theta},
  \qquad
  \theta\in [0,2\pi),\quad r>0
\end{equation*}
where $f_{k}\in L^{2}(rdr)=L^{2}((0,+\infty),rdr)$.
Equivalently, we may decompose $L^{2}(\mathbb{R}^{2})$ as
\begin{equation}\label{eq:spschr}
  L^{2}(\mathbb{R}^{2})=
  \bigoplus_{k\in \mathbb{Z}}
  L^{2}(rdr)\otimes [e^{ik \theta}],
\end{equation}
where $[e^{ik \theta}]$ denotes the one dimensional span 
generated by $e^{ik \theta}$.
For the Dirac operator, we need
a similar decomposition of the spinor space
$\big[L^{2}(\mathbb{R}^{2})\big]^{2}$, which takes the form
\begin{equation}\label{eq:spDir}
  L^{2}(\mathbb{R}^{2})^{2}=
  \bigoplus_{k\in \mathbb{Z}}
  L^{2}(rdr)^{2}\otimes h_{k}(\mathbb{S}^{1}),
\end{equation}
where $h_{k}(\mathbb{S}^{1})$ is the one dimensional space
\begin{equation}\label{eq:hk}
  h_{k}=
  h_{k}(\mathbb{S}^{1})=
  \left[
    \begin{pmatrix}
      e^{ik \theta} \\
      e^{i(k+1)\theta} 
    \end{pmatrix}
  \right]=
  \left\{
    c\begin{pmatrix}
      e^{ik \theta} \\
      e^{i(k+1)\theta} 
    \end{pmatrix}:
    c\in \mathbb{C}
  \right\}.
\end{equation}
This means that we will expand spinors
$(\begin{smallmatrix} \phi \\ \psi \end{smallmatrix})
  \in \big[L^{2}(\mathbb{R}^{2})\big]^{2}$
in the form
\begin{equation*}
  \phi=\sum_{k\in \mathbb{Z}}\phi _{k}(r)e^{ik \theta},
  \qquad
  \psi=\sum_{k\in \mathbb{Z}}\psi _{k}(r)e^{i(k+1)\theta}.
\end{equation*}

\textsc{Decomposition of the operator}.
Using \eqref{eq:spDir} we may decompose $\mathcal{D}_{A}$ as
\begin{equation*}
  \mathcal{D}_{A}=
  \bigoplus_{k\in \mathbb{Z}}d_{k}\otimes I_{2}
  \qquad
  d_{k}:=
  \begin{pmatrix}
    0 & i\partial_{k+1-\alpha} \\
    i\partial_{\alpha-k} & 0 
  \end{pmatrix}, 
    \qquad
  I_2:=
  \begin{pmatrix}
    1 & 0 \\
  0 & 1 
  \end{pmatrix}.
\end{equation*}
Equivalently we can write
\begin{equation*}
  \mathcal{D}_{A}
  \sum_{k}
  \begin{pmatrix}
    e^{ik \theta}\ \phi_{k}(r) \\
    e^{i(k+1)\theta}\ \psi_{k}(r)
  \end{pmatrix}
  =
  \sum_{k}
  \begin{pmatrix}
    ie^{ik \theta}\ \partial_{k+1-\alpha}\psi_{k}(r) \\
    ie^{i(k+1)\theta}\ \partial_{\alpha-k}\phi_{k}(r)
  \end{pmatrix}.
\end{equation*}
We note the following facts:
\begin{enumerate}
  \item 
  $d_{k}$ with domain $D(d_{k})=\big[C_{c}^{\infty}((0,+\infty))\big]^{2}$
  is symmetric and densely defined;
  \item 
  Since the decomposition \eqref{eq:spDir} is orthogonal, we have
  for the adjoint operator and the closure
  \begin{equation*}
    \mathcal{D}_{A}^{*}=
    \bigoplus_{k\in \mathbb{Z}}d_{k}^{*}\otimes I_{2},
    \qquad
    \overline{\mathcal{D}}_{A}=
    \bigoplus_{k\in \mathbb{Z}}\overline{d}_{k}\otimes I_{2}
  \end{equation*}
  and similarly, to determine the selfadjoint extensions of
  $\mathcal{D}_{A}$, it is sufficient to determine the extensions
  of $d_{k}$ for each $k$ and combine them in all possible
  ways.
\end{enumerate}
Hence we are reduced to study the closure $\overline{d}_{k}$ and the adjoint $d^{*}_{k}$.\vspace{0.1cm}

\textsc{The closure $\overline{d}_{k}$}. If
$\phi\in C_{c}^{\infty}((0,+\infty))$, expanding the square and 
integrating by parts we have
\begin{equation*}
  \textstyle
  \int_{0}^{\infty}|\partial_{\sigma}\phi|^{2}rdr=
  \int_{0}^{\infty}
  (|\partial_{r}\phi|^{2}+\frac{\sigma^{2}}{r^{2}}|\phi|^{2})
  rdr.
\end{equation*}
For 
$f=(\begin{smallmatrix}\phi \\ \psi\end{smallmatrix})\in \big[C_{c}^{\infty}((0,+\infty))\big]^{2}$, this implies 
\begin{equation*}
  \textstyle
  \|d_{k}f\|_{L^{2}(rdr)}^{2}=
  \|f'\|_{L^{2}(rdr)}^{2}+
  \|\frac{k+1-\alpha}{r}\psi\|_{L^{2}(rdr)}^{2}+
  \|\frac{k-\alpha}{r}\phi\|_{L^{2}(rdr)}^{2}.
\end{equation*}
The latter norm controls $\|f\|_{L^{\infty}}$: indeed, we have
\begin{equation*}
  \textstyle
  |\phi(b)|^{2}-|\phi(a)|^{2}=
  \int_{a}^{b}\partial_{r}|\phi(r)|^{2}dr\le
  \int_{a}^{b}(|\phi'|^{2}+\frac{|\phi|^{2}}{r^{2}})rdr
\end{equation*}
and hence if
$\phi\in C_{c}^{\infty}((0,+\infty))$
\begin{equation*}
  \|\phi\|_{L^{\infty}}\le\|\phi'\|_{L^{2}(rdr)}+
  \|\phi/r\|_{L^{2}(rdr)}.
\end{equation*}
Now let $f_{n}\in D(d_{k})$ such that both $f_{n}$ and
$d_{k}f_{n}$ converge in $L^{2}(rdr)$; since 
$f_{n}\in \big[C_{c}^{\infty}\big]^2$ it follows that $f=\lim_{n\to\infty} f_{n}$
is in $\big[C([0,+\infty))\big]^2$ and $f(0)=0$.
This implies that the closure $\overline{d}_{k}$ of $d_{k}$
has domain
\begin{equation*}
  Y=
  D(\overline{d}_{k})=X^{2}=X \times X
\end{equation*}
where
\begin{equation}\label{eq:domain}
  X=
  \{\phi\in L^{2}(rdr):
  \phi',\phi/r\in L^{2}(rdr),\ 
  \phi\in C([0,+\infty)),\ 
  \phi(0)=0\}
\end{equation}
and of course for all 
$f=(\begin{smallmatrix}\phi \\ \psi\end{smallmatrix})
  \in D(\overline{d}_{k})$
\begin{equation}\label{closuredirac}
  \overline{d}_{k}f=
  \begin{pmatrix}
     i\partial_{k+1-\alpha}\psi  \\
     i\partial_{\alpha-k}\phi
  \end{pmatrix}.
\end{equation}
Note that functions in $Y$ are absolutely continuous, hence
the distributional derivative coincides with the classical
derivative a.e.\vspace{0.1cm}

\textsc{The adjoint $d^{*}_{k}$}.
By definition
$f=(\begin{smallmatrix}\phi \\ \psi\end{smallmatrix})
  \in D(d^{*}_{k})$
iff $\exists g\in L^{2}(rdr)^{2}$ such that
\begin{equation*}
  \langle f,d_{k}u\rangle=
  \langle g,u\rangle
  \qquad
  \forall u\in (C_{c}^{\infty})^{2}.
\end{equation*}
Clearly this is equivalent to
\begin{equation*}
  f\in L^{2}(rdr)^{2}
  \quad\text{and}\quad 
  d_{k}f\in L^{2}(rdr)^{2},
\end{equation*}
where $d_{k}f$ is computed in distribution sense. We conclude
\begin{equation*}
  D(d^{*}_{k})=
  \{f\in L^{2}(rdr)^{2}:d_{k}f\in L^{2}(rdr)^{2}\}
\end{equation*}
and the expression of $d^{*}_{k}$ is the same as $d_{k}$.\vspace{0.1cm}

\textsc{Deficiency indices of $\overline{d}_{k}$}.
Since $\overline{d}_{k}$ is closed, densely defined and symmetric,
von Neumann's theory applies.
We compute $\ker(d_{k}^{*}-i)$; we must solve the system
\begin{equation}\label{eq:syseq}
  i \partial_{\sigma+1}\psi=i \phi,
  \qquad
  i \partial_{-\sigma}\phi=i \psi,
  \qquad
  \sigma=k-\alpha
\end{equation}
with $\phi,\psi\in L^{2}(rdr)$. This implies
\begin{equation*}
  \textstyle
  (\partial_{r}^{2}+\frac 1r \partial_{r}
    -\frac{\sigma^{2}}{r^{2}})\phi=\phi,
  \qquad
  \phi\in L^{2}(rdr),
\end{equation*}
\begin{equation*}
  \textstyle
  (\partial_{r}^{2}+\frac 1r \partial_{r}
    -\frac{(\sigma+1)^{2}}{r^{2}})\psi=\psi,
  \qquad
  \psi\in L^{2}(rdr).
\end{equation*}
These are modified Bessel equations.
Consider the first one: 
a system of solutions
is given by a linear combination of modified Bessel functions $\{I_{\sigma},K_{\sigma}\}$.
From \cite[10.25.3; 10.30.4]{DLMF}, one knows that
$I_{\sigma}$ grows at infinity for all $\sigma=k-\alpha$ and is not in $L^{2}(rdr)$, 
so we drop it.
On the other hand, from \cite[10.25.3; 10.30.2]{DLMF}, $K_{\sigma}$ decays exponentially as $r\to+\infty$,
while near $r \sim 0$ we have
$K_{\sigma}(r)\sim r^{-|\sigma|}$ 
(indeed $K_{-\sigma}=K_{\sigma}$) so that
\begin{equation*}
  K_{\sigma}\in L^{2}(rdr)
  \qquad\iff\qquad
  |\sigma|<1.
\end{equation*}
Recall \eqref{notation-der} and also that 
\begin{equation*}
  \partial_{-\sigma}K_{\sigma}=-K_{\sigma+1}
\end{equation*}
which gives $\psi$.
We conclude that all solutions of \eqref{eq:syseq} must
be scalar multiples of
\begin{equation*}
  \begin{pmatrix}
    \phi\\
     \psi
  \end{pmatrix}=
  \begin{pmatrix}
     K_{\sigma}  \\
     -K_{\sigma+1}
  \end{pmatrix}.
\end{equation*}
This solution is in the domain of $d^{*}_{k}$ iff
$K_{\sigma}\in L^{2}(rdr)$ and
$\partial_{-\sigma}\phi=\psi=-K_{\sigma+1}\in L^{2}(rdr)$.
Thus we must have $K_{k-\alpha},K_{k+1-\alpha}\in L^{2}(rdr)$
which implies
\begin{equation*}
  |k-\alpha|<1,\quad |k+1-\alpha|<1
  \quad\Leftrightarrow\quad
  k=0.
\end{equation*}
Summing up,
\begin{itemize}
  \item if $k\neq0$ then 
  $\ker(d^{*}_{k}-i)=\{0\}$
  \item 
  $\ker(d^{*}_{0}-i)=
    [(\begin{smallmatrix} K_{\alpha}  \\ -K_{1-\alpha} 
    \end{smallmatrix})]$,
  with dimension $n_{+}=1$.
\end{itemize}
The analysis for $\ker(d^{*}_{k}+i)$ is very similar.
System \eqref{eq:syseq} is replaced by
\begin{equation}\label{eq:syseq2}
  i \partial_{\sigma+1}\psi=-i \phi,
  \qquad
  i \partial_{-\sigma}\phi=-i \psi,
  \qquad
  \sigma=k-\alpha
\end{equation}
which gives the same second order equations for $\phi,\psi$
but now the possible solutions are the multiples of
\begin{equation*}
  \begin{pmatrix}
    \phi\\
     \psi
  \end{pmatrix}=
  \begin{pmatrix}
     K_{\sigma}  \\
     K_{\sigma+1}
  \end{pmatrix}.
\end{equation*}
By similar computations we have
\begin{itemize}
  \item if $k\neq0$ then 
  $\ker(d^{*}_{k}+i)=\{0\}$
  \item 
  $\ker(d^{*}_{0}+i)=
    [(\begin{smallmatrix} K_{\alpha}  \\ K_{1-\alpha} 
    \end{smallmatrix})]$,
  with dimension $n_{-}=1$.
\end{itemize}
By the von Neumann theory we thus obtain:
\begin{enumerate}
  \item If $k\neq0$ then
  $\overline{d}_{k}=d_{k}^{*}$ is selfadjoint, with domain 
  $Y$ defined in \eqref{eq:domain}
  \item For $k=0$
  \begin{equation*}
    D(d^{*}_{0})=
    Y
    \oplus
    \left[g_{+} \right]
    \oplus
    \left[g_{-}\right]
    \quad\text{where}\quad 
    g_{+}=
    \begin{pmatrix} K_{\alpha}  \\ -K_{1-\alpha} \end{pmatrix},
    \quad
    g_{-}=
    \begin{pmatrix} K_{\alpha} \\ K_{1-\alpha} \end{pmatrix}
  \end{equation*}
  and for any $f\in D(\overline{d}_{0})$, 
  $c_{1},c_{2}\in \mathbb{C}$,
  \begin{equation*}
    d^{*}_{0}(f+c_{1}g_{+}+c_{2}g_{-})=
    \overline{d}_{0}f+ic_{1}g_{+}-ic_{2}g_{-}
  \end{equation*}
  \item 
  Selfadjoint extensions $d_{0}^{\gamma}$ of $d_{0}$ are in
  1--1 correspondence with
  unitary operators from $N_{+}$ onto $N_{-}$,
  i.e.~multiplications by $e^{2i \gamma}$
  with $\gamma$ real.  We have
  \begin{equation*}
    D(d^{\gamma}_{0})=
    \{f+c g_{+}+ce^{2i \gamma}g_{-}:f\in Y,\ 
      c\in \mathbb{C}\}=
      Y \oplus[g_{+}+e^{2 i \gamma}g_{-}]
  \end{equation*}
  and for all $f\in Y$, $c\in \mathbb{C}$
  \begin{equation*}
    d_{0}^{\gamma}(f+c g_{+}+ce^{2i \gamma}g_{-})=
    \overline{d}_{0}f+ic g_{+}-ice^{2i \gamma}g_{-}.
  \end{equation*}
  \item 
  We rewrite the last two formulas in an equivalent way:
  \begin{equation*}
    c g_{+}+ce^{2i \gamma}g_{-}=
    ce^{i \gamma}
    \begin{pmatrix} 
      e^{-i \gamma} K_{\alpha}+e^{i \gamma}K_{\alpha}
      \\ 
      -e^{-i \gamma}K_{1-\alpha} +e^{i \gamma}K_{1-\alpha}
    \end{pmatrix}=
    2ce^{i \gamma}
    \begin{pmatrix} 
      \cos \gamma\cdot K_{\alpha}
      \\ 
      i\sin \gamma\cdot K_{1-\alpha}
    \end{pmatrix},
  \end{equation*}
  \begin{equation*}
    ic g_{+}-ice^{2i \gamma}g_{-}=
    ic e^{i \gamma}
    \begin{pmatrix} 
      e^{-i \gamma} K_{\alpha}-e^{i \gamma}K_{\alpha}
      \\ 
      -e^{-i \gamma}K_{1-\alpha} -e^{i \gamma}K_{1-\alpha}
    \end{pmatrix}=
    2ce^{i \gamma}
    \begin{pmatrix} 
      \sin \gamma\cdot K_{\alpha}
      \\ 
      -i\cos \gamma\cdot K_{1-\alpha}
    \end{pmatrix}.
  \end{equation*}
  Thus we have
  \begin{equation*}
    D(d^{\gamma}_{0})=
    \{f+c 
    \left(
    \begin{smallmatrix}
      \cos \gamma \cdot K_{\alpha} \\
      i\sin \gamma \cdot K_{1-\alpha}
    \end{smallmatrix}
    \right)
    : f\in Y,\ c\in \mathbb{C}\}=
    Y \oplus
    \left[\left(
    \begin{smallmatrix}
      \cos \gamma \cdot K_{\alpha} \\
      i\sin \gamma \cdot K_{1-\alpha}
    \end{smallmatrix}
    \right)\right]
  \end{equation*}
  and
  \begin{equation}\label{d0gam}
    d_{0}^{\gamma}
    \left(
    f+c 
    \left(
    \begin{smallmatrix}
      \cos \gamma \cdot K_{\alpha} \\
      i\sin \gamma \cdot K_{1-\alpha}
    \end{smallmatrix}
    \right)
    \right)=
  \overline{d}_{0}f+c 
    \left(
    \begin{smallmatrix}
      \sin \gamma \cdot K_{\alpha} \\
      -i\cos \gamma \cdot K_{1-\alpha}
    \end{smallmatrix}
    \right).
  \end{equation}
  For uniformity we write also, for $k\neq0$,
  \begin{equation}\label{eq:dkgam}
    d_{k}^{\gamma}=\overline{d}_{k},
    \qquad
    D(d_{k}^{\gamma})=Y.
  \end{equation}
\end{enumerate}

We can now characterize all extensions of $\mathcal{D}_{A}$,
initially defined on $C_{c}^{\infty}(\mathbb{R}^{2}\setminus0)$.
Recall that $Y$ defined in \eqref{eq:domain} is the domain
of the closures $\overline{d}_{k}$.
For any $\gamma\in \mathbb{R}$ we set
\begin{equation*}
  Y^{\gamma}=  \left[\left(
    \begin{smallmatrix}
      \phi \\
      \psi
    \end{smallmatrix}
    \right)\right] \oplus
    \left[\left(
    \begin{smallmatrix}
      \cos \gamma \cdot K_{\alpha} \\
      i\sin \gamma \cdot K_{1-\alpha}
    \end{smallmatrix}
    \right)\right] ,\quad \left(
    \begin{smallmatrix}
      \phi \\
      \psi 
    \end{smallmatrix}
    \right)\in Y .
\end{equation*}
Note that $\partial_{\alpha}K_{\alpha}=-K_{1-\alpha}$,
$\partial_{1-\alpha}K_{1-\alpha}=-K_{\alpha}$. 
Thus the action of $d^{\gamma}_{0}$
can be written simply as follows:
\begin{equation*}
  d_{0}^{\gamma} 
  \begin{pmatrix}
     \phi_{0}  \\
     \psi_{0}  
  \end{pmatrix}
  =
  \begin{pmatrix}
    i \partial_{1-\alpha}\psi_{0}(r) \\
    i\partial_{\alpha}\phi_{0}(r)
  \end{pmatrix}
  \quad\text{for all}\quad 
  \begin{pmatrix}
    \phi_{0}\\\psi_{0}\end{pmatrix}
      \in Y^{\gamma}.
\end{equation*}
We have proved the following result:

\begin{theorem}[Selfadjoint extension for Dirac AB]\label{the:sadjextDAB}\label{ABsa}
  Any selfadjoint extension of $\mathcal{D}_{A}$ is of the
  form $\mathcal{D}_{A, \gamma}$, where $\gamma\in [0,2\pi)$,
  \begin{equation*}
    D(\mathcal{D}_{A,\gamma})=
    (Y^{\gamma}\otimes h_{0})
    \bigoplus_{k\neq0}(Y \otimes h_{k})
  \end{equation*}
  and
  \begin{equation*}
    \mathcal{D}_{A,\gamma}=
    (d_{0}^{\gamma}\otimes I_{2})
    \bigoplus_{k\neq0}(\overline{d}_{k}\otimes I_{2}).
  \end{equation*}
  Any $f\in D(\mathcal{D}_{A,\gamma})$ can be written as
  \begin{equation*}
    f=
    \sum_{k\in \mathbb{Z}}
    \begin{pmatrix}
      e^{ik \theta}\ \phi_{k}(r) \\
      e^{i(k+1)\theta}\ \psi_{k}(r)
    \end{pmatrix}
  \end{equation*}
  with 
  $(\begin{smallmatrix}\phi_{k}\\\psi_{k}\end{smallmatrix})\in Y$
  for $k\neq0$ while
  $(\begin{smallmatrix}\phi_{0}\\\psi_{0}\end{smallmatrix})
    \in Y^{\gamma}$
  for $k=0$, i.e., for some $c\in \mathbb{C}$ and
  $(\begin{smallmatrix}\tilde\phi_{0}\\\tilde\psi_{0}
    \end{smallmatrix}) \in Y$,
  \begin{equation*}
    \begin{pmatrix}
      \phi_{0} \\
      \psi_{0}
    \end{pmatrix}=
    c
    \begin{pmatrix}
      \cos \gamma \cdot K_{\alpha}(r) \\
      i\sin \gamma \cdot K_{1-\alpha}(r) 
    \end{pmatrix}
    +\begin{pmatrix}
       \tilde\phi_{0}(r) \\
       \tilde\psi_{0}(r)
    \end{pmatrix}.
  \end{equation*}
  The action of $\mathcal{D}_{A,\gamma}$ on such
  $f$ is given by
  \begin{equation}\label{DAf}
    \mathcal{D}_{A,\gamma}f=
    \sum_{k\in\Z}
    \begin{pmatrix}
      ie^{ik \theta}\ \partial_{k+1-\alpha}\psi_{k}(r) \\
      ie^{i(k+1)\theta}\ \partial_{\alpha-k}\phi_{k}(r)
    \end{pmatrix}.
    \end{equation}
\end{theorem}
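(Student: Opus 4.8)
The plan is to exploit the orthogonal angular decomposition \eqref{eq:spDir} to reduce the problem to the one--dimensional radial operators $d_k$, to classify their selfadjoint extensions by von Neumann's theory, and to reassemble the global operator from the sectors. First I would check that $\mathcal{D}_A$ maps each summand $L^2(rdr)^2\otimes h_k(\mathbb{S}^1)$ into itself, acting as $d_k\otimes I_2$; this is a direct computation, writing $(i\partial_1+A_1)\mp i(i\partial_2+A_2)$ in polar coordinates and using the explicit generators of $h_k$. Since the decomposition is orthogonal and $\mathcal{D}_A$ is symmetric and densely defined, both the adjoint $\mathcal{D}_A^{*}$ and the closure $\overline{\mathcal{D}}_A$ decompose accordingly, and the selfadjoint extensions of $\mathcal{D}_A$ are precisely the orthogonal direct sums over $k$ of selfadjoint extensions of the closures $\overline{d}_k$. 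Hence it suffices to treat each radial block $d_k$ on $[C_c^\infty((0,\infty))]^2$ in isolation.

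For the closure, I would integrate by parts in $\|d_k f\|_{L^2(rdr)}^2$ to obtain, for $f=(\phi,\psi)\in[C_c^\infty((0,\infty))]^2$, the identity $\|d_k f\|_{L^2(rdr)}^2=\|f'\|_{L^2(rdr)}^2+\|\tfrac{k+1-\alpha}{r}\psi\|_{L^2(rdr)}^2+\|\tfrac{k-\alpha}{r}\phi\|_{L^2(rdr)}^2$; since $\alpha\in(0,1)$ neither $k-\alpha$ nor $k+1-\alpha$ vanishes, so the graph norm dominates $\|f'\|_{L^2(rdr)}+\|f/r\|_{L^2(rdr)}$, which in turn controls $\|f\|_{L^\infty}$ via the elementary one--dimensional estimate. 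Passing to the limit along graph--Cauchy sequences then identifies $D(\overline{d}_k)=Y=X\times X$ with $X$ as in \eqref{eq:domain}, the action being unchanged. The adjoint is the maximal operator $D(d_k^{*})=\{f\in L^2(rdr)^2:d_kf\in L^2(rdr)^2 \text{ distributionally}\}$. To compute the deficiency spaces I would solve the first order systems \eqref{eq:syseq}, \eqref{eq:syseq2}, which decouple into modified Bessel equations of orders $k-\alpha$ and $k+1-\alpha$; discarding the exponentially growing $I_\sigma$ and using $K_\sigma(r)\sim r^{-|\sigma|}$ near the origin, one sees $K_\sigma\in L^2(rdr)$ iff $|\sigma|<1$, and requiring both $K_{k-\alpha},K_{k+1-\alpha}\in L^2(rdr)$ forces $k=0$. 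With $\partial_{-\sigma}K_\sigma=-K_{\sigma+1}$ to pin down the second component, this gives $\ker(d_k^{*}\mp i)=\{0\}$ for $k\neq0$, so $\overline{d}_k=d_k^{*}$ is already selfadjoint on $Y$, whereas for $k=0$ the spaces $\ker(d_0^{*}\mp i)$ are one dimensional, spanned by $g_\pm=(K_\alpha,\mp K_{1-\alpha})$; thus $n_+=n_-=1$.

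Von Neumann's theorem then produces the one--parameter family $d_0^\gamma$, $\gamma\in[0,2\pi)$, of selfadjoint extensions of $d_0$, corresponding to the unitaries $N_+\to N_-$ given by multiplication by $e^{2i\gamma}$, with $D(d_0^\gamma)=Y\oplus[g_++e^{2i\gamma}g_-]$ and $d_0^\gamma(f+cg_++ce^{2i\gamma}g_-)=\overline{d}_0f+icg_+-ice^{2i\gamma}g_-$. Rewriting $cg_++ce^{2i\gamma}g_-$ up to the nonzero factor $2ce^{i\gamma}$ as $(\cos\gamma\cdot K_\alpha,\,i\sin\gamma\cdot K_{1-\alpha})$ and its image as $(\sin\gamma\cdot K_\alpha,\,-i\cos\gamma\cdot K_{1-\alpha})$, and invoking the Bessel identities $\partial_\alpha K_\alpha=-K_{1-\alpha}$, $\partial_{1-\alpha}K_{1-\alpha}=-K_\alpha$, one checks that on all of $Y^\gamma$ the operator $d_0^\gamma$ acts by the plain differential expression $(\phi_0,\psi_0)\mapsto(i\partial_{1-\alpha}\psi_0,\,i\partial_\alpha\phi_0)$. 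Reassembling the blocks as in the first paragraph yields the stated domain $D(\mathcal{D}_{A,\gamma})=(Y^\gamma\otimes h_0)\oplus\bigoplus_{k\neq0}(Y\otimes h_k)$ and the action \eqref{DAf}.

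The main obstacle is the precise identification in the second paragraph: proving that $\overline{d}_k$ has \emph{exactly} the domain $Y$, so that $f(0)=0$ is genuinely the only surviving boundary condition and no extra behaviour at the origin slips through, and carefully bookkeeping the $L^2(rdr)$ integrability of $K_\sigma$ at both ends so that the deficiency spaces come out one dimensional for $k=0$ and trivial otherwise. Once these two facts are in place, the remainder is routine von Neumann theory together with elementary identities for Macdonald functions.
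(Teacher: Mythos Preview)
Your proposal is correct and follows essentially the same route as the paper: reduce via the orthogonal angular decomposition to the radial blocks $d_k$, identify $D(\overline{d}_k)=Y$ by the integration-by-parts identity, compute the deficiency spaces via the modified Bessel equations (trivial for $k\neq0$, one-dimensional for $k=0$ since $K_{k-\alpha},K_{k+1-\alpha}\in L^2(rdr)$ forces $k=0$), and then apply von Neumann's theory with the $\cos\gamma/\sin\gamma$ rewriting. The obstacles you flag---the exact identification of the closure domain and the careful bookkeeping of $L^2(rdr)$ integrability of $K_\sigma$---are precisely the points the paper spells out in detail, so you have correctly isolated where the work lies.
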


\begin{remark}\rm
  The singular term, that is the part associated with the modified Bessel functions, does not vanish for any choice of the parameter $\gamma$ in the self-adjoint extension. This is in sharp contrast with what happens for the second order AB Hamiltonian: in that case the Friedrichs extension does not contain any singular function in the domain. We remark that this fact was overlooked in the previous papers \cite{CF, CYZ}.
\end{remark}

\section{Generalized eigenfunctions and relativistic Hankel transform}\label{sec:3}

The first step in order to study the dynamics of system \eqref{eq:dirac} will be to provide an explicit representation formula for the solution. 
We begin by introducing the definition of the {\em relativistic Hankel transform}.

\subsection{The generalized eigenfunctions} 
The generalized eigenfunctions of the self-adjoint operator $d_{k}^\gamma$ are the weak solutions of
\begin{equation}\label{eq:geneg}
d_{k}^\gamma \Psi_k(\rho r)=d_{k}^\gamma \left(\begin{matrix} \psi_k^1(\rho r)\\ \psi_k^2(\rho r)\end{matrix}\right)=\rho  \left(\begin{matrix} \psi_k^1(\rho r)\\ \psi_k^2(\rho r)\end{matrix}\right)
\end{equation}
for $\rho\in\mathbb{R}$. In the following we will assume $\rho>0$,
since the case of negative energies is similar.
Moreover, in view of the scaling properties of $d^{\gamma}_{k}$,
it is sufficient to compute the egienfunctions for $\rho=1$
and then rescale them

Recalling the definition of $d_{k}$, equation \eqref{eq:geneg} 
for $\rho=1$ reads
\begin{equation}\label{sys-bessel}
\begin{cases} i ( \frac{d}{dr} \psi^2_k(r)+\frac{k+1-\alpha}r \psi^2_k(r))= \psi^1_k(r)\\
i (\frac{d}{dr} \psi^1_k(r)-\frac{k-\alpha}r \psi^1_k(r))= \psi^2_k(r)
\end{cases}
\end{equation}
which is equivalent to the Bessel equation
\begin{equation}\label{sys-bessel'}
  \frac{d^2}{dr^2} \psi^1_k(r)+\frac1r  \frac{d}{dr} \psi^1_k(r)+(1-\frac{(k-\alpha)^2}{r^2})\psi^1_k(r)=0
\end{equation}
whose solutions are
\begin{equation}\label{correctsol}\psi^1_k(r)= J_{\pm(k-\alpha)}(r).
\end{equation}
%\begin{equation}
%\frac{d}{dr}(r^{-\alpha}J_{\alpha}(r))=r^{-\alpha}J_{1+\alpha}(r),\quad \Re(\alpha)>-1/2,
%\end{equation}
%\begin{equation}
%\frac{d}{dr}(r^{\alpha}J_{\alpha}(r))=r^{\alpha}J_{\alpha-1}(r),\quad \Re(\alpha)>1/2,
%\end{equation}
%The recurrence formula \cite[Page 45, (3)(4)]{Watson}, for all $\nu$
%\begin{equation}
%rJ'_{\nu}(r)+\nu J_{\nu}(r)=rJ_{\nu-1}(r),
%\end{equation}
%\begin{equation}
%rJ'_{\nu}(r)-\nu J_{\nu}(r)=-rJ_{\nu+1}(r).
%\end{equation}
Thus the solutions to \eqref{sys-bessel} are given by
(any scalar  multiple of)
\begin{equation}\label{Jsol}
  \begin{pmatrix}
 \psi^1_k(r)\\
  \psi^2_k(r)
  \end{pmatrix}=\begin{pmatrix}
   J_{k-\alpha}(r)\\
   - iJ_{k+1-\alpha}(r)
  \end{pmatrix} \quad \text{or}\,\quad \begin{pmatrix}
   J_{-(k-\alpha)}(r)\\
    iJ_{-(k+1-\alpha)}(r)
  \end{pmatrix}.
\end{equation}
Recalling the well known asymptotic behaviour of Bessel functions
as $x\to0$
$$
J_\nu(x) = \frac{x^\nu}{2^\nu \Gamma(1+\nu)}+O(x^{\nu+2}),
$$
square integrability at the origin fixes the sign above, as long as $(k-\alpha) \not\in(-1,0)$, that is for $k\neq0$: we thus take
\begin{equation}\label{gen-eig'}
\begin{cases}
%\psi^1_k(r)= c_1 J_{-\alpha}(r)+c_2J_{\alpha}(r),\quad \psi^2_k(r)=-c_1 iJ_{1-\alpha}(r)+c_2 iJ_{-(1-\alpha)}(r)\, {\rm if} \, k=0,\\
\psi^1_k(r)= J_{|k-\alpha|}(r),\quad \psi^2_k(r)=-iJ_{|k+1-\alpha|}(r)\quad {\rm if}\, k\geq 1, \\
\psi^1_k(r)= J_{|k-\alpha|}(r),\quad \psi^2_k(r)=iJ_{|k+1-\alpha|}(r)\qquad {\rm if} \, k\leq -1.
\end{cases}
\end{equation}
In the remaining case $k=0$, both choices in \eqref{Jsol} lead to solutions that are square integrable near the origin, though one component is more singular than the other. Thus the choice of the generalized eigenfunction in the case $k=0$ will be dictated by
the choice of the selfadjoint extension,
according to Theorem \ref{ABsa}.
Of course, choosing an eigenfunction reduces to the choice of 
$c_{1},c_{2}$ in the linear combination
\begin{equation}\label{gen-eig''}
\psi^1_0(r)= c_1 J_{-\alpha}(r)+c_2J_{\alpha}(r),\quad \psi^2_0(r)=-c_1 iJ_{1-\alpha}(r)+c_2 iJ_{-(1-\alpha)}(r).
\end{equation}

Let us fix our choices.
The radial Hamiltonian $$d^{\gamma}_{0}=\left(\begin{matrix} 0& i (\frac{d}{dr}+\frac{1-\alpha}r) \\ i (\frac{d}{dr}+\frac{\alpha}r) & 0\end{matrix}\right) $$ is symmetric, that is, 
for arbitrary spinors $\varphi,\chi\in D(d^{\gamma}_{0})$
 \begin{align}\label{sym}
 \langle  d^{\gamma}_{0} \chi(r), 
 \varphi(r)\rangle_{L^2(rdr)}=\langle\chi(r), d^{\gamma}_{0}\varphi(r)\rangle_{L^2(rdr)}.
 \end{align}
In components $\varphi(r)=\left(
\begin{smallmatrix} \varphi_1(r) \\ \varphi_2(r) 
\end{smallmatrix}
\right)$ 
and 
$\chi(r)=\left(\begin{smallmatrix} \chi_1(r) \\ \chi_2(r)
\end{smallmatrix} \right)$
we have
\begin{align*}
 & \langle  d^{\gamma}_{0} \chi(r), \varphi(r)\rangle_{L^2(rdr)}=\int^\infty_0  (\overline{\varphi_1}(r), \overline{\varphi_2}(r)) \left(\begin{matrix} 0& i (\frac{d}{dr}+\frac{1-\alpha}r) \\ i (\frac{d}{dr}+\frac{\alpha}r) & 0\end{matrix}\right) \left(
    \begin{smallmatrix}
      \chi_1(r) \\
          \chi_2(r)
    \end{smallmatrix}
    \right)  r \,dr\\
   & =\int^\infty_0  \Big[\overline{ \left(\begin{matrix} 0& i (\frac{d}{dr}+\frac{1-\alpha}r) \\ i (\frac{d}{dr}+\frac{\alpha}r) & 0\end{matrix}\right)\left(
    \begin{smallmatrix}
      \varphi_1(r) \\
          \varphi_2(r)
    \end{smallmatrix}
    \right) }\Big]^T\left(
    \begin{smallmatrix}
      \chi_1(r) \\
          \chi_2(r)
    \end{smallmatrix}
    \right)  r \,dr+i\left(\overline{\varphi_1}\chi_2r+\overline{\varphi_2}\chi_1r\right)\Big|^\infty_0\\
 &=\langle\chi(r), d^{\gamma}_{0}\varphi(r)\rangle_{L^2(rdr)}+i\left(\overline{\varphi_1}\chi_2r+\overline{\varphi_2}\chi_1r\right)\Big|^\infty_0.
 \end{align*}
Thus \eqref{sym} will be satisfied if and only 
if the spinors satisfy 
 \begin{align*}
ir\left(\overline{\varphi_1}\chi_2+\overline{\varphi_2}\chi_1\right)\Big|^\infty_0=0
 \end{align*}
that is to say
\begin{equation}\label{bdy}
  \lim_{r\to 0}r\left(\overline{\varphi_1}\chi_2+\overline{\varphi_2}\chi_1\right)=
  \lim_{r\to \infty}r\left(\overline{\varphi_1}\chi_2+\overline{\varphi_2}\chi_1\right)=0.
\end{equation}
 % \begin{align}\label{bdy}
 % i\lim_{r\rightarrow0}r\varphi^T(r)\cdot\sigma_1\chi(r)=i\lim_{r\rightarrow+\infty}r\varphi^T(r)\cdot\sigma_1\chi(r).
 % \end{align}
Now, in order to choose $c_1, c_2$ in \eqref{gen-eig''}, let us consider the spinors 
$\varphi(r)=\left(
\begin{smallmatrix}
  \cos \gamma \cdot K_{\alpha} \\
  i\sin \gamma \cdot K_{1-\alpha}
\end{smallmatrix}
\right)$
and 
$\chi(r)=\left(
\begin{smallmatrix}
  \psi^1_0(r) \\
     \psi^2_0(r)
\end{smallmatrix}
\right)$, 
which belong to $D(d^\gamma_0)$ as we know. 
Since $K_{\sigma}$ is exponentially decaying and $J_{\nu}$ decays as $r\to\infty$, the condition at infinity is
obviously satisfied.
%  \begin{align*}
% i\lim_{r\to \infty}r\left(\overline{\varphi_1}\chi_2+\overline{\varphi_2}\chi_1\right)=0,
%  \end{align*}
% hence we obtain
%  \begin{align}
%  &i\lim_{r\rightarrow\infty}r( \psi^2_0(r)\cos\gamma K_{\alpha}(r)-i\sin\gamma K_{1-\alpha}(r) \psi^1_0(r))=0.
%  \end{align}
On the other hand, since
$K_{\sigma}(r)\sim \frac12 \Gamma(\sigma)(\frac12r)^{-|\sigma|}$ as $r \to 0$,  
then \begin{align}
\chi(r)=\left(
    \begin{smallmatrix}
      \psi^1_0(r) \\
         \psi^2_0(r)
    \end{smallmatrix}
    \right)\sim\left(
    \begin{smallmatrix}
     \cos\gamma\, r^{-\alpha} \\
     i\sin\gamma \, r^{-(1-\alpha)}
    \end{smallmatrix}
    \right)\quad \text{as}\, r\to0.
\end{align}
Recalling \eqref{gen-eig''}, 
and using that $J_{\nu}(r)\sim r^{\nu}$ near $r=0$, 
the boundary condition at $r=0$ implies
$$c_1=\tilde{c_1}\cos\gamma,\quad c_2=\tilde{c_2}\sin\gamma$$
and we must choose the eigenfunctions
\begin{equation*}
\begin{split}
\psi^1_0(r)&= \tilde{c_1}\cos\gamma J_{-\alpha}(r)+\tilde{c_2}\sin\gamma J_{\alpha}(r),\\ \psi^2_0(r)&=-\tilde{c_1}\cos\gamma iJ_{1-\alpha}(r)+\tilde{c_2}\sin\gamma iJ_{-(1-\alpha)}(r).
\end{split}
\end{equation*}
 This explicitly connects the choice of the generalized eigenfunction for $k=0$ to the choice of the self-adjoint extensions of the operator, that is the choice of the parameter $\gamma$ in Theorem \ref{ABsa}. Thus in the case $k=0$ we may choose any
 \begin{equation}\label{geneige0}
   \Psi_{0}(r)=
   \begin{pmatrix} 
   \psi^1_0(r)\\ \psi^2_0(r)
   \end{pmatrix}
   =c
   \begin{pmatrix}
    \cos \gamma J_{-\alpha}(r)+\sin \gamma J_{\alpha}(r)  \\
    -i\cos \gamma J_{1-\alpha}(r)+i\sin \gamma J_{\alpha-1}(r)
   \end{pmatrix}.
\end{equation}

\begin{remark}\rm
  For negative energies the computation is completely
  analogous. We obtain eigenfunctions of the form
  \begin{equation}\label{negen}
  \left(\begin{matrix}\phi_k^1(\rho r)\\ \phi_k^2(\rho r)\end{matrix}\right)=
  \left(\begin{matrix}\psi_k^1(|\rho| r)\\ -\psi_k^2(|\rho| r)\end{matrix}\right),\qquad \rho<0.
  \end{equation}
\end{remark}

\subsection{The relativistic Hankel transform}\label{sec:relhan}

After the previous discussion, we are now ready to define
the \emph{relativistic Hankel transform}, which will be an essential
tool in the rest of the paper. This transform in the Dirac Aharonov-Bohm case was first 
introduced in \cite{CF}, and produces an explicit representation of the propagator via the generalized eigenfunctions. We propose here a version slightly different from the one in \cite{CF}, which allows for a much simpler representation.

The eigenfunctions are normalized as follows:
given $\alpha\in(0,1)$, we choose
\begin{itemize}
  \item 
  for $k\ge1$,
  \begin{equation}\label{eq:psika}
    \Psi_{k}(r)=
    \frac{1}{\sqrt{2}}
    \begin{pmatrix}
      J_{k-\alpha}(r)  \\
      -iJ_{k+1-\alpha}(r)  
    \end{pmatrix}
  \end{equation}
  \item 
  for $k\le-1$,
  \begin{equation}\label{eq:psikb}
    \Psi_{k}(r)=
    \frac{1}{\sqrt{2}}
    \begin{pmatrix}
      J_{\alpha-k}(r)  \\
      iJ_{\alpha-k-1}(r)  
    \end{pmatrix}
  \end{equation}
  \item 
  for $k=0$,
  \begin{equation}\label{eq:psik0}
    \Psi_{0}(r)=
    \frac{1}{\sqrt{2}}
    \begin{pmatrix}
     \cos \gamma J_{-\alpha}(r)+\sin \gamma J_{\alpha}(r)  \\
     -i\cos \gamma J_{1-\alpha}(r)+i\sin \gamma J_{\alpha-1}(r)
    \end{pmatrix}.
  \end{equation}
\end{itemize}
Recall that for negative energies 
we must change the sign of the second component, which is
the same as taking the conjugate of $\Psi_{k}$
(see \eqref{negen}).

Then we define the \emph{relativistic Hankel transform} as a
sequence of operators, $k\in \mathbb{Z}$
\begin{equation}\label{eq:Pkspa}
  \mathcal{P}_{k}:[L^{2}(\mathbb{R}^{+},rdr)]^{2}
    \to L^{2}(\mathbb{R},|\rho|d\rho)
\end{equation}
acting on spinors
$f(r)=\big(\begin{smallmatrix}
   f_1 (r) \\ f_2 (r)\end{smallmatrix}\big)$, $r>0$,
as
\begin{equation}\label{hankel}
  \mathcal{P}_{k}f(\rho)=
  \begin{cases}
    \int_{0}^{+\infty}\Psi_{k}(r \rho)^{T}\cdot f(r)rdr
     &\text{if $ \rho>0 $,}\\
     \int_{0}^{+\infty}\overline{\Psi_{k}}(r |\rho|)^{T}
       \cdot f(r)rdr
     &\text{if $ \rho<0 $.}
  \end{cases}
\end{equation}
Note that $\mathcal{P}_{k}$ takes spinors into scalar
functions. 
$\mathcal{P}_{k}$ can be written as a combination
of Hankel transforms $\mathcal{H}_{\nu}$ of suitable order
$\nu$, defined as
\begin{equation*}
  \mathcal{H}_{\nu}\phi(\rho)=
  \int_{0}^{+\infty}J_{\nu}(r \rho)\phi(r)rdr, \quad\rho>0.
\end{equation*}
Thus we can write for $k\ge1$, with $\nu=k-\alpha$
\begin{equation}\label{eq:hankmat1}
  \mathcal{P}_{k} f(\rho)=
  2^{-\frac 12}
  \begin{cases}
    \mathcal{H}_{\nu}f_{1}(\rho)-i\mathcal{H}_{\nu+1}f_{2}(\rho)
      &\text{if $ \rho>0 $,}\\
    \mathcal{H}_{\nu}f_{1}(|\rho|)+i\mathcal{H}_{\nu+1}f_{2}(|\rho|)
     &\text{if $ \rho<0 $}
  \end{cases}
\end{equation}
while for $k\le-1$, with $\nu=\alpha-k$, we have
\begin{equation}\label{eq:hankmat2}
  \mathcal{P}_{k} f(\rho)=
  2^{-\frac 12}
  \begin{cases}
    \mathcal{H}_{\nu}f_{1}(\rho)+i\mathcal{H}_{\nu-1}f_{2}(\rho)
      &\text{if $ \rho>0 $,}\\
    \mathcal{H}_{\nu}f_{1}(|\rho|)-i\mathcal{H}_{\nu-1}f_{2}(|\rho|)
     &\text{if $ \rho<0 $}
  \end{cases}
\end{equation}
and finally for $k=0$
\begin{equation}\label{eq:hankmat0}
  \mathcal{P}_{0}f(\rho)=
  2^{-\frac 12}
  \begin{cases}
    \cos \gamma
    (\mathcal{H}_{-\alpha}f_{1}
    -i \mathcal{H}_{1-\alpha}f_{2})
    +\sin \gamma
    (\mathcal{H}_{\alpha}f_{1}+
    i \mathcal{H}_{\alpha-1}f_{2})
     &\text{$ \rho>0 $,}\\
     \cos \gamma
     (\mathcal{H}_{-\alpha}f_{1}+i \mathcal{H}_{1-\alpha}f_{2})
     +\sin \gamma
     (\mathcal{H}_{\alpha}f_{1}-i \mathcal{H}_{\alpha-1}f_{2})
     &\text{$ \rho<0 $.}
  \end{cases}
\end{equation}

As it is well known, the Hankel
transform $\mathcal{H}_{\nu}$ is a unitary operator on 
$L^{2}(\mathbb{R}^{+},rdr)$ satisfying
\begin{equation}\label{eq:propHn}
  \mathcal{H}_{\nu}^{2}=I,
  \qquad
  \mathcal{H}_{\nu}\partial_{\nu+1}=
  r\mathcal{H}_{\nu+1},
  \qquad
  \mathcal{H}_{\nu+1}\partial_{-\nu}=
  -r \mathcal{H}_{\nu}.
\end{equation}
The last two properties hold 
on the domain of $\partial_{\nu}=\partial_{r}+\frac{\nu}{r}$
(see \eqref{eq:domain}), and $r$ denotes the multiplication by
the independent variable. Thus by \eqref{eq:hankmat1}, 
\eqref{eq:hankmat2} we see that $\mathcal{P}_k$ is 
a well defined operator on the spaces \eqref{eq:Pkspa} for any $k\in\mathbb{Z}$.

Let us now show that the operator $\mathcal{P}_{k}$ diagonalizes $d_{k}$.
For $k\ge \alpha-\frac 12$ 
we have, denoting with $\nu=k-\alpha$,
\begin{equation*}
  \mathcal{P}_{k}d_{k}
  \begin{pmatrix}
    \phi  \\
     \psi
  \end{pmatrix}=
  \mathcal{P}_{k}
  \begin{pmatrix}
    i \partial_{\nu+1}\psi  \\
    i \partial_{-\nu} \phi
  \end{pmatrix}=2^{-\frac 12}
  \begin{cases}
    \mathcal{H}_{\nu}(i\partial_{\nu+1}\psi)
    -i \mathcal{H}_{\nu+1}
    (i\partial_{-\nu}\phi) 
    &\text{$ \rho>0 $,}\\
    \mathcal{H}_{\nu}(i\partial_{\nu+1}\psi)
    +i  \mathcal{H}_{\nu+1}
    (i\partial_{-\nu}\phi) 
     &\text{$ \rho<0, $}
  \end{cases}
\end{equation*}
and recalling \eqref{eq:propHn}
\begin{equation*}
  \left.
  =2^{-\frac 12}
  \begin{cases}
    -\rho(\mathcal{H}_{\nu}\phi-i \mathcal{H}_{\nu+1}\psi)
    &\text{$ \rho>0$,}\\
    |\rho| (\mathcal{H}_{\nu}\phi+i \mathcal{H}_{\nu+1}\psi )
     &\text{$ \rho<0,$}
  \end{cases}
  \right\}=
  -\rho\mathcal{P}_{k}
  \begin{pmatrix}
    \phi \\
    \psi 
  \end{pmatrix}.
\end{equation*}
Taking closures, this implies for all $k\ge \alpha-\frac 12$ and
$(\begin{smallmatrix} \phi  \\ \psi \end{smallmatrix})\in Y$
\begin{equation*}
  \mathcal{P}_{k}\overline{d}_{k}
  \begin{pmatrix}
    \phi  \\
     \psi
  \end{pmatrix}=
  -\rho\mathcal{P}_{k}
  \begin{pmatrix}
    \phi \\
    \psi 
  \end{pmatrix}.
\end{equation*}
The computation for $k<\alpha-\frac 12$ is similar.
The case $k=0$ requires an additional argument
since the domain $Y^{\gamma}$ of $d_{0}^{\gamma}$
contains also the linear span of the vector
\begin{equation*}
  v_{\gamma}=
    \begin{pmatrix}
      \cos \gamma \cdot K_{\alpha} \\
      i\sin \gamma \cdot K_{1-\alpha}
    \end{pmatrix}
\end{equation*}
which is mapped to
\begin{equation*}
  d_{0}^{\gamma}v_{\gamma}=
  w_{\gamma}:=
    \begin{pmatrix}
      \sin \gamma \cdot K_{\alpha} \\
      -i\cos \gamma \cdot K_{1-\alpha}
    \end{pmatrix}.
\end{equation*}
We can compute $\mathcal{P}_{0}v_{\gamma}$, 
$\mathcal{P}_{0}w_{\gamma}$ explicitly,
using the identity
\begin{equation*}
  \mathcal{H}_{\nu}K_{\nu}(\rho)=
  \frac{\rho^{\nu}}{1+\rho^{2}},
  \qquad
  \nu>-1
\end{equation*}
(see Section 6.5.2 of \cite{GradshteynRyzhik15-y}).
This gives
\begin{equation*}
  \mathcal{P}_{0}v_{\gamma}(\rho)=
  \frac{2^{-\frac 12}}{1+\rho^{2}}
  \begin{cases}
    \cos \gamma
    (\rho^{-\alpha}\cos \gamma+\rho^{1-\alpha} \sin \gamma)
    +\sin \gamma
    (\rho^{\alpha}\cos \gamma -\rho^{\alpha-1}\sin \gamma )
    &\text{$ \rho>0 $,}\\
    \cos \gamma
    (|\rho|^{-\alpha}\cos \gamma-|\rho|^{1-\alpha} \sin \gamma)+
    \sin \gamma
    (|\rho|^{\alpha}\cos \gamma +|\rho|^{\alpha-1}\sin \gamma )
     &\text{$ \rho<0 $}
  \end{cases}
\end{equation*}
\begin{equation*}
  % \mathcal{P}_{0}d_{0}^{\gamma}v_{\gamma}(\rho)=
  \mathcal{P}_{0}w_{\gamma}(\rho)=
  \frac{2^{-\frac 12}}{1+\rho^{2}}
  \begin{cases}
    \cos \gamma
    (\rho^{-\alpha}\sin \gamma-\rho^{1-\alpha}\cos \gamma)+
    \sin \gamma
    (\rho^{\alpha}\sin \gamma+\rho^{\alpha-1}\cos \gamma)
    &\text{$ \rho>0 $,}\\
    \cos \gamma
    (|\rho|^{-\alpha}\sin \gamma+|\rho|^{1-\alpha}\cos \gamma )+
    \sin \gamma
    (|\rho|^{\alpha}\sin \gamma-|\rho|^{\alpha-1}\cos \gamma)
     &\text{$ \rho<0 $}
  \end{cases}
\end{equation*}
and summing up
\begin{equation*}
  \mathcal{P}_{0}d_{0}^{\gamma}v_{\gamma}(\rho)+
  \rho\mathcal{P}_{0}v_{\gamma}(\rho)=
  2^{-\frac 12}\sin \gamma\cos \gamma
  \begin{cases}
    (\rho^{-\alpha}+\rho^{\alpha-1}) &\text{if $ \rho>0 $,}\\
    |\rho|^{-\alpha}-|\rho|^{\alpha-1} &\text{if $ \rho<0 $.}
  \end{cases}
\end{equation*}
We conclude that
\begin{equation*}
    \mathcal{P}_{0}d_{0}^{\gamma}=
  -\rho\mathcal{P}_{0}
  \qquad\text{holds on $Y^{\gamma}$ iff}\qquad 
  \sin \gamma\cos \gamma=0.
\end{equation*}
From now on we assume that $\sin \gamma\cos \gamma=0$.
Note that if $\sin \gamma=0$ then $\mathcal{P}_{0}$
takes the form \eqref{eq:hankmat1} with
$\nu=-\alpha$, while if $\cos \gamma=0$ then
$\mathcal{P}_{0}$ has the form \eqref{eq:hankmat2}
with $\nu=\alpha$.
% \begin{equation}\label{eq:gamma0}
%   \mathcal{P}_{0}d_{0}^{\gamma}v_{\gamma}(\rho)=
%   -|\rho|\mathcal{P}_{0}v_{\gamma}(\rho)
%   \qquad\text{iff}\qquad 
%   \sin \gamma=0.
% \end{equation}
% In a similar way, in the case $\alpha>\frac 12$ we have
% \begin{equation*}
%   \mathcal{P}_{0}v_{\gamma}(\rho)=
%   \frac{2^{-\frac 12}}{1+\rho^{2}}
%   \begin{cases}
%     \rho^{\alpha}\cos \gamma-\rho^{\alpha-1} \sin \gamma
%     &\text{$ \rho>0 $,}\\
%     e^{\pi \alpha i}
%     (\rho^{\alpha}\cos \gamma+\rho^{\alpha-1} \sin \gamma)
%      &\text{$ \rho<0 $}
%   \end{cases}
% \end{equation*}
% \begin{equation*}
%   \mathcal{P}_{0}d_{0}^{\gamma}v_{\gamma}(\rho)=
%   \mathcal{P}_{0}w_{\gamma}(\rho)=
%   \frac{2^{-\frac 12}}{1+\rho^{2}}
%   \begin{cases}
%     \rho^{\alpha}\sin \gamma+\rho^{\alpha-1}\cos \gamma 
%     &\text{$ \rho>0 $,}\\
%     e^{\pi \alpha i}
%     (\rho^{\alpha}\sin \gamma+\rho^{\alpha-1}\cos \gamma )
%      &\text{$ \rho<0 $}
%   \end{cases}
% \end{equation*}
% so that
% \begin{equation}\label{eq:gammapi}
%   \mathcal{P}_{0}d_{0}^{\gamma}v_{\gamma}(\rho)=
%   -|\rho|\mathcal{P}_{0}v_{\gamma}(\rho)
%   \qquad\text{iff}\qquad 
%   \cos \gamma=0.
% \end{equation}

Finally, we prove that $\mathcal{P}_{k}$ is invertible.
% For $k\neq0$ we have explicitly
% \begin{equation}\label{eq:invPk1}
%   \mathcal{P}_{k}^{-1}\phi(r)=
%   2^{-\frac 12}
%   \begin{pmatrix}
%     \int_{\mathbb{R}}\overline{J_{k-\alpha}(r\rho)}
%       \phi(\rho)|\rho|d\rho  \\
%     i\int_{\mathbb{R}}\overline{J_{k+1-\alpha}(r \rho)}
%       \phi(\rho)|\rho|d\rho  
%   \end{pmatrix}
%   \quad\text{if}\quad k\ge \alpha-\frac 12
% \end{equation}
% \begin{equation}\label{eq:invPk2}
%   \mathcal{P}_{k}^{-1}\phi(r)=
%   2^{-\frac 12}
%   \begin{pmatrix}
%     \int_{\mathbb{R}}\overline{J_{\alpha-k}(r \rho)}
%       \phi(\rho)|\rho|d\rho  \\
%     i\int_{\mathbb{R}}\overline{J_{\alpha-k-1}(r \rho)}
%       \phi(\rho)|\rho|d\rho  
%   \end{pmatrix}
%   \quad\text{if}\quad k< \alpha-\frac 12.
% \end{equation}
% for all $\phi\in L^{2}(\mathbb{R},|\rho|d\rho)$.
Given $\phi\in L^{2}(\mathbb{R},|\rho|d\rho)$,
write $\phi_{+}(\rho)=\phi(\rho)$ and $\phi_{-}(\rho)=\phi(-\rho)$ 
for $\rho>0$. Then we can write
\begin{equation}\label{eq:Pm1a}
  \mathcal{P}_{k}^{-1}\phi =
  2^{-\frac 12}
  \begin{pmatrix}
     \mathcal{H}_{\nu}&
       \mathcal{H}_{\nu}  \\
     i\mathcal{H}_{\nu+1}&
       -i\mathcal{H}_{\nu+1}
  \end{pmatrix}
  \begin{pmatrix}
    \phi_{+} \\
     \phi_{-} 
  \end{pmatrix},
  \qquad
  \nu=k-\alpha\ge-\frac 12
\end{equation}
\begin{equation}\label{eq:Pm1b}
  \mathcal{P}_{k}^{-1}\phi =
  2^{-\frac 12}
  \begin{pmatrix}
     \mathcal{H}_{\nu}&
       \mathcal{H}_{\nu}  \\
     -i\mathcal{H}_{\nu-1}&
       i\mathcal{H}_{\nu-1}
  \end{pmatrix}
  \begin{pmatrix}
    \phi_{+} \\
     \phi_{-} 
  \end{pmatrix},
  \qquad
  \nu=\alpha-k>-\frac 12.
\end{equation}
and using $\mathcal{H}_{\nu}^{2}=I$ one verifies 
that this operator actually inverts $\mathcal{P}_{k}$.
It is also clear that 
$\mathcal{P}_{k}^{-1}=\mathcal{P}_{k}^{*}$
so that $\mathcal{P}_{k}$ is unitary.

In the case $k=0$ we restricted $\gamma$ 
so that $\sin \gamma\cos \gamma=0$.
By the same computations as above, one checks that
if $\sin \gamma=0$ then $\mathcal{P}_{0}^{-1}$
is given by formula \eqref{eq:Pm1a} with $\nu=-\alpha$,
while if $\cos \gamma=0$ then $\mathcal{P}_{0}^{-1}$
is given by formula \eqref{eq:Pm1b} with $\nu=\alpha$ (we use the well known fact that the Hankel inversion formula holds as long as $\nu \geq-1/2$).

Summing up, we have proved:

\begin{proposition}\label{pro:hankel}
  Let $\alpha\in(0,1)$ and $\gamma\in[0,2\pi)$ such that
  \begin{equation}\label{eq:sincos}
    \sin \gamma\cos \gamma=0.
  \end{equation}
  Consider the selfadjoint operators $d^{\gamma}_{k}$ 
  defined in \eqref{d0gam}, \eqref{eq:dkgam},
  with domains $Y$ for $k\neq0$ and $Y^{\gamma}$ for $k=0$.
  Define the relativistic Hankel transform
  $\mathcal{P}_{k}$ as in \eqref{hankel},
  with $\Psi_{k}$ given by \eqref{eq:psika}, \eqref{eq:psikb},
  \eqref{eq:psik0}.

  Then $\mathcal{P}_{k}$ can be written in the equivalent forms
  \eqref{eq:hankmat1}, \eqref{eq:hankmat2},
  \eqref{eq:hankmat0}. Moreover,
  \begin{enumerate}
    \item 
    $\mathcal{P}_{k}:
      L^{2}((0,+\infty),rdr)^{2}\to
      L^{2}(\mathbb{R},|\rho|d\rho)$ 
    is a bounded bijection.

    \item 
    $\mathcal{P}_{k}$ is unitary.
    $\mathcal{P}_{k}^{-1}=\mathcal{P}^{*}_{k}$
    is given by \eqref{eq:Pm1a} for $k\ge1$ or
    $k=0$ and $\sin \gamma=0$, and by
    \eqref{eq:Pm1b} if $k\le-1$ or $k=0$ and
    $\cos \gamma=0$.

    \item 
    We have
    $\mathcal{P}_{k}d_{k}^{\gamma}=-\rho\mathcal{P}_{k}$
    on $Y^\gamma$ for all $k\in \mathbb{Z}$.
  \end{enumerate}
\end{proposition}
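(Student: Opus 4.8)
The plan is to obtain all three assertions by assembling the computations already carried out above, organised around the single observation that, for every $k$, the operator $\mathcal{P}_{k}$ is a fixed constant unitary $2\times2$ matrix composed with a pair of ordinary Hankel transforms.

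First I would record the matrix form of $\mathcal{P}_{k}$: inserting the explicit eigenfunctions \eqref{eq:psika}, \eqref{eq:psikb}, \eqref{eq:psik0} into the definition \eqref{hankel} yields at once the representations \eqref{eq:hankmat1}, \eqref{eq:hankmat2}, \eqref{eq:hankmat0}. Identifying $L^{2}(\mathbb{R},|\rho|d\rho)$ with $L^{2}((0,+\infty),\rho\,d\rho)^{2}$ via $\phi\mapsto(\phi_{+},\phi_{-})$, $\phi_{\pm}(\rho)=\phi(\pm\rho)$, each of these formulas exhibits $\mathcal{P}_{k}$ in the form $M\circ\mathrm{diag}(\mathcal{H}_{\nu},\mathcal{H}_{\nu\pm1})$, where $M$ is a constant unitary matrix (for instance $M=2^{-1/2}\big(\begin{smallmatrix}1&-i\\1&i\end{smallmatrix}\big)$ when $k\ge1$, with $\nu=k-\alpha$). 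Since $\mathcal{H}_{\nu}$ is a unitary involution on $L^{2}((0,+\infty),rdr)$ for the orders $\nu$ that occur — here I would cite the Hankel inversion theorem, valid for $\nu\ge-1/2$, together with the self-adjointness $\mathcal{H}_{\nu}^{*}=\mathcal{H}_{\nu}$ — the operator $\mathrm{diag}(\mathcal{H}_{\nu},\mathcal{H}_{\nu\pm1})$ is unitary on $L^{2}(rdr)^{2}$, and composing with the constant unitary $M$ shows that $\mathcal{P}_{k}$ is a unitary bijection. This settles (1) and the unitarity in (2). Taking adjoints gives $\mathcal{P}_{k}^{*}=\mathrm{diag}(\mathcal{H}_{\nu},\mathcal{H}_{\nu\pm1})\,M^{*}$, and writing this out, using $\mathcal{H}_{\nu}^{2}=I$, reproduces precisely formulas \eqref{eq:Pm1a}, \eqref{eq:Pm1b}; since $\mathcal{P}_{k}^{*}=\mathcal{P}_{k}^{-1}$ for a unitary, these formulas compute the inverse, which is the rest of (2).

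For the diagonalization (3) I would reuse the intertwining computation already displayed: for $k\ge\alpha-\tfrac12$, the relations \eqref{eq:propHn} give $\mathcal{P}_{k}\overline{d}_{k}=-\rho\,\mathcal{P}_{k}$ on $Y$, and the case $k<\alpha-\tfrac12$ is identical after exchanging the roles of the two spinor components. For $k=0$ the same argument yields $\mathcal{P}_{0}d_{0}^{\gamma}=-\rho\,\mathcal{P}_{0}$ on the regular part $Y$ of $Y^{\gamma}$, so only the singular line spanned by $v_{\gamma}=\big(\cos\gamma\cdot K_{\alpha},\,i\sin\gamma\cdot K_{1-\alpha}\big)^{T}$ remains; there I would use the closed form $\mathcal{H}_{\nu}K_{\nu}(\rho)=\rho^{\nu}/(1+\rho^{2})$ to evaluate $\mathcal{P}_{0}v_{\gamma}$ and $\mathcal{P}_{0}(d_{0}^{\gamma}v_{\gamma})$ explicitly, obtaining $\mathcal{P}_{0}d_{0}^{\gamma}v_{\gamma}+\rho\,\mathcal{P}_{0}v_{\gamma}=2^{-1/2}\sin\gamma\cos\gamma\cdot(\cdots)$, which vanishes exactly under hypothesis \eqref{eq:sincos}. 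By linearity $\mathcal{P}_{0}d_{0}^{\gamma}=-\rho\,\mathcal{P}_{0}$ on all of $Y^{\gamma}$; since $\mathcal{P}_{0}$ is an isometry and $\overline{d}_{0}f\in L^{2}$, this also shows $\mathcal{P}_{0}f$ lies in the domain of multiplication by $\rho$, so the identity is meaningful. This gives (3).

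The step requiring most care is the $k=0$ case, and specifically the index bookkeeping rather than the singular-line computation itself: one must check that the restriction $\sin\gamma\cos\gamma=0$ collapses $\mathcal{P}_{0}$ to one of the clean forms \eqref{eq:hankmat1}/\eqref{eq:hankmat2}, with order $\nu=-\alpha$ (if $\sin\gamma=0$) or $\nu=\alpha$ (if $\cos\gamma=0$) an admissible Hankel index, so that the unitarity argument and the intertwining argument from the $k\neq0$ case apply verbatim. Once that is in place, everything else is routine algebra with $\mathcal{H}_{\nu}^{2}=I$ and \eqref{eq:propHn}.
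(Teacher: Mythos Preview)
Your proposal is correct and follows essentially the same route as the paper: both arguments express $\mathcal{P}_{k}$ in terms of a pair of scalar Hankel transforms, invoke $\mathcal{H}_{\nu}^{2}=I$ and the unitarity of $\mathcal{H}_{\nu}$ to obtain (1)--(2), use the intertwining identities \eqref{eq:propHn} for the diagonalization on $Y$, and handle the singular line at $k=0$ by the explicit evaluation $\mathcal{H}_{\nu}K_{\nu}(\rho)=\rho^{\nu}/(1+\rho^{2})$. The only cosmetic difference is that you package unitarity via the factorization $\mathcal{P}_{k}=M\circ\mathrm{diag}(\mathcal{H}_{\nu},\mathcal{H}_{\nu\pm1})$ with $M$ a constant unitary, whereas the paper writes down the candidate inverse \eqref{eq:Pm1a}--\eqref{eq:Pm1b} first and then observes it equals the adjoint.
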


In the following, we shall select $\gamma$ according to
\eqref{eq:convention}. In particular, 
condition \eqref{eq:sincos} will be always satisfied.

\section{Construction of the Dirac propagator}\label{sec:prop}
We provide next an explicit representation for the solution of the Dirac equation
\begin{equation}\label{eq:Dirac}
\begin{cases}
i\partial_t u+\mathcal{D}_{A,\gamma} u=0,\\
u|_{t=0}=f(x)=(\phi, \psi)^T \in D(\mathcal{D}_{A, \gamma})\subset (L^2(\R^2))^2.
\end{cases}
\end{equation}
As already discussed, we can write any $f$ in
the domain of $\mathcal{D}_{A,\gamma} $ as follows
\begin{equation}\label{eq:decf}
  f=
  \sum_{k\in \mathbb{Z}}
  \begin{pmatrix}
    e^{ik \theta}\ \phi_{k}(r) \\
    e^{i(k+1)\theta}\ \psi_{k}(r)
  \end{pmatrix}
\end{equation}
with
\begin{equation*}
  \begin{pmatrix}
    \phi_{0}(r) \\
      e^{i\theta}\ \psi_{0}(r)
    \end{pmatrix}
  =
  c_0
  \begin{pmatrix}
    \cos \gamma \cdot K_{\alpha}(r) \\
    i\sin \gamma \cdot K_{1-\alpha}(r)   e^{i\theta}
  \end{pmatrix}
  + \begin{pmatrix}
  \tilde\phi_{0}(r) \\
    e^{i\theta}\ \tilde\psi_{0}(r)
  \end{pmatrix}
\end{equation*}
where
$\begin{pmatrix}
  \tilde\phi_{0}(r) \\
    \tilde\psi_{0}(r)
\end{pmatrix}\in Y$ 
and
$\begin{pmatrix}
  \phi_{k}(r) \\
    \psi_{k}(r)
\end{pmatrix}\in Y$ 
for $k\neq0$ are regular at the origin.

We shall need to treat separately
the case $k\neq0$, which we call
the {\em non radial part}, and can be handled by relatively
standard tools. 
To separate the nonradial part we apply
the orthogonal projector $P_{\bot}$  defined in \eqref{def-pro},
which cuts away the spherical harmonic spinors of order 0:
\begin{equation*}
  f_{\bot}=P_{\bot}f=  \sum_{k\in \mathbb{Z}\setminus\{0\}}
  \begin{pmatrix}
    e^{ik \theta}\ \phi_{k}(r) \\
    e^{i(k+1)\theta}\ \psi_{k}(r)
  \end{pmatrix}
\end{equation*} 
Later on we shall consider the case
$k=0$ (the {\em radial part}) which is more 
delicate due to the singularity at 0.
This corresponds to the choice of data
\begin{equation*}
  f_0=P_{0}f=
  \begin{pmatrix}
  \phi_{0}(r) \\
    e^{i\theta}\ \psi_{0}(r)
  \end{pmatrix}=
  c_0
  \begin{pmatrix}
    \cos \gamma \cdot K_{\alpha}(r) \\
    i\sin \gamma \cdot K_{1-\alpha}(r)   e^{i\theta}
  \end{pmatrix}
  +\begin{pmatrix}
  \tilde\phi_{0}(r) \\
    e^{i\theta}\ \tilde\psi_{0}(r)
  \end{pmatrix}.
\end{equation*}
By orthogonality, the full flow is the sum
of the nonradial and the radial components:
 \begin{equation*}
e^{it\mathcal{D}_{A,\gamma}} f=e^{it\mathcal{D}_{A,\gamma}}P_0 f+e^{it\mathcal{D}_{A,\gamma}}P_{\bot} f=e^{it\mathcal{D}_{A,\gamma}} f_0+e^{it\mathcal{D}_{A,\gamma}} f_{\bot}.
  \end{equation*}

We start by providing an explicit representation of the propagator $e^{it\mathcal{D}_{A,\gamma}}$ via the generalized eigenfunctions.
Note that, as
discussed in the previous section, 
the presence of a Bessel function of negative order in the term
$k=0$ introduces some additional technical difficulties,
as we shall see in the next section.
In the following statement we denote polar coordinates on
$\mathbb{R}^{2}$ by $(r,\theta)$, like in
$x=r(\cos\theta,\sin\theta)$ and 
$y=r_2(\cos\theta_2,\sin\theta_2)$.

\begin{proposition}\label{prop:Dp} 
Let $\alpha\in(0, 1)$ and $\gamma$ as in \eqref{eq:convention}.
Let $\Phi_{k}(\theta)$ be the harmonics
\begin{equation}\label{eq:defPhik}
  \Phi_k(\theta)=\frac{1}{\sqrt{2\pi}}
  \left(\begin{matrix} e^{ik\theta}&0\\ 
  0&e^{i(k+1)\theta}\end{matrix}\right).
\end{equation}
% Let us consider the following set of generalized eigenfunctions:
% \begin{itemize}
% \item for $k\neq 0$,  
% \begin{equation}\label{Psi}
% \Psi_k(r)=\left(\begin{matrix} \psi^1_k(r)\\  \psi^2_k(r)\end{matrix}\right) =\sqrt{\frac{\pi}2}
%  \left(\begin{matrix}  J_{\nu(k)}(r) \\ \epsilon iJ_{\nu(k+1)}(r) \end{matrix}\right), \end{equation}
% with $\nu(k)=|k-\alpha|$ and $\epsilon=\mathop{sgn}k$;
% \item for $k=0$, \begin{itemize} 
% \item if $\alpha\in(0,\frac12]$, 
% \begin{equation}\label{Psi0}
% \Psi_0(r)=\left(\begin{matrix} \psi^1_0(r)\\  \psi^2_0(r)\end{matrix}\right) =\sqrt{\frac{\pi}2}
%  \left(\begin{matrix}  J_{-\alpha}(r)  \\  -i J_{1-\alpha}(r)\big)  \end{matrix}\right),\end{equation}
%  \item if $\alpha\in(\frac12,1)$ 
%  \begin{equation}\label{Psi0'}
% \Psi_0(r)=\left(\begin{matrix} \psi^1_0(r)\\  \psi^2_0(r)\end{matrix}\right) =\sqrt{\frac{\pi}2}
%  \left(\begin{matrix}  J_{\alpha}(r) \\  i J_{\alpha-1}(r)\big)  \end{matrix}\right). \end{equation}
%  \end{itemize}
%  \end{itemize}
 % Then, for $\gamma$ as in \eqref{eq:convention},
Then the solution $u(t,x)$ of
\eqref{eq:Dirac} can be represented as 
\begin{equation}\label{D-propa}
u(t, x)=e^{it\mathcal{D}_{A,\gamma}}f=\int_{0}^\infty \int_{0}^{2\pi}{\bf K}(t,r,\theta,r_2,\theta_2) f(r_2,\theta_2)  d\theta_2 \;r_2dr_2,
\end{equation}
\begin{equation}\label{D-kernel}
{\bf K}(t,r,\theta,r_2,\theta_2) =\sum_{k\in\Z} 
\Phi_k(\theta) {\bf K}_{k}(t,r,r_2)\overline{\Phi_k(\theta_2)},
\end{equation}
where for $k\neq0$
\begin{equation}\label{kernel'}
{\bf K}_{k}(t, r,r_2)=
\begin{pmatrix}
  m_{|k-\alpha|} & 0 \\
  0 &  m_{|k-\alpha+1|}
\end{pmatrix}
\end{equation}
while
\begin{equation}\label{kernel'0}
  {\bf K}_{0}(t, r,r_2)=
  \begin{pmatrix}
    m_{-\alpha} & 0 \\
    0 &  m_{1-\alpha}
  \end{pmatrix}
  \quad\text{if $\alpha\le\frac12$,}\qquad 
  \begin{pmatrix}
    m_{\alpha} & 0 \\
    0 &  m_{\alpha-1}
  \end{pmatrix}
  \quad\text{if $\alpha>\frac12$}\qquad 
\end{equation}
and
\begin{equation*}
  m_{\nu}(t,r,r_{2})=
  \int_{0}^{\infty}e^{-it \rho}
  J_{\nu}(r \rho)J_{\nu}(r_{2}\rho)\rho d \rho.
\end{equation*}
\end{proposition}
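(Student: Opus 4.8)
The plan is to diagonalise the propagator one angular mode at a time, exploiting the two structural facts already established: the spherical--harmonic splitting $[L^{2}(\R^{2})]^{2}=\bigoplus_{k\in\Z}L^{2}(rdr)^{2}\otimes h_{k}$ of Theorem~\ref{ABsa}, which reduces $\mathcal{D}_{A,\gamma}$ to the family of radial operators $d_{k}^{\gamma}$, and Proposition~\ref{pro:hankel}, which says that the relativistic Hankel transform $\mathcal{P}_{k}$ is unitary and conjugates $d_{k}^{\gamma}$ to multiplication by $-\rho$ on $L^{2}(\R,|\rho|d\rho)$. Granting these, $e^{it\mathcal{D}_{A,\gamma}}$ splits as a direct sum of one--dimensional propagators, each conjugate via $\mathcal{P}_{k}$ to multiplication by $e^{-it\rho}$; rewriting this conjugation as an integral operator and reassembling the angular contributions yields \eqref{D-propa}--\eqref{D-kernel}.

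First I would expand the datum as $f=\sum_{k\in\Z}\sqrt{2\pi}\,\Phi_{k}(\theta)\,(\phi_{k}(r),\psi_{k}(r))^{T}$, which is exactly \eqref{eq:decf} with the angular factor $\Phi_{k}$ from \eqref{eq:defPhik}. Since the decomposition is orthogonal and $\mathcal{D}_{A,\gamma}$ acts on the $k$--th block as $d_{k}^{\gamma}$, the functional calculus gives $e^{it\mathcal{D}_{A,\gamma}}f=\sum_{k\in\Z}\sqrt{2\pi}\,\Phi_{k}(\theta)\,\big[e^{itd_{k}^{\gamma}}(\phi_{k},\psi_{k})^{T}\big](r)$, so the whole problem reduces to an explicit integral representation of the block propagator $e^{itd_{k}^{\gamma}}$ on $[L^{2}(rdr)]^{2}$, uniformly in $k$.

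Next I would invoke Proposition~\ref{pro:hankel}, which states in particular that $\mathcal{P}_{k}d_{k}^{\gamma}=-\rho\,\mathcal{P}_{k}$ on $Y^{\gamma}$ and that $\mathcal{P}_{k}$ is unitary; by the functional calculus this gives $e^{itd_{k}^{\gamma}}=\mathcal{P}_{k}^{-1}M_{e^{-it\rho}}\mathcal{P}_{k}$, with $M_{e^{-it\rho}}$ multiplication on $L^{2}(\R,|\rho|d\rho)$. I would then insert the explicit forms of the three factors: $\mathcal{P}_{k}$ is the integral operator \eqref{hankel} (equivalently \eqref{eq:hankmat1}--\eqref{eq:hankmat0}, the case $k=0$ being reduced by the convention \eqref{eq:convention} to the form \eqref{eq:hankmat1} with $\nu=-\alpha$ when $\alpha\le\frac12$, or to \eqref{eq:hankmat2} with $\nu=\alpha$ when $\alpha>\frac12$), while $\mathcal{P}_{k}^{-1}=\mathcal{P}_{k}^{*}$ is given by \eqref{eq:Pm1a}--\eqref{eq:Pm1b}, remembering that for negative energies the eigenfunction $\Psi_{k}$ is replaced by its conjugate, see \eqref{negen}. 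Composing the three operators, folding the half--line $\rho<0$ onto $(0,\infty)$ through $\rho\mapsto-\rho$, and using the Hankel relations \eqref{eq:propHn} to justify the interchange of the $\rho$-- and $r_{2}$--integrations, the $k$--th block turns into an integral operator against $r_{2}dr_{2}$ whose matrix kernel is assembled from scalar oscillatory integrals $\int_{0}^{\infty}e^{-it\rho}J_{\mu}(r\rho)J_{\mu'}(r_{2}\rho)\,\rho\,d\rho$ with $\mu,\mu'\in\{|k-\alpha|,|k+1-\alpha|\}$. Tracking the signs and the factors $\pm i$ in \eqref{eq:psika}--\eqref{eq:psik0} then identifies this kernel with $\mathbf{K}_{k}$ as written in \eqref{kernel'}--\eqref{kernel'0}. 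Finally, substituting back into the first step and using $\int_{0}^{2\pi}\overline{\Phi_{k}(\theta_{2})}\,\Phi_{j}(\theta_{2})\,d\theta_{2}=\delta_{jk}I_{2}$ to collapse the double sum into a single integral against $f(r_{2},\theta_{2})$ produces \eqref{D-propa}--\eqref{D-kernel}.

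The main obstacle is the analytic justification of this kernel computation. The transforms $\mathcal{P}_{k}$, and hence the $\mathcal{H}_{\nu}$, are a priori only $L^{2}$--isometries, so the pointwise manipulations (interchanging the $\rho$-- and $r_{2}$--integrals, composing kernels) cannot be carried out directly for arbitrary $L^{2}$ data; the clean route is to prove the representation first for $f\in[C_{c}^{\infty}(\R^{2}\setminus0)]^{2}$, where every Hankel integral converges absolutely, and then extend it to all $f\in D(\mathcal{D}_{A,\gamma})$ by density, using the $L^{2}$--continuity of $\mathcal{P}_{k}$ furnished by Proposition~\ref{pro:hankel}. One must also keep in mind that the scalar kernels $m_{\nu}$ are themselves only conditionally convergent oscillatory integrals when $t\neq0$ (each Bessel factor decays like $\rho^{-1/2}$, so the integrand is merely $O(1)$ at infinity), and should be read, say, as $\lim_{\varepsilon\to0^{+}}\int_{0}^{\infty}e^{-(\varepsilon+it)\rho}J_{\nu}(r\rho)J_{\nu}(r_{2}\rho)\,\rho\,d\rho$ or as tempered distributions in $(t,r,r_{2})$; correspondingly, \eqref{D-propa} is to be interpreted in the same weak sense for $f$ outside the dense class. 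Apart from this, the remaining work is routine bookkeeping: it is the scalar Hankel inversion formula applied once in each spinor component, dressed with the $2\times2$ Dirac algebra and with the negative--order subtleties that appear only in the mode $k=0$.
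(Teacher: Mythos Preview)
Your proposal is correct and follows essentially the same route as the paper: decompose $f$ along the harmonics $\Phi_k$, use the diagonalization $\mathcal{P}_k d_k^\gamma = -\rho\,\mathcal{P}_k$ from Proposition~\ref{pro:hankel} to write $e^{itd_k^\gamma}=\mathcal{P}_k^{-1}e^{-it\rho}\mathcal{P}_k$, and then compute this composition explicitly via the matrix forms \eqref{eq:hankmat1}--\eqref{eq:Pm1b}. The only differences are cosmetic: the paper carries out the last step by a direct $2\times2$ matrix multiplication that makes the diagonality of $\mathbf{K}_k$ immediate (so no off-diagonal integrals with $\mu\neq\mu'$ ever arise), and it omits the density/regularisation remarks you add about the conditional convergence of $m_\nu$.
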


% \textcolor{red}{
% \begin{remark} For any fixed $\alpha\in(0,1)$, we consider one specific self-adjoint extension that is, equivalently, we choose a generalized eigenspinor in the case $k=0$: roughly speaking, as seen, we pick the self-adjoint extension "with the least possible singular functions" in the domain. We stress the fact that for (almost) all the other choices of $\gamma$, we would be forced to deal with distribution in the kernel
%  \begin{equation*}
% \begin{split}
% \int_0^\infty J_{-\nu}(\rho r)J_{\nu}(\rho s)\;\rho d\rho=\cos(\nu\pi)\frac{\delta(r-s)}{\sqrt{rs}}+\mathrm{Pr} \Big(\frac{2\sin(\nu\pi)}{\pi(r^2-s^2)}\big(\frac rs\big)^\nu\Big)
% \end{split}
% \end{equation*}
% where Pr means the principal value. 
% \end{remark}
% }

\begin{proof}
According to decomposition \eqref{eq:spDir} we write the initial condition as
$$f(x)=(\phi,\psi)^{T}=\sum_{k\in\Z}\Phi_k(\theta) f_k(r),$$
where $f_k(r)=(\phi_k(r), \psi_k(r))^T$ and 
\begin{equation}\label{eq:fk}
 \phi_k(r)=\int_{0}^{2\pi} \phi(x) e^{ik\theta}\, d\theta, \quad \psi_k(r)=\int_{0}^{2\pi} \psi(x) e^{i(k+1)\theta}\, d\theta.
\end{equation}
Then $u(t,x)$ takes the form
\begin{equation}
u(t, x)=e^{it\mathcal{D}_{A,\gamma}}f=\sum_{k\in\Z}\Phi_k(\theta) u_k(t,r),
\end{equation}
where, for each $k\in\Z$,
 $u_k(t, r)$ satisfies 
\begin{equation}\label{eq:uka}
\begin{cases}
i\partial_t u_k(t, r)+d^{\gamma}_{k} u_k(t,r)=0, \\
u_k|_{t=0}=f_k(r)=(\phi_{k}(r),\psi_{k}(r))^T \in  D(d^{\gamma}_{k})
\subset [L^2((0,\infty);rdr)]^2.
\end{cases}
\end{equation}
We recall that  the operator $d^{\gamma}_{k}$ is given by
$$
d^{\gamma}_{k}=
\begin{cases}
d^{\gamma}_{0} \quad {\rm if}\: k=0,\\
\overline{d}_{k}\quad {\rm if}\: k\neq 0
\end{cases}
$$
with $\gamma$ as in the statement.
It is straightforward to represent the solution of 
\eqref{eq:uka} using the relativistic Hankel transform
studied in Proposition \ref{pro:hankel}. If we write
\begin{equation*}
  v_{k}(t,\rho)=\mathcal{P}_{k}u_{k},
\end{equation*}
by the diagonalization property (3)
the function $v_{k}$ satisfies
\begin{equation*}
  \begin{cases}
    i \partial_{t}v_{k}(t,\rho)-\rho v_{k}(t,\rho)=0\\
    v_{k}\vert_{t=0}=\mathcal{P}_{k}f_{k}(\rho)
  \end{cases}
\end{equation*}
which gives immediately
\begin{equation*}
  v_{k}(t,\rho)=e^{-it\rho}\mathcal{P}_{k}f_{k}(\rho)
\end{equation*}
and
\begin{equation*}
  u_{k}(t,x)=\mathcal{P}_{k}^{-1}
  e^{-it\rho}\mathcal{P}_{k}f_{k}(\rho).
\end{equation*}
Let us write explicitly the kernel $\mathbf{K}_{k}(t,r,r')$
of the operator 
$\mathcal{P}_{k}^{-1}e^{-it\rho}\mathcal{P}_{k}$.
Recalling the expressions \eqref{eq:hankmat1} and
\eqref{eq:Pm1a} for  $k\ge1$, or \eqref{eq:hankmat0} for $k=0$,
$\alpha\le \frac 12$ we obtain
\begin{equation*}
  \mathcal{P}_{k}^{-1}e^{-it \rho}\mathcal{P}_{k}=
  \begin{pmatrix}
    \mathcal{H}_{\nu}e^{-it |\rho|}\mathcal{H}_{\nu} & 0 \\
    0 &  \mathcal{H}_{\nu+1}e^{-it |\rho|}\mathcal{H}_{\nu+1}
  \end{pmatrix},
  \qquad
  \nu=
  \begin{cases}
    k-\alpha &\text{if $ k\ge1 $,}\\
    -\alpha &\text{if $ k=0 $.}
  \end{cases}
\end{equation*}

On the other hand, for $k\le-1$
or $k=0$, $\alpha>\frac 12$, using \eqref{eq:hankmat2}, \eqref{eq:Pm1b} and \eqref{eq:hankmat0}, we obtain
\begin{equation*}
  \mathcal{P}_{k}^{-1}e^{-it \rho}\mathcal{P}_{k}=
  \begin{pmatrix}
    \mathcal{H}_{\nu}e^{-it |\rho|}\mathcal{H}_{\nu} & 0 \\
    0 &  \mathcal{H}_{\nu-1}e^{-it |\rho|}\mathcal{H}_{\nu-1}
  \end{pmatrix},
  \qquad
  \nu=
  \begin{cases}
    \alpha-k &\text{if $ k\le-1 $,}\\
    \alpha &\text{if $ k=0 $.}
  \end{cases}
\end{equation*}
Introducing the scalar operator
$\mathcal{M}_{\nu}$
\begin{equation*}
  \mathcal{M}_{\nu}\phi(r)=
  \mathcal{H}_{\nu}e^{-it |\rho|}\mathcal{H}_{\nu}\phi(r)
\end{equation*}
we can write for $k\neq0$
\begin{equation*}
  \mathcal{P}_{k}^{-1}e^{-it \rho}\mathcal{P}_{k}=
  \begin{pmatrix}
    \mathcal{M}_{|k-\alpha|} & 0 \\
    0 &  \mathcal{M}_{|k-\alpha+1|}
  \end{pmatrix}
\end{equation*}
while for $k=0$ we have
\begin{equation*}
  \mathcal{P}_{0}^{-1}e^{-it \rho}\mathcal{P}_{0}=
  \begin{pmatrix}
    \mathcal{M}_{-\alpha} & 0 \\
    0 &  \mathcal{M}_{1-\alpha}
  \end{pmatrix}
  \quad\text{when $\alpha\le\frac12$,}\qquad 
  =
  \begin{pmatrix}
    \mathcal{M}_{\alpha} & 0 \\
    0 &  \mathcal{M}_{\alpha-1}
  \end{pmatrix}
  \quad\text{when $\alpha>\frac12$.}\qquad 
\end{equation*}
In integral form, $\mathcal{M}_{\nu}$ can be written as
\begin{equation*}
  \mathcal{M}_{\nu}\phi(r)=
  \int_{0}^{\infty}
  m_{\nu}(t,r,r_{2})\phi(r_{2})r_{2}dr_{2}
\end{equation*}
with kernel
\begin{equation*}
  m_{\nu}(t,r,r_{2})=
  \int_{0}^{\infty}e^{-it \rho}
  J_{\nu}(r \rho)J_{\nu}(r_{2}\rho)\rho d \rho,
\end{equation*}
and this concludes the proof.
\end{proof}

\section{Localized dispersive estimates: proof of Theorem 
\ref{thm:disper}}\label{sec:proofdisp}

In \eqref{D-propa} the term $k=0$ is singular, since it 
contains functions unbounded near 0.
To overcome this problem we shall
modify the previous construction at $k=0$ to put ourselves in position to exploit some well-established results.

We first recall the \emph{AB Schr\"{o}dinger operator}
on $L^{2}(\mathbb{R}^{2})$
\begin{equation*}
  H=-(\nabla+iA(x))^{2},
  \qquad
  A=(A_{1},A_{2})=\frac{\alpha}{|x|^{2}}(-x_{2},x_{1}),
  \qquad
  \alpha\in(0,1)
\end{equation*}
initially defined on 
$C_{c}^{\infty}(\mathbb{R}^{2}\setminus\{0\})$,
which admits a family of selfadjoint extensions
indicized by unitary operators 
$U:\mathbb{C}^{2}\to \mathbb{C}^{2}$
(see \cite{AT}). All the extensions have singular
functions in their domain but
a \emph{distinguished} one, the Friedrichs extension,
which coincides with the closure of $H$. 
We denote this extension again by
$H$; it is non negative, with natural domain
\begin{equation*}
  D(H)=\{u\in L^{2}:(\nabla+iA(x))^{2}u\in L^{2}\}.
\end{equation*}
In the usual harmonic decomposition
$L^{2}(\mathbb{R}^{2})=
  \bigoplus_{k\in \mathbb{Z}}L^{2}(rdr)\otimes[e^{ik \theta}]$
we have 
\begin{equation*}
    H=\bigoplus_{k\in \mathbb{Z}}H_{k-\alpha}\otimes I_2
\end{equation*}
where $H_{\nu}$ is the modified Bessel operator
\begin{equation*}
  H_{\nu}=H_{-\nu}=
  -\frac{d^{2}}{dr^{2}}-\frac{d}{dr}+\frac{\nu^{2}}{r^{2}}.
\end{equation*}
If $\nu\neq0$, this is a nonnegative selfadjoint
operator on $L^{2}(\mathbb{R}^{+};rdr)$ with domain
\begin{equation*}
  Z=\{f\in L^{2}(rdr)^{2}:
  f',f'',f'/r,f/r^{2}\in L^{2}(rdr)^{2},\ 
  f\in C^{1}([0,+\infty)),\ 
  f(0)=f'(0)=0
  \}.
\end{equation*}
The proof is similar to the proof of
$D(\overline{d}_{k})=Y$ in Section \ref{sec:pre}.
% Recalling the definition of the space $X$ in \eqref{eq:domain}
% \begin{equation*}
%   X=
%   \{\phi\in L^{2}(rdr):
%   \phi',\phi/r\in L^{2}(rdr),\ 
%   \phi\in C([0,+\infty)),\ 
%   \phi(0)=0\}
% \end{equation*}
We have
\begin{equation}\label{eq:ZtoX}
  Z^{2}=\{f\in Y:\overline{d}_{k}f\in Y\}.
\end{equation}
By the properties of the Hankel transform $\mathcal{H}_{\nu}$
we can also write
\begin{equation}\label{eq:hanHk}
  H_{\nu}=\mathcal{H}_{\nu}\rho^{2}\mathcal{H}_{\nu}.
\end{equation}
We notice that 
\begin{equation*}
  (\overline{d}_{k})^{2}=
  \begin{pmatrix}
    0 & i\partial_{k+1-\alpha} \\
    i\partial_{\alpha-k} & 0 
  \end{pmatrix}^{2}=
  \begin{pmatrix}
    -\partial_{k+1-\alpha} \partial_{\alpha-k}& 0 \\
    0 &  - \partial_{\alpha-k}\partial_{k+1-\alpha}
  \end{pmatrix}=
  \begin{pmatrix}
    H_{k-\alpha} & 0 \\
    0 &  H_{k-\alpha+1}
  \end{pmatrix},
\end{equation*}
and by a direct computation we have also
\begin{equation*}
  (d_{0}^{\gamma})^{2}=
  (\overline{d}_{0})^{2}=
  \begin{pmatrix}
    H_{\alpha} & 0 \\
    0 &  H_{1-\alpha}
  \end{pmatrix}
\end{equation*}
independently of the value of $\gamma$.
We call these operators $L_{k-\alpha}$:
\begin{equation*}
  L_{k-\alpha}=
  \begin{pmatrix}
    H_{k-\alpha} & 0 \\
    0 &  H_{k-\alpha+1}
  \end{pmatrix},
  \qquad
  k\in \mathbb{Z}.
\end{equation*}
Obviously, $L_{k-\alpha}$ is selfadjoint with domain $Z^{2}$,
non negative, and satisfies
\begin{equation*}
  L_{k}=(\overline{d}_{0})^{2},
  \qquad
  L_{0}=
  (d_{0}^{\gamma})^{2}=
  (\overline{d}_{0})^{2}.
\end{equation*}
If we collect the operators $L_{k-\alpha}$ using
the decomposition \eqref{eq:spDir} of 
$[L^{2}(\mathbb{R}^{2})]^{2}$, we get
\begin{equation}\label{eq:defL}
  \bigoplus_{k}L_{k-\alpha}\otimes I_{2}=
  \begin{pmatrix}
    H & 0 \\
    0 & H 
  \end{pmatrix}
  =:L.
\end{equation}
The operator $L= I_{2}\ H$ clearly has properties
similar to $H$.

Now, in order to avoid the singular term $k=0$ in 
\eqref{D-propa} (we plan to deal with it separately), we consider a modified operator
$\widetilde{\mathcal{P}}_{0}:
  L^{2}((0,+\infty),rdr)^{2}\to
  L^{2}(\mathbb{R},|\rho|d\rho)$ 
for $k=0$ defined as
\begin{equation*}
  \widetilde{\mathcal{P}}_{0}=
  \begin{pmatrix}
    \mathcal{H}_{\alpha} & -i\mathcal{H}_{1-\alpha} \\
    \mathcal{H}_{\alpha} & i\mathcal{H}_{1-\alpha} 
  \end{pmatrix}
  \quad\text{with inverse}\quad 
  \widetilde{\mathcal{P}}_{0}^{-1}=
  \widetilde{\mathcal{P}}_{0}^{*}=
  \begin{pmatrix}
    \mathcal{H}_{\alpha} & \mathcal{H}_{\alpha} \\
    i\mathcal{H}_{1-\alpha} & -i\mathcal{H}_{1-\alpha} 
  \end{pmatrix}
\end{equation*}
(where we split as before $f\in L^{2}(|\rho|d\rho)$ into
$f=(\begin{smallmatrix} f_{+} \\ f_{-} \end{smallmatrix})$
with $f_{\pm}(\rho)=f(\pm \rho)$). For all the other
values of $\mathbb{Z}$ we do not change $\mathcal{P}_{k}$:
\begin{equation*}
  \widetilde{\mathcal{P}}_{k}=\mathcal{P}_{k}
  \quad\text{for}\quad k\in \mathbb{Z}\setminus\{0\}
\end{equation*}
so that we can write simply for all $k\in \mathbb{Z}$
\begin{equation*}
  \widetilde{\mathcal{P}}_{k}=
  \begin{pmatrix}
    \mathcal{H}_{|k-\alpha|} & -i\mathcal{H}_{|k-\alpha+1|} \\
    \mathcal{H}_{|k-\alpha|} & i\mathcal{H}_{|k-\alpha+1|} 
  \end{pmatrix}.
\end{equation*}
We see that for all $k\in \mathbb{Z}$ we have
\begin{equation*}
  \widetilde{\mathcal{P}}_{k}^{-1}
  \rho^{2}
  \widetilde{\mathcal{P}}_{k}=
  \begin{pmatrix}
    \mathcal{H}_{|k-\alpha|}\rho^{2}\mathcal{H}_{|k-\alpha|} 
    & 0 \\
    0 &  
    \mathcal{H}_{|k-\alpha+1|}\rho^{2}\mathcal{H}_{|k-\alpha+1|} 
  \end{pmatrix}=
  \begin{pmatrix}
    H_{k-\alpha} & 0 \\
    0 & H_{{k-\alpha+1}}
  \end{pmatrix}
\end{equation*}
that is to say
\begin{equation}\label{eq:Lka}
  L_{k-\alpha}=  \widetilde{\mathcal{P}}_{k}^{-1}
  \rho^{2}
  \widetilde{\mathcal{P}}_{k}.
\end{equation}

We shall use the propagator $e^{-it\sqrt{L}}$,
which is essentially $e^{-it\sqrt{H}}$.
It admits the usual decomposition
\begin{equation*}
  e^{-it\sqrt{L}}=
  \bigoplus_{k\in \mathbb{Z}}
  e^{-it \sqrt{L_{k-\alpha}}}
  \otimes I_{2},
  \qquad
  e^{-it \sqrt{L_{k-\alpha}}}=
  \widetilde{\mathcal{P}}_{k}^{-1} e^{-it|\rho|}
  \widetilde{\mathcal{P}}_{k}.
\end{equation*}
We write the propagator in integral form
\begin{equation}\label{D-propa'}
  e^{-it \sqrt{L}}f=
  \int_{0}^\infty \int_{0}^{2\pi}{\tilde{\bf K}}(t,r,\theta,r_2,\theta_2) f(r_2,\theta_2)  d\theta_2 \;r_2dr_2.
\end{equation}
From the previous discussion, we see that
$\tilde{\bf K}(t,r,\theta,r_2,\theta_2)$ 
can be written 
\begin{equation*}
  \tilde
{\bf K}(t,r,\theta,r_2,\theta_2) =\sum_{k\in\Z} \Phi_k(\theta) 
\tilde{\bf K}_{k}(t,r,r_2)\overline{\Phi_k(\theta_2)},
\qquad
\Phi_k(\theta)=\frac{1}{\sqrt{2\pi}}
\left(\begin{matrix} e^{ik\theta}&0\\ 
0&e^{i(k+1)\theta}\end{matrix}\right)
\end{equation*}
where now \emph{for all $k\in \mathbb{Z}$}
\begin{equation*}
  \tilde{\bf K}_{k}(t, r,r_2)=
  \begin{pmatrix}
    m_{|k-\alpha|} & 0 \\
    0 &  m_{|k-\alpha+1|}
  \end{pmatrix}
\end{equation*}
and as before
\begin{equation*}
  m_{\nu}(t,r,r_{2})=
  \int_{0}^{\infty}e^{-it \rho}
  J_{\nu}(r \rho)J_{\nu}(r_{2}\rho)\rho d \rho.
\end{equation*}
Of course we have $\tilde{{\bf K}}_{k}={\bf K}_{k}$
for all $k\neq0$;
we modified only the term corresponding to $k=0$. This allows to apply to the nonsingular part of the propagator the well developed theory for the corresponding Schr\"odinger equation, as we shall see in the next subsection.

If $P_{>}$ and $P_{<}$ are the projections
on the harmonics with $k\ge1$ and $k\le-1$ respectively,
and $P_{0}$ is the projection on the component $k=0$
(see \eqref{def-pro}), we have
\begin{equation}\label{d-propag}
  e^{it\mathcal{D}_{A,\gamma}}f=
  e^{-it \sqrt{L}}P_{>}f+
  e^{it \sqrt{L}}P_{<}f+
  e^{it\mathcal{D}_{A,\gamma}}P_{0}f.
\end{equation}
Estimates \eqref{est:dis1}, \eqref{est:dis0} and \eqref{est:disq}
in Theorem \ref{thm:disper} are proved separately in the next
three subsections.

\subsection{Proof of \eqref{est:dis1}}

Since 
$e^{it\mathcal{D}_{A,\gamma}}P_{>}f
  =e^{-it \sqrt{L}}P_{>}f$,
in order to prove \eqref{est:dis1}
it will be sufficient to estimate $e^{-it \sqrt{L}}$;
% and actually $e^{-it \sqrt{H}}$;
the component $e^{it\mathcal{D}_{A,\gamma}}P_{<}$
is treated exactly in the same way.
We split
\begin{equation*}
  e^{-it \sqrt{L}}=\sum_{j\in \mathbb{Z}}U_{j}(t)
\end{equation*}
where
\begin{equation}\label{eq:uoft}
U_{j}(t)f=\int_{0}^\infty \int_{0}^{2\pi}\tilde{\bf K}^{(j)}(t,r,\theta,r_2,\theta_2) f_j(r_2,\theta_2)  d\theta_2 \;r_2dr_2,
\end{equation}
\begin{equation}\label{D-kernel-l}
  \tilde{\bf K}^{(j)}(t,r,\theta,r_2,\theta_2) =
  \begin{pmatrix}
    m^{(j)}_{|k-\alpha|} & 0 \\
    0 &  m^{(j)}_{|k-\alpha+1|},
  \end{pmatrix}
\end{equation}
\begin{equation*}
  m^{(j)}_{\nu}(t,r,r_{2})=
  \int_{0}^{\infty}e^{-it \rho}\varphi(2^{-j}\rho)
  J_{\nu}(r \rho)J_{\nu}(r_{2}\rho)\rho d \rho.
\end{equation*}
Cutting away the components $k\le 0$ with the projection
$P_{>}$, we see that
\begin{equation}\label{eq:equiv}
  U_{j}(t)P_{>}f=
  \varphi(2^{-j}\sqrt{L})
  e^{-it \sqrt{L}}P_{>}f=
  \varphi(2^{-j}|\mathcal{D}_{A,\gamma}|)
  e^{it \mathcal{D}_{A,\gamma}}P_{>}f
\end{equation}
% \textcolor{red}{is there a modulus of $\mathcal{D}_{A,\gamma}$ above, or maybe it is unessential as it is the positive part?}
for all $\varphi\in \mathcal{C}_{c}^{\infty}(\mathbb{R})$ and
all $j\in \mathbb{Z}$.
 
\begin{proposition}\label{prop:l-disper} 
  Let $\varphi\in\mathcal{C}_c^\infty([1,2])$.
  Define $U_{j}(t)$ as in 
  \eqref{eq:uoft}, \eqref{D-kernel-l}.
  Then there exists a constant $C$ such that for all $j\in\Z$,
  $f\in [L^1{(\R^2)}]^2$ we have
  \begin{equation}\label{est:dis1-1}
  \begin{split}
  \left\|U_{j}(t)f(x)\right\|_{[L^\infty(\R^2)]^2}\leq C2^{2j}(1+2^{j}|t|)^{-\frac12} \|f\|_{[L^1{(\R^2)]^2}}.
  \end{split}
  \end{equation}
\end{proposition}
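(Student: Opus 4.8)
The plan is to reduce \eqref{est:dis1-1} to a pointwise bound on the integral kernel of $U_{0}(t)$, and then to split into the easy regime $|t|\le1$ and the genuinely dispersive regime $|t|\ge1$, the latter being essentially the known decay estimate for the two--dimensional Aharonov--Bohm half--wave propagator. First I would use scaling to remove the parameter $j$. The substitution $\rho\mapsto2^{j}\rho$ in the defining integral gives $m^{(j)}_{\nu}(t,r,r_{2})=2^{2j}\,m^{(0)}_{\nu}(2^{j}t,2^{j}r,2^{j}r_{2})$, and since the angular factors $\Phi_{k}$ are scale invariant, a change of variables $x\mapsto2^{-j}x$, $y\mapsto2^{-j}y$ in \eqref{eq:uoft} yields $\|U_{j}(t)\|_{[L^{1}]^{2}\to[L^{\infty}]^{2}}=2^{2j}\,\|U_{0}(2^{j}t)\|_{[L^{1}]^{2}\to[L^{\infty}]^{2}}$. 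Hence it is enough to prove $\|U_{0}(\tau)\|_{[L^{1}]^{2}\to[L^{\infty}]^{2}}\le C(1+|\tau|)^{-1/2}$ for all $\tau\in\R$, which by duality is equivalent to the pointwise estimate
\[
  \sup_{x,y\in\R^{2}}\big\|\mathbf K^{(0)}(\tau,x,y)\big\|_{\mathbb{C}^{2\times2}}\le C(1+|\tau|)^{-1/2},
\]
where $\mathbf K^{(0)}$ is the kernel \eqref{D-kernel-l} specialised to $j=0$.

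Next I would unwind the structure of $\mathbf K^{(0)}$. Since every $\tilde{\mathbf K}_{k}$ is diagonal, $\mathbf K^{(0)}$ is the diagonal matrix whose entries are, up to the $\frac1{2\pi}$ normalisation, $\sum_{k}e^{ik(\theta-\theta_{2})}m^{(0)}_{|k-\alpha|}$ and $\sum_{k}e^{i(k+1)(\theta-\theta_{2})}m^{(0)}_{|k-\alpha+1|}$; after the shift $k\mapsto k-1$ in the summation index the second series becomes the first, so it suffices to bound $\sum_{k\in\Z}e^{ik\psi}\,m^{(0)}_{|k-\alpha|}(\tau,r,r_{2})$ uniformly in $\psi\in\R$ and $r,r_{2}>0$. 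By \eqref{eq:Lka} this series is precisely a scalar component of the kernel of $\varphi(\sqrt{L})e^{-it\sqrt{L}}$, which by \eqref{eq:defL} is $\mathrm{diag}\big(\varphi(\sqrt H)e^{-it\sqrt H},\varphi(\sqrt H)e^{-it\sqrt H}\big)$ with $H$ the Friedrichs Aharonov--Bohm Schr\"odinger operator; thus we have reduced matters to the unit--frequency dispersive estimate for the Aharonov--Bohm wave equation in $\R^{2}$.

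For $|\tau|\le1$ I would bound the kernel by $O(1)$ directly: on $\mathrm{supp}\,\varphi$ one has $\rho\sim1$, and comparing $|k|$ with $\max(r,r_{2})$ one uses the uniform--in--order Bessel bounds — the standard large--argument decay $|J_{|k-\alpha|}(s)|\lesssim s^{-1/2}$ and the rapid decay $|J_{|k-\alpha|}(s)|\lesssim(es/2|k|)^{|k|}$ for $|k|\gg s$ — so the tail $|k|\gtrsim\max(r,r_{2})$ is summable with an absolute constant, while on the remaining finitely many terms one integrates by parts once in $\rho$ (harmless since $|\tau|\le1$) to gain absolute summability. For $|\tau|\ge1$ the required bound $|\mathbf K^{(0)}(\tau,x,y)|\lesssim|\tau|^{-1/2}$ is exactly the dispersive decay of the Aharonov--Bohm half--wave flow localised at unit frequency, which I would quote from \cite{FZZ} (the construction there of the odd sine propagator yields $\|\varphi(\sqrt H)e^{-it\sqrt H}\|_{L^{1}\to L^{\infty}}\lesssim(1+|t|)^{-1/2}$) and then transport to $L$ through the trivial $\mathrm{diag}$ structure. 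A self--contained derivation would instead resum the bilinear series $\sum_{k}e^{ik\psi}J_{|k-\alpha|}(r\rho)J_{|k-\alpha|}(r_{2}\rho)$ in closed form by a generalised Graf--Gegenbauer addition formula — for $\alpha\in(0,1)$ this is the free kernel $J_{0}\big(\rho\sqrt{r^{2}+r_{2}^{2}-2rr_{2}\cos\psi}\big)$ plus an explicit correction carrying the flux $\alpha$ — thereby reducing $\mathbf K^{(0)}$ to a single oscillatory integral $\int_{0}^{\infty}e^{-it\rho}\varphi(\rho)(\cdots)\rho\,d\rho$ to which one applies stationary phase.

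The main obstacle is the regime where $r$, $r_{2}$ and $|\tau|$ are all large and comparable, i.e.\ near the light cone $|x-y|\sim|\tau|$: there the series over $k$ is not absolutely convergent with an $O(|\tau|^{-1/2})$ term--by--term bound, so one genuinely needs either the closed--form resummation of the Bessel bilinear series (which encodes both the $|x-y|^{-1/2}$ singularity of the $2$D wave kernel and the Aharonov--Bohm correction) or the ready--made dispersive estimate for the AB wave equation; in either case, establishing the uniformity in the angle $\theta-\theta_{2}$ across the transition region is the delicate point, while everything else is routine.
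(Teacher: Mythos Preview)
Your reduction by scaling and your identification of the kernel with that of $\varphi(\sqrt{H})e^{-it\sqrt{H}}$ are correct, and for $|\tau|\ge1$ quoting \cite{FZZ} (or resumming via an addition formula and running stationary phase) is a legitimate route. The paper does something different there: it proves a subordination formula expressing $\varphi(2^{-j}\sqrt{L})e^{it\sqrt{L}}$ as a superposition of Schr\"odinger propagators $e^{is'L}$, then invokes the $|s'|^{-1}$ pointwise bound for the Aharonov--Bohm Schr\"odinger kernel (obtained by the Weber identity and Poisson summation in $k$) and integrates in the subordination parameter. Your route attacks the half--wave kernel directly; the paper's route trades one oscillatory integral for a cleaner one whose closed form is easier to bound. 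Both are valid.

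Your argument for $|\tau|\le1$, however, has a real gap. When $r$ and $r_{2}$ are both large, the range $|k|\lesssim\max(r,r_{2})$ contains roughly $\max(r,r_{2})$ terms, and the pointwise bound $|J_{|k-\alpha|}(r\rho)J_{|k-\alpha|}(r_{2}\rho)|\lesssim(rr_{2})^{-1/2}$ only gives a partial sum of size $\max(r,r_{2})^{1/2}\min(r,r_{2})^{-1/2}$, which is unbounded. Integrating by parts once in $\rho$ does not cure this: with $|\tau|\le1$ you cannot extract decay from $e^{-i\tau\rho}$, and if you instead use the Bessel phases $e^{\pm ir\rho}$, $e^{\pm ir_{2}\rho}$, there is always a resonant sign combination that gives no gain. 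The oscillation that saves you is the angular factor $e^{ik\psi}$, and exploiting it means resumming in $k$ first --- i.e.\ you are forced back to the addition formula even in this ``easy'' regime. The paper sidesteps the issue entirely by \emph{not} proving a pointwise kernel bound for small times: it factors through $L^{2}$ via Bernstein inequalities $\|\varphi(\sqrt{L})\|_{L^{2}\to L^{\infty}}\lesssim1$ and $\|\varphi(\sqrt{L})\|_{L^{1}\to L^{2}}\lesssim1$, which in turn come from a Gaussian heat--kernel bound for $e^{-tL}$. That is the missing ingredient in your small--time step.
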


% \begin{remark}
% Notice that clearly estimate \eqref{est:dis1-1} implies, by taking as initial datum $P_0f$, 
% \begin{equation}\label{est:dis1-0}
% \begin{split}
% \Big\|e^{it\tilde{\mathcal{D}}_{A,\gamma}}\varphi(2^{-j}|\tilde{\mathcal{D}}_{A,\gamma}|)P_{0}f(x)\Big\|_{[L^\infty(\R^2)]^2}\leq C2^{2j}(1+2^{j}|t|)^{-\frac12} \|P_0f\|_{[L^1{(\R^2)]^2}}.
% \end{split}
% \end{equation}
% This second estimate will be needed due to decomposition \eqref{equ: decom}.
% \end{remark}

Now, the proof of \eqref{est:dis1-1} is very similar to the one of the corresponding estimate for the half-wave propagator as developed in \cite{FZZ,GYZZ}; we include a proof in the appendix for the sake of completeness. 
Finally, recalling \eqref{eq:equiv}, and thanks to the fact that 
$\tilde \varphi=1$ on the support of $\varphi$,
we see that \eqref{est:dis1} is an immediate consequence of 
\eqref{est:dis1-1}.

\subsection{Proof of \eqref{est:dis0}} 

We now focus on the singular component of the propagator
$e^{it \mathcal{D}_{A,\gamma}}P_{0}$.
Recall that we have chosen a special value of $\gamma$ such that
$\sin\gamma=0$ or $\cos\gamma=0$, depending on the
value of $\alpha$, see \eqref{eq:convention}.

Since any element of the range of $P_{0}$ can be written as
\eqref{equ:f0} below, estimate \eqref{est:dis0} is an
immediate consequence of the following result:

\begin{proposition}\label{prop:l-disper0} 
  Let $\varphi\in\mathcal{C}_c^\infty([1,2])$, $\tilde{\varphi}\in\mathcal{C}_c^\infty([1/2,4])$ with $\tilde{\varphi}=1$ on the support of $\varphi$ and let
  for some $c\in \mathbb{C}$ and $(\phi_{0},\psi_{0})\in Y$
  \begin{equation}\label{equ:f0}
  f=
  c
  \begin{pmatrix}
    \cos \gamma \cdot K_{\alpha}(r) \\
    i\sin \gamma \cdot K_{1-\alpha}(r) e^{i\theta}
  \end{pmatrix}
  +\begin{pmatrix}
     \phi_{0}(r) \\
    e^{i\theta}\ \psi_{0}(r)
  \end{pmatrix}.
  \end{equation}
  Then there exists a constant $C$ such that for all 
  $j\in \mathbb{Z}$ one has
%  \begin{equation}\label{est:dis0-0}
% \begin{split}
% \Big\|\varphi(2^{-j}|\tilde{\mathcal{D}}_{A,\gamma}|)&e^{it\tilde{\mathcal{D}}_{A,\gamma}}f(x)\Big\|_{[L^\infty(\R^2)]^2}
% \\&\leq C2^{2j}(1+2^{j}|t|)^{-\frac12} \|\tilde{\varphi}(2^{-j}|\tilde{\mathcal{D}}_{A,\gamma}|) f\|_{[L^1{(\R^2)]^2}},
% \end{split}
% \end{equation}
% and
  \begin{equation}\label{est:dis0-1}
  \begin{split}
  \Big\|\big(1+&(2^j |x|)^{-\alpha}\big)^{-1}\varphi(2^{-j}|{\mathcal{D}}_{A,\gamma}|)e^{it\mathcal{D}_{A,\gamma}}f(x)\Big\|_{[L^\infty(\R^2)]^2}
  \\&\leq C2^{2j}(1+2^{j}|t|)^{-\frac12} \|\big(1+(2^j |x|)^{-\alpha}\big)\tilde{\varphi}(2^{-j}|{\mathcal{D}}_{A,\gamma}|) f\|_{[L^1{(\R^2)]^2}}.
  \end{split}
  \end{equation}
\end{proposition}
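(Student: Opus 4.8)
The plan is to reduce the estimate for $e^{it\mathcal{D}_{A,\gamma}}P_0$ to the scalar kernel estimate for $m_\nu^{(j)}$, exactly as in the proof of \eqref{est:dis1-1}, the only new feature being that for $k=0$ one of the Bessel orders is \emph{negative} ($\nu=-\alpha$ when $\alpha\le\frac12$, or $\nu=\alpha-1$ when $\alpha>\frac12$), which forces the weight $(1+(2^j|x|)^{-\alpha})^{-1}$ (resp.\ the analogous weight in the other case) into the estimate. First I would invoke the explicit representation of $e^{it\mathcal{D}_{A,\gamma}}P_0f$ from Proposition~\ref{prop:Dp}: writing $f=\Phi_0(\theta)f_0(r)$ with $f_0\in[L^2(rdr)]^2$ of the form \eqref{equ:f0}, the frequency-localized propagator acts diagonally as $\mathrm{diag}(\mathcal{M}^{(j)}_{\nu_1},\mathcal{M}^{(j)}_{\nu_2})$ where $(\nu_1,\nu_2)=(-\alpha,1-\alpha)$ if $\alpha\le\frac12$ and $(\nu_1,\nu_2)=(\alpha,\alpha-1)$ if $\alpha>\frac12$, with $\mathcal{M}^{(j)}_\nu$ the integral operator whose kernel is $m^{(j)}_\nu(t,r,r_2)=\int_0^\infty e^{-it\rho}\varphi(2^{-j}\rho)J_\nu(r\rho)J_\nu(r_2\rho)\rho\,d\rho$. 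Since the component $e^{i\theta}$ has modulus $1$ and the angular factor $\Phi_0$ is bounded, the matter reduces to a pointwise (in $r,r_2$) kernel bound.

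The key step is the scalar kernel estimate: for each relevant $\nu$ (including the negative ones, with $\nu>-1$ always holding since $\alpha\in(0,1)$) one needs
\begin{equation*}
  |m^{(j)}_\nu(t,r,r_2)|\le C\,2^{2j}(1+2^j|t|)^{-1/2}\,\omega_j(r)\,\omega_j(r_2),
\end{equation*}
where $\omega_j(r)=\min\{1,(2^jr)^{\nu}\}$ when $\nu\ge0$ and $\omega_j(r)=\max\{1,(2^jr)^{\nu}\}^{-1}\sim(1+(2^jr)^{-|\nu|})^{-1}$ when $-1<\nu<0$. After rescaling $\rho\mapsto 2^j\rho$ this is a statement about $\int_0^\infty e^{-i2^jt\rho}\varphi(\rho)J_\nu(2^jr\rho)J_\nu(2^jr_2\rho)\rho\,d\rho$, and it follows from the now-standard analysis of such oscillatory Bessel integrals (used for the half-wave flow in \cite{FZZ,GYZZ}): one splits according to whether $2^jr$, $2^jr_2$ are $\lesssim1$ or $\gtrsim1$, uses the small-argument asymptotics $J_\nu(s)\sim s^\nu$ for $s\lesssim1$ (this is where the weight $(2^jr)^\nu$, hence the inverse weight for negative $\nu$, enters), and the large-argument expansion $J_\nu(s)=s^{-1/2}(a(s)e^{is}+\overline{a(s)e^{is}})$ with $|a^{(\ell)}(s)|\lesssim s^{-\ell}$ for $s\gtrsim1$, combined with stationary phase / non-stationary phase in $\rho$ to extract the $(1+2^j|t|)^{-1/2}$ decay. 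The negative order $\nu\in(-1,0)$ causes no new difficulty in the large-argument regime, and in the small-argument regime it is precisely absorbed by the stated weight; this is the part I expect to be the main obstacle, since one must track the weight uniformly across the four regions and in particular verify that the singular factor $(2^jr)^{-|\nu|}$ produced near the origin is exactly cancelled by the weight $(1+(2^j|x|)^{-\alpha})^{-1}$ appearing on the left and reproduced by $(1+(2^j|x|)^{-\alpha})$ on the right.

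Finally, with the kernel bound in hand, I would estimate
\begin{align*}
  \big\|(1+(2^j|x|)^{-\alpha})^{-1}\varphi(2^{-j}|\mathcal{D}_{A,\gamma}|)e^{it\mathcal{D}_{A,\gamma}}f\big\|_{L^\infty}
  &\lesssim \sup_r \omega_j(r)^{-1}\int_0^\infty |m^{(j)}(t,r,r_2)|\,|f_0(r_2)|\,r_2\,dr_2\\
  &\lesssim 2^{2j}(1+2^j|t|)^{-1/2}\int_0^\infty \omega_j(r_2)\,|f_0(r_2)|\,r_2\,dr_2,
\end{align*}
and the last integral is controlled by $\|(1+(2^j|x|)^{-\alpha})\tilde\varphi(2^{-j}|\mathcal{D}_{A,\gamma}|)f\|_{L^1}$ after noting that, since $\tilde\varphi=1$ on $\mathrm{supp}\,\varphi$, inserting $\tilde\varphi(2^{-j}|\mathcal{D}_{A,\gamma}|)$ does not change $\varphi(2^{-j}|\mathcal{D}_{A,\gamma}|)e^{it\mathcal{D}_{A,\gamma}}f$, and that $\omega_j(r_2)\lesssim 1+(2^jr_2)^{\nu_1}\le 1+(2^jr_2)^{-\alpha}$ in the case $\alpha\le\frac12$ (and the analogous bound for $\alpha>\frac12$), so the weight $\omega_j(r_2)|f_0(r_2)|$ is dominated pointwise by $(1+(2^j|x|)^{-\alpha})|f(r_2)|$. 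This yields \eqref{est:dis0-1}; the analogous argument with $W_j$ replaced by its second branch handles $\alpha>\frac12$, and \eqref{est:dis0} follows since every $P_0f$ has the form \eqref{equ:f0}.
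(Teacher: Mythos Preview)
Your strategy is exactly the one in the paper: reduce via Proposition~\ref{prop:Dp} to a scalar kernel bound for $m^{(j)}_{\nu}$ with $\nu\in\{-\alpha,1-\alpha\}$ (or $\{\alpha,\alpha-1\}$), prove that bound by rescaling to $j=0$ and splitting into the four regions $2^jr\lessgtr 1$, $2^jr_2\lessgtr 1$ using the small--argument expansion $J_\nu(s)\sim s^{\nu}$ and the large--argument oscillatory representation \eqref{eq:bess3}--\eqref{eq:bess4}, then feed the kernel bound into an $L^1\to L^\infty$ estimate. The paper packages the scalar kernel estimate as Lemma~\ref{lem:hankelb} and its rescaled form Lemma~\ref{lemTF}.

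There is, however, a sign slip in your weight bookkeeping that you should fix. For $\nu<0$ the kernel $m^{(j)}_\nu$ \emph{blows up} like $(2^jr)^{\nu}$ near $r=0$, so the correct pointwise bound is
\[
  |m^{(j)}_\nu(t,r,r_2)|\le C\,2^{2j}(1+2^j|t|)^{-1/2}\,
  \bigl(1+(2^jr)^{-|\nu|}\bigr)\bigl(1+(2^jr_2)^{-|\nu|}\bigr),
\]
i.e.\ with weights that \emph{grow} at the origin (this is \eqref{est:F-E-j} in the paper). Your stated bound has $\omega_j(r)\omega_j(r_2)$ with $\omega_j\sim(1+(2^jr)^{-|\nu|})^{-1}\le 1$, which would assert that the kernel is uniformly bounded and is false. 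Correspondingly, in the final chain of inequalities the weight multiplying the integrand should be $(1+(2^j|x|)^{-\alpha})^{-1}$ itself (matching the left side), and the kernel bound then cancels the growing factor in $r$ and deposits the growing factor $(1+(2^jr_2)^{-\alpha})$ on $f_0$, exactly producing the $L^1$ norm on the right of \eqref{est:dis0-1}. Once you invert your $\omega_j$, everything you wrote goes through and coincides with the paper's argument.
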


% \begin{remark}\label{rem: d} \textcolor{red}{remove?}
% We stress the fact that if we take $\phi_0=0$ when $\alpha\in(0,\frac12]$ or $\psi_0=0$ when $\alpha\in(\frac12,1)$, then
% \begin{equation}\label{rem:0}
% \begin{split}
% \Big(\varphi(2^{-j}|{\mathcal{D}}_{A,\gamma}|)e^{it\mathcal{D}_{A,\gamma}}-&\varphi(2^{-j}|\tilde{\mathcal{D}}_{A,\gamma}|)e^{it\tilde{\mathcal{D}}_{A,\gamma}}\Big)f(x)
% \end{split}
% \end{equation}
% vanishes.
% \end{remark}

% \begin{remark} Notice that as for any $f\in D(\mathcal{D}_{A, \gamma})$,  $P_0f$ is in the form as given by \eqref{equ:f0}, then in fact estimate \eqref{est:dis0} is a consequence of  \eqref{est:dis0-0} and \eqref{est:dis0-1}.
% \end{remark}

\begin{proof}
We start by writing
$f_{j}=\tilde\varphi(2^{-j}|{\mathcal{D}}_{A,\gamma}|)f$
and
\begin{equation*}
\varphi(2^{-j}|{\mathcal{D}}_{A,\gamma}|)e^{it\mathcal{D}_{A,\gamma}}
f=\int_{0}^\infty \int_{0}^{2\pi}
\Phi_0(\theta) {\bf K}_{0}^{(j)}(t,r,r_2)\overline{\Phi_0(\theta_2)}
 f_j(r_2,\theta_2)  d\theta_2 \;r_2dr_2
\end{equation*}
where

\begin{itemize}
\item if $\alpha\in(0,\frac12]$,
 \begin{equation}\label{equ:F1}
\begin{split}
  \Phi_0(\theta) {\bf K}_{0}^{(j)}(t,r,r_2)\overline{\Phi_0(\theta_2)}
=\left(\begin{matrix} F_{\alpha,j}& 0 \\0 & 
 e^{i(\theta-\theta_2)} E_{1-\alpha,j}\end{matrix}\right),
\end{split}
\end{equation}
\item if $\alpha\in(\frac12,1)$,
 \begin{equation}\label{equ:F2}
\begin{split}
  \Phi_0(\theta) {\bf K}_{0}^{(j)}(t,r,r_2)\overline{\Phi_0(\theta_2)}=\left(\begin{matrix} E_{\alpha,j}& 0 \\0 &  e^{i(\theta-\theta_2)} F_{1-\alpha,j}\end{matrix}\right),
\end{split}
\end{equation}
\end{itemize}
% and
%  \begin{equation}\label{equ:E}
% \begin{split}
% \tilde{\bf K}_{0}^{(j)}(t,r,\theta,r_2,\theta_2)=\left(\begin{matrix} E_{\alpha,j}& 0 \\0 &  e^{i(\theta-\theta_2)} E_{1-\alpha,j}\end{matrix}\right),\quad 0<\alpha<1.
% \end{split}
% \end{equation}
and for any $\alpha\in(0,1)$
\begin{equation}\label{equ:F-E-j}
\begin{split}
F_{\alpha,j}(t;r,r_2) =\frac1{2\pi}\int_0^\infty e^{-it\rho}J_{-\alpha}(r\rho)J_{-\alpha}(r_2\rho) \varphi(2^{-j}\rho)\,\rho d\rho,\\
E_{\alpha,j}(t;r,r_2) =\frac1{2\pi}\int_0^\infty e^{-it\rho}J_{\alpha}(r\rho)J_{\alpha}(r_2\rho) \varphi(2^{-j}\rho)\,\rho d\rho.
 \end{split}
\end{equation}

We shall need the following properties of Bessel functions.

\begin{lemma}\label{lem:bessel}
  Let $\nu_{-}=\max\{0,-\nu\}$.
  For all $x,\nu\in \mathbb{R}$ we have:
  \begin{equation}\label{eq:bess1}
    |J_{\nu}(x)|\le c_{\nu}|x|^{\nu}\bra{x}^{-\nu-\frac 12},
    \qquad
    |J_{\nu}(x)|\le c_{\nu}(1+|x|^{-\nu_{-}}),
  \end{equation}
  \begin{equation}\label{eq:bess2}
    |J'_{\nu}(x)|=
    |J_{\nu-1}(x)-\nu J_{\nu}(x)/x|
    \le c_{\nu}|x|^{\nu-1}\bra{x}^{-\nu+\frac 12}.
  \end{equation}
  Moreover we can write
  \begin{equation}\label{eq:bess3}
    J_{\nu}(x)=x^{-1/2}(e^{ix}a_{+}(x)+e^{-ix}a_{-}(x))
  \end{equation}
  for two functions $a_{\pm}$ depending on $\nu,x$
  and satisfying for all $N\ge1$ and $|x|\ge1$
  \begin{equation}\label{eq:bess4}
    |a_{\pm}(x)|\le c_{\nu,0},
    \qquad
    |\partial_{x}^{N}a_{\pm}(x)|\le c_{\nu,N}
    |x|^{-N-1}.
  \end{equation}
  Here $c_{\nu}$ and $c_{\nu,N}$ are various constants 
  depending only on $\nu$ and $\nu,N$ respectively.
\end{lemma}

\begin{proof}
  Since $J_{\nu}(z)=z^{\nu}\widetilde{J}_{\nu}(z)$
  where $\widetilde{J}_{\nu}$ is an even entire function,
  it is sufficient to prove the estimates for $x>0$.

  From the Taylor series of $J_{\nu}$ we have 
  $|J_{\nu}(x)|\le C|x|^{\nu}$ for $|x|\le1$, and the estimate
  $|J_{\nu}(x)|\le C|x|^{-\frac 12}$ for $|x|\ge1$
  and fixed $\nu\in \mathbb{R}$ is well known
  (it also follows from 
  \eqref{eq:bess3}--\eqref{eq:bess4} proved below).
  From these estimates we obtain \eqref{eq:bess1},
  while \eqref{eq:bess2} follows from \eqref{eq:bess1}
  and the identity $J'_{\nu}=J_{\nu-1}-\nu J_{\nu}/x$.

  Next, we recall the integral representation of the
  Macdonald function $K_{\nu}(z)$:
  \begin{equation}\label{eq:macd}
    K_{\nu}(z)=b_{\nu}z^{-1/2}e^{-z}
    \int_{0}^{\infty}
    e^{-t}t^{\nu-1/2}
    \left(1+\frac{t}{2z}\right)^{\nu-1/2}dt
  \end{equation}
  where
  $b_{\nu}=\sqrt{\frac \pi2}/\Gamma(\nu+\frac 12)$.
  The representation \eqref{eq:macd} is valid for
  all $z\not\in(-\infty,0]$ and $\Re \nu>-\frac 12$
  (see e.g.~\cite{EMO} page 19).
  By the standard connection formulas
  \begin{equation*}%\label{eq:KtoH}
    \textstyle
    K_{\nu}(z)=
    \frac{i\pi}{2}e^{i\nu\pi/2}H^{(1)}_{\nu}(iz)=
    -\frac{i\pi}{2}e^{-i\nu\pi/2}H^{(2)}_{\nu}(-iz)
  \end{equation*}
  we deduce the analogous representations, for some
  constants $b'_{\nu},b''_{\nu}$ and
  $\Re \nu>-1/2$,
  \begin{equation}\label{eq:hank1}
    H_{\nu}^{(1)}(z)=b'_{\nu}
    z^{-1/2}e^{iz}
    \int_{0}^{\infty}
    e^{-t}t^{\nu-1/2}
    \left(1+\frac{it}{2z}\right)^{\nu-1/2}dt,
  \end{equation}
  \begin{equation}\label{eq:hank2}
    H_{\nu}^{(2)}(z)=b''_{\nu}
    z^{-1/2}e^{-iz}
    \int_{0}^{\infty}
    e^{-t}t^{\nu-1/2}
    \left(1+\frac{t}{2iz}\right)^{\nu-1/2}dt.
  \end{equation}
  Both \eqref{eq:hank1}--\eqref{eq:hank2} are valid at
  least for $\Re z>0$, and we shall use them for
  $z=x\in(0,\infty)$ and $\nu>-1/2$.
  Consider the function on $[1,\infty)$
  \begin{equation*}
    a(x)=
    b'_{\nu}\int_{0}^{\infty}
    e^{-t}t^{\nu-1/2}
    \left(1+\frac{it}{2x}\right)^{\nu-1/2}dt.
  \end{equation*}
  Since 
  $|\left(1+\frac{it}{2x}\right)^{\nu-1/2}|=
    (1+\frac{t^{2}}{4x^{2}})^{\frac \nu2-\frac 14}\le
    c_{\nu}(1+ \bra{t}^{\nu-1/2})$,
  we have obviously 
  \begin{equation}\label{eq:esta}
    |a(x)|\le c_{\nu}
    \quad\text{for}\quad x\ge1.
  \end{equation}
  Next we have
  \begin{equation*}
    \left|
    \partial_{x}\left(1+\frac{it}{2x}\right)^{\nu-1/2}
    \right|=
    \left|
    (\nu-1/2)
    \left(1+\frac{it}{2x}\right)^{\nu-3/2}\frac{it}{2x^{2}}
    \right|
    \le
    c_{\nu}
    (1+\bra{t}^{\nu-3/2})t x^{-2}
  \end{equation*}
  and more generally, for some polynomial $P_{N}(t)$,
  \begin{equation*}
    \left|
    \partial_{x}^{N}\left(1+\frac{it}{2x}\right)^{\nu-1/2}
    \right|\le
    c_{\nu,N}P_{N}(t)x^{-N-1},
    \qquad
    x\ge1.
  \end{equation*}
  This implies, by differentiating under the integral sign,
  \begin{equation}\label{eq:estan}
    |\partial_{x}^{N}a(x)|\le
    c_{\nu,N}x^{-N-1},
    \qquad
    x\ge1.
  \end{equation}
  Recalling \eqref{eq:hank1} we have thus proved,
  for $x\ge1$, $\nu>-1/2$,
  \begin{equation*}
    H^{(1)}_{\nu}(x)=x^{-1/2}e^{ix}a(x)
    \quad\text{with $a(x)$ satisfying }\quad 
    \eqref{eq:esta}, \eqref{eq:estan}.
  \end{equation*}
  In a similar way we prove, for $x\ge1$, $\nu>-1/2$,
  \begin{equation*}
    H^{(2)}_{\nu}(x)=x^{-1/2}e^{-ix}b(x)
    \quad\text{with $b(x)$ satisfying }\quad 
    \eqref{eq:esta}, \eqref{eq:estan}.
  \end{equation*}
  Since $2J_{\nu}=H^{(1)}_{\nu}+H^{(2)}_{\nu}$,
  we have proved \eqref{eq:bess3}--\eqref{eq:bess4}
  for all $\nu>-1/2$, $x\ge1$.
  Recalling the identity
  \begin{equation*}
    J_{\nu-1}(x)=\frac{2\nu}{x}J_{\nu}(x)-J_{\nu+1}(x)
  \end{equation*}
  we obtain that \eqref{eq:bess3}--\eqref{eq:bess4} are
  valid for $\nu\in(-3/2,-1/2]$, and by induction
  we conclude the proof for all $\nu\le-1/2$.
\end{proof}

We apply Lemma \ref{lem:bessel} to prove the following

\begin{lemma}\label{lem:hankelb}
  Let $\nu\ge-1/2$, $\nu_{-}=\max\{0,-\nu\}$,
  $\phi\in C_{c}^{\infty}([1,2])$.
  Then the integral
  \begin{equation}\label{eq:intI}
    I_{\nu}(t;r_{1},r_{2})=
    \int_{0}^{\infty}e^{it \rho}
      J_{\nu}(r_{1}\rho)J_{\nu}(r_{2}\rho) \phi(\rho)\rho d\rho
  \end{equation}
  satisfies the following estimate for all
  $r_{1},r_{2},t>0$:
  \begin{equation}\label{eq:intIest}
    |I_{\nu}(t;r_{1},r_{2})|\le
    c_{\nu}\bra{t}^{-1/2}(1+r_{1}^{-\nu_{-}})(1+r_{2}^{-\nu_{-}}).
  \end{equation}
\end{lemma}

\begin{proof}%[Proof of ...]
  Note that from
  \eqref{eq:bess1} we get, for $r>0$ and $\rho\in[1,2]$,
  \begin{equation}\label{eq:Jrr}
    |J_{\nu}(r \rho)|\le C(1+r^{-\nu_{-}})
  \end{equation}
  and (denoting by a prime the derivative $\partial_{\rho}$)
  \begin{equation}\label{eq:Jprr}
    |[J_{\nu}(r \rho)]'|=
    r|J'_{\nu}(r \rho)|\le C
    r^{\nu}\bra{r \rho}^{-\nu+1/2}.
  \end{equation}
  Note also that for $a_{\pm}$ satisfying \eqref{eq:bess4}
  we have, still for $r>0$ and $\rho\in[1,2]$,
  \begin{equation}\label{eq:arr}
    |a_{\pm}(r \rho)'|\le C r(r \rho)^{-2}\le Cr^{-1}
  \end{equation}
  In particular, \eqref{eq:Jrr} implies
  \begin{equation*}
    |I_{\nu}|\le C(1+r_{1}^{-\nu_{-}})(1+r_{2}^{-\nu_{-}})
    \quad\text{for all}\quad r_{1},r_{2},t>0
  \end{equation*}
  so that \eqref{eq:intIest} is true if $t\le 1$.
  It remains to consider $t\ge1$ only.

  \underline{Case $r_{1},r_{2}\le 1$}. 
  If $r\le 1$ \eqref{eq:Jprr} reduces to
  \begin{equation}\label{eq:Jprra}
    |J_{\nu}(r \rho)'|\le C r^{\nu}.
  \end{equation}
  Now, integration by parts in $I$ gives
  \begin{equation*}
    I_{\nu}=\frac{i}{t}
    \int_{0}^{\infty}
      (\rho \phi(\rho)J_{\nu}(r_{1}\rho) J_{\nu}(r_{2}\rho))'
      e^{it \rho}d\rho.
  \end{equation*}
  Expanding the derivative, we obtain: a term
  $(\rho \phi(\rho))'J_{\nu}(r_{1}\rho) J_{\nu}(r_{2}\rho)$
  which can be estimated as before by \eqref{eq:Jrr};
  a second term of the form
  \begin{equation*}
    \rho \phi(\rho)J_{\nu}(r_{1}\rho)' J_{\nu}(r_{2}\rho)
  \end{equation*}
  which we estimate using \eqref{eq:Jrr} and
  \eqref{eq:Jprra}; and a third symmetric term.
  Summing up,
  \begin{equation*}
    |(\rho \phi(\rho)J_{\nu}(r_{1}\rho) J_{\nu}(r_{2}\rho))'|
    \le C(1+r_{1}^{-\nu_{-}})(1+r_{2}^{-\nu_{-}})
  \end{equation*}
  which implies
  $|I|\le C t^{-1}(1+r_{1}^{-\nu_{-}})(1+r_{2}^{-\nu_{-}})$,
  and \eqref{eq:intIest} follows.

  \underline{Case $r_{1}\le 1$, $r_{2}\ge 1$} (or conversely).
  Using \eqref{eq:bess3}, we are reduced to estimate the
  two integrals
  \begin{equation*}
    I_{\pm}=\int_{0}^{\infty}
    \rho \phi(\rho)J_{\nu}(r_{1}\rho)(r_{2}\rho)^{-1/2}
    e^{i(t\pm r_{2})\rho}a_{\pm}(r_{2}\rho)d \rho.
  \end{equation*}
  If $\frac 12r_{2}\le t\le 2 r_{2}$ we have by \eqref{eq:Jrr}
  \begin{equation*}
    |I_{\pm}|\le C
    (1+r_{1}^{-\nu_{-}})r_{2}^{-1/2}\le
    C(1+r_{1}^{-\nu_{-}})t^{-1/2}
  \end{equation*}
  which implies \eqref{eq:intIest}.
  In the remaining cases we integrate by parts:
  \begin{equation*}
    I_{\pm}=\frac{ir_{2}^{-1/2}}{t\pm r_{2}}
    \int_{0}^{\infty}
    (\rho^{1/2}\phi(\rho)J_{\nu}(r_{1}\rho)a_{\pm}(r_{2}\rho))'
    e^{i \rho(t\pm r_{2})}d\rho.
  \end{equation*}
  When the derivative falls on $\rho^{1/2}\phi(\rho)$,
  we use \eqref{eq:Jrr} and $|a_{\pm}|\le C$;
  when it falls on $J_{\nu}$, we use \eqref{eq:Jprra}
  and $|a_{\pm}|\le C$; finally, when it falls on $a_{\pm}$,
  we use \eqref{eq:Jrr} and \eqref{eq:Jprra},
  which implies $|a_{\pm}(r_{2}\rho)|\le Cr_{2}^{-1}\le C$.
  Summing up we obtan
  \begin{equation*}
    |(\rho^{1/2}\phi(\rho)J_{\nu}(r_{1}\rho)a_{\pm}(r_{2}\rho))'|
    \le Cr_{1}^{-\nu_{-}}
  \end{equation*}
  which gives
  \begin{equation*}
    |I_{\pm}|\le C \frac{r_{1}^{-\nu_{-}}r_{2}^{-1/2}}
      {|t\pm r_{2}|}\le
      C\frac{r_{1}^{-\nu_{-}}}{|t\pm r_{2}|}.
  \end{equation*}
  Recall that either $t\ge 2r_{2}$, so that
  $|t\pm r_{2}|\ge Ct$, or $t\le \frac 12r_{2}$, so that
  $|t\pm r_{2}|\ge Cr_{2}\ge Ct$, and in both cases we get
  $|I_{\pm}|\le C t^{-1}r_{1}^{-\nu_{-}}$ which is stronger
  than \eqref{eq:intIest}.

  \underline{Case $r_{1},r_{2}\ge1$}.
  In this final case we use the representation \eqref{eq:bess3}
  for both Bessel functions. We get four terms, 
  with all possible combinations of signs:
  \begin{equation*}
    I_{++},I_{+-},I_{-+},I_{--}=
    (r_{1}r_{2})^{-1/2}
    \int_{0}^{\infty}\phi(\rho) 
    a_{\pm}(r_{1}\rho)a_{\pm}(r_{2}\rho)
    e^{i \rho(t\pm r_{1}\pm r_{2})}d \rho.
  \end{equation*}
  Since $|a_{\pm}|\le C$ we have trivially
  $|I_{\pm,\pm}|\le C(r_{1}r_{2})^{-1/2}$. This is already
  sufficient when
  \begin{equation*}
    t\le 3\max\{r_{1},r_{2}\}
    \implies
    (r_{1}r_{2})^{-1/2}\le C t^{-1/2}
    \implies
    \eqref{eq:intIest}.
  \end{equation*}
  When $t\ge 3\max\{r_{1},r_{2}\}$, we have
  $t\pm r_{1}\pm r_{2}\ge \frac t3$. Then we can
  integrate by parts:
  \begin{equation*}
    I_{\pm,\pm}=
    \frac{i(r_{1}r_{2})^{-1/2}}{t\pm r_{1}\pm r_{2}}
    \int_{0}^{\infty}
    (\phi a_{\pm}a_{\pm})'e^{i \rho(t\pm r_{1}\pm r_{2})}d\rho.
  \end{equation*}
  Recalling \eqref{eq:bess4} and \eqref{eq:arr}, we see that 
  $|(\phi a_{\pm}a_{\pm})'|\le C$. We obtain
  \begin{equation*}
    |I_{\pm,\pm}|\le
    C
    \frac{(r_{1}r_{2})^{-1/2}}{t\pm r_{1}\pm r_{2}}\le
    C t^{-1},
  \end{equation*}
  and the proof is concluded.
\end{proof}

\begin{lemma}\label{lemTF} Let $F_{\alpha,j}(t;r_1,r_2) $  and $E_{\alpha,j}(t;r_1,r_2) $ be given by \eqref{equ:F-E-j}. Then there exists a constant $C_{\alpha}$ such that
 \begin{equation}\label{est:F-E-j}
\begin{split}
\big|E_{\alpha,j}(t;r_1,r_2)\big|&\leq C_{\alpha} 2^{2j}
  (1+2^j|t|)^{-1/2},\\
\big|F_{\alpha,j}(t;r_1,r_2)\big|&\leq C_{\alpha} 2^{2j}
  (1+2^j|t|)^{-1/2}\big(1+(2^j r_{1})^{-\alpha}\big)
  \big(1+(2^j r_2)^{-\alpha}\big).
\end{split}
\end{equation}
\end{lemma}

\begin{proof}
By scaling, we have
\begin{equation*}
  E_{\alpha,j}(t;r_1,r_2)=2^{2j}E_{\alpha,0}(2^jt;2^jr_1, 2^jr_2)
\end{equation*}
\begin{equation*}
  F_{\alpha,j}(t;r_1,r_2)=2^{2j}F_{\alpha,0}(2^jt;2^jr_1, 2^jr_2).
\end{equation*}
Setting $t'=2^{j}t$, $r_{1}'=2^{j}r_{1}$, $r_{2}'=2^{j}r_{2}$,
estimates \eqref{est:F-E-j} follow from
\begin{equation*}
  |E_{\alpha,0}(t,r_{1},r_{2})|\le
  C \bra{t}^{-1/2},
\end{equation*}
\begin{equation*}
  |F_{\alpha,0}(t,r_{1},r_{2})|\le
  C \bra{t}^{-1/2}(1+r_{1}^{-\alpha})(1+r_{2}^{-\alpha}).
\end{equation*}
Now $E_{\alpha,0}=I_{\alpha}$ and
$F_{\alpha,0}=I_{-\alpha}$ where $I_{\nu}$ is
the integral \eqref{eq:intI}. Thus both estimates
are special cases of \eqref{eq:intIest}.
\end{proof}

We are in position to conclude the proof of Proposition \ref{prop:l-disper0}.
Consider the operators
\begin{equation}\label{T_j}
\begin{split}
T^F_jf(t,r)&=\int_0^\infty F_{\alpha,j}(t;r,r_2) f(r_2)\, r_2 dr_2,\\  T^E_jf(t,r)&=\int_0^\infty E_{\alpha,j}(t;r,r_2) f(r_2)\, r_2 dr_2, 
\end{split}
\end{equation}
where $f$ is a scalar radial function on $\mathbb{R}^{2}$.
By \eqref{est:F-E-j} we obtain immediately
\begin{equation}\label{eq:TEj}
\Big\|T^E_jf(x)\Big\|_{L^\infty(\R^2)}
\leq C2^{2j}(1+2^j|t|)^{-1/2} \|f\|_{L^1{(\R^2)}},
\end{equation}
\begin{equation}\label{eq:TFj}
\Big\|\big(1+(2^j r)^{-\alpha}\big)^{-1}T^F_jf(x)\Big\|_{L^\infty(\R^2)}\leq C2^{2j}(1+2^j|t|)^{-1/2} \|\big(1+(2^j r)^{-\alpha}\big)f\|_{L^1{(\R^2)}}.
\end{equation}
Recalling \eqref{equ:F1}--\eqref{equ:F-E-j} we deduce
\eqref{est:dis0-1} and thus the proof is concluded.
\end{proof}

\subsection{Proof of \eqref{est:disq}} We adapt the proof of Proposition \ref{prop:l-disper0}.
We start by writing

\begin{equation*}
\varphi(2^{-j}|{\mathcal{D}}_{A,\gamma}|)e^{it\mathcal{D}_{A,\gamma}}
f=\int_{0}^\infty \int_{0}^{2\pi}
\Phi_0(\theta) {\bf K}_{0}^{(j)}(t,r,r_2)\overline{\Phi_0(\theta_2)}
 f_j(r_2,\theta_2)  d\theta_2 \;r_2dr_2
\end{equation*}
where $f_{j}=\tilde\varphi(2^{-j}|{\mathcal{D}}_{A,\gamma}|)f$ and the kernel is given by \eqref{equ:F1}, \eqref{equ:F2} and \eqref{equ:F-E-j}. The main issue is the term \eqref{equ:F-E-j} with Bessel functions of negative order.
 By a scaling argument, it suffices to prove \eqref{est:disq} in the case $j=0$, which is a consequence of the following

\begin{lemma}\label{lem:est-qq'}
  Let $-1/2<\nu<0$. Let $T_\nu$ be the operator defined as
  \begin{equation}\label{Tnu-operator}
\begin{split}
(T_{\nu}g)(t,r_1)=\int_0^\infty  I_{\nu}(t;r_{1},r_{2}) g(r_2)\, r_2 dr_2
 \end{split}
\end{equation}
where $  I_{\nu}(t;r_{1},r_{2})$ is given in \eqref{eq:intI}. Then 
 for $2\leq q<-\frac2\nu$ the following estimate holds
  \begin{equation}\label{est:q-q'}
  \|T_{\nu}g\|_{L^q_{r_1 dr_1}}\le
    c_{\nu}(1+|t|)^{-\frac12(1-\frac 2q)}\|g\|_{L^{q'}_{r_2 dr_2}}.
  \end{equation}
\end{lemma}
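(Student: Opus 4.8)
The plan is to prove \eqref{est:q-q'} by interpolation, after splitting the kernel $I_\nu$ into a ``regular'' part, which behaves like the kernel of the corresponding nonnegative-order (Friedrichs) problem, and ``singular'' parts, which carry the factors $r^{\nu}=r^{-|\nu|}$ responsible for the restriction $q<-2/\nu$. First, since $(T_\nu g)(r_1)=\mathcal{H}_\nu\big[e^{it\rho}\phi(\rho)\,\mathcal{H}_\nu g\big](r_1)$ and $\mathcal{H}_\nu$ is unitary on $L^2((0,\infty),rdr)$ for $\nu\ge-\tfrac12$, we get $\|T_\nu\|_{L^2\to L^2}\le\|\phi\|_{L^\infty}$, uniformly in $t$; this is \eqref{est:q-q'} for $q=2$, so from now on we take $q>2$. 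Next fix $\chi\in C_c^\infty(\R)$ with $\chi\equiv1$ on $[-1,1]$ and $\mathrm{supp}\,\chi\subset[-2,2]$, and write $J_\nu(x)=c_\nu x^\nu\chi(x)+\widetilde J_\nu(x)$ with $c_\nu=2^{-\nu}/\Gamma(1+\nu)$. By the Taylor expansion of $J_\nu$ and Lemma \ref{lem:bessel}, the function $\widetilde J_\nu$ is \emph{bounded} on $[0,\infty)$, satisfies $\widetilde J_\nu(x)=O(x^{\nu+2})$ as $x\to0$ (and an analogous bound for its derivative), and for $|x|\ge1$ it inherits the oscillatory representation \eqref{eq:bess3}--\eqref{eq:bess4} (indeed $\widetilde J_\nu=J_\nu$ for $|x|\ge2$). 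Inserting this splitting into \eqref{eq:intI} and expanding the product we obtain $I_\nu=I^{\mathrm{reg}}_\nu+I^{(1)}_\nu+I^{(2)}_\nu+I^{(0)}_\nu$, where $I^{\mathrm{reg}}_\nu$ has both Bessel factors replaced by $\widetilde J_\nu$, $I^{(j)}_\nu$ ($j=1,2$) has the factor $J_\nu(r_j\rho)$ replaced by $c_\nu(r_j\rho)^\nu\chi(r_j\rho)$ and the other by $\widetilde J_\nu$, and $I^{(0)}_\nu$ has both factors of this singular type.

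For the doubly singular term $I^{(0)}_\nu(t;r_1,r_2)=c_\nu^2(r_1r_2)^{-|\nu|}\int_0^\infty e^{it\rho}\chi(r_1\rho)\chi(r_2\rho)\rho^{2\nu+1}\phi(\rho)\,d\rho$, the $\rho$-amplitude is $C_c^\infty$, supported in $[1,2]$, with all $\rho$-derivatives bounded uniformly in $r_1,r_2$ (since $\partial_\rho^N\chi(r_i\rho)=r_i^N\chi^{(N)}(r_i\rho)$ is supported where $r_i\rho\asymp1$), so integration by parts gives $|I^{(0)}_\nu|\lesssim_N(1+|t|)^{-N}(r_1r_2)^{-|\nu|}\mathbf 1_{\{r_1,r_2\le2\}}$; as $\|r^{-|\nu|}\mathbf 1_{\{r\le2\}}\|_{L^q(rdr)}<\infty$ exactly when $|\nu|q<2$, i.e.\ $q<-2/\nu$, the corresponding rank-one operator is $L^{q'}\to L^q$ with norm $\lesssim_N(1+|t|)^{-N}$. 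For the mixed term $I^{(1)}_\nu(t;r_1,r_2)=c_\nu\,r_1^{-|\nu|}\,B(t,r_1,r_2)$ with $B(t,r_1,r_2)=\int_0^\infty e^{it\rho}\chi(r_1\rho)\rho^{\nu+1}\widetilde J_\nu(r_2\rho)\phi(\rho)\,d\rho$, the weight $r_1^{-|\nu|}$ is again supported in $r_1\le2$ and lies in $L^q(r_1dr_1)$ iff $q<-2/\nu$; for the integral $B$ I would argue as in Lemma \ref{lem:hankelb}: for $r_2\le1$ the amplitude is smooth and integration by parts yields $(1+|t|)^{-N}$, while for $r_2\ge1$ one substitutes \eqref{eq:bess3} for $\widetilde J_\nu(r_2\rho)$ and integrates by parts against the two phases $e^{i(t\pm r_2)\rho}$, obtaining $|B|\lesssim_N\mathbf 1_{\{r_1\le2\}}\,r_2^{-1/2}\big((1+|t-r_2|)^{-N}+(1+|t+r_2|)^{-N}\big)$. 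A direct computation of the $L^q(r_2dr_2)$ norm then gives $\|B(t,r_1,\cdot)\|_{L^q(r_2dr_2)}\lesssim\mathbf 1_{\{r_1\le2\}}(1+|t|)^{1/q-1/2}$, the main contribution coming from $r_2\asymp|t|$; Hölder in $r_2$ gives $|(I^{(1)}_\nu g)(r_1)|\lesssim r_1^{-|\nu|}\mathbf 1_{\{r_1\le2\}}(1+|t|)^{1/q-1/2}\|g\|_{L^{q'}}$, and taking the $L^q(r_1dr_1)$ norm shows that the operator with kernel $I^{(1)}_\nu$ (and, symmetrically, $I^{(2)}_\nu$) is $L^{q'}\to L^q$ with norm $\lesssim_{\nu,q}(1+|t|)^{-\frac12(1-\frac2q)}$ for $2<q<-2/\nu$.

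It remains to treat $I^{\mathrm{reg}}_\nu$, the kernel $\int_0^\infty e^{it\rho}\widetilde J_\nu(r_1\rho)\widetilde J_\nu(r_2\rho)\phi(\rho)\rho\,d\rho$. Since $\widetilde J_\nu$ is bounded near the origin, the oscillatory-integral estimates of Lemma \ref{lem:hankelb} apply with $\nu_-=0$ and give $|I^{\mathrm{reg}}_\nu(t;r_1,r_2)|\lesssim(1+|t|)^{-1/2}$ uniformly in $r_1,r_2$, hence $\|I^{\mathrm{reg}}_\nu\|_{L^1\to L^\infty}\lesssim(1+|t|)^{-1/2}$. It is also bounded on $L^2$ uniformly in $t$: writing $\mathcal G g(\rho)=\int_0^\infty\widetilde J_\nu(r\rho)g(r)\,rdr=\mathcal{H}_\nu g(\rho)-c_\nu\rho^\nu\int_0^\infty r^\nu\chi(r\rho)g(r)\,rdr$, Cauchy--Schwarz (using $\nu>-1$) bounds the correction term by $\lesssim\rho^{-1}\|g\|_{L^2(rdr)}$, so $\|\mathbf 1_{[1,2]}\mathcal G g\|_{L^2(\rho d\rho)}\lesssim\|g\|_{L^2(rdr)}$; since $\mathrm{supp}\,\phi\subset[1,2]$ and $I^{\mathrm{reg}}_\nu$ is the kernel of $\mathcal G^{*}\,[e^{it\rho}\phi(\rho)]\,\mathcal G$, this gives $\|I^{\mathrm{reg}}_\nu\|_{L^2\to L^2}\lesssim1$. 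Interpolating the two bounds yields $\|I^{\mathrm{reg}}_\nu\|_{L^{q'}\to L^q}\lesssim(1+|t|)^{-\frac12(1-\frac2q)}$ for every $q\ge2$. Summing the four contributions proves \eqref{est:q-q'}.

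The main obstacle is the treatment of the mixed terms $I^{(1)}_\nu,I^{(2)}_\nu$: a crude estimate (for instance Hölder in $r_2$ against $1+r_2^{-|\nu|}$, using only the pointwise bound of Lemma \ref{lem:hankelb}) overshoots by a power $|t|^{1/q}$ and does not reach the exponent $\tfrac12(1-\tfrac2q)$; one genuinely needs the half-wave oscillatory structure of the $\rho$-integral (the decomposition \eqref{eq:bess3} and integration by parts against $t\pm r_2$) together with the precise $L^q(r_2dr_2)$ computation of $B$. A secondary point is the uniform $L^2$-boundedness of the regular piece, where the relevant observation is that the modified transform $\mathcal G$ differs from $\mathcal{H}_\nu$ by an operator that becomes $L^2$-bounded once restricted to the compact frequency support of $\phi$.
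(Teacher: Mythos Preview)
Your proof is correct and follows the same overall strategy as the paper---a four-piece decomposition into doubly singular, mixed, and regular parts, with the regular part handled by interpolation between an $L^1\to L^\infty$ dispersive bound and an $L^2\to L^2$ bound. The execution differs in two respects. First, the paper localizes in the spatial variables via cutoffs $\chi(r_1)$, $\chi(r_2)$ (so the Bessel functions are left intact but restricted to $r_j\lesssim1$ or $r_j\gtrsim1$), whereas you split the Bessel function itself as $J_\nu=c_\nu x^\nu\chi+\widetilde J_\nu$; this makes the singular factor $r^{-|\nu|}$ explicit from the outset and slightly streamlines the doubly singular piece. Second, for the mixed terms the paper carries out a dyadic decomposition in $|t\pm r_2|$ and sums the resulting contributions, while you compute the $L^q(r_2dr_2)$ norm of $B(t,r_1,\cdot)$ directly; both routes yield the same exponent $(1+|t|)^{1/q-1/2}$. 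A minor price of your splitting is that the regular piece is no longer a genuine Hankel-transform sandwich, so the $L^2$ bound requires your extra argument that $\mathcal G-\mathcal H_\nu$ is $L^2$-bounded after restriction to $\rho\in[1,2]$; in the paper's decomposition the $L^2$ bound for $T^4_\nu$ is immediate from the unitarity of $\mathcal H_\nu$.
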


\begin{proof}%[Proof of ...]
Let $\chi\in \mathcal{C}_c^\infty ([0,+\infty)$ be defined as 
\begin{equation}
\chi(r)=
\begin{cases}1,\quad r\in [0, \frac12],\\
0, \quad r\in [1,+\infty)
\end{cases}
\end{equation}
and let us set $\chi^c=1-\chi$. Then we decompose the kernel $  I_{\nu}(t;r_{1},r_{2})$ into four terms as follows:
  \begin{equation}
\begin{split}
 I_{\nu}(t;r_{1},r_{2})=&\chi(r_1)I_{\nu}(t;r_{1},r_{2})\chi(r_2)+\chi^c(r_1)I_{\nu}(t;r_{1},r_{2})\chi(r_2)\\
&+\chi(r_1)I_{\nu}(t;r_{1},r_{2})\chi^c(r_2)+\chi^c(r_1)I_{\nu}(t;r_{1},r_{2})\chi^c(r_2).
 \end{split}
\end{equation}
This yields a corresponding decomposition for the operator $T_{\nu}=T^1_{\nu}+T^2_{\nu}+T^3_{\nu}+T^4_{\nu}$. We thus estimate separately the norms $\|T^j_{\nu}g\|_{L^q_{r_1 dr_1}}$ for $j=1,2,3,4$.

For what concerns $T^1_\nu$, by Lemma \ref{lem:hankelb}, we have that 
  \begin{equation}
\begin{split}
|\chi(r_1)I_{\nu}(t;r_{1},r_{2})\chi(r_2)|\leq (1+|t|)^{-1}\chi(r_1)r_1^{\nu}\chi(r_2)r_2^{\nu};
 \end{split}
\end{equation}
therefore, as long as $2\leq q<-\frac2\nu$, we can write
  \begin{equation}\label{est:q-q'1}
  \|T^1_{\nu}g\|_{L^q_{r_1 dr_1}}\le
    c_{\nu}(1+|t|)^{-1}\Big(\int_0^1 r^{\nu q} r dr\Big)^{2/q}\|g\|_{L^{q'}_{r_2 dr_2}}\le
    c_{\nu}(1+|t|)^{-1}\|g\|_{L^{q'}_{r_2 dr_2}}.
  \end{equation}
  
The terms $T^2_{\nu}$ and $T^3_{\nu}$ are similar, so we only deal with $T^3_{\nu}$. We need a refinement of the argument developed in the proof of Lemma \ref{lem:hankelb} in the Case $r_{1}\le 1$, $r_{2}\ge 1$.  
We need to estimate the two integrals
  \begin{equation*}
    I_{\pm}=\int_{0}^{\infty}
    \rho \phi(\rho)J_{\nu}(r_{1}\rho)(r_{2}\rho)^{-1/2}
    e^{i(t\pm r_{2})\rho}a_{\pm}(r_{2}\rho)d \rho.
  \end{equation*}
 If $|t|\leq 1$, thanks to an integration by parts we obtain
  \begin{equation*}
   | I_{\pm}|=\Big|\frac{ir_{2}^{-1/2}}{\pm r_{2}}
    \int_{0}^{\infty}
    (\rho^{1/2}\phi(\rho)J_{\nu}(r_{1}\rho)a_{\pm}(r_{2}\rho)e^{it\rho})'
    e^{\pm i \rho r_{2}}d\rho\Big|\leq Cr_1^{\nu}r_2^{-\frac32} .
  \end{equation*}
 Hence if $|t|\leq 1$, we have
      \begin{equation}
  \|T^3_{\nu}g\|_{L^q_{r_1 dr_1}}\le
    c_{\nu}\Big(\int_0^1 r_1^{\nu q} r_1 dr_1\Big)^{1/q}\Big(\int_{\frac12}^{+\infty} r_2^{-\frac{3q}2} r_2 dr_2\Big)^{1/q}\|g\|_{L^{q'}_{r_2 dr_2}}\le
    c_{\nu}\|g\|_{L^{q'}_{r_2 dr_2}}.
  \end{equation}
It remains to consider the region $|t|\geq 1$. If $|t\pm r_2|\leq 1$, we have by \eqref{eq:Jrr}
  \begin{equation*}
    |I_{\pm}|\le C
    (1+r_{1}^{-\nu_{-}})r_{2}^{-1/2}.
  \end{equation*}
If $|t\pm r_2|\geq 1$, we integrate by parts to obtain
 \begin{equation*}
    |I_{\pm}|\le C \frac{r_{1}^{-\nu_{-}}r_{2}^{-1/2}}
      {|t\pm r_{2}|}.
  \end{equation*}
   Therefore, we can write
      \begin{equation}\label{est:q-q'2}
    \begin{split}
  &\|T^3_{\nu}g\|^q_{L^q_{r_1 dr_1}}\\
   &\leq\int_0^1\Big(\int_{|t\pm r_2|\leq 1} r_{1}^{\nu}r_{2}^{-1/2}| g(r_2)| r_2\, dr_2 +\int_{1\leq |t\pm r_2|\leq |t|/2} r_{1}^{\nu}r_{2}^{-1/2}|t\pm r_2|^{-1}| g(r_2)| r_2\, dr_2 \\
   &\quad+\int_{|t\pm r_2|\geq |t|/2} r_{1}^{\nu} r_{2}^{-1/2}|t\pm r_2|^{-1}| g(r_2)| r_2\, dr_2\Big)^q r_1\, dr_1
    \end{split}
  \end{equation}
  where in all integrals we can add the condition $r_{2}\ge1$ if
  necessary.
  Since $|t|\geq 1$ and $|t\pm r_2|\leq 1/10$, one has that $r_2\sim |t|$. Therefore, on the one hand, we have
      \begin{equation}
    \begin{split}
&\int_{|t\pm r_2|\leq 1} r_{2}^{-1/2}| g(r_2)| r_2\, dr_2\\
&\quad\leq \Big(\int_{|t\pm r_2|\leq 1} r_{2}^{-q/2} r_2\, dr_2\Big)^{\frac1q} \|g\|_{L^{q'}_{r_2 dr_2}}\leq C|t|^{\frac1q-\frac12}\|g\|_{L^{q'}_{r_2 dr_2}}.
    \end{split}
  \end{equation}
  On the other hand, since $|t\pm r_2|\leq |t|/2$ implies $r_2\sim |t|$, we have
      \begin{equation}
    \begin{split}
&\int_{1\leq|t\pm r_2|\leq |t|/2} r_{2}^{-1/2}(1+|t\pm r_2|)^{-1}| g(r_2)| r_2\, dr_2\\
&\lesssim |t|^{-1/2}\int_{1\leq|t\pm r_2|\leq |t|/2} (1+|t\pm r_2|)^{-1}| g(r_2)| r_2\, dr_2\\
&\quad\lesssim |t|^{-1/2+1/q} \Big(\int_{ |t|/2}^{\frac{3|t|}2} (1+|t\pm r_2|)^{-q} \, dr_2\Big)^{\frac1q} \|g\|_{L^{q'}_{r_2 dr_2}}\leq C|t|^{\frac1q-\frac12}\|g\|_{L^{q'}_{r_2 dr_2}}.
    \end{split}
  \end{equation}
  Finally  we estimate 
  \begin{equation}
    \begin{split}
&\int_{|t\pm r_2|\geq |t|/2} r_{2}^{-1/2}|t\pm r_2|^{-1}| g(r_2)| r_2\, dr_2\\
&\leq C \Big(\int_{|t\pm r_2|\geq |t|/2}   r_{2}^{-q/2}(1+|t\pm r_2|)^{-q} \, r_2dr_2\Big)^{\frac1q} \|g\|_{L^{q'}_{r_2 dr_2}}\\
&\leq C|t|^{\frac1q-\frac12} \Big(\int_{|t\pm r_2|\geq |t|/2}   r_{2}^{-q/2}(1+|t\pm r_2|)^{-\frac q2-1} \, r_2dr_2\Big)^{\frac1q} \|g\|_{L^{q'}_{r_2 dr_2}}.
% \\ &\leq C|t|^{\frac1q-\frac12} \|g\|_{L^{q'}_{r_2 dr_2}}
    \end{split}
  \end{equation}
  The last integral is uniformly bounded, indeed, recalling that
  $r_{2}\ge1$, we have for $q\ge2$
  \begin{equation*}
    \int_{|t\pm r_2|\geq |t|/2}r_{2}^{-q/2}
    (1+|t\pm r_2|)^{-\frac q2-1} \, r_2dr_2 \le
    \int_{-\infty}^{+\infty}
    (1+|t\pm r_2|)^{-\frac q2-1}dr_2=
    \int(1+|r_2|)^{-\frac q2-1}dr_2=C.
  \end{equation*}
  Thus we obtain for $q\ge2$
  \begin{equation*}
    \int_{|t\pm r_2|\geq |t|/2} r_{2}^{-1/2}
    |t\pm r_2|^{-1}| g(r_2)| r_2\, dr_2\le
    C|t|^{\frac 1q-\frac 12}\|g\|_{L^{q'}_{r_{2}dr_{2}}}.
  \end{equation*}
 % Now we consider the integral 
 %   \begin{equation}
 %    \begin{split}
 % \int_{|t\pm r_2|\geq |t|/2}   r_{2}^{-q/2}(1+|t\pm r_2|)^{-\frac q2-1} \, r_2dr_2  \lesssim \begin{cases}(1+|t|)^{-\frac q2-1}\int_1^{|t|} r_{2}^{1-q/2} dr_2\lesssim 1,\quad r_2\ll |t|;\\
 %  \int_{|t|}^\infty   r_{2}^{-q/2} r_2^{-\frac q2-1} \, r_2dr_2\lesssim 1 \quad r_2\gg |t|,\\
 % (1+|t|)^{-\frac q2-1}\int_{r_2\sim |t|} r_{2}^{1-q/2} dr_2\lesssim 1 \quad r_2\sim |t|
 % \end{cases}
 %  \end{split}
 %  \end{equation}
 %  If $r_2\ll |t|$, then 
  In conclusion, we have shown that,   for $2\le q<-\frac{2}{\nu}$
      \begin{equation}\label{est:q-q'2}
    \begin{split}
  &\|T^3_{\nu}g\|_{L^q_{r_1 dr_1}}\leq C |t|^{\frac1q-\frac{1}2}\int_0^1r_1^{\nu q} r_1\, dr_1 \|g\|_{L^{q'}_{r_2 dr_2}}\leq C |t|^{\frac1q-\frac{1}2}\|g\|_{L^{q'}_{r_2 dr_2}}.
    \end{split}
  \end{equation}

We finally deal with $T^4_{\nu}$. By Lemma \ref{lem:hankelb}, we have
 \begin{equation}
  \|T^4_{\nu}g\|_{L^\infty_{r_1 dr_1}}\le
    c_{\nu}(1+|t|)^{-\frac12}\|g\|_{L^{1}_{r_2 dr_2}}.
  \end{equation}
  Since the Hankel transform is unitary on $L^2_{rdr}$, we get
     \begin{equation}
  \|T^4_{\nu}g\|_{L^2_{r_1 dr_1}}\le   \|\mathcal{H}_\nu e^{it\rho}\phi(\rho) \mathcal{H}_\nu e^{it\rho} g\|_{L^2_{r_1 dr_1}}\leq C \|g\|_{L^{2}_{r_2 dr_2}}.
  \end{equation}
  By interpolation, we obtain 
      \begin{equation}\label{est:q-q'4}
    \begin{split}
  &\|T^4_{\nu}g\|_{L^q_{r_1 dr_1}}\leq C (1+|t|)^{-\frac12(1-\frac2q)}\|g\|_{L^{q'}_{r_2 dr_2}}.
    \end{split}
  \end{equation}
  Collecting the estimates on the terms $T^j_\nu$, yields \eqref{est:q-q'} and the proof is concluded.
\end{proof}

\section{Proof of the Strichartz estimates}\label{sec:stri}

This section is devoted to the proof of Theorems \ref{thm-stri1}, \ref{thm-stri3} and \ref{thm-stri2}.
The proof of Theorem \ref{thm-stri1} relies on the abstract Keel--Tao argument \cite{KT} and on the localized dispersive estimates  \eqref{est:dis1}, and happens to be quite straightforward. On the other hand, the proofs of
Theorems \ref{thm-stri2} and \ref{thm-stri3}, which are inspired by \cite{CYZ}, are much more delicate, as they require a more careful analysis of Bessel functions.

\subsection{Proof of  Theorem \ref{thm-stri1}} 

We use the following variant of the Keel--Tao argument,
see \cite{Zhang} for a proof:

\begin{proposition}\label{prop:semi}
Let $(X,\mathcal{M},\mu)$ be a $\sigma$-finite measured space and
$U: \mathbb{R}\rightarrow B(L^2(X,\mathcal{M},\mu))$ be a weakly
measurable map satisfying, for some constants $C$, $\kappa\geq0$,
$\sigma, h>0$,
\begin{equation}\label{md}
\begin{split}
\|U(t)\|_{L^2\rightarrow L^2}&\leq C,\quad t\in \mathbb{R},\\
\|U(t)U(s)^*f\|_{L^\infty}&\leq
Ch^{-\kappa}(h+|t-s|)^{-\sigma}\|f\|_{L^1}.
\end{split}
\end{equation}
Then for every pair $q,p\in[1,\infty]$ such that $(p,q,\sigma)\neq
(2,\infty,1)$ and
\begin{equation*}
\frac{1}{p}+\frac{\sigma}{q}\leq\frac\sigma 2,\quad p\ge2,
\end{equation*}
there exists a constant $\tilde{C}$ only depending on $C$, $\sigma$,
$q$ and $p$ such that
\begin{equation*}
\Big(\int_{\mathbb{R}}\|U(t) u_0\|_{L^q}^p dt\Big)^{\frac1p}\leq \tilde{C}
\Lambda(h)\|u_0\|_{L^2}
\end{equation*}
where $\Lambda(h)=h^{-(\kappa+\sigma)(\frac12-\frac1q)+\frac1p}$.
\end{proposition}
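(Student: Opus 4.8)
The argument is the standard $TT^{*}$/Keel--Tao bilinear scheme, with the sole extra ingredient that the powers of the parameter $h$ must be tracked; this is made transparent by a scaling reduction to $h=1$. By duality and the $TT^{*}$ identity, the estimate
\(
\big(\int_{\mathbb{R}}\|U(t)u_0\|_{L^q}^p\,dt\big)^{1/p}\le \tilde C\,\Lambda(h)\|u_0\|_{L^2}
\)
is equivalent to
\[
\Big|\int_{\mathbb{R}}\!\int_{\mathbb{R}}\big\langle U(t)U(s)^{*}F(s),F(t)\big\rangle\,ds\,dt\Big|
\;\le\;\tilde C^{2}\,\Lambda(h)^{2}\,\|F\|_{L^{p'}_tL^{q'}_x}^{2},
\]
so I would first record this reduction and then prove the bilinear bound.

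From the hypotheses \eqref{md} one has the two kernel bounds $\|U(t)U(s)^{*}\|_{L^2\to L^2}\le C^{2}$ and $\|U(t)U(s)^{*}\|_{L^1\to L^\infty}\le C\,h^{-\kappa}(h+|t-s|)^{-\sigma}$. Interpolating these (Riesz--Thorin in the spatial variable) gives, for every $2\le q\le\infty$,
\[
\big|\langle U(t)U(s)^{*}F(s),F(t)\rangle\big|
\;\lesssim\; h^{-\kappa(1-\frac2q)}\,(h+|t-s|)^{-\beta}\,\|F(s)\|_{L^{q'}}\|F(t)\|_{L^{q'}},
\qquad \beta:=\sigma\Big(1-\tfrac2q\Big).
\]
Substituting this into the bilinear form and setting $g(t):=\|F(t)\|_{L^{q'}}$ reduces everything to the scalar bilinear estimate
\[
\int_{\mathbb{R}}\!\int_{\mathbb{R}}(h+|t-s|)^{-\beta}\,g(s)\,g(t)\,ds\,dt\;\lesssim\;h^{\frac2p-\beta}\,\|g\|_{L^{p'}}^{2},\qquad g\ge0 .
\]
Indeed, combining the two displayed bounds one gets the constant $h^{-\kappa(1-\frac2q)}\cdot h^{\frac2p-\beta}=h^{-(\kappa+\sigma)(1-\frac2q)+\frac2p}=\Lambda(h)^{2}$, as required; so it only remains to prove the scalar inequality.

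For the scalar estimate, the substitution $t\mapsto ht,\ s\mapsto hs$ shows it is equivalent to the case $h=1$, i.e.\ $\int\!\int(1+|t-s|)^{-\beta}g(s)g(t)\,ds\,dt\lesssim\|g\|_{L^{p'}}^{2}$. The admissibility condition $\frac1p+\frac\sigma q\le\frac\sigma2$ is precisely $\frac2p\le\beta$. When $\frac2p<\beta$ the kernel $(1+|\cdot|)^{-\beta}$ lies in $L^{p/2}(\mathbb{R})$ and Young's inequality closes the estimate immediately. When $\frac2p=\beta$ and $p>2$ (so $0<\beta<1$), the homogeneous kernel $|t-s|^{-\beta}$ dominates and the Hardy--Littlewood--Sobolev inequality applies with exponents $(p',p')$, since $\frac1{p'}+\frac1{p'}+\beta=2$. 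The only remaining case is the genuine endpoint $\frac2p=\beta=1$, that is $p=2$ and $q=\frac{2\sigma}{\sigma-1}$ with $\sigma>1$ (the case $q=\infty$, $\sigma=1$ being excluded by hypothesis): here I would run the Keel--Tao argument, decomposing dyadically according to $h+|t-s|\sim 2^{j}h$, estimating each dyadic bilinear piece in two ways --- crudely via the kernel size and Young, and sharply via the $L^2$ bound combined with bilinear (real or complex) interpolation between Lebesgue exponents --- and summing the resulting series; see \cite{KT} and \cite{Zhang}. Collecting the three cases yields the scalar estimate, and hence the proposition.

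The main obstacle is exactly this endpoint $p=2$ on the sharp line: neither Young nor Hardy--Littlewood--Sobolev is available there, and one genuinely needs the Keel--Tao bilinear interpolation machinery. A secondary, purely bookkeeping, point is to check that all the $h$-powers recombine into the stated $\Lambda(h)$, which the scaling reduction makes immediate. We remark that in the present paper one always has $\sigma=\tfrac12$, so $\beta<\tfrac12<1$ and this endpoint difficulty never occurs; nonetheless the general statement requires addressing it.
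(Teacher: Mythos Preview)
Your sketch is correct and is precisely the standard $TT^{*}$/Keel--Tao argument with the $h$-bookkeeping carried along; the paper itself does not give a proof of this proposition but simply refers to \cite{Zhang}, where the same argument is carried out. Your non-endpoint portion (interpolation followed by H\"older and Young) in fact coincides verbatim with the short proof the paper does include for the closely related Proposition~\ref{prop:semi-1}, and your observation that in the present application $\sigma=\tfrac12$ (so the Keel--Tao endpoint never arises) is apt.
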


% \begin{proof}
% %This is an analogue of the semiclassical Strichartz estimates for Schr\"odinger in \cite{KTZ, Zworski}. 
% The result is essentially well known, 
% \end{proof}

To prove estimate \eqref{stri-D1} we write again as in \eqref{d-propag}
 \begin{equation*}
e^{it\mathcal{D}_{A,\gamma}}P_{>}= e^{-it \sqrt{L}}P_{>}
\end{equation*}
using the modified propagator defined in \eqref{D-propa'}.
Then \eqref{stri-D1} is a consequence of 
\begin{equation}\label{stri-D1-1}
\|e^{-it \sqrt{L}}f\|_{L^p_t(\R; L^q_x(\R^2))}\leq
C\| f\|_{\dot{H}^{s}_A}.
\end{equation}
with $p,q, s$ as in Theorem \ref{thm-stri1}. 
Using a Paley--Littlewood frequency decomposition
(see \eqref{LP-dp}) we define
$f_j=\varphi_j(\sqrt{L})f$ for all $j\in \mathbb{Z}$.
By the square-function
estimates \eqref{square1} and the Minkowski inequality, we obtain
\begin{equation}\label{LP}
\|e^{-it\sqrt{L}}f\|_{L^p(\R;L^q(\R^2))}\lesssim
\Big(\sum_{j\in\Z}\|e^{-it\sqrt{L}}f_j\|^2_{L^p(\R;L^q(\R^2))}\Big)^{\frac12}.
\end{equation}
Moreover, if $\tilde{\varphi}$ are cutoffs such that  $\tilde{\varphi}=1$ on the support of $\varphi$, we can write
$$e^{-it \sqrt{L} }f_j=
U_{j}(t)\tilde{f}_{j},
\qquad
U_{j}(t):=\varphi_j(\sqrt{L})e^{-it \sqrt{L} },
\qquad
\tilde{f}_j=\tilde{\varphi}_j(\sqrt{L})f.$$
By the spectral theorem, letting $g=\tilde{f}_j$ we have
\begin{equation*}
\|U_j(t)g\|_{L^2(\R^2)}\leq C\|g\|_{L^2(\R^2)}.
\end{equation*}
By the dispersive estimate \eqref{est:dis1} we have
\begin{equation*}
\|U_j(t)U_j^*(s)g\|_{L^\infty (\R^2)}=\|U_j(t-s)g\|_{L^\infty (\R^2)}\leq C 2^{\frac32 j}\big(2^{-j}+|t-s|\big)^{-\frac12}\|g\|_{L^1(\R^2)}.
\end{equation*}
Then assumptions \eqref{md} are valid
for $U_{j}(t)$ with $\alpha=3/2$, $\sigma=1/2$ and
$h=2^{-j}$, and Proposition \ref{prop:semi} gives
\begin{equation*}
\|U_j(t)g\|_{L^p(I;L^q(\R^2))}\leq C
2^{j[2(\frac12-\frac1p)-\frac1q]} \|g\|_{L^2(\R^2)}
\end{equation*}
which implies \eqref{stri-D1-1}
\begin{equation}\label{LP}
\|e^{-it\sqrt{L}}f\|_{L^p(\R;L^q(\R^2))}\lesssim
\Big(\sum_{j\in\Z}2^{2js} \|\tilde{\varphi}_j(\sqrt{L})f\|^2_{L^2(\R^2)}\Big)^{\frac12}
\end{equation}
 since $s=2(\frac12-\frac1q)-\frac1p$.

\subsection{Proof of  Theorem \ref{thm-stri2}: first part} 

We first prove the partial estimate
\begin{equation}\label{stri-D2}
  \|W(|x|) e^{it\mathcal{D}_{A,\gamma}}P_{0}f\|
  _{[L^p_t(\R; L^q_x(\R^2))]^2}
  \leq C \|P_{0} f\|_{[{H}^{s}_A]^2},
  \quad s=1-\frac1p-\frac2q
\end{equation}
for all $(p,q)$ satisfying \eqref{pqrange0}.
This estimate will follow
by complex interpolation between the two estimates
\begin{equation}\label{est:2-2}
\|W(|x|)e^{it\mathcal{D}_{A,\gamma}}P_{0}f\|
  _{[L^\infty_t(\R; L^2_x(\R^2))]^2}\leq
   C \|P_{0} f\|_{[L^2]^2}
\end{equation}
and 
\begin{equation}\label{est:inf-2}
\|W(|x|)e^{it\mathcal{D}_{A,\gamma}}P_{0}f\|_{[L^p_t(\R; L^\infty_x(\R^2))]^2}\leq
C
\|P_{0} f\|_{[H^{1-\frac1p}_A]^2}, \quad \forall p>2.
\end{equation}
We recall that the $k=0$ component of $f$ (the singular component) takes the form
 \begin{equation}\label{equ:f0b}
 P_0 f=
  c_1
  \begin{pmatrix}
    \cos \gamma \cdot K_{\alpha}(r) \\
    i\sin \gamma \cdot K_{1-\alpha}(r) e^{i\theta}
  \end{pmatrix}
  +c_2\begin{pmatrix}
     \phi_{0}(r) \\
    e^{i\theta}\ \psi_{0}(r)
  \end{pmatrix}:=\begin{pmatrix}
     g_{0}(r) \\
    e^{i\theta} g_1(r)
  \end{pmatrix}.
  \end{equation}
We need to estimate the following terms:

$\bullet$ if $\alpha\in(0,\frac12]$,
\begin{equation*}
e^{it\mathcal{D}_{A,\gamma}}
P_0 f=\int_{0}^\infty \int_{0}^{2\pi}\left(\begin{matrix} F_{\alpha}& 0 \\0 &  e^{i(\theta-\theta_2)} E_{1-\alpha}\end{matrix}\right)  \begin{pmatrix}
     g_{0}(r_2) \\
    e^{i\theta_2} g_1(r_2)
  \end{pmatrix}  d\theta_2 \;r_2dr_2;
\end{equation*}

$\bullet$ if $\alpha\in(\frac12,1)$
\begin{equation*}
e^{it\mathcal{D}_{A,\gamma}}
P_0 f=\int_{0}^\infty \int_{0}^{2\pi}\left(\begin{matrix} E_{\alpha}& 0 \\0 &  e^{i(\theta-\theta_2)} F_{1-\alpha}\end{matrix}\right)  \begin{pmatrix}
     g_{0}(r_2) \\
    e^{i\theta_2} g_1(r_2)
  \end{pmatrix}  d\theta_2 \;r_2dr_2,
\end{equation*}
where $F_{\alpha}$ and $E_{\alpha}$ 
(see \eqref{equ:F-E-j}) are defined by 
\begin{equation}\label{equ:F-E}
\begin{split}
F_{\alpha}=F_{\alpha}(t;r_1,r_2) =\frac1{2\pi}\int_0^\infty e^{it\rho}J_{-\alpha}(r_1\rho)J_{-\alpha}(r_2\rho) \,\rho d\rho,\\
E_{\alpha}=E_{\alpha}(t;r_1,r_2) =\frac1{2\pi}\int_0^\infty e^{it\rho}J_{\alpha}(r_1\rho)J_{\alpha}(r_2\rho) \,\rho d\rho.
 \end{split}
\end{equation}
Let us define
\begin{equation}\label{Tnuoperator}
\begin{split}
(T_{\nu}g)(t,r)=\int_0^\infty K_{\nu}^0(t; r, r_2) g(r_2)\, r_2 dr_2.
 \end{split}
\end{equation}
where
\begin{equation}\label{Knukernel}
\begin{split}
K_{\nu}^0(t; r, r_2)=\frac1{2\pi}\int_0^\infty e^{it\rho}J_{\nu}(r\rho)J_{\nu}(r_2\rho)\,\rho d\rho, \quad \nu\in[-1/2, 0)\cup(0,1].
 \end{split}
\end{equation}
Then, estimates \eqref{est:2-2} and \eqref{est:inf-2} 
are consequences of the following result:

\begin{lemma}\label{lemwe} 
  Let $p>2$ and $0<\epsilon\ll1$.
  We have
\begin{equation}\label{est:2-2'}
\|\omega_{\nu}(r)(T_{\nu}g)(t,r)\|_{L^\infty(\R;L^2_{rdr})}
\leq \|g\|_{L^2},
\end{equation}
and 
\begin{equation}\label{est:inf-2'}
\|\omega_{\nu}(r)(T_{\nu}g)(t,r)\|_{L^p(\R;L^\infty_{rdr})}
\leq C\|g\|_{H^{1-\frac1p}_A},
\end{equation}
where 
\begin{equation}
\omega_{\nu}(r)=
\begin{cases}
    1 &
    \ \text{if}\ \ 0< \nu\leq 1\\
    (1+r^{\nu-\epsilon})^{-1}&
    \ \text{if}\  -1/2\leq \nu<0.
\end{cases}
\end{equation}
\end{lemma}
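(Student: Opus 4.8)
The starting point is the identity $T_{\nu}=\tfrac{1}{2\pi}\,\mathcal{H}_{\nu}\,e^{it\rho}\,\mathcal{H}_{\nu}$ acting on radial profiles, which follows from \eqref{Knukernel} and the definition of the Hankel transform; here $\mathcal{H}_{\nu}$ is the Hankel transform of order $\nu$, unitary on $L^{2}(r\,dr)$ for every $\nu\ge -1/2$, with $\mathcal{H}_{\nu}^{2}=\Id$ (see \eqref{eq:propHn}). The energy bound \eqref{est:2-2'} is then immediate, and in fact holds with $\omega_{\nu}$ replaced by $1$: since $e^{it\rho}$ has modulus $1$ and $\mathcal{H}_{\nu}$ is unitary, $\|T_{\nu}g(t,\cdot)\|_{L^{2}(r\,dr)}\lesssim\|g\|_{L^{2}(r\,dr)}$ uniformly in $t$, and $0\le\omega_{\nu}\le 1$ can only help. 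So the entire content of the lemma lies in \eqref{est:inf-2'}.

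For \eqref{est:inf-2'} the plan is a $TT^{*}$ argument reduced to a single operator. Setting $\Lambda=(1+L_{\nu})^{1/2}$ with $L_{\nu}=\mathcal{H}_{\nu}\rho^{2}\mathcal{H}_{\nu}$ the Bessel operator governing the $k=0$ channel (so that $\|g\|_{H^{1-1/p}_{A}}\simeq\|\Lambda^{1-1/p}g\|_{L^{2}}$), it suffices to prove that $V(t):=\omega_{\nu}\,T_{\nu}(t)\,\Lambda^{-(1-1/p)}$ maps $L^{2}(r\,dr)$ into $L^{p}_{t}L^{\infty}_{r}$. By the algebra above, $V(t)V(s)^{*}=\tfrac{1}{(2\pi)^{2}}\,\omega_{\nu}\,\mathcal{H}_{\nu}\,e^{i(t-s)\rho}(1+\rho^{2})^{-(1-1/p)}\,\mathcal{H}_{\nu}\,\omega_{\nu}$, whose kernel is $\omega_{\nu}(r_{1})\,\mathcal{K}(t-s;r_{1},r_{2})\,\omega_{\nu}(r_{2})$ with $\mathcal{K}(\tau;r_{1},r_{2})=c\int_{0}^{\infty}e^{i\tau\rho}(1+\rho^{2})^{-(1-1/p)}J_{\nu}(r_{1}\rho)J_{\nu}(r_{2}\rho)\,\rho\,d\rho$. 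Once one disposes of a pointwise bound of the form $|\omega_{\nu}(r_{1})\mathcal{K}(\tau;r_{1},r_{2})\omega_{\nu}(r_{2})|\lesssim\bra{\tau}^{-\sigma}$ with $\sigma>2/p$, the Hardy--Littlewood--Sobolev inequality in $t$ closes the estimate, since for $p>2$ the required time integrability is available; the $L^{2}\to L^{2}$ bound on $V(t)$ follows from the energy step.

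The heart of the matter is therefore the kernel bound for $\mathcal{K}$. I would split the $\rho$-integral dyadically, $\rho\sim2^{j}$. For the low frequencies $j\le 0$, and for all $j$ in the region where $2^{j}r_{1}$ or $2^{j}r_{2}$ is $\lesssim1$, one uses the elementary bounds \eqref{eq:bess1} on $J_{\nu}$ together with Lemma \ref{lem:hankelb}: the factors $(1+(2^{j}r_{i})^{-|\nu|})$ produced by the origin singularity of $J_{\nu}$ (present only when $\nu<0$) are absorbed by the weights $\omega_{\nu}(r_{i})$, at the price of a power $2^{-j|\nu|}$ of the frequency; this loss is harmless because the symbol $(1+\rho^{2})^{-(1-1/p)}$ supplies the gain $2^{-2j(1-1/p)}$ and $1-\tfrac1p>\tfrac12\ge|\nu|$ for $p>2$, so the dyadic sum converges. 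This is precisely where the \emph{inhomogeneous} Sobolev norm $H^{1-1/p}_{A}$, rather than the homogeneous one, becomes necessary: the weight forces a low-frequency loss that only an inhomogeneous norm can accommodate. For the remaining, genuinely oscillatory contribution ($2^{j}\gtrsim1$ and $2^{j}r_{1},2^{j}r_{2}\gtrsim1$), the elementary bounds are too crude to reach $p\le 4$, and here I would invoke the refined Bessel asymptotics \eqref{eq:bess3}--\eqref{eq:bess4}, $J_{\nu}(x)=x^{-1/2}(e^{ix}a_{+}(x)+e^{-ix}a_{-}(x))$: this exhibits an extra $r^{-1/2}$ spatial decay and a one-dimensional half-wave phase $e^{i\rho(\tau\pm r_{1}\pm r_{2})}$, and repeated integration by parts in $\rho$ against the non-stationary phases yields enough additional decay to upgrade the range from $p>4$ (the naive Keel--Tao output, cf.\ Proposition \ref{prop:semi}) to the full range $p>2$, along the lines of the radial analysis in \cite{CYZ}.

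The main obstacle is thus twofold, and both difficulties are caused by the negative-order Bessel function: (i) calibrating the weight $\omega_{\nu}$ so that it simultaneously neutralizes the origin singularity of $J_{\nu}$ and keeps the low-frequency loss at an integrable power of the frequency (the $-\epsilon$ in $\omega_{\nu}(r)=(1+r^{\nu-\epsilon})^{-1}$ providing exactly the margin that makes the $j\to-\infty$ summation converge); and (ii) extracting the radial improvement from $p>4$ down to $p>2$, which cannot come from the uniform $\bra{t}^{-1/2}$ decay but only from the stationary-phase analysis built on \eqref{eq:bess3}--\eqref{eq:bess4}. The case $\nu\in(0,1/2]$, where $\omega_{\nu}\equiv1$, is strictly easier: the same scheme applies with $\nu_{-}=0$, there are no singular factors and no low-frequency loss, and one in fact obtains the stronger homogeneous version of \eqref{est:inf-2'}.
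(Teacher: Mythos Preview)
Your argument for \eqref{est:2-2'} is correct and coincides with the paper's. The gap is in \eqref{est:inf-2'}: a $TT^{*}$ scheme resting on a \emph{pointwise} kernel bound $|\omega_{\nu}(r_{1})\mathcal{K}(\tau;r_{1},r_{2})\omega_{\nu}(r_{2})|\lesssim\bra{\tau}^{-\sigma}$ cannot reach $\sigma>2/p$ unless $p>4$. To see the obstruction, take $r_{1}=1$ and $r_{2}=\tau-1$, so that $r_{1}+r_{2}=\tau$. In the expansion via \eqref{eq:bess3} the term carrying the phase $e^{i\rho(\tau-r_{1}-r_{2})}$ is exactly stationary; its amplitude $(r_{1}\rho)^{-1/2}(r_{2}\rho)^{-1/2}(1+\rho^{2})^{-(1-1/p)}\rho$ integrates absolutely over $\rho\ge 1$ (for $p>2$) to a quantity $\sim r_{2}^{-1/2}\sim\tau^{-1/2}$. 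Thus $\sup_{r_{1},r_{2}}|\mathcal{K}(\tau;r_{1},r_{2})|\gtrsim\bra{\tau}^{-1/2}$, and Young/Hardy--Littlewood--Sobolev then forces $1/2>2/p$, i.e.\ $p>4$. Your claim that ``repeated integration by parts against the non-stationary phases'' upgrades this to $p>2$ does not apply to this term, and no refinement of \eqref{eq:bess3}--\eqref{eq:bess4} can push the uniform pointwise kernel bound beyond $\bra{\tau}^{-1/2}$.

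The paper does not go through $TT^{*}$ here. It decomposes dyadically both in frequency ($\rho\sim N$) and in radius ($r\sim R$), rescales to unit frequency, and proves directly the localized bound (Proposition~\ref{LRE})
\[
\bigl\|\mathcal{H}_{\nu}[e^{it\rho}g]\bigr\|_{L^{p}_{t}L^{\infty}_{dr}([R/2,R])}\lesssim\|g\|_{L^{2}_{\rho d\rho}}\times\begin{cases}R^{\nu-\varepsilon/2}&R\lesssim 1,\\ R^{1/p-1/2}&R\gg 1,\end{cases}
\]
via Sobolev embedding on $[R/2,R]$, Minkowski, one-dimensional Hausdorff--Young in $t$, and $|J_{\nu}(r)|\lesssim r^{-1/2}$. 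The exponent $\tfrac{1}{p}-\tfrac{1}{2}$ for large $R$ is negative exactly when $p>2$, and a Schur test in $(N,R)$ sums the double decomposition. The radial gain from $p>4$ to $p>2$ thus lives in a \emph{space-localized} $L^{p}_{t}$ estimate, not in a sharper time-decay rate of a dispersive kernel; this is the mechanism your sketch is missing. Your intuition that the weight $\omega_{\nu}$ for $\nu<0$ produces a low-frequency loss, and that this is what forces the inhomogeneous norm $H^{1-1/p}_{A}$, is however correct and matches the paper's handling of that case.
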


\begin{proof}
The proof is inspired by \cite{CYZ}; we include the details for the sake of completeness.  
% In order to simplify the presentation (and without losing in generality), let us slightly modify the definition \eqref{hankel} in order to deal with scalar equations: we set 
% \begin{equation}
%     (\mathcal{H}_{\nu}f)(\rho)=\int_0^\infty J_{\nu}(r\rho) g(r)\,rdr,
% \end{equation}
% so that
The operator $T_{\nu}$ can be written via the Hankel
transform:
\begin{equation}
   T_{\nu}g=\mathcal{H}_{\nu}e^{it\rho} \mathcal{H}_{\nu}g.
\end{equation}
Since $|\omega_{\nu}(r)|\leq 1$ and $\mathcal{H}_{\nu}$
is unitary on $L^{2}(rdr)$,
the proof of \eqref{est:2-2'} is trivial.
Let us focus on \eqref{est:inf-2'}.

% $\| (\mathcal{H}_{\nu}f)\|_{L^2_{rdr}}\leq \|f\|_{L^2_{rdr}}$,  
% then it is easy to see that
% $$\|\omega_{\nu}(r)(T_{\nu}g)(t,r)\|_{L^\infty(\R;L^2_{rdr})}\leq \Big\|\mathcal{H}_{\nu}\Big(e^{it\rho} \mathcal{H}_{\nu}(g)(\rho)\Big)(r)\Big\|_{L^\infty(\R;L^2_{rdr})}\leq \|g\|_{L^2},$$ 
% which implies \eqref{est:2-2'}. Let us now prove \eqref{est:inf-2'}.

Denote by $R,N\in 2^{\mathbb{Z}}$ dyadic numbers.
By a dyadic decomposition,  we can write
\begin{eqnarray*}
\|\omega_{\nu}(r)T_{\nu}f(t,r)\|_{L^p_tL^\infty_{dr}}^2&\leq&\left\|\omega_{\nu}(r)\sum_{N\in 2^\Z}\hank\left[e^{it{\rho}} \varphi(\frac{\rho}N)\hank f(\rho) \right](r)\right\|_{L^p_tL^\infty_{dr}(\R^+)\,}^2
\\
&\leq&
\left\|\sup_{R\in2^{\Z}} \left\|\omega_{\nu}(r)\sum_{N\in 2^\Z} \hank\left[e^{it{\rho}} \varphi(\frac{\rho}N)\hank f(\rho) \right]\right\|_{L^\infty_{dr}([R,2R])}\right\|_{L^p_t}^2,
\end{eqnarray*}
where we replace the measure $rdr$ in \eqref{est:inf-2'} by $dr$ since we focus on the $L^\infty$-norm.
By using the fact that $\ell^2 \hookrightarrow\ell^\infty$ and the Minkowski  inequality, we further obtain 
\begin{eqnarray*}
\|\omega_{\nu}(r)T_{\nu}f(t,r)\|_{L^p_tL^\infty_{dr}}^2
&\leq&
\left\| \left(\sum_{R\in2^{\Z}}\left\|\omega_{\nu}(r)\sum_{N\in 2^\Z} \hank\left[e^{it{\rho}} \varphi(\frac{\rho}N)\hank f(\rho) \right](r)\right\|^2_{L^\infty_{dr}\,([R,2R])}\right)^{1/2}\right\|_{L^p_t}^2
\\
&\leq&
 \sum_{R\in2^{\Z}}\left\|\omega_{\nu}(r)\sum_{N\in 2^\Z} \hank\left[e^{it{\rho}} \varphi(\frac{\rho}N)\hank f(\rho) \right]\right\|^2_{L^p_tL^\infty_{dr}\,([R,2R])}
\\
&\leq&
 \sum_{R\in2^{\Z}}\left(\omega_{\nu}(R)\sum_{N\in 2^\Z}\left\|    \hank\left[e^{it{\rho}} \varphi(\frac{\rho}N)\hank f(\rho) \right]\right\|_{L^p_tL^\infty_{dr}\,([R,2R])}\right)^2.
\end{eqnarray*}
Notice that in the last inequality we have used the triangle inequality instead of Littlewood--Paley square function inequality, which fails at $L^\infty_{dr}$. 
By using a scaling argument, we can further estimate with 
\begin{equation}\label{strifin}
\leq
 \sum_{R\in2^{\Z}}\biggl(\omega_{\nu}(R)\sum_{N\in 2^\Z}N^{2-\frac1p}\left\| \hank\left[e^{it{\rho}} \varphi(\rho)\hank f(N\rho) \right]\right\|_{L^p_tL^\infty_{dr}\,([NR,2NR])}\biggr)^2
 \end{equation}
Now, we need the following result, whose proof will be given at the end of this section.

\begin{proposition}\label{LRE}

Let $p\geq2$, $\varphi\in\mathcal{C}_c^\infty(\R)$ be supported in $I:=[1,2]$, $R>0$ be a dyadic number,  and $\nu\in[-\frac12, 1]$. Then, for any $0<\varepsilon\ll 1$ the following estimate holds
\begin{equation}\label{stri-L}
\bigl\|\mathcal{H}_{\nu}\big[e^{ it\rho} g(\rho)\big](r)
\bigr\|_{L^p_tL^\infty_{dr}([R/2,R])} 
\lesssim \|g\|_{L^2_{\rho d\rho}(I)}\times
\begin{cases}
R^{\nu-\frac{\varepsilon} 2}&\ \text{if}\  R\lesssim1\\
R^{\frac1p-\frac12}&\ \text{if}\  R\gg1
\end{cases}
\end{equation}
provided $g(\rho)$ is supported in $[1,2]$.
\end{proposition}

Applying this result with
$g(\rho)=\varphi(\rho)\mathcal{H}_{\nu}f(N \rho)$,
we can continue the estimate:
\begin{equation*} 
  \eqref{strifin}\leq 
  \sum_{R\in2^{\Z}}
  \biggl(\sum_{N\in 2^\Z}N^{1-\frac1p} \omega_{\nu}(R)Q(NR)
  \left\| \varphi(\frac\rho{N})\hank f(\rho) \right\|
  _{L^2_{\rho d\rho}}
  \biggr)^2
\end{equation*}
where
\begin{equation}
Q(NR)=
\begin{cases}
  (NR)^{\nu-\frac{\varepsilon} 2}& \ \text{if}\  NR\lesssim 1,\\ 
  (NR)^{\frac1p-\frac12}& \ \text{if}\ NR\gg1.
\end{cases}
\end{equation}
Since $p>2$, we have
\begin{equation}\label{q-cond}
\frac1p-\frac12<0.
\end{equation}
We now consider two cases.
\bigskip

$\bullet$ When $0<\nu\leq 1$, 
we take $0<\varepsilon<\nu$ so that
\begin{equation}\label{ST}
\mathcal{Q}_1:=\sup_{R} \sum_{N\in2^\Z} Q(NR) <\infty,\quad \mathcal{Q}_2:=\sup_{N} \sum_{R\in2^\Z} Q(NR) <\infty.
\end{equation}
Setting now
\begin{equation}
A_{N,\nu}=N^{1 -\frac 1p}
\|(\hank f)(\rho) \varphi(\rho/N)\|_{L^2_{\rho d\rho}},
\end{equation} 
by the Schur test we obtain
\begin{equation*}
  \Bigl\|\sum_{N}Q(NR)A_{N,\nu}\Bigr\|_{\ell^{2}_{R}}
  \le
  (\mathcal{Q}_{1}\mathcal{Q}_{2})^{1/2}
  \|A_{N,\nu}\|_{\ell^{2}_{N}}.
\end{equation*}
% \begin{equation}
% \begin{split}
%  \left(\sum_{R\in2^\Z}  \Big(\sum_{N\in2^\Z} Q(NR) A_{N,\nu} \Big)^2\right)^{1/2}
%  &=\sup_{\|B_R\|_{\ell^2}\leq 1}\sum_{R\in2^\Z}  \sum_{N\in2^\Z} Q(NR)A_{N,\nu} B_R\\&\leq C  \left(\sum_{R\in2^\Z}\sum_{N\in2^\Z} Q(NR)|A_{N,\nu}|^2\right)^{1/2} \left(\sum_{R\in2^\Z}\sum_{N\in2^\Z} Q(NR)|B_R|^2\right)^{1/2}
%  \\&\leq C\big( \mathcal{Q}_1\mathcal{Q}_2\big)^{1/2} \left(\sum_{N\in2^\Z}|A_{N,\nu}|^2\right)^{1/2} \left(\sum_{R\in2^\Z}|B_R|^2\right)^{1/2}
%  \\&\leq C\left(\sum_{N\in2^\Z}|A_{N,\nu}|^2\right)^{1/2} .
% \end{split}
% \end{equation}
% Let us now distinguish two cases.
% \medskip
Summing up we have proved
\begin{equation*}
  \|\omega_{\nu}(r)(T_{\nu}f)(t,r)\|_{L^p(\R;L^\infty_{dr})}^{2}
  \le
  C_{\nu}\sum_{N\in2^\Z}|A_{N,\nu}|^2
  =C_{\nu}\|f\|_{\dot H^{1-\frac1p}_A}^2 
\end{equation*}
% \begin{eqnarray*}
% \|\omega_{\nu}(r)(T_{\nu}f)(t,r)\|_{L^p(\R;L^\infty_{dr})}&=&\|(T_{\nu}f)(t,r)\|_{L^p(\R;L^\infty_{dr})} \\ &\leq& \sum_{R\in2^\Z}  \Big(\sum_{N\in2^\Z} Q(NR) A_{N,\nu} \Big)^2
% \\
% &\leq& C_{\nu}\sum_{N\in2^\Z}|A_{N,\nu}|^2
% =C_{\nu}\|f\|_{\dot H^{1-\frac1p}_A}^2 .
% \end{eqnarray*}
which implies \eqref{est:inf-2'} for $0<\nu\leq 1$.
\medskip

$\bullet$
For $-\frac12\leq\nu<0$ we modify the previous
argument as follows.
If $NR \lesssim 1$ we have
\begin{equation*}
  \omega_{\nu}(R)Q(NR)=
  \frac{(NR)^{\nu-\frac \epsilon2}}{1+R^{\nu-\epsilon}}=
  \frac{(NR)^{\nu-\epsilon}}
    {N^{\nu-\epsilon}+(NR)^{\nu-\epsilon}}
  (NR)^{\frac \epsilon2}N^{\nu-\epsilon}
  \le (NR)^{\frac \epsilon2}N^{\nu-\epsilon}.
\end{equation*}
If $NR\ge 1$ we have
\begin{equation*}
  \omega_{\nu}(R)Q(NR)=
  \frac{(NR)^{\frac 1p-\frac 12}}{1+R^{\nu-\epsilon}}=
  \frac{N^{\nu-\epsilon}}{N^{\nu-\epsilon}+(NR)^{\nu-\epsilon}}
  (NR)^{\frac 1p-\frac 12}\le
  (NR)^{\frac 1p-\frac 12}N^{\nu-\epsilon}.
\end{equation*}
Summing up, taking $\epsilon>0$
so small that $1-\frac1p+(\nu-\epsilon)\geq 0$, we obtain
\begin{equation*}
  N^{1-\frac1p}\omega_{\nu}(R)Q(NR)\leq 
  N^{1-\frac 1p}N^{\nu-\epsilon}\widetilde{Q}(NR)\leq
 \bra{N}^{1-\frac 1p}\widetilde{Q}(NR)
\end{equation*}
% For $N\lesssim 1$, one has that $N^{-(\nu-\epsilon)}\omega_{\nu}(R)\leq C(1+(NR)^{\nu-\epsilon})^{-1}$, hence 
% \begin{equation}
%   N^{1-\frac1p}\omega_{\nu}(R)Q(NR)\leq C N^{1-\frac1p}
%   \times
%   \begin{cases} 
%   N^{\nu-\epsilon} (NR)^{\nu-\frac \varepsilon2}
%   &\ \text{if}\  N\lesssim 1,\ NR\lesssim 1\\
%   N^{(\nu-\epsilon)} (NR)^{\epsilon-\varepsilon}
%   &\ \text{if}\  N\geq 1,\ NR\lesssim 1\\
%   N^{\nu-\epsilon} (NR)^{\frac1p-\frac12}
%   &\ \text{if}\   N\lesssim 1,\ NR\geq 1\\
%   (NR)^{\frac1p-\frac12}
%   &\ \text{if}\  N\geq 1,\ NR\geq 1.
% \end{cases}
% \end{equation}
% We choose $0<\varepsilon\leq \frac\epsilon2$ to see that 
% $$N^{1-\frac1p}\omega_{\nu}(R)Q(NR)\leq C N^{1-\frac1p}\max\{1, N^{\nu-\epsilon}\}\tilde{Q}(NR)$$
where $\bra{N}=(1+N^{2})^{1/2}$ and
\begin{equation}
\tilde{Q}(NR)=
\begin{cases}
  (NR)^{\frac{\epsilon}2}& \ \text{if}\ NR\lesssim 1 \\
  (NR)^{\frac1p-\frac12}&\ \text{if}\  NR\gg1.
\end{cases}
\end{equation}
This gives
\begin{equation*}
  \|\omega_{\nu}(r)T_{\nu}f(t,r)\|_{L^p_tL^\infty_{dr}}^2
  \leq
  \sum_{R\in2^{\mathbb{Z}}}
  \biggl(\sum_{N\in 2^\mathbb{Z}}
  \bra{N}^{1-\frac1p}\tilde{Q}(NR)
    \left\| \varphi(\frac\rho{N})\hank f(\rho) \right\|
    _{L^2_{\rho d\rho}}\biggr)^{2}.
\end{equation*}
We can now repeat the previous argument with
$Q(NR)$ replaced by $\tilde{Q}(NR)$ and $A_{N,\nu}$
replaced by
\begin{equation*}
  \tilde{A}_{N,\nu}=
  \bra{N}^{1-\frac1p}
    \left\| \varphi(\rho/N)\hank f(\rho) \right\|
    _{L^2_{\rho d\rho}}
\end{equation*}
and we obtain
\begin{equation*}
  \|\omega_{\nu}(r)T_{\nu}f(t,r)\|_{L^p_tL^\infty_{dr}}^2
  \le C_{\nu}
  \sum_{N\in 2^{\mathbb{Z}}}
  |\tilde A_{N,\nu}|^{2}=
  C_{\nu}\|f\|_{ H^{1-\frac1p}_A}^2
\end{equation*}
and this concludes the proof.
% \begin{equation*}
% \begin{split}
% &\|\omega_{\nu}(r)T_{\nu}f(t,r)\|_{L^p_tL^\infty_{dr}}^2
% \\&\leq
%  \sum_{R\in2^{\Z}}\left(\sum_{N\in 2^\Z; N\lesssim 1}\tilde{Q}(NR)\left\| \varphi(\frac\rho{N})\hank f(\rho) \right\|_{L^2_{\rho d\rho}}
% \right)^2\\ &+\sum_{R\in2^{\Z}}\left(\sum_{N\in 2^\Z; N\geq 1}N^{1-\frac1p}\tilde{Q}(NR)\left\| \varphi(\frac\rho{N})\hank f(\rho) \right\|_{L^2_{\rho d\rho}}
% \right)^2,
% \end{split}
% \end{equation*}
% Therefore, we modify the above argument to obtain
% \begin{eqnarray*}
% \|\omega_{\nu}(r)(T_{\nu}f)(t,r)\|^2_{L^p(\R;L^\infty_{dr})}
% &\leq& C_{\nu}\sum_{N\in2^\Z, N\geq 1}|A_{N,\nu}|^2 +\sum_{N\in2^\Z, N\lesssim 1}|N^{-1 +\frac 1p}A_{N,\nu}|^2 
% \\
% &=&C_{\nu}\|f\|_{ H^{1-\frac1p}_A}^2 
% \end{eqnarray*}
% and this concludes the proof.
\end{proof}

It remains to prove Proposition \ref{LRE}.

\begin{proof}[Proof of Proposition \ref{LRE}]
  We examine the two cases of estimate \eqref{stri-L}.
  We shall write
  \begin{equation*}
    Z=\int_0^\infty e^{ it\rho} J_{\nu}(r\rho)g(\rho)\, \rho d\rho,
    \qquad
    I=[R/2,R].
  \end{equation*}
 \medskip
 
$\bullet$ 
For $R\lesssim 1$ we must prove
\begin{equation}\label{eq:Rm1}
  \|Z\|_{L^p_tL^{\infty}_{dr}(I)}
  \lesssim R^{\nu-\frac{\epsilon} 2}
  \|g\|_{L^2_{\rho d\rho}(I)}.
\end{equation}
Taking  $0<\varepsilon\ll 1$, by the embedding 
$H^{\frac{1+2\varepsilon}2}(I)\hookrightarrow
L^\infty_{dr}(I)$ and interpolation, we have 
\begin{equation}\label{eq:interm}
  \|Z\| _{L^p_tL^{\infty}(I)}
  \lesssim
  \|Z\|_{L^p_t H^{\frac 12+\varepsilon}(I)}
  \lesssim
  \|Z\|^{\frac 12-\varepsilon}
  _{L^p_t L^{2}(I)}
  \|Z\|^{\frac 12+\varepsilon}
  _{L^p_t \dot H^{1}(I)}.
\end{equation}
The $L^p_t L^{2}(I)$ factor can be estimated 
by the Minkowski and the Hausdorff--Young inequality (in $t$)
as follows:
\begin{equation*}
  \|Z\|_{L^p_t L^{2}_{dr}(I)}\lesssim
  \|J_{\nu}(r \rho)g(\rho)\|
  _{L^{2}_{dr}(I)L^{p'}_{\rho}}.
\end{equation*}
Recall that $g(\rho)$ is supported in $[1,2]$ while
$r\in I=[R,2R]\subset[0,K]$ for some fixed $K$, since
$R \lesssim 1$. Using the standard inequaity
\begin{equation*}
  \textstyle
  |J_{\nu}(r)|\le C_{K}r^{\nu},
  \qquad
  r\in(0,K],
  \qquad
  \nu\in [-\frac 12,\frac 12]
\end{equation*}
we can continue the previous estimate as follows
\begin{equation*}
  \lesssim \|r^{\nu}\|_{L^{2}_{dr}([R,2R])}
  \|\rho^{\nu} g(\rho)\|_{L^{p'}_{\rho}([1,2])}
  \lesssim R^{\nu+\frac 12}\|g\|_{L^{2}_{\rho d \rho}},
\end{equation*}
and this proves
\begin{equation}\label{eq:Jest1}
  \|Z\|_{L^p_t L^{2}_{dr}(I)}\lesssim
  R^{\nu+\frac 12}\|g\|_{L^{2}_{\rho d \rho}}.
\end{equation}
To estimate the $L^{p}_{t}\dot H^{1}(I)$ factor we proceed
in a similar way and we arrive at
\begin{equation*}
  \|Z\|_{L^{p}_{t}\dot H^{1}(I)}\lesssim
  \|J'_{\nu}(r \rho)\rho g(\rho)\|
  _{L^{2}_{dr}(I)L^{p'}_\rho}.
\end{equation*}
Now using the obvious inequality
\begin{equation*}
  \textstyle
  |J'_{\nu}(r)|\le C_{K}r^{\nu-1},
  \qquad
  r\in(0,K],
  \qquad
  \nu\in [-\frac 12,\frac 12]
\end{equation*}
we obtain as before
\begin{equation}\label{eq:Jest2}
  \|Z\|_{L^{p}_{t}\dot H^{1}(I)}\lesssim
  R^{\nu-\frac 12}|g\|_{L^{2}_{\rho d \rho}}.
\end{equation}
Inserting these estimates in \eqref{eq:interm} we finally
obtain \eqref{eq:Rm1}.

\bigskip
$\bullet$
For $R\gg1$ we must prove
\begin{equation}\label{eq:Rp1}
  \|Z\|_{L^p_tL^{\infty}_{dr}(I)}
  \lesssim R^{\frac 1p-\frac 12}
  \|g\|_{L^2_{\rho d\rho}(I)}.
\end{equation}
By the Sobolev embedding
$W^{1,p}(I)\hookrightarrow L^{\infty}(I)$ for $p>2$,
we have
\begin{equation*}
  \|Z\|_{L^p_tL^{\infty}_{dr}(I)}\lesssim
  \|Z\|_{L^{p}_{t}L^{p}_{dr}(I)}+
  \|\partial_{r}Z\|_{L^{p}_{t}L^{p}_{dr}(I)}.
\end{equation*}
By the Minkowski and the Hausdorff-Young
inequality (in $t$), we get
\begin{equation*}
  \|Z\|_{L^p_tL^{p}_{dr}(I)}
  \lesssim
  \|J_{\nu}(r\rho)g(\rho)\|_{L^{p}_{dr}(I)L^{p'}_{\rho}}\lesssim
  \|r^{-\frac 12}\|_{L^{p}_{dr}(I)}
  \|g(\rho)\|_{L^{p'}}\lesssim
  R^{\frac 1p-\frac 12}\|g\|_{L^{2}_{\rho d \rho}},
\end{equation*}
where we used the standard estimate
\begin{equation*}
  \textstyle
  |J_{\nu}(r)|\le Cr^{-1/2},
  \qquad
  r\ge1,
  \quad
  \nu\in[-\frac 12,\frac 12].
\end{equation*}
In a similar way we have 
\begin{equation*}
  \|\partial_{r}Z\|_{L^{p}_{t}L^{p}_{dr}(I)}\lesssim
  \|J'_{\nu}(r\rho)\rho g(\rho)\|_{L^{p}_{dr}(I)L^{p'}_{\rho}}
  \lesssim
  \|r^{-\frac 12}\|_{L^{p}_{dr}(I)}
  \|\rho g(\rho)\|_{L^{p'}}\lesssim
  R^{\frac 1p-\frac 12}\|g\|_{L^{2}_{\rho d \rho}}
\end{equation*}
where we used the estimate
\begin{equation*}
  \textstyle
  |J'_{\nu}(r)|\le Cr^{-1/2},
  \qquad
  r\ge1,
  \quad
  \nu\in[-\frac 12,\frac 12]
\end{equation*}
and this concludes the proof of \eqref{eq:Rp1} and of
Proposition \ref{LRE}.
\end{proof}

To conclude the proof of Theorem \ref{thm-stri2}, it is now
sufficient to use complex interpolation between estimate
\eqref{stri-D3} (which will be proved in the next subsection) with $q<q(\alpha)$ sufficiently close to
$q(\alpha)$ and estimate \eqref{stri-D2} with $q$ sufficiently high.
We omit the straightforward details.

\subsection{Proof of Theorem \ref{thm-stri3}} 

The proof is very similar to the one of \eqref{stri-D2},
thus we shall just sketch it and highlight the differences. We must estimate the operator $T_{\nu}$ defined in \eqref{Tnuoperator}, \eqref{Knukernel} but instead of \eqref{est:2-2'}, \eqref{est:inf-2'}, our goal is now the weightless estimate
\begin{equation}\label{eq:weightless}
  \|T_{\nu}g(t,r)\|_{L^p_tL^q_{rdr}}
  \leq C\|f\|_{\dot H^{s}_A}.
\end{equation}
Proceeding as above (see \eqref{strifin}),
we arrive at the following bound:
\begin{equation}\label{strifin2}
\|T_{\nu}f(t,r)\|_{L^p_tL^q_{rdr}}^2\leq
 \sum_{R\in2^{\Z}}\left(\sum_{N\in 2^\Z}N^{2(1-\frac1q)-\frac1p}\left\| \hank\left[e^{it{\rho}} \varphi(\rho)\hank f(N\rho) \right]\right\|_{L^p_tL^q_{rdr}\,([NR,2NR])}\right)^2.
\end{equation}
Now we need the following analog of Proposition \ref{LRE}:

\begin{proposition}\label{LRE2}
  Let $p\geq2$, $2\leq q<\infty$, 
  $R>0$ a dyadic number,  and $\nu\in[-\frac12,1]$. 
  Then the following estimate holds:
  \begin{equation}\label{stri-L2}
    \|\mathcal{H}_{\nu}[e^{ it\rho}g(\rho)\big](r)\|
    _{L^p_tL^q_{rdr}([R/2,R])} 
    \lesssim
    \|g(\rho)\|_{L^2_{\rho d\rho}}
    \times
    \begin{cases}
      R^{\nu+\frac2q} &
      \ \text{if}\ R\lesssim1\\
      R^{\frac1q+\frac1{\min\{p, q\}}-\frac12} &
      \ \text{if}\ R\gg1
    \end{cases}
  \end{equation}
  provided $g(\rho)$ is supported in $[1,2]$.
\end{proposition}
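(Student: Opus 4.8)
The plan is to follow the strategy of the proof of Proposition \ref{LRE}, but replace the $L^\infty_{dr}$ bounds by $L^q_{dr}$ bounds, splitting into the low-radius and high-radius regimes. Write $Z(t,r)=\int_0^\infty e^{it\rho}J_\nu(r\rho)g(\rho)\,\rho\,d\rho$ and $I=[R/2,R]$. For $R\lesssim 1$, I would exploit the fact that on the support of $g$ (i.e.\ $\rho\in[1,2]$) and for $r\in I$ we have $r\rho\lesssim 1$, so $|J_\nu(r\rho)|\le C r^\nu\rho^\nu$. Then by Minkowski and the Hausdorff--Young inequality in $t$,
\begin{equation*}
  \|Z\|_{L^p_tL^q_{dr}(I)}\lesssim
  \|J_\nu(r\rho)g(\rho)\|_{L^q_{dr}(I)L^{p'}_\rho}
  \lesssim
  \|r^\nu\|_{L^q_{dr}(I)}\|\rho^\nu g\|_{L^{p'}_\rho([1,2])}
  \lesssim R^{\nu+\frac1q}\|g\|_{L^2_{\rho d\rho}}.
\end{equation*}
Since the left-hand side of \eqref{stri-L2} carries the measure $r\,dr$ on $[R/2,R]$, passing from $dr$ to $r\,dr$ costs a factor $R^{1/q}$, which yields the claimed exponent $R^{\nu+2/q}$. (Equivalently one absorbs $r^{1/q}\sim R^{1/q}$ into the estimate from the start.) No Sobolev embedding is needed here since we are not measuring an $L^\infty$ norm in $r$; this is actually simpler than the corresponding step in Proposition \ref{LRE}.

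For $R\gg 1$ the situation is the genuinely new one. Here $r\rho\gg1$ on the relevant range, so we use the oscillatory representation \eqref{eq:bess3}--\eqref{eq:bess4} of $J_\nu$, writing $J_\nu(r\rho)=(r\rho)^{-1/2}(e^{ir\rho}a_+(r\rho)+e^{-ir\rho}a_-(r\rho))$. This reduces $Z$ to a sum of two pieces of the form $r^{-1/2}\int_0^\infty e^{i(t\pm r)\rho}\rho^{1/2}\varphi\text{-type}(\rho)\,a_\pm(r\rho)g(\rho)\,d\rho$, i.e.\ essentially one-dimensional Fourier integrals in the variable $t\pm r$. I would then bound the $L^q_{dr}$ norm by combining: (i) an $L^2_{dr}$ bound coming from the unitarity of $\mathcal{H}_\nu$ on $L^2(rdr)$, which gives $\|Z\|_{L^2_t L^2_{rdr}}\lesssim \|g\|_{L^2_{\rho d\rho}}$ and, localized to $[R/2,R]$, the scaling $R^{1/2}$ in the $dr$ normalization; and (ii) an $L^\infty_{dr}$ bound (or an $L^p_{dr}$ bound via the Sobolev embedding $W^{1,p}(I)\hookrightarrow L^\infty(I)$ as in Proposition \ref{LRE}) giving $\|Z\|_{L^p_tL^\infty_{dr}(I)}\lesssim R^{1/p-1/2}\|g\|_{L^2_{\rho d\rho}}$, using $|J_\nu(r)|,|J_\nu'(r)|\lesssim r^{-1/2}$ for $r\ge1$. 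Interpolating these two endpoints along the $r$-variable — more precisely, writing the $L^q_{rdr}$ norm over $[R/2,R]$ as an interpolation between $L^2_{rdr}$ and $L^\infty_{rdr}$ with weights $\tfrac2q$ and $1-\tfrac2q$ — produces the exponent
\begin{equation*}
  \tfrac2q\cdot 0 + \Big(1-\tfrac2q\Big)\Big(\tfrac1p-\tfrac12\Big)
\end{equation*}
in $R$ (after the $dr$ versus $r\,dr$ bookkeeping), and one checks this equals $\tfrac1q+\tfrac1{\min\{p,q\}}-\tfrac12$ in the regime $q\le p$; when $q>p$ the $L^p_tL^q_{dr}$ estimate is instead obtained directly from the Sobolev/Hausdorff--Young argument with $\|r^{-1/2}\|_{L^q_{dr}(I)}\sim R^{1/q-1/2}$, giving the exponent with $\min\{p,q\}=q$. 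The two formulas match at $p=q$, so the stated uniform exponent $\tfrac1q+\tfrac1{\min\{p,q\}}-\tfrac12$ covers both cases.

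The main obstacle, as in \cite{CYZ}, is the $R\gg1$ regime and in particular getting the \emph{sharp} power of $R$: a naive triangle-inequality/Minkowski estimate loses the oscillation in $t\pm r$ and gives only $R^{-1/2}$ type decay in the $L^\infty_{dr}$ norm, which after summing the dyadic pieces in $N$ and $R$ (via the Schur test applied in \eqref{strifin2}, exactly as in the proof of Lemma \ref{lemwe}) would not close. One must genuinely use the dispersive decay in $t$ — concretely, the stationary/non-stationary phase analysis of $\int e^{i(t\pm r)\rho}(\cdots)d\rho$, which is why the oscillatory expansion \eqref{eq:bess3} and the derivative bounds \eqref{eq:bess4} are essential — and then interpolate against the cheap $L^2$ bound. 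Once \eqref{stri-L2} is established, plugging it into \eqref{strifin2} with $g(\rho)=\varphi(\rho)\mathcal{H}_\nu f(N\rho)$, and running the Schur-test summation in $N$ and $R$ under the admissibility conditions \eqref{pqrange0} together with $q<q(\alpha)$ (which is exactly what forces the relevant geometric series in $N$ to converge, since $q<q(\alpha)$ controls the sign of the low-frequency exponent $\nu+2/q$ for the negative-order Bessel function), yields \eqref{eq:weightless} with $s=1-\tfrac1p-\tfrac2q$, completing the proof of Theorem \ref{thm-stri3}.
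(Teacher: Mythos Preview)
Your proposal has two genuine gaps, both rooted in the direction of Minkowski's inequality and the choice of interpolation endpoints.

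\textbf{The case $R\lesssim 1$.} Your direct Minkowski--Hausdorff--Young argument swaps $\|Z\|_{L^p_t L^q_{dr}}\le \|Z\|_{L^q_{dr}L^p_t}$, which is only valid when $p\ge q$. For $q>p$ (a regime you definitely need, e.g.\ near the line $p=q$) this step fails, and there is no obvious repair since the oscillation $e^{it\rho}$ must be exploited through the $t$-integral. The paper avoids this by using the Sobolev embedding $\dot H^{1/2-1/q}(I)\hookrightarrow L^q(I)$ in the $r$ variable and interpolating between the $L^p_t L^2_{dr}$ and $L^p_t \dot H^1_{dr}$ bounds \eqref{eq:Jest1}--\eqref{eq:Jest2} from the proof of Proposition~\ref{LRE}; both of those permit Minkowski because $p\ge 2$. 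So a Sobolev step \emph{is} needed here, contrary to your remark.

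\textbf{The case $R\gg 1$.} Your endpoint (i) is wrong: unitarity of $\mathcal H_\nu$ gives $\|Z\|_{L^\infty_t L^2_{rdr}}=\|g\|_{L^2}$, not an $L^2_t$ bound (which is infinite). Interpolating an $L^\infty_t$ endpoint against an $L^p_t$ endpoint does not yield $L^p_t L^q$. Moreover, your claimed exponent $(1-\tfrac2q)(\tfrac1p-\tfrac12)$ simply does not equal $\tfrac1q+\tfrac1{\min\{p,q\}}-\tfrac12$ in either regime (try $p=4$, $q=3$ or $p=3$, $q=6$). Finally, your case split is reversed: the direct Minkowski/Hausdorff--Young route works precisely when $p>q$ (giving $R^{2/q-1/2}$ in the $rdr$ norm), while for $q\ge p$ one must instead interpolate between the $L^p_t L^p_{rdr}$ bound $R^{2/p-1/2}$ (obtained by Fubini plus Hausdorff--Young, since the $t$ and $r$ exponents coincide) and the $L^p_t L^\infty$ bound $R^{1/p-1/2}$ from Proposition~\ref{LRE}. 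With $\theta=p/q$ this interpolation gives $\tfrac pq(\tfrac2p-\tfrac12)+(1-\tfrac pq)(\tfrac1p-\tfrac12)=\tfrac1q+\tfrac1p-\tfrac12$, which is the stated exponent.
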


\begin{proof}
  We write as before
  \begin{equation*}
    Z=\int_0^\infty e^{ it\rho} J_{\nu}(r\rho)g(\rho)xd\rho,
    \qquad
    I=[R/2,R].
  \end{equation*}

  \bigskip
  $\bullet$
  For $R\gg1$, the proof follows closely the proof 
  of Proposition \ref{LRE}. 
  If $q\geq p$, the same argument of Proposition \ref{LRE} 
  for large $R$ works also in this case; we omit the details.
  If $p>q$, by the Minkowski inequality, it suffices to prove
  the bound
  \begin{equation*}
    \|Z\|_{L^q_{dr}(I)L^p_t(\R)}\lesssim
    R^{\frac{1}q-\frac12}\|g(\rho)\|_{L^{p'}_{d\rho}}
  \end{equation*}
  \begin{equation*}
  \begin{split}
  \Big\|\int_0^\infty e^{it\rho}
  J_{\nu}(r\rho)g(\rho)\varphi(\rho)d\rho\Big\|
  _{L^q_{dr}([R/2,R])L^p_t(\R)}\lesssim
  R^{\frac{1}q-\frac12}\Big\|g(\rho)\varphi(\rho)\Big\|_{L^{p'}_{d\rho}(I)},
  \end{split}
  \end{equation*}
  which follows as \eqref{eq:Rp1}.

  $\bullet$
  For $R\lesssim 1$ we use the Sobolev embedding $\dot{H}^{\frac12-\frac1q}\hookrightarrow L^q$, $q \in[2,+\infty)$ to estimate 
  \begin{equation*}
    \|\mathcal{H}_{\nu}\big[e^{ it\rho}g(\rho)\big](r)\|
    _{L^p_tL^q_{rdr}(I)} \lesssim
    R^{1/q}\|Z\|_{L^p_tL^q_{rdr}(I)} \lesssim
    R^{1/q}\|Z\|_{L^{p}_{t}\dot H^{\frac 12-\frac 1q}(I)}
  \end{equation*}
  \begin{equation*}
    \lesssim
    R^{1/q}
    \|Z\|^{\frac 12+\frac 1q}
    _{L^{p}_{t}L^{2}(I)}
    \|Z\|^{\frac 12-\frac 1q}
    _{L^{p}_{t}\dot H^{1}(I)}
    \lesssim
    R^{\nu+\frac 2q}\|g(\rho)\|_{L^{2}_{\rho d\rho}(I)}
  \end{equation*}
  where the last inequalityis proved as before
  relying on \eqref{eq:Jest1}, \eqref{eq:Jest2}
\end{proof}

Using this result, we can continue the estimate as follows:
\begin{equation*} 
  \eqref{strifin2}\leq {\sum_{R\in2^{\Z}}\left(\sum_{N\in 2^\Z}N^{1-\frac1p-\frac2q}\left\| \varphi(\rho/N)\hank f(\rho) \right\|_{L^2_{\rho d\rho}}
\right)^2}
\end{equation*}
where
\begin{equation}
  Q(NR)=
  \begin{cases}
    (NR)^{\nu+\frac2q} &
    \ \text{if}\ 
    NR\lesssim 1\\ 
    (NR)^{\frac1q+\frac1{\min\{p,q\}}-\frac12} &
    \ \text{if}\ NR\gg1.
\end{cases}
\end{equation}
Now, in order to make the Schur test argument work as above, we need the conditions
\begin{equation}\label{condfinal}
{\frac1q+\frac1{\min\{p,q\}}}-\frac12<0,\qquad \nu+\frac2q>0
\end{equation}
which are equivalent to $p>2$ and $q>0$ if $\nu>0$ while $q<-\frac2\nu$ if $\nu\in [-1/2,0)$, which is ensured by the assumption $q<q(\alpha)$. We remark that there is no $q$ satisfying the above condition when $\alpha=1/2$. We omit the rest of the details (we mention that this proof now closely follows the one of Theorem 1.2 in \cite{cacserzha}). This concludes the proof of Theorem \ref{thm-stri3} provided that either $p\leq q$ or $p>q>4$. 

To conclude with, we now need to consider the additional range $2\leq q\leq 4$ when $q<p$. We first prove the Strichartz estimates when $2\leq q<4$ and $\frac2p+\frac1q\leq \frac12$ by using Keel-Tao's argument. Then, by interpolation with \eqref{eq:weightless}, we extend $\frac2p+\frac1q\leq \frac12$ to $\frac1p+\frac1q < \frac12$.

We shall need another variant of Keel-Tao's argument
(compare with Proposition \ref{prop:semi} above):

\begin{proposition}\label{prop:semi-1}
Let $(X,\mathcal{M},\mu)$ be a $\sigma$-finite measured space and
$U: \mathbb{R}\rightarrow B(L^2(X,\mathcal{M},\mu))$ be a weakly
measurable map satisfying, for some constants $C$, $\kappa\geq0$,
$\sigma, h>0$,
\begin{equation}\label{md-1}
\begin{split}
\|U(t)\|_{L^2\rightarrow L^2}&\leq C,\quad t\in \mathbb{R},\\
\|U(t)U(s)^*f\|_{L^{q_0}}&\leq
Ch^{-\kappa(1-\frac2{q_0})}(h+|t-s|)^{-\sigma(1-\frac2{q_0})}\|f\|_{L^{q_0'}},\quad 2\leq q_0\leq +\infty.
\end{split}
\end{equation}
Then for every pair $q,p\in[1,\infty]$ such that $(p, q,\sigma)\neq
(2,\infty,1)$ and
\begin{equation*}
\frac{1}{p}+\frac{\sigma}{q}\leq\frac\sigma 2,\quad 2\leq q\leq q_0, p\geq 2.
\end{equation*}
there exists a constant $\tilde{C}$ depending only on $C$, $\sigma$,
$q$ and $p$ such that
\begin{equation*}
\Big(\int_{\mathbb{R}}\|U(t) u_0\|_{L^q}^p dt\Big)^{\frac1p}\leq \tilde{C}
\Lambda(h)\|u_0\|_{L^2}
\end{equation*}
where $\Lambda(h)=h^{-(\kappa+\sigma)(\frac12-\frac1q)+\frac1p}$.
\end{proposition}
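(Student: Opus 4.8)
The plan is to run the $TT^{*}$ scheme underlying the Keel--Tao argument exactly as in the proof of Proposition \ref{prop:semi} (see \cite{KT, Zhang}), the only new feature being that the dispersive input is an $L^{q_0'}\!\to L^{q_0}$ bound instead of the usual $L^1\!\to L^\infty$ one. First I would rescale time, replacing $U(t)$ by $\widetilde U(t)=U(ht)$: under $t\mapsto ht$ the space--time norm $\|U(\cdot)u_0\|_{L^p_tL^q_x}$ picks up a factor $h^{1/p}$, while the second line of \eqref{md-1} becomes $\|\widetilde U(t)\widetilde U(s)^{*}f\|_{L^{q_0}}\le C\,h^{-(\kappa+\sigma)(1-\frac2{q_0})}(1+|t-s|)^{-\sigma(1-\frac2{q_0})}\|f\|_{L^{q_0'}}$, i.e.\ a scale--invariant kernel with an $h$--dependent amplitude $B:=C\,h^{-(\kappa+\sigma)(1-\frac2{q_0})}$. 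Since $\widetilde U(t)\widetilde U(s)^{*}$ is bounded on $L^2$ with norm $\le C^{2}$, Riesz--Thorin interpolation between this and the $L^{q_0'}\!\to L^{q_0}$ bound gives, for every $q$ with $2\le q\le q_0$,
\begin{equation*}
  \|\widetilde U(t)\widetilde U(s)^{*}f\|_{L^{q}}
  \le C'\,h^{-(\kappa+\sigma)(1-\frac2q)}(1+|t-s|)^{-\sigma(1-\frac2q)}\|f\|_{L^{q'}},
\end{equation*}
where I use that the interpolation parameter $\theta=\tfrac{1/2-1/q}{1/2-1/q_0}$ satisfies $\theta(1-\tfrac2{q_0})=1-\tfrac2q$.

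Next I would carry out the usual duality reduction: writing $Tu_0=\widetilde U(\cdot)u_0$, it suffices to bound $TT^{*}F(t)=\int \widetilde U(t)\widetilde U(s)^{*}F(s)\,ds$ from $L^{p'}_tL^{q'}_x$ to $L^{p}_tL^{q}_x$, and by the previous display together with Minkowski's inequality this reduces to the one--dimensional convolution estimate
\begin{equation*}
  \Big\|\int_{\mathbb{R}}(1+|t-s|)^{-\sigma(1-\frac2q)}\,g(s)\,ds\Big\|_{L^p_t(\mathbb{R})}
  \le C\,\|g\|_{L^{p'}_t(\mathbb{R})}.
\end{equation*}
Splitting the kernel into its restriction to $|t-s|\le1$ and to $|t-s|>1$, the near--diagonal part is in $L^{p/2}(\mathbb{R})$ with $O(1)$ norm and is handled by Young's inequality (with $\tfrac1{p/2}=\tfrac2p$), while the tail is dominated by the Riesz potential of order $1-\tfrac2p$ and is handled by Hardy--Littlewood--Sobolev when $\tfrac1p+\tfrac\sigma q=\tfrac\sigma2$ and $p>2$, or again by Young when $\tfrac1p+\tfrac\sigma q<\tfrac\sigma2$ (so that $\sigma(1-\tfrac2q)\tfrac p2>1$ and the tail is integrable). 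In all cases the convolution constant is $O(1)$, hence $\|T\|^{2}=\|TT^{*}\|\lesssim h^{-(\kappa+\sigma)(1-\frac2q)}$, i.e.\ $\|\widetilde U(\cdot)u_0\|_{L^p_tL^q_x}\lesssim h^{-(\kappa+\sigma)(\frac12-\frac1q)}\|u_0\|_{L^2}$; undoing the time rescaling multiplies the left side by $h^{1/p}$ and produces precisely $\Lambda(h)=h^{-(\kappa+\sigma)(\frac12-\frac1q)+\frac1p}$.

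The only case not covered by this soft reasoning is the genuine endpoint $p=2$ with $\tfrac1p+\tfrac\sigma q=\tfrac\sigma2$, which forces $\sigma>1$; this does not occur in the applications of this paper, where $\sigma=\tfrac12$, but it is included in the statement, so for completeness I would invoke the Keel--Tao bilinear dyadic decomposition: split $TT^{*}=\sum_{j}T_j$ according to $|t-s|\sim 2^{j}$, estimate $|\langle T_jF,G\rangle|$ by the $L^{q'}\!\to L^q$ bound and separately by the $L^2$ bound, and interpolate the two resulting inequalities in a weighted $\ell^2$ fashion, summing the geometric series exactly as in \cite{KT}; no modification is needed. The main (indeed only) delicate point is therefore this $p=2$ endpoint, and the excluded triple $(p,q,\sigma)=(2,\infty,1)$ is exactly the scaling--critical configuration where the Keel--Tao summation fails, mirroring the classical forbidden endpoint.
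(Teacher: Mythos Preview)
Your proof is correct and follows essentially the same approach as the paper's: interpolate the $L^{q_0'}\to L^{q_0}$ dispersive bound against the $L^2$ bound to get the $L^{q'}\to L^q$ estimate for all $2\le q\le q_0$, then run the bilinear $TT^*$ argument and reduce to a one--dimensional time convolution handled by H\"older/Young (and HLS on the sharp line). Your time rescaling $U(t)\mapsto U(ht)$ and the explicit treatment of the $p=2$ endpoint via the Keel--Tao dyadic decomposition are extra details the paper omits (it only writes out the non--endpoint case $\tfrac1p+\tfrac\sigma q<\tfrac\sigma2$), but the route is the same.
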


\begin{proof} 
By interpolation of the bilinear form of \eqref{md-1}, we have
\begin{equation*}
\begin{split}
\langle U(s)^*f(s), U(t)^*g(t) \rangle&\leq
Ch^{-\kappa(1-\frac2q)}(h+|t-s|)^{-\sigma(1-\frac2q)}\|f\|_{L^{q'}}\|g\|_{L^{q'}},\quad 2\leq q\leq q_0.
\end{split}
\end{equation*}
Therefore we see by H\"older's and Young's inequalities for
$\frac1p+\frac\sigma q<\frac\sigma 2$
\begin{equation*}
\begin{split}
\Big|\iint\langle U(s)^*f(s),& U(t)^*g(t) \rangle
dsdt\Big|\\&\lesssim
h^{-\kappa(1-\frac2q)}\iint(h+|t-s|)^{-\sigma(1-\frac2q)}\|f(t)\|_{L^{q'}}\|g(s)\|_{L^{q'}}dtds\\&
\lesssim
h^{-\kappa(1-\frac2q)}h^{-\sigma(1-\frac2q)+\frac2p}\|f\|_{L^{p'}_tL^{q'}}\|g\|_{L^{p'}_tL^{q'}}
\end{split}
\end{equation*}
and this concludes the proof.
\end{proof}

Now, in order to remove the frequency localization, we need the Littlewood-Paley theory for the radial component. Square function estimates are a standard consequence of Gaussian upper bounds for the corresponding heat kernel. Unfortunately, we cannot prove the Gaussian upper bounds for $e^{-t \mathcal{D}_{A,\gamma}^2} P_0$ due to the presence of Bessel functions of negative order (and we actually suspect that such bounds fail). Therefore we must prove the multiplier estimates in $L^{p}$ for the Littlewood-Paley operator directly. We shall not prove the estimates for all $1<p<\infty$, but only for $q'(\alpha)<p<q(\alpha)$: this will be enough for our purpose of proving Strichartz estimates with $2\leq q<4$ since $(\frac43,4)\subset (q'(\alpha),q(\alpha))$.
\begin{proposition}[Square function inequality]
  \label{prop:squarefun0} 
  Let $\varphi\in C_c^\infty$, with support in $[1/2,1]$
  and such that $0\leq\varphi\leq 1$ and
  \begin{equation}\label{LP-dp}
    \sum_{j\in\Z}\varphi(2^{-j}\lambda)=1
    \qquad\text{where}\qquad 
    \varphi_j(\lambda):=\varphi(2^{-j}\lambda).
  \end{equation}
  Then for all $q'(\alpha)<p<q(\alpha)$
  there exist constants $c_p$ and $C_p$ depending on $p$ such that
  \begin{equation}\label{square1-0}
  c_p\|P_0 f\|_{[L^p(\R^2)]^2}\leq
  \Big\|\Big(\sum_{j\in\Z}|\varphi_j(|\mathcal{D}_{A,\gamma}|)P_0f|^2\Big)^{\frac12}\Big\|_{[L^p(\R^2)]^2}\leq
  C_p\|P_0f\|_{[L^p(\R^2)]^2}.
  \end{equation}
\end{proposition}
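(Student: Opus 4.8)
The plan is to reduce the square function inequality for $P_0$ to a one-dimensional statement about the radial operators $T_\nu = \mathcal{H}_\nu e^{it\rho}\mathcal{H}_\nu$-type multipliers, i.e. to Littlewood--Paley theory for the Hankel-type operators $H_\nu^{1/2} = \mathcal{H}_\nu\,\rho\,\mathcal{H}_\nu$ with $\nu\in\{-\alpha,\alpha,1-\alpha,\alpha-1\}$ as appropriate. After restricting to the $k=0$ component via $P_0$, the operator $|\mathcal{D}_{A,\gamma}|$ acts on the two spinor components as $H_\nu^{1/2}$ and $H_{\nu'}^{1/2}$ separately (with $(\nu,\nu')=(-\alpha,1-\alpha)$ if $\alpha\le\frac12$ and $(\alpha,\alpha-1)$ if $\alpha>\frac12$, as in \eqref{kernel'0}), so it suffices to prove the scalar estimate
\begin{equation*}
  c_p\|g\|_{L^p(\mathbb{R}^2)}\le
  \Big\|\Big(\sum_{j\in\Z}|\varphi_j(H_\nu^{1/2})g|^2\Big)^{1/2}\Big\|_{L^p(\mathbb{R}^2)}
  \le C_p\|g\|_{L^p(\mathbb{R}^2)}
\end{equation*}
for radial $g$ and each relevant $\nu\in[-\tfrac12,\tfrac12]$, in the range $q'(\alpha)<p<q(\alpha)$. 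By duality and the usual Rademacher/Khintchine randomization argument it is enough to prove the upper bound, i.e.\ the $L^p$-boundedness of the operator $g\mapsto\sum_j\epsilon_j\varphi_j(H_\nu^{1/2})g$ uniformly in the signs $\epsilon_j\in\{\pm1\}$; equivalently, that $m(\rho)=\sum_j\epsilon_j\varphi(2^{-j}\rho)$ is an $L^p$-multiplier for the Hankel transform $\mathcal{H}_\nu$ uniformly in $\epsilon$.

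The key step is therefore a Hankel--Mikhlin multiplier theorem with a restricted range of $p$. I would proceed as follows. First, write the kernel of $\varphi_j(H_\nu^{1/2})$ explicitly: by scaling it equals $2^{2j}G_\nu(2^j r,2^j r_2)$ where $G_\nu(r,r_2)=\int_0^\infty\varphi(\rho)J_\nu(r\rho)J_\nu(r_2\rho)\rho\,d\rho$. Using the bounds in Lemma \ref{lem:bessel} (in particular $|J_\nu(x)|\lesssim |x|^\nu\langle x\rangle^{-\nu-1/2}$ and the oscillatory representation \eqref{eq:bess3}--\eqref{eq:bess4}), repeated integration by parts gives a pointwise kernel estimate of the form
\begin{equation*}
  |G_\nu(r,r_2)|\lesssim_N (1+r)^{-\nu_-}(1+r_2)^{-\nu_-}\,(1+|r-r_2|)^{-N}
\end{equation*}
together with the short-range bound $|G_\nu(r,r_2)|\lesssim (rr_2)^\nu$ when $r,r_2\lesssim 1$ (here $\nu_-=\max\{0,-\nu\}$). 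When $\nu\ge0$ these kernels satisfy standard Calderón--Zygmund bounds on the homogeneous space $((0,\infty),rdr)$ and the full range $1<p<\infty$ works; the only obstruction is the factor $(rr_2)^{\nu}$ with $\nu<0$, i.e. the Bessel function of negative order $J_{-\alpha}$ (or $J_{\alpha-1}$), which produces a singularity $r^{-|\nu|}$ at the origin. I would handle this by splitting the kernel into a near-diagonal/large-$r$ piece, which is Calderón--Zygmund and contributes the full range, and a piece supported in $\{r,r_2\lesssim 1\}$ bounded by $(rr_2)^{\nu}$; the latter operator maps $L^p\to L^p$ precisely when $r^\nu\in L^p_{\rm loc}(rdr)$ and $r^\nu\in L^{p'}_{\rm loc}(rdr)$, i.e.\ $p\nu+2>0$ and $p'\nu+2>0$, which is exactly $-\tfrac2\nu<p<-\tfrac{2}{\nu}$... more precisely $q'(\alpha)<p<q(\alpha)$ once one recalls $q(\alpha)=2/\min\{\alpha,1-\alpha\}$ and the relevant $\nu$ satisfies $|\nu|=\min\{\alpha,1-\alpha\}$.

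The main obstacle I expect is making the kernel bound above uniform in the dyadic parameter $j$ after summing against arbitrary signs $\epsilon_j$, i.e.\ proving that $K(r,r_2)=\sum_j 2^{2j}\epsilon_j G_\nu(2^jr,2^jr_2)$ satisfies the Calderón--Zygmund estimates $|K(r,r_2)|\lesssim (rr_2)^\nu\mathbf 1_{r,r_2\lesssim\langle r-r_2\rangle}+ |r-r_2|^{-1}$ away from the singular region plus the appropriate Hörmander-type cancellation in $r$ — this requires carefully exploiting the oscillation in \eqref{eq:bess3} to beat the loss of two derivatives on the kernel, exactly as in the estimates of Lemma \ref{lem:hankelb}. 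Once the uniform CZ bounds are in place, Calderón--Zygmund theory on the doubling space $((0,\infty),rdr)$ (equivalently, viewing radial functions on $\mathbb{R}^2$) gives weak-$(1,1)$ for the ``good'' part, hence $L^p$ for $1<p<\infty$ there, while the negative-order contribution restricts to $q'(\alpha)<p<q(\alpha)$; interpolation and duality then yield \eqref{square1-0}. The lower bound in \eqref{square1-0} follows from the upper bound applied to the adjoint together with the standard polarization identity $\|g\|_p^2\sim\|(\sum_j|\varphi_j g|^2)^{1/2}\|_p^2$ valid once both directions of the multiplier bound are known. Reassembling the two spinor components completes the proof of the proposition.
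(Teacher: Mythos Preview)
Your approach is essentially that of the paper: reduce to the scalar components, invoke the Rademacher/Khintchine argument, and analyze the kernel $\int_0^\infty\varphi(\rho)J_\nu(r_1\rho)J_\nu(r_2\rho)\,\rho\,d\rho$ by splitting according to whether $r_1,r_2$ are $\lesssim 1$ or $\gtrsim 1$ (the paper's four pieces $\mathrm{LP}^1_\nu,\dots,\mathrm{LP}^4_\nu$), with the restriction $q'(\alpha)<p<q(\alpha)$ arising precisely from the local $L^p$ and $L^{p'}$ integrability of $r^\nu$ for the relevant negative $\nu$. The one notable difference is that the paper treats only the single multiplier $\varphi$ (direct H\"older bounds for the small-variable pieces, integration by parts plus Schur's test for the large--large piece) and leaves the uniformity over the Rademacher signs implicit, whereas you correctly flag the summation in $j$ with arbitrary $\epsilon_j$ as the main obstacle and propose Calder\'on--Zygmund theory on $((0,\infty),r\,dr)$ to close it.
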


\begin{proof}  
In order to prove the square function estimates \eqref{square1-0}, by using the Rademacher functions and 
the argument of Stein \cite[Appendix D]{Stein}, it suffices to show 
(similarly to \eqref{equ:F1} and \eqref{equ:F2})
that the Littlewood-Paley operator 
$\varphi(|\mathcal{D}_{A,\gamma}|)P_0$
is bounded on $L^p(\R^2)$.
We may write explictly
\begin{equation*}
  \varphi(|\mathcal{D}_{A,\gamma}|)P_0 f=
  \int_{0}^{\infty}K(r_{1},r_{2})f(r_{2})r_{2}dr_{2}
\end{equation*}
where the kernel $K$ is defined as follows:
\begin{itemize}
\item if $\alpha\in(0,\frac12]$,
 \begin{equation}\label{def:LP1}
K(r_1,r_2)=
\left(\begin{matrix} \mathrm{LP}_{-\alpha}& 0 \\0 & 
 e^{i(\theta_1-\theta_2)} \mathrm{LP}_{1-\alpha}\end{matrix}\right),
\end{equation} 
\item if $\alpha\in(\frac12,1)$,
 \begin{equation}\label{def:LP2}
K(r_1,r_2)=\
\left(\begin{matrix} \mathrm{LP}_{\alpha}& 0 \\0 & 
 e^{i(\theta_1-\theta_2)} \mathrm{LP}_{\alpha-1}\end{matrix}\right),
\end{equation} 
\end{itemize}
and
\begin{equation}\label{def:LP}
\begin{split}
\mathrm{LP}_{\nu}=\int_0^\infty \varphi(\rho)J_{\nu}(r_1\rho)J_{\nu}(r_2\rho)\,\rho d\rho, \quad \nu\in[-1/2, 0)\cup(0, 1/2].
 \end{split}
\end{equation} 
Compare with the proof in \cite{Zhang} where the multiplier estimates on $L^p$ with $1<p<\infty$ are deduced from the Gaussian upper bounds for the heat kernel. The same argument is used in the Appendix of the present paper to prove the square function estimates for operator $L$. As mentioned above, due to  the presence of Bessel functions of negative order, we shall obtain $L^{p}$ boundedness only in the range $q'(\alpha)<p<q(\alpha)$. In the following argument, since the other cases can be proved simlarly, we only consider the case 
$\alpha\in (0,1/2]$ and the kernel $\mathrm{LP}_{-\alpha}$.
We write $\nu=-\alpha<0$.

We analyze directly the kernel as in Lemma \ref{lem:est-qq'}.
Define $\chi\in \mathcal{C}_c^\infty ([0,+\infty)$ as 
\begin{equation}
\chi(r)=
\begin{cases}1,\quad r\in [0, \frac12],\\
0, \quad r\in [1,+\infty)
\end{cases}
\end{equation}
and write $\chi^c=1-\chi$. We decompose the kernel $  K(r_{1},r_{2})$ into four terms as follows:
  \begin{equation}
\begin{split}
 K(r_{1},r_{2})=&\chi(r_1)K(r_{1},r_{2})\chi(r_2)+\chi^c(r_1)K(r_{1},r_{2})\chi(r_2)\\
&+\chi(r_1)K(r_{1},r_{2})\chi^c(r_2)+\chi^c(r_1)K(r_{1},r_{2})\chi^c(r_2).
 \end{split}
\end{equation}
This yields a corresponding decomposition for the operator $\mathrm{LP}_{\nu}=\mathrm{LP}^1_{\nu}+\mathrm{LP}^2_{\nu}+\mathrm{LP}^3_{\nu}+\mathrm{LP}^4_{\nu}$. We thus estimate separately the norms $\|\mathrm{LP}^j_{\nu}g\|_{L^p_{r_1 dr_1}}$ for $j=1,2,3,4$.

For $\mathrm{LP}^1_\nu$, we have that 
  \begin{equation}
\begin{split}
|\chi(r_1)K_{\nu}(r_{1},r_{2})\chi(r_2)|\leq \chi(r_1)r_1^{\nu}\chi(r_2)r_2^{\nu};
 \end{split}
\end{equation}
therefore, as long as $1<p, p'<-\frac2\nu$ 
% when $\nu<0$ or $1<p,p'<\infty$ when $\nu>0$, 
we can write
  \begin{equation}\label{est:q-q'1}
  \|\mathrm{LP}^1_{\nu}g\|_{L^p_{r_1 dr_1}}\le
    c_{\nu}\Big(\int_0^1 r^{\nu p} r dr\Big)^{1/p}\Big(\int_0^1 r^{\nu p'} r dr\Big)^{1/p'}\|g\|_{L^{p}_{r_2 dr_2}}\le
    c_{\nu}\|g\|_{L^{p}_{r_2 dr_2}}.
  \end{equation}
  
The terms $\mathrm{LP}^2_{\nu}$ and $\mathrm{LP}^3_{\nu}$ are similar to the second and third terms of Lemma \ref{lem:est-qq'} with $t=0$, so we can use the same argument for $|t|\leq 1$ of Lemma \ref{lem:est-qq'}.
Focusing on $\mathrm{LP}^{3}_{\nu}$,
we need to estimate the two integrals
  \begin{equation*}
    I_{\pm}=\int_{0}^{\infty}
    \rho \phi(\rho)J_{\nu}(r_{1}\rho)(r_{2}\rho)^{-1/2}
    e^{\pm i r_{2}\rho}a_{\pm}(r_{2}\rho)d \rho.
  \end{equation*}
Thanks to \eqref{eq:bess2}
and 
\begin{equation}
|J_{\nu}''(x)|=\big|J_{\nu-1}'(x)+\frac{\nu J_{\nu}(x)}{x^2}-\frac{\nu J'_{\nu}(x)}x\big|\leq C_\nu |x|^{\nu-2}
\quad \text{if}\, |x|\leq 1,
\end{equation}
and then by integration by parts twice we obtain
  \begin{equation*}
   | I_{\pm}|=\Big|\frac{ir_{2}^{-1/2}}{\pm r^2_{2}}
    \int_{0}^{\infty}
    (\rho^{1/2}\phi(\rho)J_{\nu}(r_{1}\rho)a_{\pm}(r_{2}\rho))'
    e^{\pm i \rho r_{2}}d\rho\Big|\leq 
    Cr_1^{\nu}r_2^{-\frac52} .
  \end{equation*}
 Hence, as long as $1<p<-\frac2\nu$,  we have
      \begin{equation}
  \|\mathrm{LP}^3_{\nu}g\|_{L^p_{r_1 dr_1}}\le
    c_{\nu}\Big(\int_0^1 r_1^{\nu p} r_1 dr_1\Big)^{1/p}\Big(\int_{\frac12}^{+\infty} r_2^{-\frac{5p'}2} r_2 dr_2\Big)^{1/p'}\|g\|_{L^{p}_{r_2 dr_2}}\le
    c_{\nu}\|g\|_{L^{p}_{r_2 dr_2}}.
\end{equation}
The computation for the term $\mathrm{LP}^2_{\nu}$
is similar and gives the condition
$1<p'<-\frac2\nu$. 

For the term $\mathrm{LP}^4_{\nu}$, 
we need to estimate the two integrals
\begin{equation*}
  I_{\pm}(r_1,r_2)=\int_{0}^{\infty}
  \rho \phi(\rho)(r_{1}\rho)^{-1/2}(r_{2}\rho)^{-1/2}
  e^{ i(r_1\pm r_{2})\rho}a_{\pm}(r_{1}\rho) a_{\pm}(r_{2}\rho)d \rho.
\end{equation*}
Integrating by parts, for $r_1,r_2\geq 1/2$ and any $N\geq 0$, we obtain
  \begin{equation*}
   |  I_{\pm}(r_1,r_2)|\leq C_N (r_1r_2)^{-\frac12} (1+|r_1-r_2|)^{-N}.
  \end{equation*}
 To prove $\mathrm{LP}^4_{\nu}$ is bounded on $L^p_{rdr}$, by Schur test lemma, it suffices to show
   \begin{equation*}
 \sup_{r_2\in [\frac12,+\infty)} 
 \int_{\frac12}^\infty 
 |  I_{\pm}(r_1,r_2)| r_1 dr_1\leq C,\quad  \sup_{r_1\in [\frac12,+\infty)} \int_{\frac12}^\infty |  I_{\pm}(r_1,r_2)| r_2 dr_2\leq C.
\end{equation*}
By symmetry, we consider only the first estimate.
Since $r_{1}^{1/2}\lesssim |r_{1}-r_{2}|^{1/2}+r_{2}^{1/2}$,
we can write
\begin{equation*}
  |I_{\pm}(r_1,r_2)|r_1\le
  Cr_{2}^{-1/2}(1+|r_{1}-r_{2}|)^{-N+1/2}+
  C(1+|r_{1}-r_{2}|)^{-N}
\end{equation*}
and recalling that $r_{2}\ge1/2$
\begin{equation*}
  \int_{\frac12}^\infty 
  |  I_{\pm}(r_1,r_2)| r_1 dr_1\le
  C\int_{\mathbb{R}}(1+|r_{1}-r_{2}|)^{-N'}dr_{1}\le C
\end{equation*}
%   By symmetry, we need only consider the case $r_1\leq r_2$.
% If $|r_1-r_2|\leq 4r_1$, since  $r_1\leq r_2$, then $|r_1-r_2|\lesssim r_1\sim r_2$. Then
%    \begin{equation*}
%    \begin{split}
% & \sup_{r_2\in [\frac12,+\infty)} \int_{\frac12}^\infty |  I_{\pm}(r_1,r_2)| r_1 dr_1+  \sup_{r_1\in [\frac12,+\infty)} \int_{\frac12}^\infty |  I_{\pm}(r_1,r_2)| r_2 dr_2\\
%  &\lesssim \sup_{r_2\in [\frac12,+\infty)} \int_{\frac12}^\infty (1+|r_1-r_2|)^{-N} dr_1+  \sup_{r_1\in [\frac12,+\infty)} \int_{\frac12}^\infty (1+|r_1-r_2|)^{-N} dr_2\\
%  &\lesssim 1.
%  \end{split}
%   \end{equation*}
%   If $|r_1-r_2|\geq 4r_1$, since  $r_1\leq r_2$, then $|r_1-r_2|\sim r_2\geq r_1$. Then
%    \begin{equation*}
%    \begin{split}
% & \sup_{r_2\in [\frac12,+\infty)} \int_{\frac12}^\infty |  I_{\pm}(r_1,r_2)| r_1 dr_1+  \sup_{r_1\in [\frac12,+\infty)} \int_{\frac12}^\infty |  I_{\pm}(r_1,r_2)| r_2 dr_2\\
%  &\lesssim \sup_{r_2\in [\frac12,+\infty)} \int_{\frac12}^\infty (1+|r_1-r_2|)^{-N} dr_1+  \sup_{r_1\in [\frac12,+\infty)} \int_{\frac12}^\infty (1+r_2)^{-N}\Big(\frac{r_2}{r_1}\Big)^{\frac12} dr_2\\
%  &\lesssim 1.
%  \end{split}
%   \end{equation*}
  Therefore, $\mathrm{LP}^4_{\nu}$ is bounded on $L^p_{rdr}$ for $1<p<\infty$. Summing up, we have proved that the Littlewood-Paley operator is bounded on $L^p_{rdr}$ for $q'(\alpha)=\frac2{2-\alpha}<p<\frac 2{\alpha}=q(\alpha)$
  when $\alpha\in (0,1/2]$.
\end{proof}

As for the proof of Theorem \ref{thm-stri1}, we now use Proposition \ref{prop:semi-1} and the square function estimates given in Proposition \ref{prop:squarefun0} to deduce
the Strichartz estimates of Theorem \ref{thm-stri3} when $2\leq q<4$ and $\frac2p+\frac1q\leq \frac12$.
The remaining estimates can be obtained by interpolating with the estimates for the range $\frac1p+\frac1q< \frac12$ with $q>4$ obtained in the first step. This concludes the proof of
Theorem \ref{thm-stri3}

\subsection{Sharpness on the restriction $q<q(\alpha)$ in \eqref{stri} }

Corollary \ref{cor-stri} is an obvious consequence of the previous
estimates. In this subsection, we construct a counterexample to show that the restriction $q<q(\alpha)$ is in fact necessary. 

\begin{proposition}[Counterexample] 
  Let $(p,q)$ be as in \eqref{pqrange1} and let $q\geq q(\alpha)$.
  Then the Strichartz estimates \eqref{stri} fail, in the sense that there exist an initial condition $f\in D(\mathcal{D}_{A,\gamma})$ such that
  \begin{equation}\label{counter}
  \|e^{it\mathcal{D}_{A,\gamma}}f\|_{L^p(\R;L^q(\R^2))}=\infty
  \quad\text{for any}\quad q\geq q(\alpha).
  \end{equation}
\end{proposition}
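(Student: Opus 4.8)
The idea is to feed the flow the singular element of the domain itself and to check that the solution keeps, for all $t$ in a neighbourhood of $0$, a spatial singularity at the origin of order exactly $|x|^{-\alpha}$ when $\alpha\in(0,\tfrac12]$, and $|x|^{\alpha-1}$ when $\alpha\in(\tfrac12,1)$; such a profile lies in $L^q(\mathbb{R}^2)$ if and only if $q<q(\alpha)$, which at once forces \eqref{counter}. (Note $q(\alpha)\ge4$ for all $\alpha$, so the endpoint $(p,q)=(\infty,2)$ in \eqref{pqrange1} never occurs, and we may assume $(p,q)\in(2,\infty]^2$ with $\tfrac2p+\tfrac1q\le\tfrac12$.) Fix $\varphi\in C_c^\infty((0,\infty))$ with $\varphi\ge0$, $\varphi\not\equiv0$, and take as datum $f=\varphi(|\mathcal{D}_{A,\gamma}|)f_{sing}$, where $f_{sing}=(K_\alpha(r),0)^T$ if $\sin\gamma=0$ (i.e. $\alpha\le\tfrac12$) and $f_{sing}=(0,iK_{1-\alpha}(r)e^{i\theta})^T$ if $\cos\gamma=0$ (i.e. $\alpha>\tfrac12$). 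By Theorem~\ref{ABsa}, $f_{sing}\in D(\mathcal{D}_{A,\gamma})$ and lies in the $k=0$ sector; hence $f\in D(\mathcal{D}_{A,\gamma})$, $P_0f=f$, and since $f$ is frequency localised it belongs to $[\dot H^s_A]^2$ for every $s$, so it is an admissible datum in \eqref{stri}.

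On the $k=0$ sector the flow acts by $e^{it\mathcal{D}_{A,\gamma}}=\mathcal{P}_0^{-1}e^{-it\rho}\mathcal{P}_0$ (Proposition~\ref{pro:hankel}), and $\varphi(|\mathcal{D}_{A,\gamma}|)$ by multiplication by $\varphi(|\rho|)$ in the Hankel variable. Using $\mathcal{H}_\nu K_\nu(\rho)=\rho^\nu/(1+\rho^2)$ and $K_\nu=K_{-\nu}$, one finds $\mathcal{P}_0f_{sing}(\rho)=2^{-1/2}|\rho|^{\nu_0}/(1+\rho^2)$ with $\nu_0=-\alpha$, even in $\rho$, when $\alpha\le\tfrac12$, and $\mathcal{P}_0f_{sing}(\rho)=\mp\,2^{-1/2}|\rho|^{\nu_0}/(1+\rho^2)$ for $\pm\rho>0$ with $\nu_0=\alpha-1$, odd in $\rho$, when $\alpha>\tfrac12$. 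Plugging this into the inversion formulas \eqref{eq:Pm1a}--\eqref{eq:Pm1b} and combining the $\pm\rho$ contributions — which in both parities collapse to a $\cos(t\rho)$ factor — gives that the component of $e^{it\mathcal{D}_{A,\gamma}}f$ carrying the singularity (the first one if $\alpha\le\tfrac12$, the second one if $\alpha>\tfrac12$) equals, up to a unimodular constant,
\[
  u^\sharp(t,r)=\mathcal{H}_{\nu_0}\!\Big(\varphi(\rho)\cos(t\rho)\,\tfrac{\rho^{\nu_0}}{1+\rho^2}\Big)(r).
\]

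Since $\varphi$ is supported in a compact subset of $(0,\infty)$, inserting the expansion $J_{\nu_0}(x)=\frac{(x/2)^{\nu_0}}{\Gamma(\nu_0+1)}+O(x^{\nu_0+2})$ as $x\to0$ and using dominated convergence yields, as $r\to0$,
\[
  u^\sharp(t,r)=\frac{(r/2)^{\nu_0}}{\Gamma(\nu_0+1)}\,c(t)+O(r^{\nu_0+2}),
  \qquad
  c(t):=\int_0^\infty\varphi(\rho)\cos(t\rho)\,\frac{\rho^{2\nu_0+1}}{1+\rho^2}\,d\rho.
\]
The function $c$ is continuous and $c(0)=\int_0^\infty\varphi(\rho)\,\rho^{2\nu_0+1}(1+\rho^2)^{-1}\,d\rho>0$, so there is $\delta>0$ with $c(t)\ne0$ for $|t|<\delta$; for such $t$ one has $|u^\sharp(t,r)|\gtrsim|x|^{\nu_0}$ for $|x|$ small. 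Since $\int_{|x|<1}|x|^{\nu_0q}\,dx=\infty$ exactly when $\nu_0q\le-2$, i.e. (with $\nu_0=-\alpha$ or $\nu_0=\alpha-1$) exactly when $q\ge q(\alpha)$, we conclude $\|e^{it\mathcal{D}_{A,\gamma}}f(\cdot)\|_{L^q_x(\mathbb{R}^2)}=+\infty$ for every $|t|<\delta$ whenever $q\ge q(\alpha)$. Integrating the time norm over $(-\delta,\delta)$ — or taking the essential supremum when $p=\infty$ — gives $\|e^{it\mathcal{D}_{A,\gamma}}f\|_{L^p_t(\mathbb{R};L^q_x(\mathbb{R}^2))}=+\infty$, which is \eqref{counter}.

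The only step requiring care is the identification of $u^\sharp$ from \eqref{eq:Pm1a}--\eqref{eq:Pm1b}: one must track the parity of $\mathcal{P}_0f_{sing}$ (even when $\alpha\le\tfrac12$, odd when $\alpha>\tfrac12$) and check that in both cases the two exponentials $e^{\mp it\rho}$ add up to $2\cos(t\rho)$ rather than cancelling into $2i\sin(t\rho)$; this is exactly what makes $c(0)\ne0$. Everything else is routine once the cutoff $\varphi$ is inserted, since the $\rho$-integrals then range over a compact set; without the cutoff one would instead split the integral at $\rho=1/r$, bound the high-frequency tail via $|J_{\nu_0}(x)|\lesssim|x|^{-1/2}$, and evaluate $c(0)$ through $\int_0^\infty\rho^{s-1}(1+\rho^2)^{-1}\,d\rho=\tfrac{\pi}{2\sin(\pi s/2)}$, obtaining $c(0)=\tfrac{\pi}{2\sin\pi\alpha}$.
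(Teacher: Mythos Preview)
Your proof is correct and follows essentially the same strategy as the paper's: pick a frequency--localized datum in the singular $k=0$ sector, compute the flow via the relativistic Hankel transform, and use the leading term $J_{\nu_0}(r\rho)\sim c\,(r\rho)^{\nu_0}$ as $r\to0$ to exhibit a persistent $|x|^{\nu_0}$ singularity for $t$ near $0$, which fails to be in $L^q$ precisely when $q\ge q(\alpha)$. The paper chooses $f=(\mathcal{H}_{-\alpha}\chi,0)$ with $\chi\in C_c^\infty([1,2])$ and works only in the case $\alpha\in(0,\tfrac12]$; you instead take $f=\varphi(|\mathcal{D}_{A,\gamma}|)f_{sing}$, track the parity of $\mathcal{P}_0 f_{sing}$ to obtain the $\cos(t\rho)$ factor explicitly, and treat both ranges of $\alpha$. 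These differences are cosmetic: in Hankel space both data are smooth compactly supported functions of $\rho$, and the singularity analysis is identical. Your choice has the minor advantage that membership in $D(\mathcal{D}_{A,\gamma})$ is automatic from the spectral cutoff.
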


\begin{proof} 
  The counterexample is inspired by \cite{ZZ}.  Without loss of generality, we assume $\alpha\in (0, \frac12]$ and choose initial data of the form $f=(\mathcal{H}_{-\alpha}\chi, 0)$, where $\chi\in\CC_c^\infty([1,2])$ takes value in $[0,1]$. Obviously,  $f\in \big[\dot H^s\big]^2$ due to the compact support of  $\chi$ and to the unitarity of the Hankel transform on 
  $L^2(rdr)\simeq L^{2}(\mathbb{R}^{2})$. 
  We prove that \eqref{counter} holds for this choice of $f$. Recalling \eqref{equ:F-E}, we must prove that the quantity
  \begin{equation*}
    Z=\int_0^\infty 
    J_{-\alpha}(r\rho)e^{it\rho}\chi(\rho)\rho d\rho
  \end{equation*}
  satisfies
  \begin{equation}\label{est:aim}
    \|Z\|_{L^p(\R;L^q(\R^2))} =\infty,
    \qquad q\geq q(\alpha).
  \end{equation}
  From the series expansion of $J_{-\alpha}(r)$ at 0,
  we know that
  \begin{equation}\label{Bessel1}
    J_{-\alpha}(r)=C_{\alpha}r^{-\alpha}+S_{\alpha}(r)
  \end{equation}
  where
  \begin{equation}\label{Bessel3}
    |S_{\alpha}(r)|\leq C_{\alpha} r^{1-\alpha},
    \qquad
    r\in(0,2].
  \end{equation}
  Then for any $0<\epsilon<1$ we can estimate
  \begin{equation*}
    \|Z\|_{L^p(\R;L^q(\R^2))}\ge
    \|Z\|_{L^p_{t}([0,1/2];L^q_{rdr}[\epsilon,1]}\ge
    C_{\alpha}\|P\|_{L^p_{t}([0,1/4];L^q_{rdr}[\epsilon,1]}
    -
    \|Q\|_{L^p_{t}([0,1/2];L^q_{rdr}[\epsilon,1]}
  \end{equation*}
  where
  \begin{equation*}
    P= \int_{0}^{\infty}(r \rho)^{-\alpha}e^{it \rho}\chi(\rho)
      \rho d \rho,
    \qquad
    Q=\int_{0}^{\infty}S_{\alpha}(\rho)e^{it \rho}\chi(\rho)
      \rho d \rho.
  \end{equation*}
  Now, on one hand by \eqref{Bessel3} we have
  \begin{equation*}
    \|Q\|_{L^p_{t}([0,1/2];L^q_{rdr}[\epsilon,1]}
    \lesssim
    \left\|
      \int_{0}^{\infty}(r \rho)^{1-\alpha}\chi(\rho)\rho d \rho
    \right\|_{L^p_{t}([0,1/2];L^q_{rdr}[\epsilon,1]}
    \lesssim
    1-\epsilon^{1-\alpha+\frac 2q}\le1.
  \end{equation*}
  On the other hand, we have
  \begin{equation*}
    \|P\|_{L^p([0,1/4];L^q_{rdr}[\epsilon,1])}=
    \left(\int_0^{\frac14}\left( \int_{\epsilon}^1 \left|\int_0^\infty (r\rho)^{-\alpha}e^{
    it\rho}\chi(\rho)\rho d\rho\right|^q r dr\right)^{p/q}dt\right)^{1/p}
  \end{equation*}
  and by the assumption $q\geq q(\alpha)=\frac2\alpha$
  \begin{equation*}
    \gtrsim
    \left(\int_0^{\frac14}
    \left|\int_0^\infty \rho^{-\alpha}
    e^{it\rho}\chi(\rho)\rho d\rho\right|^{p}dt\right)^{1/p}
    \times 
    \begin{cases}
      \epsilon^{-\alpha+\frac 2q} &
      \text{if}\quad q\alpha>2\\
      \ln\epsilon &
      \text{if}\quad q\alpha=2
    \end{cases}
    \gtrsim C
    \begin{cases}
      \epsilon^{-\alpha+\frac 2q} &
      \text{if}\quad q\alpha>2\\
      \ln\epsilon &
      \text{if}\quad q\alpha=2.
    \end{cases}
  \end{equation*}
  In the last inequality, we have used the fact that 
  $\cos(\rho t)\geq 1/2$ for $t\in [0, 1/4]$ and 
  $\rho\in [1,2]$, so that
  \begin{equation}
  \begin{split}
  \left|\int_0^\infty \rho^{-\alpha}e^{
  it\rho}\chi(\rho)\rho d\rho\right|\geq \frac12\int_0^\infty \rho^{-\alpha}\chi(\rho)\rho d\rho\geq c.
  \end{split}
  \end{equation}
  Summing up, if $q>\frac2\alpha$ we obtain 
  \begin{equation*}
    \Big\|\int_0^\infty 
      J_{-\alpha}(r\rho)e^{it\rho}\chi(\rho)\rho d\rho\Big\|
      _{L^p(\R;L^q(\R^2))} \geq
      c\epsilon^{-\alpha+\frac 2q} -C 
      \to +\infty \qquad \text{as}\quad \epsilon\to 0,
  \end{equation*}
  while if $q\alpha=2$ we have
  \begin{equation*}
    \Big\|\int_0^\infty 
      J_{-\alpha}(r\rho)e^{it\rho}\chi(\rho)\rho d\rho
    \Big\|_{L^p(\R;L^q(\R^2))}
    \ge
    c\ln\epsilon-C\to +\infty \qquad \text{as}\quad \epsilon\to 0,
  \end{equation*}
  and this implies \eqref{est:aim}.
  
 \end{proof}

\appendix
\section{Proof of estimate \eqref{est:dis1-1}.}

We include a proof of estimate \eqref{est:dis1-1} for the sake of completeness. The proof requires a number of technical tools, such as Bernstein and Littlewood--Paley inequalities, which will be established via heat kernel estimates. For the convenience of the Reader we divide the argument in several subsections.

\subsection{The modified Schr\"odinger propagator} 

Recall the AB Schr\"{o}dinger operators $H$ and 
$L$ constructed at the beginning of Section \ref{sec:proofdisp}.
We first consider the propagator $e^{itL}$, which we
represent in integral form as
\begin{equation*}
  e^{it L}f=
  \int_{0}^\infty \int_{0}^{2\pi}
  {\tilde{\bf K}}_{S}(t,r,\theta,r_2,\theta_2) 
  f(r_2,\theta_2)  d\theta_2 \;r_2dr_2,
\end{equation*}
where we use polar coordinates
$x=r_1(\cos\theta_1,\sin\theta_1)$ and
$y=r_2(\cos\theta_2,\sin\theta_2)$.
We compute ${\tilde{\bf K}}_{S}$ explicitly:

\begin{proposition}\label{prop:kerenel-S} 
  Let $\alpha\in(0,1)$ and $L$ the operator defined in
  \eqref{eq:defL}. Then the integral kernel of the
  propagator $e^{itL}$ can be written as
\begin{equation}\label{bf-KS1}
  e^{itL}(x,y)=
  \tilde{\bf K}_S(t,r_1,\theta_1,r_2,\theta_2)= 
  \begin{pmatrix} 
    G_{\alpha}+D_{\alpha}& 0 \\
    0 & G_{\alpha}+D_{\alpha}\end{pmatrix}
\end{equation}
where, writing
$x=r_1(\cos\theta_1,\sin\theta_1)$ and
$y=r_2(\cos\theta_2,\sin\theta_2)$,
\begin{equation}\label{G-term}
  G_\alpha(t;r_1,\theta_1,r_2,\theta_2) =
  \frac{e^{-\frac{|x-y|^2}{4it}} }{it}
  A_{\alpha}(\theta_1,\theta_2),
\end{equation}
\begin{equation}\label{D-term}
  D_{\alpha}(t;r_1,\theta_1,r_2,\theta_2)
  =-\frac{e^{-\frac{r_1^2+r_2^2}{4it}} }{4\pi^2it}
  \int_0^\infty e^{-\frac{r_1r_2}{2it}\cosh s} 
  \Big(B_{\alpha}(s)+
  C_{\alpha}(s,\theta_1-\theta_2+\pi)\Big) ds,
\end{equation}
and
\begin{align} % &(all)&(separa)&(all)-piu' col meglio alignat
        \label{equ:A} % o \notag
  & A_{\alpha}(\theta_1,\theta_2)= 
  \frac{e^{i\alpha(\theta_1-\theta_2)}}{4\pi^2}
  \big(\mathbbm{1}_{[0,\pi]}(|\theta_1-\theta_2|)
  +e^{-i2\pi\alpha}\mathbbm{1}_{[\pi,2\pi]}
  (\theta_1-\theta_2)\\&\qquad\qquad\qquad\qquad\qquad\qquad+e^{i2\pi\alpha}\mathbbm{1}_{(-2\pi,-\pi]}
  (\theta_1-\theta_2)\big),\nonumber
\\ 
        \label{equ:BC} % o \notag
  & B_{\alpha}(s)= 
   \sin(|\alpha|\pi)e^{-|\alpha| s},
\\
    \notag
  & C_{\alpha}(s,\theta)=
  \sin(\alpha\pi)
  \frac{(e^{-s}-\cos(\theta))
      \sinh(\alpha s)-i\sin(\theta)\cosh(\alpha s)}
    {\cosh(s)-\cos(\theta)}.
\end{align}
\end{proposition}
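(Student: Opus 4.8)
The plan is to reduce the computation to the scalar Aharonov--Bohm Schr\"odinger propagator and then to evaluate the resulting Bessel series explicitly, following the classical route for the AB heat/Schr\"odinger kernel.

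First I would note that $L=I_{2}\otimes H$, so $e^{itL}=I_{2}\otimes e^{itH}$ and it suffices to produce the kernel of the scalar propagator $e^{itH}$; it then appears unchanged in both diagonal entries of $\tilde{\bf K}_{S}$, which already gives the block form \eqref{bf-KS1}. Using the harmonic decomposition $H=\bigoplus_{k}H_{k-\alpha}$ together with $H_{\nu}=\mathcal H_{|\nu|}\rho^{2}\mathcal H_{|\nu|}$ from \eqref{eq:hanHk}, the kernel of $e^{itH}$ equals
\begin{equation*}
  \tilde{\bf K}_{S}(t,r_{1},\theta_{1},r_{2},\theta_{2})=
  \frac{1}{2\pi}\sum_{k\in\Z}e^{ik(\theta_{1}-\theta_{2})}m_{|k-\alpha|}(t,r_{1},r_{2}),
  \qquad
  m_{\nu}(t,r_{1},r_{2})=\int_{0}^{\infty}e^{-it\rho^{2}}J_{\nu}(r_{1}\rho)J_{\nu}(r_{2}\rho)\rho\,d\rho .
\end{equation*}
The radial integral is a Weber second exponential integral: for $\Re p>0$, $\Re\nu>-1$ one has $\int_{0}^{\infty}e^{-p\rho^{2}}J_{\nu}(a\rho)J_{\nu}(b\rho)\rho\,d\rho=\tfrac1{2p}e^{-(a^{2}+b^{2})/4p}I_{\nu}(ab/2p)$, and extending this to $p=it$ yields $m_{\nu}=\tfrac{1}{2it}e^{-(r_{1}^{2}+r_{2}^{2})/4it}I_{\nu}\big(r_{1}r_{2}/2it\big)$. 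Everything then reduces to summing $\Sigma(z,\phi):=\sum_{k}e^{ik\phi}I_{|k-\alpha|}(z)$ at $z=r_{1}r_{2}/(2it)$, $\phi=\theta_{1}-\theta_{2}$.

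To evaluate $\Sigma$ I would insert the Schl\"afli representation \cite[10.32.4]{DLMF}: for $\Re z>0$ and $\mu\notin\Z$,
\begin{equation*}
  I_{\mu}(z)=\frac1\pi\int_{0}^{\pi}e^{z\cos\psi}\cos(\mu\psi)\,d\psi-\frac{\sin(\mu\pi)}{\pi}\int_{0}^{\infty}e^{-z\cosh s-\mu s}\,ds ,
\end{equation*}
split $\Sigma=\Sigma_{1}+\Sigma_{2}$ accordingly, and interchange the sum with the two integrals. For $\Sigma_{1}$ one uses $\cos(|k-\alpha|\psi)=\cos((k-\alpha)\psi)$ and Poisson summation $\sum_{k}e^{ik(\phi\pm\psi)}=2\pi\sum_{n}\delta(\phi\pm\psi-2\pi n)$: integrating in $\psi\in[0,\pi]$ picks up finitely many $\delta$-contributions, and combining with the Gaussian prefactor via $r_{1}^{2}+r_{2}^{2}-2r_{1}r_{2}\cos\phi=|x-y|^{2}$ reproduces exactly the term $G_{\alpha}$ of \eqref{G-term} with the three indicator functions of $|\phi|<\pi$, $\pi<\phi<2\pi$, $-2\pi<\phi<-\pi$ arising as the $n=0$, $n=1$, $n=-1$ contributions. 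For $\Sigma_{2}$ one separates the $k=0$ term, which is $\phi$-independent and equals $\sin(\alpha\pi)e^{-\alpha s}=B_{\alpha}(s)$; the remaining sums over $k\ge1$ and $k\le-1$ are geometric series in $-e^{-s}e^{\pm i\phi}$ (using $\sin(|k-\alpha|\pi)=\mp(-1)^{k}\sin(\alpha\pi)$), and after summing them and reducing the resulting rational function of $e^{s},\cos\phi,\sin\phi$ over the common denominator $2e^{s}(\cosh s+\cos\phi)$ one obtains precisely $C_{\alpha}(s,\phi+\pi)$ as in \eqref{equ:BC} — the $\pi$-shift and the $\cosh s-\cos\theta$ denominator both come from $\sum_{k\ge1}(-e^{-s+i\phi})^{k}$. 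Collecting $\Sigma_{1}+\Sigma_{2}$ and multiplying by $\tfrac{1}{4\pi it}e^{-(r_{1}^{2}+r_{2}^{2})/4it}$ produces \eqref{bf-KS1}--\eqref{equ:BC}.

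The main obstacle is rigor in these series manipulations: $\sum_{k}I_{|k-\alpha|}(z)$ converges for $\Re z>0$ by the superexponential decay of $I_{\mu}(z)$ in $\mu$, but the $k$-sums that appear after inserting the Schl\"afli representation are only distributional, and at the value of interest $z=r_{1}r_{2}/(2it)$ is purely imaginary so the representation is not literally applicable. The clean remedy I would use is to perform the entire computation first in the heat regime $t=-i\tau$, $\tau>0$ (i.e.\ $z=r_{1}r_{2}/(2\tau)>0$), where Fubini and termwise summation are fully justified and one recovers the closed form for $e^{-\tau H}$; then both sides are holomorphic in $\{\Re\tau>0\}$ and possess boundary values as $\Re\tau\to0^{+}$ — the left side by the spectral theorem, the right side because the only delicate piece, $\int_{0}^{\infty}e^{-z\cosh s}(B_{\alpha}+C_{\alpha})\,ds$, converges and depends continuously on $z$ for $\Re z\ge 0$, $z\ne0$ — so the identity persists on the ray $\tau=it$. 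A secondary, purely bookkeeping, point will be tracking the signs of $\sin(|k-\alpha|\pi)$ and the shift $\phi\mapsto\phi+\pi$ in $\Sigma_{2}$, and verifying that the three indicator regions in $A_{\alpha}$ exhaust the range $\phi=\theta_{1}-\theta_{2}\in(-2\pi,2\pi)$.
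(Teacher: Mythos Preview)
Your proposal is correct and follows essentially the same route as the paper: reduction to the scalar AB Schr\"odinger propagator, evaluation of the radial integral via the Weber identity to get modified Bessel functions $I_{|k-\alpha|}$, insertion of the Schl\"afli representation, Poisson summation for the $\cos(|k-\alpha|\psi)$ piece producing $G_{\alpha}$, and geometric summation of the $\sin(|k-\alpha|\pi)e^{-|k-\alpha|s}$ piece producing $B_{\alpha}+C_{\alpha}$. Your heat-regime-then-analytic-continuation justification is exactly the paper's $\lim_{\epsilon\searrow0}$ regularization in \eqref{equ:knukdef12sch}, rephrased.
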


\begin{proof} 
  The proof is in the spirit of Cheeger--Taylor 
  \cite{CT1,CT2} and is adapted from \cite{FZZ,GYZZ}; 
  we include it here for the sake of completeness. 
  We expand $\tilde{\bf K}_S$ in spherical harmonics:
  \begin{equation}\label{equ:ktxyschr}
    \tilde{\bf K}_S(t; r_1,\theta_1,r_2,\theta_2)=
    \frac1{2\pi} 
    \sum_{k\in\Z} \Phi_k(\theta_{1})
    \tilde{\bf K}_{S,k}(t; r_1,r_2)
    \overline{\Phi_k(\theta_{2})}
  \end{equation}
  where $\Phi_{k}$ is given by \eqref{eq:defPhik}.
  With the notations of Section \ref{sec:proofdisp},
  in particular \eqref{eq:Lka}, we see that
  $\tilde{\bf K}_{S,k}$ is the kernel of
  \begin{equation*}
    e^{it L_{k-\alpha}}=
    \widetilde{\mathcal{P}_{k}}^{-1}
      e^{it \rho^{2}}
    \widetilde{\mathcal{P}_{k}}
  \end{equation*}
  that is to say
  \begin{equation*}
    \tilde{\bf K}_{S,k}=
    \begin{pmatrix}
      S_{|k-\alpha|} & 0 \\
      0 &  S_{|k-\alpha+1|}
    \end{pmatrix}
  \end{equation*}
  where $S_{\nu}$ is defined by
  \begin{equation}\label{S-nu}
    S_{\nu}(t,r_1,r_2)=
    \int_0^\infty e^{it\rho^2}
    J_{\nu}(r_1\rho)J_{\nu}(r_2\rho) \,\rho d\rho.
  \end{equation}
  We now use the Weber identity and analytic continuation 
  like in \cite[Proposition 8.7, and Page 161, (8.88)]{Taylor}
  to write 
  \begin{equation}\label{equ:knukdef12sch}
    S_{\nu}(-t,r_{1},r_{2})=
    \lim_{\epsilon\searrow0}\int_0^\infty 
    e^{-(\epsilon+it)\rho^2}J_{\nu}(r_1\rho)
    J_{\nu}(r_2\rho) \,\rho d\rho=
    \lim_{\epsilon\searrow0}
    \frac{e^{-\frac{r_1^2+r_2^2}{4(\epsilon+it)}}}
      {2(\epsilon+it)} 
      I_\nu\Big(\frac{r_1r_2}{2(\epsilon+it)}\Big)
  \end{equation}
  where $I_\nu$ is the modified Bessel function,
  which can be defined e.g.~via the integral representation
  \cite{Watson}
  \begin{equation}\label{m-bessel}
    I_\nu(z)=
    \frac1{\pi}\int_0^\pi e^{z\cos \tau} 
    \cos(\nu \tau) d\tau-\frac{\sin(\nu\pi)}{\pi}
    \int_0^\infty e^{-z\cosh \tau} e^{-\tau\nu} d\tau.
  \end{equation}
  To compute \eqref{bf-KS1}, we must prove that
  \begin{equation*}
    \frac1{2\pi} \sum_{k\in\Z}\Phi_{k}(\theta_{1})
    \begin{pmatrix} 
      S_{|k-\alpha|}(t,r_1, r_2)& 0 \\
      0 & S_{|k-\alpha+1|} (t,r_1,r_2)
    \end{pmatrix}
    \overline{\Phi_{k}(\theta_{2})}=
    \begin{pmatrix} 
      G_{\alpha}+D_{\alpha}& 0 
      \\0 & G_{\alpha}+D_{\alpha}
    \end{pmatrix}
  \end{equation*}
  that is,
  \begin{equation*}
    \lim_{\epsilon\searrow0}
    \frac{e^{-\frac{r_1^2+r_2^2}{4(\epsilon+it)}}}{4\pi(\epsilon+it)} 
    \sum_{k\in\Z} e^{ik(\theta_1-\theta_2)}  
    I_{\nu}\Big(\frac{r_1r_2}{2(\epsilon+it)}\Big)=
    G_{\alpha}+D_{\alpha},\qquad 
    \nu=|k-\alpha|.
  \end{equation*}
  Applying the following Lemma, we conclude the proof
  of the Proposition.
\end{proof}

\begin{lemma}\label{lem:sumk}
  Let $\alpha\in(0,1)$, $S_{\nu}$ as in \eqref{S-nu},
  $G_{\alpha},D_{\alpha}$ as in \eqref{G-term}, \eqref{D-term}.
  Define
  \begin{equation*}
    S(-t,r_1,r_2,\alpha):=
    \sum_{k\in\Z} e^{ik(\theta_1-\theta_2)}  
    S_{|k-\alpha|}(-t,r_1,r_2).
  \end{equation*}
  Then we have
  \begin{equation}\label{equ:Skernel2}
    S(-t,r_1,r_2,\alpha)=
    G_\alpha(t;r_1,\theta_1,r_2,\theta_2)+
    D_{\alpha}(t;r_1,\theta_1,r_2,\theta_2).
  \end{equation}
\end{lemma}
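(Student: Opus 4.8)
The plan is to prove Lemma \ref{lem:sumk} by the classical Cheeger--Taylor strategy: reduce the series $\sum_k e^{ik(\theta_1-\theta_2)}S_{|k-\alpha|}(-t,r_1,r_2)$ to a series of modified Bessel functions via the Weber--Schafheitlin type identity \eqref{equ:knukdef12sch}, then split $I_{|k-\alpha|}$ into its two integral pieces as in \eqref{m-bessel} and sum the resulting two series separately. First I would use \eqref{equ:knukdef12sch} to write, with $\mu=1/(2(\epsilon+it))$,
\begin{equation*}
  S(-t,r_1,r_2,\alpha)=
  \lim_{\epsilon\searrow0}\mu\, e^{-\mu(r_1^2+r_2^2)/2}
  \sum_{k\in\Z}e^{ik(\theta_1-\theta_2)}I_{|k-\alpha|}(\mu r_1r_2),
\end{equation*}
and insert \eqref{m-bessel}. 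This produces two contributions: a "cosine" part coming from $\frac1\pi\int_0^\pi e^{z\cos\tau}\cos(|k-\alpha|\tau)\,d\tau$, and an "exponential" part coming from $-\frac{\sin(|k-\alpha|\pi)}{\pi}\int_0^\infty e^{-z\cosh\tau}e^{-\tau|k-\alpha|}\,d\tau$.

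For the cosine part, the key is the Fourier series identity (valid for $\tau\in(-\pi,\pi)$)
\begin{equation*}
  \sum_{k\in\Z}e^{ik\phi}\cos(|k-\alpha|\tau)=
  \text{(a $2\pi$-periodic sum of Dirac-like kernels)},
\end{equation*}
more precisely one writes $\cos(|k-\alpha|\tau)=\tfrac12(e^{i(k-\alpha)\tau}+e^{-i(k-\alpha)\tau})$ for $k\geq\lceil\alpha\rceil$ and the complementary sign for $k\leq 0$, and then resums the two geometric series in $k$; the Poisson summation / Dirichlet kernel collapses the $\tau$-integral against $e^{z\cos\tau}$ to the value $z=\mu r_1 r_2$ at $\cos\tau=\cos(\theta_1-\theta_2)$, i.e.\ $e^{z\cos(\theta_1-\theta_2)}$, which combined with $\mu e^{-\mu(r_1^2+r_2^2)/2}$ and the prefactor reconstitutes exactly $e^{-|x-y|^2/4it}/(it)$ times the angular factor $A_\alpha(\theta_1,\theta_2)$ in \eqref{equ:A} — the three indicator functions appearing in $A_\alpha$ are precisely the bookkeeping of which $2\pi$-translate of $\theta_1-\theta_2$ falls in $(-\pi,\pi)$, together with the phase jumps $e^{\pm 2\pi i\alpha}$ produced by the fractional part of $\alpha$. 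For the exponential part, one sums $\sum_k e^{ik\phi}\sin(|k-\alpha|\pi)e^{-\tau|k-\alpha|}$ as two geometric series (for $k\geq 1$ and $k\leq 0$) with ratio $e^{i\phi-\tau}$ resp.\ $e^{-i\phi-\tau}$; the $k=0$ and the "boundary" terms give the $B_\alpha(s)$ piece (the $\alpha$-only exponential $\sin(|\alpha|\pi)e^{-|\alpha|s}$), while the two geometric tails sum to the rational expression in $\cosh s$ and $\cos\theta$ that is exactly $C_\alpha(s,\theta_1-\theta_2+\pi)$ after the substitution $\theta\mapsto\theta_1-\theta_2+\pi$; multiplying by $\mu e^{-\mu(r_1^2+r_2^2)/2}e^{-\mu r_1r_2\cosh\tau}$ and taking $\epsilon\to0$ (so $\mu\to 1/(2it)$) reproduces \eqref{D-term}.

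The main obstacle will be the careful resummation of the two split series and tracking the fractional-part conventions: one must justify interchanging $\sum_k$ with the $\tau$-integrals (uniform convergence after the regularization $\epsilon>0$, since $\Re\mu>0$ makes $|I_{|k-\alpha|}(\mu r_1r_2)|$ decay super-exponentially in $k$), handle the absolute value $|k-\alpha|$ which forces the split at $k=0$ vs.\ $k\geq1$ and introduces the $\sin(|\alpha|\pi)$ versus $\sin(\alpha\pi)$ distinction, and correctly identify the Dirichlet-kernel limit as a genuine evaluation (this is where the indicator functions $\mathbbm{1}_{[0,\pi]}$, $\mathbbm{1}_{[\pi,2\pi]}$, $\mathbbm{1}_{(-2\pi,-\pi]}$ in $A_\alpha$ come from, and where the $e^{\pm i2\pi\alpha}$ phases enter). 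Once the two series are in closed form, sending $\epsilon\searrow0$ and matching with \eqref{G-term}, \eqref{D-term}, \eqref{equ:A}, \eqref{equ:BC} is a direct computation; I would also remark that all the manipulations are rigorous because the dominated convergence in $\epsilon$ is controlled by the exponential decay $e^{-\epsilon\rho^2}$ in the original integral \eqref{S-nu}, so no distributional subtleties arise until the final limit, which is taken in the sense of tempered distributions (or pointwise for $t\neq0$ and $x\neq y$).
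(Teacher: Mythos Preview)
Your proposal is correct and follows essentially the same strategy as the paper's proof: use the Weber identity \eqref{equ:knukdef12sch} to express $S_{|k-\alpha|}$ via $I_{|k-\alpha|}$, split $I_\nu$ using the integral representation \eqref{m-bessel}, then resum the cosine part via Poisson summation (yielding the delta-function evaluations that produce $G_\alpha$ with its three angular sectors) and the exponential part via geometric series in $k$ (yielding $B_\alpha+C_\alpha$ and hence $D_\alpha$). One small remark: since cosine is even, $\cos(|k-\alpha|\tau)=\cos((k-\alpha)\tau)$ for all $k$, so no sign split is needed in the cosine part --- the paper simply writes $\cos(|k-\alpha|s)=\tfrac12(e^{i(k-\alpha)s}+e^{-i(k-\alpha)s})$ and applies Poisson summation directly.
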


\begin{proof}
  The two terms $G_{\alpha}$, $D_{\alpha}$ correspond to the
  two terms in the integral representation \eqref{m-bessel}.
  We deal with $G_{\alpha}$ first.
  Setting $z=\frac{r_1r_2}{2(\epsilon+it)}$, we must
  compute
  \begin{equation*}
    \frac{1}{2\pi^{2}}
    \sum_{k\in\Z} e^{ik(\theta_1-\theta_2)}
    \int_0^\pi e^{z\cos s} \cos(|k-\alpha| s) ds.
  \end{equation*}
  By the Poisson summation formula we have
  \begin{equation*}
  \begin{split}
  &\sum_{ k\in\Z} \cos(|k-\alpha|s) e^{ik(\theta_1-\theta_2)}=\sum_{k\in\Z} \frac{e^{i(k-\alpha)s}+e^{-i(k-\alpha)s}}2 e^{ik(\theta_1-\theta_2)}\\
  &=\frac12\sum_{j\in\Z}\big(e^{-i\alpha s}\delta(\theta_1-\theta_2+s+2\pi j)+e^{i\alpha s}\delta(\theta_1-\theta_2-s+2\pi j)\big).
  \end{split}
  \end{equation*} 
  Then the previous sum can be written
  \begin{equation*}
    =
    \frac{1}{4\pi^{2}}
    \sum_{j\in \mathbb{Z}}
    \int_{0}^{\pi}e^{z\cos s}
    \big[e^{-is\alpha}\delta(\theta_1-\theta_2+2j\pi+s)
       +e^{is\alpha}\delta(\theta_1-\theta_2+2j\pi-s)\big]ds
  \end{equation*}
  \begin{equation*}
    =\frac1{2\pi^2}
    \sum_{\{j\in\Z: 0\leq |\theta_1-\theta_2+2j\pi|\leq \pi\}} 
    e^{z\cos(\theta_1-\theta_2+2j\pi)}
    e^{i(\theta_1-\theta_2+2j\pi)\alpha}
  \end{equation*}
  \begin{equation*}
    =\frac1{2\pi^2}
    \begin{cases}
      e^{z\cos(\theta_1-\theta_2)} 
      e^{i(\theta_1-\theta_2)\alpha} &
      \text{if}\quad |\theta_1-\theta_2|< \pi,\\
      e^{z\cos(\theta_1-\theta_2)}
      e^{i(\theta_1-\theta_2-2\pi)\alpha}&
      \text{if}\quad \pi<\theta_1-\theta_2<2\pi,\\
      e^{z\cos(\theta_1-\theta_2)}
      e^{i(\theta_1-\theta_2+2\pi)\alpha}&
      \text{if}\quad -2\pi<\theta_1-\theta_2<-\pi,\\
      e^{-z}\big(e^{i\pi\alpha}+e^{-i\pi\alpha}\big)&
      \text{if}\quad |\theta_1-\theta_2|=\pi,
  \end{cases}
  \end{equation*}
  which gives $G_{\alpha}$.
  To obtain the term $D_\alpha$, we consider the second term 
  arising from \eqref{m-bessel}
  \begin{equation}\label{aim-D}
    \sum_{k\in\Z} 
    e^{ik(\theta_1-\theta_2)}
    \frac{\sin(|k-\alpha|\pi)}{\pi}
    \int_0^\infty e^{-z\cosh s} e^{-s\tilde{\nu}_k} ds.
  \end{equation}
  We note that
  \begin{equation*}
    \sin(\pi|k-\alpha|)=-\begin{cases}
    \cos(k\pi)\sin(\alpha\pi),
     &k\geq1;\\
    \sin(\alpha\pi),
    &k=0;\\
    -\cos(k\pi)\sin(\alpha\pi),
    &k\leq -1.
  \end{cases}
  \end{equation*}
  Hence we can write
  \begin{equation}\label{}
  \begin{split}
  &\sum_{k\in\Z}\sin(\pi|k-\alpha|)e^{-s|k-\alpha|}e^{ik(\theta_1-\theta_2)}\\
  =&-\sin(\alpha\pi)\sum_{k\geq 1} \frac{e^{ik\pi}+e^{-ik\pi}}2 e^{-s(k-\alpha)}e^{ik(\theta_1-\theta_2)}-\sin(\alpha\pi)e^{-\alpha s}\\&+\sin(\alpha\pi)\sum_{k\leq-1} \frac{e^{ik\pi}+e^{-ik\pi}}2 e^{s(k-\alpha)}e^{ik(\theta_1-\theta_2)}\\
  =&-\sin(\alpha\pi)e^{-\alpha s}-\frac{\sin(\alpha\pi)}2\Big(e^{s\alpha}\sum_{k\geq1} e^{-ks} \big(e^{ik(\theta_1-\theta_2+\pi)} +e^{ik(\theta_1-\theta_2-\pi)}\big)\\&\qquad\qquad\qquad- e^{-s\alpha} \sum_{k\geq 1} e^{-ks} \big(e^{ik(\theta_1-\theta_2+\pi)} +e^{ik(\theta_1-\theta_2-\pi)}\big)\Big).
  \end{split}
  \end{equation}
  Using the identity
  \begin{equation}
    \sum_{k=1}^\infty e^{ikz}=
    \frac{e^{iz}}{1-e^{iz}},\qquad \mathrm{Im} z>0,
  \end{equation}
  and $\alpha\in(1,1)$, we conclude
  \begin{align*}
  &-\sum_{k\in\Z}\sin(\pi|k-\alpha|)e^{-s|k-\alpha|}e^{ik(\theta_1-\theta_2)}\\\nonumber
  =&\sin(\alpha\pi)e^{-\alpha s}+\frac{\sin(\alpha\pi)}2\Big(\frac{e^{-(1+\alpha)s-i(\theta_1-\theta_2+\pi)}}
  {1-e^{-s-i(\theta_1-\theta_2+\pi)}}+\frac{e^{-(1+\alpha)s-i(\theta_1-\theta_2-\pi)}}{1-e^{-s-i(\theta_1-\theta_2-\pi)}}\\\nonumber
  &\qquad\qquad\qquad-\frac{e^{-(1-\alpha)s+i(\theta_1-\theta_2+\pi)}}{1-e^{-s+i(\theta_1-\theta_2+\pi)}}-
  \frac{e^{-(1-\alpha)s+i(\theta_1-\theta_2-\pi)}}{1-e^{-s+i(\theta_1-\theta_2-\pi)}}\Big)\\\nonumber
  =&\sin(|\alpha|\pi)e^{-|\alpha| s}+\sin(\alpha\pi)\frac{(e^{- s}-\cos(\theta_1-\theta_2+\pi))\sinh(\alpha s)
  -i\sin(\theta_1-\theta_2+\pi)\cosh(\alpha s)}{\cosh( s)-\cos(\theta_1-\theta_2+\pi)}\\
  =&B_{\alpha}(s)+C_{\alpha}(s,\theta_1-\theta_2+\pi).
  \end{align*}
  Plugging this into \eqref{aim-D} and letting 
  $\epsilon\to 0^+$, we obtain 
  $D_\alpha(t;r_1,\theta_1,r_2,\theta_2)$ 
  and \eqref{equ:Skernel2} is proved.
\end{proof}

\begin{proposition}\label{prop:dis-S}
  The kernel $\tilde{\bf K}_S$ of the propagator
  $e^{itL}$ satisfies the pointwise estimate
  \begin{equation}\label{est:dis-S1}
    |\tilde{\bf K}_S(t,r_1,\theta_1,r_2,\theta_2)|\le
    C|t|^{-1}
  \end{equation}
\end{proposition}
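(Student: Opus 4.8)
The plan is to read off the explicit structure from Proposition~\ref{prop:kerenel-S}. Since $\tilde{\bf K}_S$ is diagonal with both entries equal to the scalar kernel $G_\alpha+D_\alpha$, it suffices to prove the two scalar bounds $|G_\alpha|\le C|t|^{-1}$ and $|D_\alpha|\le C|t|^{-1}$ and add them. The term $G_\alpha$ is disposed of immediately: in \eqref{G-term} the exponent $-|x-y|^2/(4it)$ is purely imaginary, so $|e^{-|x-y|^2/(4it)}|=1$, and the angular factor $A_\alpha(\theta_1,\theta_2)$ in \eqref{equ:A} is bounded (with modulus $\le 1/(2\pi^{2})$, since the three indicator functions have disjoint interiors and the three phase factors are unimodular). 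Hence $|G_\alpha|\le C|t|^{-1}$.

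For $D_\alpha$ I would again use that the two exponentials $e^{-(r_1^2+r_2^2)/(4it)}$ and $e^{-(r_1r_2/2it)\cosh s}$ in \eqref{D-term} are unimodular, which gives
\[
  |D_\alpha|\le\frac{1}{4\pi^{2}|t|}\int_0^\infty\big(|B_\alpha(s)|+|C_\alpha(s,\theta_1-\theta_2+\pi)|\big)\,ds ,
\]
so that everything reduces to showing this $s$-integral is bounded uniformly with respect to the angular variable. The contribution of $B_\alpha(s)=\sin(|\alpha|\pi)e^{-|\alpha|s}$ is trivially $\sin(|\alpha|\pi)/|\alpha|$. For $C_\alpha$ I would split $\int_0^\infty=\int_0^1+\int_1^\infty$. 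On $[1,\infty)$ I would resum the geometric series exactly as in the proof of Lemma~\ref{lem:sumk} to rewrite $C_\alpha$ in the resolvent form $C_\alpha(s,\theta)=\sin(\alpha\pi)\big(e^{-(1+\alpha)s}e^{-i\theta}(1-e^{-s}e^{-i\theta})^{-1}-e^{-(1-\alpha)s}e^{i\theta}(1-e^{-s}e^{i\theta})^{-1}\big)$; since $|1-e^{-s}e^{\pm i\theta}|\ge 1-e^{-s}\ge 1-e^{-1}$ for $s\ge1$, this yields $|C_\alpha(s,\theta)|\lesssim e^{-(1-\alpha)s}$, which is integrable over $[1,\infty)$ because $\alpha<1$.

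The delicate part is $\int_0^1|C_\alpha(s,\theta)|\,ds$. Here $C_\alpha$ is \emph{not} bounded uniformly in $\theta$: from \eqref{equ:BC} one has $C_\alpha(0,\theta)=-i\sin(\alpha\pi)\cot(\theta/2)$, which blows up as $\theta\to0$, so one cannot pull $\sup_\theta$ inside the $s$-integral and must integrate in $s$ first. Writing the denominator as $\cosh s-\cos\theta=2\sinh^2(s/2)+2\sin^2(\theta/2)$ and bounding the numerator of $C_\alpha$ by $(1-e^{-s})\sinh(\alpha s)+(1-\cos\theta)\sinh(\alpha s)+|\sin\theta|\cosh(\alpha s)$, I would estimate the three resulting quotients separately. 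Using $1-e^{-s}=2e^{-s/2}\sinh(s/2)$ and $\sinh(\alpha s)\le\alpha\sinh s$, the first quotient is $\le\alpha(1+e^{-s})\le2\alpha$; using $\cosh s-\cos\theta\ge2\sin^2(\theta/2)$, the second is $\le\sinh(\alpha s)\le\sinh1$ on $[0,1]$; both are bounded uniformly and hence integrable over $[0,1]$. For the third quotient, set $b=|\sin(\theta/2)|$ and use $|\sin\theta|\le2b$ together with $\sinh(s/2)\ge s/2$: the crude pointwise estimate $\tfrac{b}{a^2+b^2}\le\tfrac1{2a}$ is not integrable near $s=0$, so instead one keeps $\tfrac{b}{(s/2)^2+b^2}$ and computes $\int_0^1\tfrac{b\,ds}{(s/2)^2+b^2}=2\arctan(1/2b)\le\pi$ uniformly in $b$ (the harmless factor $\cosh(\alpha s)\le\cosh1$ being absorbed). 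Summing the three contributions gives $\int_0^1|C_\alpha(s,\theta)|\,ds\le C$ uniformly in $\theta$, hence $|D_\alpha|\le C|t|^{-1}$, and combining with the bound for $G_\alpha$ proves \eqref{est:dis-S1}.

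The only genuine obstacle is the estimate of $\int_0^1|C_\alpha(s,\theta)|\,ds$ near $(s,\theta)=(0,0)$: because $C_\alpha$ truly blows up there pointwise, one must exploit the (logarithmic-type) integrability in $s$ of this singularity, uniformly in $\theta$ — via the $\arctan$ computation above — rather than seeking a uniform pointwise bound on $C_\alpha$. Everything else is routine.
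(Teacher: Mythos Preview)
Your proof is correct and follows the same route as the paper: reduce to $|G_\alpha|,|D_\alpha|\lesssim|t|^{-1}$ via unimodularity of the oscillatory factors, then to uniform $L^1_s$ bounds on $B_\alpha$ and $C_\alpha$. The only difference is that the paper outsources these integral bounds to \cite[(3.20)--(3.21)]{FZZ}, whereas you supply a self-contained argument --- and your $\arctan$ computation handling the singularity at $(s,\theta)=(0,0)$ is precisely the crux of those cited estimates.
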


\begin{proof}
  By Proposition \ref{prop:kerenel-S}, 
  we must only prove that 
  $|G_{\alpha}|+|D_{\alpha}|\lesssim|t|^{-1}$.
  The bound for $G_{\alpha}$ follows immediately from
  \eqref{G-term}.
  To control $D_\alpha$, from \eqref{D-term}, we must prove
  \begin{equation}
    \int_0^\infty  e^{-|\alpha|s} ds \lesssim 1,
  \end{equation}
  \begin{equation}
  \int_0^\infty  
  \Big|\frac{(e^{- s}-\cos(\theta_1-\theta_2+\pi))
    \sinh(\alpha s) }{\cosh( s)-\cos(\theta_1-\theta_2+\pi)}
    \Big| ds \lesssim 1,
  \end{equation}
  and
  \begin{equation}
  \int_0^\infty  \Big|\frac{\sin(\theta_1-\theta_2+\pi)\cosh(\alpha s)}{\cosh( s)-\cos(\theta_1-\theta_2+\pi)}\Big| ds \lesssim 1,
  \end{equation}
  all of which are proved in \cite[(3.20)-(3.21)]{FZZ}.
\end{proof}

\subsection{The modified heat propagator} 

We next compute the heat kernel of $L$, that is the
integral kernel $\tilde{\bf K}_H$ of $e^{-tL}$.
We have the following result, where as usual we use
polar coordinates
$x=r_1(\cos\theta_1,\sin\theta_1)$, 
$y=r_2(\cos\theta_2,\sin\theta_2)$.
We omit the proof since the
proof is the same as for Proposition \ref{prop:kerenel-S},
with minor modifications.

\begin{proposition}\label{prop:kerenel-H} 
  Let $\alpha\in(0,1)$. Then 
  the integral kernel $\tilde{\bf K}_H$ of $e^{-tL}$
  can be written
   \begin{equation}\label{bf-KH1}
    e^{-tL}(x,y)=
    \tilde{\bf K}_H(t,r_1,\theta_1,r_2,\theta_2)= \left(\begin{matrix} G^H_{\alpha}+D^H_{\alpha}& 0 \\0 & G^H_{\alpha}+D^H_{\alpha}\end{matrix}\right)
  \end{equation}
  where
 \begin{equation}\label{G-H-term}
    G^H_\alpha(t;r_1,\theta_1,r_2,\theta_2) =
    \frac{e^{-\frac{|x-y|^2}{4t}} }{t}
    A_{\alpha}(\theta_1,\theta_2),
  \end{equation}
\begin{equation}\label{D-H-term}
  D^H_{\alpha}(t;r_1,\theta_1,r_2,\theta_2)
  =-\frac{1}{4\pi^2}\frac{e^{-\frac{r_1^2+r_2^2}{4t}} }{t}  
  \int_0^\infty e^{-\frac{r_1r_2}{2t}\cosh s} 
  \Big(B_{\alpha}(s)+
  C_{\alpha}(s,\theta_1-\theta_2+\pi)\Big) ds,
\end{equation}
ans $A_{\alpha}$, $B_{\alpha}$ and $C_{\alpha}$ 
are defined by \eqref{equ:A},\eqref{equ:BC} respectively.
\end{proposition}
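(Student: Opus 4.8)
The plan is to obtain the heat kernel of $L=I_2\,H$ by repeating the computation of Proposition \ref{prop:kerenel-S} verbatim, with the oscillatory factor $e^{it\rho^2}$ replaced by the decaying factor $e^{-t\rho^2}$; since $L$ is block-diagonal, it suffices to compute the scalar kernels $S^H_\nu$ of $e^{-tH_\nu}$ and then reassemble using the harmonic decomposition \eqref{eq:spDir} and the modified transforms $\widetilde{\mathcal{P}}_k$ of Section \ref{sec:proofdisp}.

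First I would expand $\tilde{\bf K}_H$ in spherical harmonics exactly as in \eqref{equ:ktxyschr}, so that $\tilde{\bf K}_{H,k}=\mathrm{diag}(S^H_{|k-\alpha|},S^H_{|k-\alpha+1|})$ where, by \eqref{eq:hanHk} (equivalently \eqref{eq:Lka}),
\begin{equation*}
  S^H_\nu(t,r_1,r_2)=\int_0^\infty e^{-t\rho^2}J_\nu(r_1\rho)J_\nu(r_2\rho)\,\rho\,d\rho.
\end{equation*}
Next I would apply the Weber--Schafheitlin (Weber) second exponential integral identity, which for $t>0$ holds directly without the analytic-continuation step needed in the Schr\"odinger case: $S^H_\nu(t,r_1,r_2)=\frac{1}{2t}e^{-\frac{r_1^2+r_2^2}{4t}}I_\nu\!\big(\frac{r_1r_2}{2t}\big)$. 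Then I would insert the integral representation \eqref{m-bessel} of $I_\nu$ and sum over $k\in\mathbb{Z}$, with $\nu=|k-\alpha|$, using the Poisson summation formula precisely as in Lemma \ref{lem:sumk}: the first term of \eqref{m-bessel} produces $G^H_\alpha$ with the same angular factor $A_\alpha(\theta_1,\theta_2)$ from \eqref{equ:A}, and the second (the $\sin(\nu\pi)$ term) produces $D^H_\alpha$ with the same functions $B_\alpha$ and $C_\alpha$ from \eqref{equ:BC}. The only change relative to the Schr\"odinger case is the prefactor: $\frac{1}{it}$ is replaced by $\frac1t$ and $e^{-\frac{|x-y|^2}{4it}}$ by $e^{-\frac{|x-y|^2}{4t}}$ (similarly in $D_\alpha$), because there is no $\epsilon+it$ regularization needed.

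The one point requiring a little care — and the only real obstacle — is the interchange of summation over $k$ and integration against the Poisson kernel: in the Schr\"odinger case this was justified by the regularizing parameter $\epsilon>0$ and a limit $\epsilon\to0^+$, whereas here I would justify it directly from the rapid decay of $I_\nu(z)\sim \frac{z^\nu}{2^\nu\Gamma(\nu+1)}$ for small $z$ together with $e^{-t\rho^2}$-smoothing, or equivalently by inserting a harmless $e^{-\epsilon\rho^2}$ factor and passing to the limit exactly as before (this is the ``minor modification'' alluded to in the statement). Once this is in place, the series converges absolutely and the same rearrangement that proved \eqref{equ:Skernel2} gives \eqref{equ:Skernel2}$'$ with $G_\alpha,D_\alpha$ replaced by $G^H_\alpha,D^H_\alpha$; reassembling the two diagonal blocks yields \eqref{bf-KH1}--\eqref{D-H-term}, which completes the proof.
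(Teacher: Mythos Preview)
Your proposal is correct and follows exactly the approach the paper itself takes: the paper omits the proof, noting that it is the same as for Proposition~\ref{prop:kerenel-S} with the obvious replacement of $it$ by $t$, and your outline reproduces precisely that argument. The remark about justifying the summation directly (or via an $e^{-\epsilon\rho^2}$ regularization) is the ``minor modification'' the paper alludes to.
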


Similarly as in Proposition \ref{prop:dis-S}, 
we obtain the following pointwise bound:

\begin{proposition}[Heat kernel estimates]
  The heat kernel $e^{-tL}$ satisfies the estimate
  \begin{equation}\label{est:heat1}
    |e^{-tL}(x,y)|=
    \big|  \tilde{\bf K}_H(t,r_1,\theta_1,r_2,\theta_2)\big|\leq 
     C t^{-1}e^{-\frac{|x-y|^2}{4t}},\quad t>0.
  \end{equation}
\end{proposition}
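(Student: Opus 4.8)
The plan is to follow exactly the same strategy that was used for the dispersive (oscillatory) kernel $\tilde{\bf K}_S$ in Proposition \ref{prop:dis-S}, simply replacing the Schr\"odinger phase $e^{-\frac{|x-y|^2}{4it}}$ by the Gaussian weight $e^{-\frac{|x-y|^2}{4t}}$ and using the analogous formula from Proposition \ref{prop:kerenel-H}. By that proposition, $\tilde{\bf K}_H$ is a diagonal matrix with both entries equal to $G^H_\alpha+D^H_\alpha$, so it suffices to prove
\begin{equation*}
  |G^H_\alpha(t;r_1,\theta_1,r_2,\theta_2)|+|D^H_\alpha(t;r_1,\theta_1,r_2,\theta_2)|
  \lesssim t^{-1}e^{-\frac{|x-y|^2}{4t}},\qquad t>0.
\end{equation*}
First I would handle $G^H_\alpha$: from \eqref{G-H-term}, $G^H_\alpha = t^{-1}e^{-|x-y|^2/(4t)}A_\alpha(\theta_1,\theta_2)$, and since $|A_\alpha(\theta_1,\theta_2)|\le \frac1{4\pi^2}\cdot 3$ is uniformly bounded (it is a unit-modulus phase times a sum of at most one indicator), this term is immediately bounded by $C t^{-1}e^{-|x-y|^2/(4t)}$.

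The main work is the term $D^H_\alpha$. From \eqref{D-H-term} we have
\begin{equation*}
  |D^H_\alpha|\le \frac{1}{4\pi^2}\,\frac{e^{-\frac{r_1^2+r_2^2}{4t}}}{t}
  \int_0^\infty e^{-\frac{r_1r_2}{2t}\cosh s}\,
  \big(|B_\alpha(s)|+|C_\alpha(s,\theta_1-\theta_2+\pi)|\big)\,ds.
\end{equation*}
The plan is to bound the kernel $|B_\alpha(s)|+|C_\alpha(s,\theta)|$ in $s$ and exploit the fast decay $e^{-\frac{r_1r_2}{2t}\cosh s}\le e^{-\frac{r_1r_2}{2t}}e^{-\frac{r_1r_2}{2t}(\cosh s-1)}$; combined with $e^{-\frac{r_1^2+r_2^2}{4t}}e^{-\frac{r_1r_2}{2t}}=e^{-\frac{(r_1+r_2)^2}{4t}}\le e^{-\frac{|x-y|^2}{4t}}$ this will give the Gaussian factor, and the remaining $s$-integral must be shown to be $O(1)$ uniformly in the angle. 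Concretely I would invoke the estimates already established in \cite[(3.20)-(3.21)]{FZZ}, which are precisely the three bounds
\begin{equation*}
  \int_0^\infty e^{-|\alpha|s}\,ds\lesssim1,\quad
  \int_0^\infty\Big|\frac{(e^{-s}-\cos\phi)\sinh(\alpha s)}{\cosh s-\cos\phi}\Big|\,ds\lesssim1,\quad
  \int_0^\infty\Big|\frac{\sin\phi\,\cosh(\alpha s)}{\cosh s-\cos\phi}\Big|\,ds\lesssim1,
\end{equation*}
uniformly in $\phi=\theta_1-\theta_2+\pi$; these are exactly the same integrals appearing in the proof of Proposition \ref{prop:dis-S}, because $B_\alpha$ and $C_\alpha$ are identical in the heat and Schr\"odinger cases. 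Thus the $s$-integral in $D^H_\alpha$ is bounded by a constant depending only on $\alpha$, independent of $r_1,r_2,t,\theta_1,\theta_2$.

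Putting these pieces together: $|G^H_\alpha|\lesssim t^{-1}e^{-|x-y|^2/(4t)}$ directly, and
\begin{equation*}
  |D^H_\alpha|\lesssim \frac{e^{-\frac{r_1^2+r_2^2}{4t}}}{t}\,e^{-\frac{r_1r_2}{2t}}\int_0^\infty e^{-\frac{r_1r_2}{2t}(\cosh s-1)}(|B_\alpha|+|C_\alpha|)\,ds
  \lesssim \frac{e^{-\frac{(r_1+r_2)^2}{4t}}}{t}\lesssim \frac{e^{-\frac{|x-y|^2}{4t}}}{t},
\end{equation*}
where we used $\cosh s\ge1$ to drop the extra exponential factor and then the stated uniform bound on the $s$-integral. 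This yields \eqref{est:heat1}. The only mildly delicate point — the ``main obstacle'' — is verifying that the three angular integrals are genuinely bounded uniformly in $\phi$ near the singular value $\phi=0$ (i.e.\ $\theta_1-\theta_2=-\pi$), where $\cosh s-\cos\phi$ can be small for small $s$; but this is precisely what is settled in \cite{FZZ}, and the factor $\sinh(\alpha s)$ (resp.\ $\sin\phi$) in the numerator cancels the apparent singularity, so no new argument is needed and one may cite \cite[(3.20)-(3.21)]{FZZ} verbatim, exactly as was done in the proof of Proposition \ref{prop:dis-S}.
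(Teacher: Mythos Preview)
Your proof is correct and follows essentially the same approach as the paper, which simply states that the bound follows ``similarly as in Proposition \ref{prop:dis-S}'' without further detail. You have in fact supplied more than the paper does: the key additional observation needed here (and not in the Schr\"odinger case) is the extraction of the Gaussian factor via $e^{-\frac{r_1^2+r_2^2}{4t}}e^{-\frac{r_1r_2}{2t}\cosh s}\le e^{-\frac{(r_1+r_2)^2}{4t}}\le e^{-\frac{|x-y|^2}{4t}}$, which you handle correctly.
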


\subsection{Bernstein inequalities and square function estimates }\label{sec:BS}

We prove next the Bernstein and square function 
inequalities associated with the heat kernel $e^{-tL}$.

\begin{proposition}[Bernstein inequalities]
  Let $\varphi(\lambda)$ be a $C^\infty_c$ function on 
  $\R$  with support in $[\frac{1}{2},2]$.
  Then for all $f\in L^q(\mathbb{R}^2)$, $j\in\mathbb{Z}$
  we have
  \begin{equation}\label{est:Bern1}
    \|\varphi(2^{-j}\sqrt{L})f\|_{[L^p(\mathbb{R}^2)]^2}
    \lesssim
    2^{2j\big(\frac{1}{q}-\frac{1}{p}\big)}
    \|f\|_{[L^q(\mathbb{R}^2)]^2},
    \qquad 1\leq q\leq p\leq\infty.
  \end{equation}
\end{proposition}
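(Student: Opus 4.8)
The plan is to deduce \eqref{est:Bern1} from the Gaussian heat kernel bound \eqref{est:heat1} by the standard combination of heat smoothing and a uniform smooth spectral multiplier estimate; since $L=I_2\,H$ acts diagonally on $\C^2$-valued functions, everything reduces to the scalar Friedrichs operator $H$.

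First I would record the $L^q$--$L^p$ smoothing of the heat semigroup. By \eqref{est:heat1} and the generalized Young (Schur) inequality, with the exponent $r$ determined by $1+\tfrac1p=\tfrac1r+\tfrac1q$,
$$
\sup_{x}\|\tilde{\bf K}_H(t,x,\cdot)\|_{L^r(\R^2)}+\sup_{y}\|\tilde{\bf K}_H(t,\cdot,y)\|_{L^r(\R^2)}\lesssim t^{-1}\Big(\int_{\R^2}e^{-r|z|^2/(4t)}\,dz\Big)^{1/r}\lesssim t^{-1+\frac1r}=t^{-(\frac1q-\frac1p)},
$$
so that $\|e^{-tL}\|_{[L^q(\R^2)]^2\to[L^p(\R^2)]^2}\lesssim t^{-(\frac1q-\frac1p)}$ for $1\le q\le p\le\infty$ and $t>0$. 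Next, since $\varphi$ is supported in $[\tfrac12,2]$, the function $\psi(\lambda):=\varphi(\lambda)e^{\lambda^2}$ is again $C_c^\infty([\tfrac12,2])$, and by the spectral theorem (using $(2^{-j}\sqrt L)^2=2^{-2j}L$)
$$
\varphi(2^{-j}\sqrt L)=\psi(2^{-j}\sqrt L)\,e^{-2^{-2j}L}.
$$
Applying the heat bound with $t=2^{-2j}$, the factor $e^{-2^{-2j}L}$ maps $[L^q]^2\to[L^p]^2$ with norm $\lesssim 2^{2j(\frac1q-\frac1p)}$, which is exactly the gain appearing in \eqref{est:Bern1}. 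It then remains only to prove that $\psi(2^{-j}\sqrt L)$ is bounded on $[L^p(\R^2)]^2$ uniformly in $j\in\Z$, for all $1\le p\le\infty$; by interpolation it suffices to treat $p=1$ and $p=\infty$.

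For this uniform multiplier bound I would use that, by \eqref{est:heat1}, $L$ satisfies the Davies--Gaffney estimate, so that the wave propagator $\cos(t\sqrt L)$ has finite speed of propagation (its kernel is supported in $\{|x-y|\le|t|\}$). Writing $\psi(2^{-j}\sqrt L)$ through the Fourier inversion formula as an integral of $\cos(2^{-j}t\sqrt L)$ (respectively $\tfrac{\sin(2^{-j}t\sqrt L)}{2^{-j}\sqrt L}$ for the odd part) against $\widehat\psi$, and combining finite speed of propagation---which localizes the $t$-integration to $|t|\gtrsim 2^{j}|x-y|$---with the rapid decay of $\widehat\psi$ and the $L^2$/Gaussian bounds for $\cos(s\sqrt L)$ on balls, one obtains for the kernel $K_j$ of $\psi(2^{-j}\sqrt L)$ the off-diagonal estimate
$$
|K_j(x,y)|\lesssim_N 2^{2j}\big(1+2^j|x-y|\big)^{-N},\qquad N\ge1.
$$
Hence $\sup_x\|K_j(x,\cdot)\|_{L^1}+\sup_y\|K_j(\cdot,y)\|_{L^1}\lesssim 1$ uniformly in $j$, which gives the uniform $[L^1]^2$ and $[L^\infty]^2$ bounds and therefore, by interpolation, the uniform $[L^p]^2$ bound; combined with the heat smoothing estimate this yields \eqref{est:Bern1}. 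Alternatively, one may bypass the finite-speed-of-propagation argument by continuing the explicit heat kernel of Proposition \ref{prop:kerenel-H} to complex times $z$ in a sector $|\arg z|\le\pi/4$, obtaining $|e^{-zL}(x,y)|\lesssim |z|^{-1}e^{-c|x-y|^2/|z|}$, and then representing $\varphi(2^{-j}\sqrt L)$ by a contour integral of the analytic heat semigroup. The main obstacle is precisely this off-diagonal kernel estimate for $\psi(2^{-j}\sqrt L)$: it is the only point requiring genuine input beyond \eqref{est:heat1}---namely the Davies--Gaffney/finite-speed-of-propagation machinery (or complex-time Gaussian bounds)---whereas every other step is a routine application of Young's inequality and the spectral calculus.
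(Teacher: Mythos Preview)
Your proposal is correct, and in fact your \emph{alternative} route---complex-time Gaussian bounds via analytic continuation of Proposition~\ref{prop:kerenel-H}---is essentially what the paper does. The paper sets $\psi_e(x):=\varphi(\sqrt x)e^{2x}\in C_c^\infty$ and writes, by Fourier inversion,
\[
\varphi(2^{-j}\sqrt L)=e^{-2^{-2j}L}\int_\R e^{-(1-i\xi)2^{-2j}L}\,\hat\psi_e(\xi)\,d\xi,
\]
i.e.\ a superposition of complex-time heat propagators along the horizontal line $\mathrm{Re}\,z=2\cdot 2^{-2j}$. Plugging in the explicit heat kernel (which extends to $\mathrm{Re}\,t>0$), convolving the two resulting Gaussians, and using the rapid decay of $\hat\psi_e$ gives directly the pointwise bound $|K_j(x,y)|\lesssim_N 2^{2j}(1+2^j|x-y|)^{-N}$, from which \eqref{est:Bern1} follows by Young's inequality. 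In particular the paper does not split off $e^{-2^{-2j}L}$ for the $L^q\to L^p$ smoothing and then prove a separate uniform $L^p$ multiplier bound as you do; it obtains the full kernel estimate in one stroke.

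Your main route via Davies--Gaffney $\Rightarrow$ finite speed of propagation for $\cos(t\sqrt L)$ is a legitimate and standard alternative. It has the advantage of using only the abstract Gaussian upper bound \eqref{est:heat1} rather than the explicit formula of Proposition~\ref{prop:kerenel-H}, so it would transfer to settings where no closed-form heat kernel is available; on the other hand it imports more machinery than is needed here, since the explicit expression makes the complex-time extension immediate. Either way the crux is the same off-diagonal estimate for $K_j$, and you correctly identify this as the only step requiring genuine input beyond Young's inequality and spectral calculus.
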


\begin{proof}
  Let  $\psi(x)=\varphi(\sqrt{x})$ and $\psi_e(x):=\psi(x)e^{2x}$. 
  Then $\psi_e$  is $C^\infty_c$ with support in 
  $[\frac{1}{4},4]$ and its Fourier transform $\hat{\psi}_e$ 
  is of Schwartz class. We can write
  \begin{equation*}
    \varphi(\sqrt{x})=\psi(x)= e^{-2x}\psi_{e}(x)=
    e^{-2x}\int_{\R} e^{i x \cdot\xi} \hat{\psi}_e(\xi)\, d\xi
    =e^{-x}\int_{\R} e^{-x(1-i\xi)} \hat{\psi}_e(\xi)\, d\xi.
  \end{equation*}
  By the functional calculus we get
  \begin{equation*}
    \varphi(\sqrt{L})=\psi(L)=
    e^{-2^{j}L}\int_{\R} e^{-(1-i\xi)L} \hat{\psi}_e(\xi)\, d\xi
  \end{equation*}
  and
  \begin{equation*}
    \varphi(2^{-j}\sqrt{L})= \psi(2^{-2j}L)=
    e^{-2^jL}\int_{\R} e^{-(1-i\xi)2^{-2j}L} 
    \hat{\psi}_e(\xi)\, d\xi.
  \end{equation*}
  Applying \eqref{est:heat1} with $t=2^{-2j}$, we obtain
  for all $N>1$
  \begin{align*}
    \Big|\varphi(2^{-j}\sqrt{L})(x,y)\Big| &
    \lesssim 
    2^{4j}\int_{\R^2}e^{-\frac{2^{2j}|x-z|^2}{C}}
      e^{-\frac{2^{2j}|y-z|^2}{C}}\, dz \int_{\R} \hat{\psi}_e(\xi)\, d\xi\\
    & \lesssim 2^{2j}\int_{\R^2}e^{-\frac{|2^jx-z|^2}{C}}
    e^{-\frac{|2^jy-z|^2}{C}}\, dz \lesssim 2^{2j}(1+2^j|x-y|)^{-N},
  \end{align*}
  where we used the inequality 
  $|x-z|^2+|y-z|^2\geq \frac12|x-y|^2$ with $x,y\in\R^2$. 
  Then by Young's inequality we deduce \eqref{est:Bern1}.
\end{proof}

\begin{proposition}[Square function inequality]\label{prop:squarefun} 
  Let $\varphi\in C_c^\infty$, with support in $[1/2,1]$
  and such that $0\leq\varphi\leq 1$ and
  \begin{equation}\label{LP-dp}
    \sum_{j\in\Z}\varphi(2^{-j}\lambda)=1
    \qquad\text{where}\qquad 
    \varphi_j(\lambda):=\varphi(2^{-j}\lambda).
  \end{equation}
  Then for all $1<p<\infty$
  there exist constants $c_p$ and $C_p$ depending on $p$ such that
  \begin{equation}\label{square1}
  c_p\|f\|_{[L^p(\R^2)]^2}\leq
  \Big\|\Big(\sum_{j\in\Z}|\varphi_j(\sqrt{L})f|^2\Big)^{\frac12}\Big\|_{[L^p(\R^2)]^2}\leq
  C_p\|f\|_{[L^p(\R^2)]^2}.
  \end{equation}
\end{proposition}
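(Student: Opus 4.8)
The plan is to prove the square function estimate \eqref{square1} by the standard route: since we have already established the Gaussian upper bound \eqref{est:heat1} for the heat kernel $e^{-tL}$, the operator $L$ fits into the framework of nonnegative self-adjoint operators whose heat semigroup satisfies Gaussian estimates on a doubling space (here $\mathbb{R}^2$ with Lebesgue measure, which is trivially doubling). For such operators the Littlewood--Paley square function is bounded on every $L^p$, $1<p<\infty$, and this is bounded below as well by the usual duality/polarization argument. Thus the heart of the matter is to invoke the abstract machinery correctly rather than to reprove it.

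First I would reduce the matrix statement to a scalar one: since $L = I_2\, H$ acts diagonally, $\varphi_j(\sqrt L)$ acts componentwise, so it suffices to prove $c_p\|g\|_{L^p(\mathbb{R}^2)}\le \|(\sum_j |\varphi_j(\sqrt H)g|^2)^{1/2}\|_{L^p(\mathbb{R}^2)}\le C_p\|g\|_{L^p(\mathbb{R}^2)}$ for scalar $g$, where $H$ is the Friedrichs AB Schr\"odinger operator. Next I would recall the Rademacher function trick (as in Stein, \emph{Topics in Harmonic Analysis}, Appendix D, or the version used later in this very paper for the radial component): the square function bound follows once one knows that for any choice of signs $\epsilon_j\in\{\pm1\}$ the multiplier operator $m_\epsilon(\sqrt H) = \sum_j \epsilon_j\varphi_j(\sqrt H)$ is bounded on $L^p$ uniformly in $\epsilon$. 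This in turn follows from a Mikhlin--H\"ormander type spectral multiplier theorem for $H$. The relevant such theorem (see e.g.\ Duong--Ouhabaz--Sikora, or the Hebisch--Blunck framework) states: if $e^{-tH}$ has a kernel satisfying $|e^{-tH}(x,y)|\le Ct^{-1}e^{-|x-y|^2/(ct)}$ on $\mathbb{R}^2$, then any bounded function $m$ on $[0,\infty)$ with $\sup_{t>0}\|\eta\, m(t\cdot)\|_{C^s} < \infty$ for some $s>1$ (here $s>\dim/2 = 1$ suffices, with a smooth cutoff $\eta$ supported away from $0$) defines an operator $m(\sqrt H)$ bounded on $L^p$ for all $1<p<\infty$ and of weak type $(1,1)$. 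The functions $m_\epsilon(\lambda)=\sum_j\epsilon_j\varphi(2^{-j}\lambda)$ manifestly satisfy this uniform H\"ormander condition because $\varphi$ is a fixed $C_c^\infty$ bump and the dyadic pieces have disjoint-ish supports, so the $C^s$ norms of $m_\epsilon(t\cdot)$ are bounded independently of $\epsilon$ and $t$.

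Concretely, the steps in order are: (i) reduce to scalar $H$ and observe \eqref{est:heat1} gives the Gaussian bound needed; (ii) quote the spectral multiplier theorem valid under Gaussian heat kernel bounds on a doubling space, to get $\|m(\sqrt H)\|_{L^p\to L^p}\lesssim \sup_t\|\eta\,m(t\cdot)\|_{C^s}$ for $s>1$, $1<p<\infty$; (iii) apply it to $m_\epsilon$ with uniform bound; (iv) integrate over $\epsilon$ against Rademacher functions and use Khintchine's inequality to pass from $\mathbb{E}_\epsilon\|m_\epsilon(\sqrt H)f\|_{L^p}^p$ to $\|(\sum_j|\varphi_j(\sqrt H)f|^2)^{1/2}\|_{L^p}^p$, yielding the upper bound in \eqref{square1}; (v) obtain the lower bound by the standard duality argument: writing $f = \sum_j \tilde\varphi_j(\sqrt H)\varphi_j(\sqrt H)f$ with a fattened cutoff $\tilde\varphi$, pairing against $g\in L^{p'}$, applying Cauchy--Schwarz in $j$ pointwise, H\"older in $x$, and the already-proven upper bound for the $\tilde\varphi$-square function on $L^{p'}$.

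The main obstacle is not conceptual but one of citation hygiene: the paper must point to a precise spectral multiplier / Littlewood--Paley theorem whose hypotheses are exactly ``nonnegative self-adjoint operator on $L^2(\mathbb{R}^2)$ with a heat kernel obeying a Gaussian upper bound,'' since $H$ is a singular perturbation and one cannot simply invoke the classical Euclidean Littlewood--Paley theory. Provided the reference (the Hebisch/Duong--Ouhabaz--Sikora line of results, or the book of Ouhabaz) is cited, all the remaining arguments are routine. I would also remark that the same scheme underlies the proof of \eqref{est:Bern1} just given and that for the singular component $P_0$ such an abstract result is \emph{not} available, which is precisely why Proposition \ref{prop:squarefun0} had to be proved by hand with the restricted range $q'(\alpha)<p<q(\alpha)$.
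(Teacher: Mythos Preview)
Your proposal is correct and follows essentially the same approach as the paper: both derive \eqref{square1} directly from the Gaussian heat kernel upper bound \eqref{est:heat1} by invoking the abstract Littlewood--Paley/spectral multiplier machinery valid for nonnegative self-adjoint operators with Gaussian heat kernels on doubling spaces. The paper simply cites Alexopoulos \cite{Alex} in one line, whereas you spell out the standard reduction (Rademacher/Khintchine for the upper bound, duality for the lower bound) and point to the Duong--Ouhabaz--Sikora framework, but the underlying argument is identical.
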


\begin{proof}
  Estimate \eqref{square1} is a consequence of the Gaussian
  heat kernel upper bound proved in \eqref{est:heat1};
  see \cite{Alex} for a general approach to such estimates.
\end{proof}

\subsection{Proof of \eqref{est:dis1-1}}

We prove the decay estimate \eqref{est:dis1-1}.
We distinguish the two cases $2^jt\leq 1$ and $2^jt\geq 1$.

\underline{{\bf Case 1: $2^jt\leq 1$.}} 
It is sufficient to prove that
\begin{equation}\label{est:dis1-1'}
  \Big\|\varphi(2^{-j}\sqrt{L})
  e^{it\sqrt{L}}f(x)\Big\|_{[L^\infty(\R^2)]^2}\leq 
  C2^{2j} \|f\|_{[L^1{(\R^2)]^2}}.
\end{equation}
Indeed, by the Bernstein inequality \eqref{est:Bern1} and the boundedness of $e^{it\sqrt{L}}$ on $L^{2}$, we obtain
\begin{equation}
\begin{split}
  \Big\|\varphi(2^{-j}\sqrt{L})e^{it\sqrt{L}}f(x)&
    \Big\|_{[L^\infty(\R^2)]^2}\leq 
    C 2^{j}\Big\|e^{it\sqrt{L}}\varphi(2^{-j}
      \sqrt{L})f(x)\Big\|_{[L^2(\R^2)]^2}\\
  &\leq C 2^{j}\Big\|\varphi(2^{-j}\sqrt{L})f(x)
    \Big\|_{[L^2(\R^2)]^2}
  \leq C2^{2j} \|f\|_{[L^1{(\R^2)]^2}}.
\end{split}
\end{equation}

\underline{ {\bf Case 2: $2^jt\geq 1$.}}
This is slightly more complicated. We first recall the following Proposition about the subordination formula from \cite{MS, DPR}.

\begin{proposition}\label{prop:sub0}
  If $\beta(\lambda)\in C_c^\infty(\mathbb{R})$ is supported in $[\frac{1}{2},2]$, then for all $j\in\Z$ and
  $t, x>0$ with $2^jt\geq 1$, we can write
  \begin{equation}\label{key}
    \beta(2^{-j}\lambda)e^{it\lambda}=
    \beta(2^{-j}\lambda)\rho\big(\frac{t\lambda^2}{2^j}, 2^jt\big)
    +\beta(2^{-j}\lambda)\big(2^jt\big)^{\frac12}
    \int_0^\infty \chi(s,2^jt)
    e^{\frac{i2^jt}{4s}}e^{i2^{-j}ts\lambda^2}\,ds,
  \end{equation}
  where $\rho(s,\tau)\in\mathcal{S}(\mathbb{R}\times\mathbb{R})$ is a Schwartz function and 
  $\chi\in C^\infty(\mathbb{R}\times\mathbb{R})$ with $\text{supp}\,\chi(\cdot,\tau)\subseteq[\frac{1}{16},4]$ such that
  \begin{equation}\label{est:chi}
  \sup_{\tau\in\R}\big|\partial_s^\alpha\partial_\tau^\beta \chi(s,\tau)\big|\lesssim_{\alpha,\beta}(1+|s|)^{-\alpha},\quad \forall \alpha,\beta\geq0.
  \end{equation}
\end{proposition}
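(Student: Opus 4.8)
The plan is to follow the now-standard subordination argument (as in \cite{MS, DPR}), adapting it to the frequency regime $2^{j}t\ge1$ by peeling off a Schwartz remainder. First I would start from the elementary identity, valid for $\lambda>0$,
\begin{equation*}
  e^{it\lambda}=c\,(t)^{-1/2}\int_{0}^{\infty}s^{-1/2}e^{i\frac{t}{4s}}e^{-it s\lambda^{2}}\,ds
\end{equation*}
obtained by a stationary-phase/contour evaluation of the Gaussian-type integral on the right (equivalently, the subordination formula relating $e^{it\lambda}$ to the family $e^{-its\lambda^{2}}$). After the change of variables $s\mapsto 2^{-j}s$ this becomes
\begin{equation*}
  e^{it\lambda}=c\,(2^{j}t)^{-1/2}\int_{0}^{\infty}s^{-1/2}e^{i\frac{2^{j}t}{4s}}e^{-i2^{-j}ts\lambda^{2}}\,ds,
\end{equation*}
which already has the shape of the second term in \eqref{key} up to the sign of the phase (one fixes the sign by the appropriate choice of branch / orientation of the contour) and up to the insertion of a cutoff. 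Multiplying by $\beta(2^{-j}\lambda)$ and inserting a smooth partition $1=\chi_{0}(s)+\chi_{\infty}(s)$ with $\chi_{0}$ supported near $s=0$ and $\chi_{\infty}$ supported away from $0$, one isolates the portion where $s$ is comparable to a fixed dyadic range: on $\mathrm{supp}\,\beta(2^{-j}\cdot)$ we have $\lambda\sim2^{j}$, hence $2^{-j}ts\lambda^{2}\sim 2^{j}ts$, and the phase $\frac{2^{j}t}{4s}+2^{-j}ts\lambda^{2}$ has a critical point at $s\sim1$; away from that critical range, repeated integration by parts in $s$ gains arbitrary powers of $2^{j}t\ge1$ and produces the Schwartz term $\beta(2^{-j}\lambda)\rho(t\lambda^{2}2^{-j},2^{j}t)$, where the dependence on $\tau=2^{j}t$ is Schwartz because each integration by parts costs a factor $(1+2^{j}t)^{-1}$ and the $s$-integral converges.

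The main technical point is the verification of the symbol-type bound \eqref{est:chi} for the resulting cutoff $\chi(s,\tau)$. Here $\chi(s,\tau)$ will be essentially $\chi_{\infty}(s)$ multiplied by factors coming from the non-stationary-phase corrections, and one must check that differentiating in $s$ genuinely produces the decay $(1+|s|)^{-\alpha}$ while differentiating in $\tau=2^{j}t$ produces no growth. This follows by writing $\chi$ as an oscillatory integral in an auxiliary variable and integrating by parts there; the key is that the phase is non-degenerate on $\mathrm{supp}\,\chi_{\infty}$ so that each $\partial_{s}$ lands either on the amplitude (harmless) or brings down a factor that, after one more integration by parts, is bounded by $(1+|s|)^{-1}$, while $\partial_{\tau}$ only acts on the smooth amplitude and bounded exponential factors. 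The support condition $\mathrm{supp}\,\chi(\cdot,\tau)\subseteq[\tfrac1{16},4]$ is then inherited from the choice of $\chi_{\infty}$, after accounting for the $\lambda\sim2^{j}$ localization that rescales the critical window.

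I expect the hard part to be the careful bookkeeping in the stationary/non-stationary decomposition that simultaneously yields (i) a remainder that is \emph{Schwartz jointly in $(t\lambda^{2}2^{-j},2^{j}t)$} and (ii) the precise symbol bounds \eqref{est:chi} uniform in $\tau$; both require tracking how the large parameter $2^{j}t$ enters each integration by parts, and making sure the compact $s$-support survives the cutoffs. All of this, however, is a direct adaptation of the computations in \cite{MS, DPR}, and since those references establish exactly such a subordination formula, I would cite them for the detailed estimates and only indicate the modifications needed to restrict to the range $2^{j}t\ge1$ and to the spectral cutoff $\beta(2^{-j}\lambda)$. For this reason the proposition is stated with a reference and the proof is brief.
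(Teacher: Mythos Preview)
Your proposal is correct and aligns with the paper's treatment: the paper does not prove this proposition at all but simply states it as a recalled result from \cite{MS, DPR}, and you correctly identify this and sketch the standard subordination argument from those references. Your outline of the stationary-phase decomposition of the integral representation $e^{it\lambda}=c\,t^{-1/2}\int_0^\infty s^{-1/2}e^{it/4s}e^{its\lambda^2}\,ds$ is the right idea, and your closing remark that one should cite the references for the details is exactly what the paper does.
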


As a direct consequence, we obtain:

\begin{proposition}\label{prop:sub1}
  If $\varphi(\lambda)\in C_c^\infty(\mathbb{R})$ is supported 
  in $[\frac{1}{2},2]$, then for all $j\in\Z$
  and $t,x>0$ with $2^jt\geq 1$ we can write
  \begin{equation}\label{key}
    \begin{split}
    &\int_0^\infty \varphi(2^{-j}\lambda)e^{it\lambda} J_{\nu}(\lambda r_1)J_{\nu}(\lambda r_2)\, \lambda d\lambda\\&=\int_0^\infty  \varphi(2^{-j}\lambda)\rho\big(\frac{t\lambda^2}{2^j}, 2^jt\big) J_{\nu}(\lambda r_1)J_{\nu}(\lambda r_2)\, \lambda d\lambda
    \\
    &+\int_0^\infty \varphi(2^{-j}\lambda)\big(2^jt\big)^{\frac12}\int_0^\infty \chi(s,2^jt)e^{\frac{i2^jt}{4s}}e^{i2^{-j}ts\lambda^2}\,ds J_{\nu}(\lambda r_1)J_{\nu}(\lambda r_2)\, \lambda d\lambda,
    \end{split}
  \end{equation}
  where $\rho(s,\tau)\in\mathcal{S}(\mathbb{R}\times\mathbb{R})$ 
  is a Schwartz function
  and $\chi\in C^\infty(\mathbb{R}\times\mathbb{R})$ 
  with $\text{supp}\,\chi(\cdot,\tau)\subseteq[\frac{1}{16},4]$,
  such that
  \begin{equation}\label{est:chi}
    \sup_{\tau\in\R}\big|\partial_s^\alpha\partial_\tau^\beta \chi(s,\tau)\big|\lesssim_{\alpha,\beta}(1+|s|)^{-\alpha},\quad \forall \alpha,\beta\geq0.
  \end{equation}
\end{proposition}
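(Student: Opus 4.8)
The plan is to derive Proposition \ref{prop:sub1} as an immediate corollary of Proposition \ref{prop:sub0} by inserting the pointwise decomposition of $\beta(2^{-j}\lambda)e^{it\lambda}$ (with $\beta=\varphi$) into the Hankel-type integral $\int_0^\infty (\cdot)\, J_{\nu}(\lambda r_1)J_{\nu}(\lambda r_2)\lambda\, d\lambda$ and then justifying that one may integrate term by term and exchange the order of the $s$- and $\lambda$-integrations in the second summand.

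First I would apply Proposition \ref{prop:sub0} with $\beta=\varphi$, which gives, for every $\lambda>0$ and every $j\in\Z$, $t>0$ with $2^jt\ge1$, the identity
\begin{equation*}
  \varphi(2^{-j}\lambda)e^{it\lambda}=
  \varphi(2^{-j}\lambda)\rho\big(\tfrac{t\lambda^2}{2^j},2^jt\big)
  +\varphi(2^{-j}\lambda)(2^jt)^{\frac12}
  \int_0^\infty \chi(s,2^jt)e^{\frac{i2^jt}{4s}}e^{i2^{-j}ts\lambda^2}\,ds
\end{equation*}
together with the stated properties of $\rho$ and $\chi$. Multiplying both sides by $J_\nu(\lambda r_1)J_\nu(\lambda r_2)\lambda$ and integrating in $\lambda$ over $(0,\infty)$ yields the claimed formula, \emph{provided} the manipulations are legitimate. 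Since $\varphi$ has compact support in $[\tfrac12,2]$, the $\lambda$-integral is really over the compact interval $[2^{j-1},2^{j+1}]$; on that set $J_\nu(\lambda r_1)J_\nu(\lambda r_2)\lambda$ is bounded (for fixed $r_1,r_2$) by Lemma \ref{lem:bessel}, so all integrands are bounded and everything is absolutely convergent. The only genuine point to check is the Fubini exchange in the second term: one must bound $\int_{2^{j-1}}^{2^{j+1}}\!\!\int_0^\infty |\varphi(2^{-j}\lambda)|\,(2^jt)^{1/2}\,|\chi(s,2^jt)|\,|J_\nu(\lambda r_1)J_\nu(\lambda r_2)|\,\lambda\,ds\,d\lambda<\infty$; this follows from $|\chi(s,2^jt)|\lesssim (1+s)^{-\alpha}$ with $\alpha>1$ (taking $\alpha=2$ in \eqref{est:chi}), which makes the $s$-integral finite, and from the boundedness of the $\lambda$-integrand on the compact $\lambda$-interval.

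The hard part — such as it is — is essentially bookkeeping: one must be careful that the representation \eqref{key} in Proposition \ref{prop:sub0} is a pointwise identity in $\lambda$ (not merely an operator identity), so that it can be plugged inside an integral against a fixed kernel; and one must record, in the statement of Proposition \ref{prop:sub1}, exactly the same regularity and support properties of $\rho$ and $\chi$, since these will be used later when estimating $m^{(j)}_\nu$ via stationary/non-stationary phase in $\lambda$ and decay in $s$. There is no new analytic content beyond Proposition \ref{prop:sub0}; the whole proof is the two lines above plus the Fubini justification, so I would present it as: ``Apply Proposition \ref{prop:sub0} with $\beta=\varphi$, multiply by $J_\nu(\lambda r_1)J_\nu(\lambda r_2)\lambda$ and integrate in $\lambda$; the interchange of integrals in the second term is justified by the rapid decay \eqref{est:chi} of $\chi$ in $s$ and the compact support of $\varphi$ in $\lambda$.''
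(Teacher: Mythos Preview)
Your proposal is correct and matches the paper's approach: the paper states Proposition \ref{prop:sub1} with the single line ``As a direct consequence, we obtain:'' after Proposition \ref{prop:sub0}, without a written proof. Your argument---apply Proposition \ref{prop:sub0} with $\beta=\varphi$, multiply by $J_\nu(\lambda r_1)J_\nu(\lambda r_2)\lambda$, integrate---is exactly the intended derivation; note that the Fubini justification is even simpler than you indicate, since $\chi(\cdot,\tau)$ is supported in $[\tfrac{1}{16},4]$, so the double integral is over a compact region in $(s,\lambda)$ and absolute convergence is immediate.
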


Then, by the spectral theory for the non-negative self-adjoint operator $L$, we have the representation of the microlocalized
half-wave propagator, if $2^jt\geq 1$:
\begin{equation}\label{key-operator}
  \varphi(2^{-j}\sqrt{L})e^{it\sqrt{L}}
  =\varphi(2^{-j}\sqrt{L})\rho\big(\frac{tL}{2^j}, 2^jt\big)
  +\varphi(2^{-j}\sqrt{L})\big(2^jt\big)^{\frac12}
  \int_0^\infty \chi(s,2^jt)
  e^{\frac{i2^jt}{4s}}e^{i2^{-j}tsL}\,ds.
\end{equation}
Since $\rho(s,\tau)\in\mathcal{S}(\mathbb{R}\times\mathbb{R})$ is a Schwartz function, by a similar argument as above we get
\begin{equation}
  \Big\|\varphi(2^{-j}\sqrt{L})\rho
  \big(\frac{tL}{2^j}, 2^jt\big)f(x)
  \Big\|_{[L^\infty(\R^2)]^2}\leq 
  C2^{2j}(1+2^j|t|)^{-N} \|f\|_{[L^1{(\R^2)]^2}}.
\end{equation}
By using \eqref{est:dis-S1} from Proposition \ref{prop:dis-S}, 
we have
\begin{equation}\label{est:dis1-1''}
  \begin{split}
  &\Big\|\varphi(2^{-j}\sqrt{L})
    \big(2^jt\big)^{\frac12}\int_0^\infty \chi(s,2^jt)
    e^{\frac{i2^jt}{4s}}e^{i2^{-j}tsL}\,ds 
    f(x)\Big\|_{[L^\infty(\R^2)]^2}\\
  &\leq C \big(2^jt\big)^{\frac12} 
  \int_0^\infty \chi(s,2^jt)\big(2^{-j}|ts|\big)^{-1}\,ds 
  \|f\|_{[L^1{(\R^2)]^2}}\\
  &\leq C2^{\frac32j}|t|^{-1/2} \|f\|_{[L^1{(\R^2)]^2}}\leq 
  C2^{2j}(1+2^j|t|)^{-\frac12} \|f\|_{[L^1{(\R^2)]^2}}.
  \end{split}
\end{equation}
Putting together \eqref{est:dis1-1'} and \eqref{est:dis1-1''} 
we complete the proof of \eqref{est:dis1-1}. \vspace{0.2cm}

{\bf Conflicts of Interest Statement:}
The authors declare that there are no conflicts of interest relevant to the content of this manuscript. The research was conducted without any commercial or financial relationships that could be construed as a potential conflict of interest.

{\bf Data Availability Statement:}
The data supporting the findings of this study are available from the corresponding author upon reasonable request.

\end{document}